\newtheorem{theorem}{Theorem}[section]
\newtheorem{corollary}[theorem]{Corollary}
\newtheorem{proposition}[theorem]{Proposition}
\newtheorem{lemma}[theorem]{Lemma}
\theoremstyle{definition}    
\newtheorem{definition}[theorem]{Definition}
\newtheorem{example}[theorem]{Example}
\newtheorem{remark}[theorem]{Remark}
\newtheorem*{acks}{Acknowledgments}
\theoremstyle{remark}
\def\Q{\mathbb{Q}}
\def\R{\mathbb{R}}
\def\C{\mathbb{C}}
\def\P{\mathbb{P}}
\def\Z{\mathbb{Z}}
\def\N{\mathbb{Z}_{\geq 0}}
\def\GL{\operatorname{GL}}
\def\SL{\operatorname{SL}}
\def\gl{\operatorname{\mathfrak{gl}}}
\def\g{\mathfrak{g}}
\def\Re{\operatorname{Re}}
\def\Lie{\operatorname{Lie}}
\def\Spec{\operatorname{Spec}}
\def\Proj{\operatorname{Proj}}
\def\End{\operatorname{End}}
\def\Hom{\operatorname{Hom}}
\def\Ker{\operatorname{Ker}}
\def\Coker{\operatorname{Coker}}
\def\Im{\operatorname{Im}}
\def\codim{\operatorname{codim}}
\def\rank{\operatorname{rank}}
\def\corank{\operatorname{corank}}
\def\Tr{\operatorname{Tr}}
\def\Ad{\operatorname{Ad}}
\def\Res{\mathrm{Res}}
\def\gr{\operatorname{gr}}
\def\lset#1#2{\left\{ \, \left. #1 \hspace{0.25em} \right| \, #2 \, \right\} }
\def\rset#1#2{\left\{ \, #1 \, \left| \hspace{0.25em} #2\right. \, \right\} } 
\def\ov{\overline}
\def\mc#1{\mathcal{#1}}
\def\vin{\operatorname{in}} 
\def\vout{\operatorname{out}} 
\def\bA{\mathbf{A}} 
\def\bC{\mathbf{C}} 
\def\bv{\mathbf{v}} 
\def\bw{\mathbf{w}} 
\def\mbe{\mathbf{e}} 
\def\bM{\mathbf{M}} 
\def\bMM{\mathbf{M}^{\circ}} 
\def\bMs{\mathbf{M}^\mathrm{s}} 
\def\bMss{\mathbf{M}^\mathrm{ss}} 
\def\bF{\mathbb{F}} 
\def\GIT{/\hspace{-3pt}/} 
\def\MM{\mathcal{M}} 
\def\MMreg{\mathcal{M}^\mathrm{s}} 
\def\LL{\mathcal{L}} 
\def\M{\mathfrak{M}} 
\def\Mreg{\mathfrak{M}^\mathrm{s}} 
\def\bU{\mathbf{U}} 
\def\PP{\mathcal{P}} 
\def\SS{\mathcal{S}} 
\def\CF{\operatorname{CF}} 
\def\FF{\mathcal{F}} 
\def\RH{\mathrm{RH}} 
\def\bMl{\mathbf{M}^{\ell}} 
\def\MMl{\mathcal{M}^{\ell}} 
\def\MMlreg{\mathcal{M}^{\ell \, \rm s}}
\begin{document}
\title{Geometry of Multiplicative Preprojective Algebra}
\author{Daisuke Yamakawa}
\date{}
\address{Department~of~Mathematics, %
Faculty~of~Science, Kyoto~University, Kyoto, 606-8502, Japan}
\email{yamakawa@math.kyoto-u.ac.jp}
\subjclass[2000]{Primary~16G20; Secondary~14H60,~17B67}
\keywords{Quiver varieties; multiplicative preprojective algebras; %
group-valued moment maps; filtered local systems; middle convolutions; Kac-Moody Lie algebras.}


\begin{abstract}
Crawley-Boevey and Shaw recently 
introduced a certain multiplicative analogue 
of the deformed preprojective algebra, 
which they called the multiplicative preprojective algebra. 
In this paper we study the moduli 
space of (semi)stable representations of 
such an algebra (the multiplicative 
quiver variety), which in fact 
has many similarities to the 
quiver variety. We show that 
there exists a complex analytic 
isomorphism between the nilpotent subvariety 
of the quiver variety and 
that of the multiplicative quiver 
variety (which can be extended 
to a symplectomorphism between these 
tubular neighborhoods). We also show 
that when the quiver is 
star-shaped, the multiplicative quiver variety 
parametrizes Simpson's (poly)stable filtered 
local systems on a punctured 
Riemann sphere with prescribed filtration 
type, weight and associated graded 
local system around each puncture.        
\end{abstract}

\maketitle

\tableofcontents

\section{Introduction}\label{1}

In this paper we study the geometry of multiplicative preprojective relation. 

First let us recall the notion of (deformed) preprojective relation. 
Let $Q=(I,\Omega)$ be a finite quiver with vertex set $I$ and arrow set $\Omega$, 
and let $(I,H)$ be its ``double''; 
that is obtained by adding a reverse arrow $\ov h$ to $\Omega$ for each $h \in \Omega$. 
For $h\in H$, we denote by $\vout(h), \vin(h) \in I$ 
the outgoing, incoming vertex of $h$, respectively. 
A representation of $(I,H)$ is given by a pair $(V,x)$ 
of an $I$-graded vector space $V=\bigoplus V_i$ 
and a family $x=(x_h)_{h\in H}$ of linear maps 
$x_h \colon V_{\vout(h)} \to V_{\vin(h)}$. 
Then for $\zeta=(\zeta_i) \in \C^I$, the equation  
\[
(\mu_V)_i(x) := \sum_{h\in H; \vin(h)=i} \epsilon(h) x_h x_{\ov h} = \zeta_i 1_{V_i} 
\qquad (i\in I)
\] 
is called the {\em (deformed) preprojective relation}.  
Here $\epsilon(h)=1$ if $h \in \Omega$ and $\epsilon(h)=-1$ otherwise. 
One of the most important properties of this relation is that for a fixed $V$,  
the map 
\[
\mu_V \colon \bM(V) :=\bigoplus_{h \in H} \Hom(V_{\vout(h)},V_{\vin(h)}) 
\to \bigoplus_{i \in I} \End(V_i)
\]
satisfies the defining property of moment map 
for the natural action of $G_V :=\prod_i \GL(V_i)$. 
Thus taking a stability condition on $\bM(V)$ 
in the sense of geometric invariant theory, 
the quotient $\mu_V^{-1}(\zeta)^{\rm s} /G_V$ 
of the stable locus carries naturally a symplectic structure. 
Such a quotient is so-called the {\em quiver variety}. 
Strictly speaking, 
there are various choices of stability condition parametrized by $\theta \in \Q^I$ 
($\theta$-stability), and  
the quiver variety $\M_{\zeta,\theta}(V)=\mu_V^{-1}(\zeta)^{\theta-\mathrm{ss}} \GIT G_V$ 
is defined as the quotient of the $\theta$-semistable locus.   
In general its stable locus $\Mreg_{\zeta,\theta}(V)$ does not coincide with the whole space.     

An importance of quiver variety in geometry was firstly found by Kronheimer~\cite{Kro}. 
He described the minimal resolution $\widetilde{\C^2/\Gamma}$   
of the Kleinnian singularity  
as the quiver variety associated to a quiver of the extended Dynkin type 
corresponding to $\Gamma \subset \SL_2(\C)$ via the McKay correspondence. 
Motivated by this fact and the ADHM description of moduli spaces of 
instantons on such spaces by Kronheimer and him~\cite{KN}, 
Nakajima introduced the notion of quiver variety  
in his celebrated paper~\cite{Nak-Duke1}.  
In the same paper, developing Lusztig's idea he constructed geometrically 
all irreducible highest weight representations of Kac-Moody Lie algebras. 
For further developments in this direction, see \cite{Nak-Duke2, Nak-AMS}. 

On the other hand, Crawley-Boevey and Shaw  
recently introduced a certain ``multiplicative'' 
analogue of the preprojective relation, 
called the {\em multiplicative preprojective relation}~\cite{CS};   
that is    
\[
(\Phi_V)_i(x) := \prod_{h\in H; \vin(h)=i} (1+ x_h x_{\ov h})^{\epsilon(h)} 
= q_i 1_{V_i} \qquad (i\in I), 
\]         
where we have fixed $q=(q_i) \in (\C^\times)^I$ and an ordering for taking product, 
and have assumed that $\det (1+x_h x_{\ov h} ) \neq 0$ for all $h \in H$. 
They considered such a relation motivated by the 
{\em Deligne-Simpson problem}. 
Fix a number of conjugacy classes $\mc{C}_1, \dots , \mc{C}_n$ in $\GL(r,\C)$. 
Then the problem asks if irreducible solutions of the equation
\[
A_1 A_2 \cdots A_n = 1 \qquad (A_i \in \mc{C}_i)
\]  
exist. Here the word ``irreducible'' means that 
$A_i$'s have no common invariant non-zero proper subspace.
There is also an additive version of it;  
replace conjugacy classes $\mc{C}_i$ by coadjoint orbits $\mc{O}_i \subset \gl(r,\C)$, 
and replace the above equation by 
\[
A_1 + A_2 + \cdots + A_n = 0 \qquad (A_i \in \mc{O}_i).
\]  
It is well-known that the closure of any coadjoint orbit in $\gl(r,\C)$ 
can be described as the quiver variety associated to a quiver of type $A$, 
where the stability is nothing so that the resulting quiver variety 
is the affine quotient $\mu_V^{-1}(\zeta) \GIT G_V$. 
Based on this fact, Crawley-Boevey observed that the quiver variety  
associated to a {\em star-shaped} quiver: 
 
\vspace{10pt}
 
\begin{center} 
\unitlength 0.1in
\begin{picture}( 45.5000, 13.4000)(  5.0500,-16.7000)
\put(8.2000,-10.0000){\makebox(0,0){$\star$}}%
%
\special{pn 8}%
\special{ar 820 1000 70 70  0.0000000 6.2831853}%
%
\special{pn 8}%
\special{ar 1390 400 70 70  0.0000000 6.2831853}%
%
\special{pn 8}%
\special{ar 2390 400 70 70  0.0000000 6.2831853}%
%
\special{pn 8}%
\special{ar 4986 400 70 70  0.0000000 6.2831853}%
%
\special{pn 8}%
\special{ar 1390 800 70 70  0.0000000 6.2831853}%
%
\special{pn 8}%
\special{ar 2390 800 70 70  0.0000000 6.2831853}%
%
\special{pn 8}%
\special{ar 4986 800 70 70  0.0000000 6.2831853}%
%
\special{pn 8}%
\special{ar 1390 1600 70 70  0.0000000 6.2831853}%
%
\special{pn 8}%
\special{ar 2390 1600 70 70  0.0000000 6.2831853}%
%
\special{pn 8}%
\special{ar 4986 1600 70 70  0.0000000 6.2831853}%
%
\special{pn 8}%
\special{pa 1336 1550}%
\special{pa 886 1040}%
\special{fp}%
\special{sh 1}%
\special{pa 886 1040}%
\special{pa 914 1104}%
\special{pa 920 1080}%
\special{pa 944 1078}%
\special{pa 886 1040}%
\special{fp}%
%
\special{pn 8}%
\special{pa 2316 400}%
\special{pa 1466 400}%
\special{fp}%
\special{sh 1}%
\special{pa 1466 400}%
\special{pa 1532 420}%
\special{pa 1518 400}%
\special{pa 1532 380}%
\special{pa 1466 400}%
\special{fp}%
%
\special{pn 8}%
\special{pa 3166 400}%
\special{pa 2456 400}%
\special{fp}%
\special{sh 1}%
\special{pa 2456 400}%
\special{pa 2522 420}%
\special{pa 2508 400}%
\special{pa 2522 380}%
\special{pa 2456 400}%
\special{fp}%
\special{pa 3176 400}%
\special{pa 2456 400}%
\special{fp}%
\special{sh 1}%
\special{pa 2456 400}%
\special{pa 2522 420}%
\special{pa 2508 400}%
\special{pa 2522 380}%
\special{pa 2456 400}%
\special{fp}%
%
\special{pn 8}%
\special{pa 2316 800}%
\special{pa 1466 800}%
\special{fp}%
\special{sh 1}%
\special{pa 1466 800}%
\special{pa 1532 820}%
\special{pa 1518 800}%
\special{pa 1532 780}%
\special{pa 1466 800}%
\special{fp}%
%
\special{pn 8}%
\special{pa 2316 1600}%
\special{pa 1466 1600}%
\special{fp}%
\special{sh 1}%
\special{pa 1466 1600}%
\special{pa 1532 1620}%
\special{pa 1518 1600}%
\special{pa 1532 1580}%
\special{pa 1466 1600}%
\special{fp}%
%
\special{pn 8}%
\special{pa 3176 800}%
\special{pa 2466 800}%
\special{fp}%
\special{sh 1}%
\special{pa 2466 800}%
\special{pa 2532 820}%
\special{pa 2518 800}%
\special{pa 2532 780}%
\special{pa 2466 800}%
\special{fp}%
\special{pa 3186 800}%
\special{pa 2466 800}%
\special{fp}%
\special{sh 1}%
\special{pa 2466 800}%
\special{pa 2532 820}%
\special{pa 2518 800}%
\special{pa 2532 780}%
\special{pa 2466 800}%
\special{fp}%
%
\special{pn 8}%
\special{pa 3176 1600}%
\special{pa 2466 1600}%
\special{fp}%
\special{sh 1}%
\special{pa 2466 1600}%
\special{pa 2532 1620}%
\special{pa 2518 1600}%
\special{pa 2532 1580}%
\special{pa 2466 1600}%
\special{fp}%
\special{pa 3186 1600}%
\special{pa 2466 1600}%
\special{fp}%
\special{sh 1}%
\special{pa 2466 1600}%
\special{pa 2532 1620}%
\special{pa 2518 1600}%
\special{pa 2532 1580}%
\special{pa 2466 1600}%
\special{fp}%
%
\special{pn 8}%
\special{pa 4910 400}%
\special{pa 4190 400}%
\special{fp}%
\special{sh 1}%
\special{pa 4190 400}%
\special{pa 4258 420}%
\special{pa 4244 400}%
\special{pa 4258 380}%
\special{pa 4190 400}%
\special{fp}%
%
\special{pn 8}%
\special{pa 4910 800}%
\special{pa 4190 800}%
\special{fp}%
\special{sh 1}%
\special{pa 4190 800}%
\special{pa 4258 820}%
\special{pa 4244 800}%
\special{pa 4258 780}%
\special{pa 4190 800}%
\special{fp}%
%
\special{pn 8}%
\special{pa 4910 1600}%
\special{pa 4190 1600}%
\special{fp}%
\special{sh 1}%
\special{pa 4190 1600}%
\special{pa 4258 1620}%
\special{pa 4244 1600}%
\special{pa 4258 1580}%
\special{pa 4190 1600}%
\special{fp}%
%
\special{pn 8}%
\special{pa 1326 830}%
\special{pa 896 980}%
\special{fp}%
\special{sh 1}%
\special{pa 896 980}%
\special{pa 966 978}%
\special{pa 946 962}%
\special{pa 952 940}%
\special{pa 896 980}%
\special{fp}%
%
\special{pn 8}%
\special{pa 1346 450}%
\special{pa 876 950}%
\special{fp}%
\special{sh 1}%
\special{pa 876 950}%
\special{pa 936 916}%
\special{pa 912 912}%
\special{pa 906 888}%
\special{pa 876 950}%
\special{fp}%
%
\special{pn 8}%
\special{sh 1}%
\special{ar 1390 1000 10 10 0  6.28318530717959E+0000}%
\special{sh 1}%
\special{ar 1390 1200 10 10 0  6.28318530717959E+0000}%
\special{sh 1}%
\special{ar 1390 1400 10 10 0  6.28318530717959E+0000}%
\special{sh 1}%
\special{ar 1390 1400 10 10 0  6.28318530717959E+0000}%
%
\special{pn 8}%
\special{sh 1}%
\special{ar 3500 400 10 10 0  6.28318530717959E+0000}%
\special{sh 1}%
\special{ar 3710 400 10 10 0  6.28318530717959E+0000}%
\special{sh 1}%
\special{ar 3900 400 10 10 0  6.28318530717959E+0000}%
\special{sh 1}%
\special{ar 3900 400 10 10 0  6.28318530717959E+0000}%
%
\special{pn 8}%
\special{sh 1}%
\special{ar 3500 800 10 10 0  6.28318530717959E+0000}%
\special{sh 1}%
\special{ar 3710 800 10 10 0  6.28318530717959E+0000}%
\special{sh 1}%
\special{ar 3900 800 10 10 0  6.28318530717959E+0000}%
\special{sh 1}%
\special{ar 3900 800 10 10 0  6.28318530717959E+0000}%
%
\special{pn 8}%
\special{sh 1}%
\special{ar 3500 1600 10 10 0  6.28318530717959E+0000}%
\special{sh 1}%
\special{ar 3710 1600 10 10 0  6.28318530717959E+0000}%
\special{sh 1}%
\special{ar 3900 1600 10 10 0  6.28318530717959E+0000}%
\special{sh 1}%
\special{ar 3900 1600 10 10 0  6.28318530717959E+0000}%
\end{picture}%
\end{center}

\vspace{10pt}

\noindent
with no stability and an appropriate parameters $V$ and $\zeta$, 
is isomorphic to the variety 
\[
\mc{Q} :=\{\, (A_1,A_2, \dots ,A_n) \in \ov{\mc{O}}_1 \times \cdots \times \ov{\mc{O}}_n 
\mid A_1 + \cdots + A_n =0 \,\} \GIT \GL(r,\C).
\]  
Here the equation $\sum_i A_i =0$ arises as the preprojective relation at the vertex $\star$. 
He solved the additive version~\cite{Cra-add} 
using this idea,   
and in \cite{CS}, he and Shaw observed that 
the ``multiplicative quiver variety'' $\Phi_V^{-1}(q) \GIT G_V$ describes 
the multiplicative analogue of the above variety:
\begin{equation*}
\mc{R} :=\{\, (A_1,A_2, \dots ,A_n) \in \ov{\mc{C}}_1 \times \cdots \times \ov{\mc{C}}_n 
\mid A_1 \cdots A_n =1 \,\} \GIT \GL(r,\C).
\end{equation*}%
Note that fixing distinct $n$ points $p_1, \dots ,p_n$ in the Riemann sphere $\P^1$, 
this variety can be considered as the moduli space of representations of 
the fundamental group (the {\em character variety}) of $\P^1 \setminus \{ p_i \}$ 
whose local monodromy around each $p_i$ belongs to $\ov{\mc{C}}_i$. 

We have mentioned that the preprojective relation can be understood as a moment map. 
In fact, the multiplicative preprojective relation can be also understood as a 
``multiplicative analogue'' of moment map, called  
the {\em group-valued moment map}. 
The notion of group-valued moment map 
was introduced by Alekseev-Malkin-Meinrenken~\cite{AMM}, 
and Van den Bergh~\cite{V-poi, V-ham} observed that the map $\Phi_V$ 
together with an appropriate 2-form 
satisfies the defining properties of group-valued moment map. 
A general theory of group-valued moment map allows us to take the ``quotient'' like as 
moment map; the quotient space 
$\MMreg_{q,\theta}(V):=\Phi_V^{-1}(q)^{\theta-\mathrm{s}}/ G_V$ 
of the $\theta$-stable locus has naturally a symplectic structure.   
We call the quotient $\MM_{q,\theta}(V):=\Phi_V^{-1}(q)^{\theta-\mathrm{ss}}\GIT G_V$ 
of the semistable locus the {\em multiplicative quiver variety}, 
which and its stable locus are the main objects in this paper.  

Note that if we consider $\Phi_V(x)$ as a formal series in $x_h$, 
then it can be written as  
\[
\Phi_V(x) = 1 + \mu_V(x) + ( \text{higher order terms in $x_h$} ).
\]
Thus we may expect a certain direct relation between the quiver variety and 
the multiplicative quiver variety. 
In fact, in the case of star-shaped quivers there is a 
{\em monodromy map} between them.   
If each $\mc{O}_i$ is semi-simple and eigenvalues are generic, 
then the variety $\mc{Q}$ becomes smooth and 
there is a map from $\mc{Q}$ to the variety $\mc{R}$ with 
$\mc{C}_i:= \exp \mc{O}_i$ given by: 
\begin{align*}
(A_1, \cdots ,A_n) \longmapsto 
\text{the monodromy representation of the connection}& \\ 
d- \frac{1}{2\pi \sqrt{-1}}\sum_i \frac{A_i}{z-p_i} dz 
&\quad \text{on} \quad  \P^1 \setminus \{ p_i \}. 
\end{align*}
Such a map was considered by Hitchin~\cite{Hit} and Hausel~\cite{Hau} 
(Boalch~\cite{Boa1, Boa2} considered its generalization to the case of irregular singularity). 
Hitchin showed that the monodromy map is a local analytic isomorphism and interchanges 
the symplectic structures.   
Hausel conjectured that under this map, 
the cohomology of $\mc{Q}$ is isomorphic to the pure part of 
one of $\mc{R}$. In this direction, he and Rodriguez-Villegas~\cite{HR} 
suggested several interesting conjectures 
for the mixed Hodge polynomial of twisted character varieties of 
compact Riemann surfaces.  

In this paper, using a property of group-valued moment map we show that: 
\begin{theorem}[Corollary~\ref{3.3.4}]\label{1.1} 
There exist an open neighborhood   
$U$ {\rm (}resp.\ $U'${\rm )} of $[0] \in \MM_{1,0}(V)$ 
{\rm (}resp.\ $[0] \in \M_{0,0}(V)${\rm )}  
and a commutative diagram   
\[
\begin{CD}
\MM_{1,\theta}(V) \supset @.\, \pi^{-1}(U) @>{\tilde f}>> \pi^{-1}(U')\, @. \subset \M_{0,\theta}(V) \\
@. @V{\pi}VV @V{\pi}VV @. \\
@. U @>{f}>> U' @. 
\end{CD}
\]
such that $f([0])=[0]$ and both $\tilde{f}$ and $f$ are complex analytic isomorphisms. 
Moreover $\tilde{f}$ maps the stable locus symplectomorphically onto the stable locus. 
\end{theorem}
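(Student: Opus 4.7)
The plan is to construct a $G_V$-equivariant complex analytic diffeomorphism of neighborhoods of $0 \in \bM(V)$ carrying $\Phi_V^{-1}(1)$ onto $\mu_V^{-1}(0)$, and then descend it to the GIT quotients. The key algebraic observation, already recorded in the excerpt, is that as formal power series
\[
\Phi_V(x) = 1 + \mu_V(x) + (\text{terms of order} \geq 3),
\]
so $\mu_V$ is the linearization of $\Phi_V - 1$ at the origin, and the exponential map of $G_V = \prod_i \GL(V_i)$ provides the correct nonlinear comparison between the two moment-map conditions.

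First I would produce a $G_V$-equivariant local analytic isomorphism $\psi$ from a $G_V$-invariant open neighborhood $W$ of $0$ onto another such neighborhood, satisfying
\[
\Phi_V \circ \psi = \exp \circ \, \mu_V,
\]
where $\exp \colon \mathfrak{g}_V \to G_V$ is the exponential of $G_V$. Existence of $\psi$ follows from the implicit function theorem applied along slices transverse to $\mu_V^{-1}(0)$, with $G_V$-equivariance imposed formally because both sides of the intertwining relation are $G_V$-equivariant. Since $\exp$ is a biholomorphism near $0 \in \mathfrak{g}_V$, one shrinks $W$ so that $\mu_V(W)$ lies in the domain of an inverse branch of $\exp$; then $\psi$ identifies $\mu_V^{-1}(0) \cap W$ with $\Phi_V^{-1}(1) \cap \psi(W)$. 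Passing to the affine GIT quotients gives a biholomorphism $f$ between neighborhoods $U'$ of $[0] \in \M_{0,0}(V)$ and $U$ of $[0] \in \MM_{1,0}(V)$. Since $\theta$-semistability is an open $G_V$-invariant condition preserved by the equivariant map $\psi$, after shrinking $U,U'$ the same $\psi$ induces the lift $\tilde f$ between $\pi^{-1}(U')$ and $\pi^{-1}(U)$ in the $\theta$-semistable quotients, giving the commutative diagram.

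For the symplectic assertion on the stable locus, the two quotients carry reduced forms coming respectively from AMM reduction of Van den Bergh's quasi-Hamiltonian $2$-form $\omega_{\mathrm{mult}}$ on $\bM(V)$ and from ordinary symplectic reduction of the linear form $\omega_{\mathrm{add}}$. A direct computation using the explicit Van den Bergh formula shows $\omega_{\mathrm{mult}} = \omega_{\mathrm{add}} + O(x)$ near the origin, and I would apply an equivariant relative Moser argument on a tubular neighborhood of $\mu_V^{-1}(0) \cap W$ to adjust $\psi$, without altering its action on the zero-locus of the moment map, so that $\psi^*\omega_{\mathrm{mult}}$ and $\omega_{\mathrm{add}}$ induce the same $2$-form after reduction; this is exactly the statement that $\tilde f$ is a symplectomorphism on the stable locus.

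The main obstacle is performing the two modifications of $\psi$ in compatible fashion: the implicit function step fixes $\psi$ only up to a $G_V$-equivariant deformation tangent to the moment-map fibers, and this residual freedom must be used carefully to cancel the discrepancy between $\omega_{\mathrm{mult}}$ and $\omega_{\mathrm{add}}$ without destroying the identity $\Phi_V \circ \psi = \exp \circ \mu_V$. In the general quasi-Hamiltonian framework this reconciliation is furnished by the local linearization theorem at a point with trivial moment value in \cite{AMM}, adapted to the holomorphic setting; alternatively, it can be executed by hand using the algebraic form of the preprojective quasi-Hamiltonian data from \cite{V-ham}.
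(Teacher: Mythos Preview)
Your overall direction is right, and you correctly identify at the end that the AMM local linearization is the relevant tool. But your two-step plan (implicit function theorem to match moment maps, then a separate Moser argument for the forms) creates exactly the reconciliation problem you flag, and the paper avoids it entirely. The paper's argument (Proposition~\ref{3.3.1}) first uses the AMM $2$-form $\rho$ on $G_V$ (Lemma~\ref{3.3.2}) to convert the quasi-Hamiltonian data into genuine Hamiltonian data: the pair $(\varpi - \Phi_V^*\log^*\rho,\ \log\circ\Phi_V)$ satisfies (H1)--(H2), and a direct computation shows $\varpi_0=\omega_0$ at the origin. Now there are \emph{two} Hamiltonian structures on a neighborhood of $0$, with the same $2$-form at $0$; a single application of the equivariant Darboux theorem produces a biholomorphism $f$ with $f^*\omega=\varpi-\Phi_V^*\log^*\rho$ and $\mu_V\circ f=\log\circ\Phi_V$ simultaneously. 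The level-set correspondence and the symplectic claim on $\Ker d\Phi_V$ (hence on the reduced stable locus) then follow immediately, since $\Phi_V^*\log^*\rho$ vanishes on $\Ker d\Phi_V$. What you propose as a last-paragraph alternative is in fact the main argument.

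There are also two genuine gaps in your sketch. First, ``$G_V$-equivariance imposed formally because both sides are equivariant'' is not an argument: $G_V$ is noncompact reductive, so you cannot simply average a solution of the implicit function theorem. The paper handles this (Remark~\ref{3.3.3}) by applying equivariant Darboux for the maximal compact $U_V=\prod_i \mathrm{U}(V_i)$ and then extending uniquely to a $G_V$-equivariant map via Sjamaar's results. Second, to descend to the GIT quotients you need the neighborhoods $\mc{U},\mc{U}'$ to be $\varphi$-saturated, not merely $G_V$-invariant; otherwise neither the map on $\MM_{1,0}(V)$ nor the lift to $\MM_{1,\theta}(V)$ is well defined. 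The paper arranges this explicitly using the same Sjamaar machinery and the map $F_\infty$. You should address both points rather than asserting that the passage to quotients is automatic.
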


Let us consider a star-shaped quiver again. 
Then the associated $\MM_{q,0}(V)$ with appropriate $q, V$ 
and $\theta =0$ gives the variety $\mc{R}$.   
Now by definition there is a natural projective morphism 
$\pi \colon \MM_{q,\theta}(V) \to \MM_{q,0}(V)=\Phi_V^{-1}(q)\GIT G_V$.    
We show that: 
\begin{theorem}[Theorem~\ref{4.2.5}]\label{1.2}
Suppose that $\theta_i >0$ for any $i \neq \star$. 
Then the variety $\MM_{q,\theta}(V)$   
parametrizes Simpson's polystable filtered local systems on $(\P^1,\{ p_i \})$   
of which the filtration type, weight and 
the monodromy of the graded local systems 
are prescribed by $V, \theta$ and $q$, respectively.  
Moreover $\pi$ can be understood as the map taking the monodromy representation 
of the underlying local system.  
\end{theorem}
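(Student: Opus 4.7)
The plan is to construct an explicit bijection between $\theta$-polystable orbits in $\MM_{q,\theta}(V)$ and isomorphism classes of Simpson-polystable filtered local systems on $(\P^1, \{p_i\})$ with the prescribed invariants, and then identify $\pi$ as the monodromy map. The construction proceeds in three stages: reading off filtered-local-system data from a quiver representation, matching the two stability notions, and inverting the construction.

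For the first stage, I would label the vertices of the $i$-th leg as $[i,1], \ldots, [i,\ell_i]$ with $[i,1]$ adjacent to $\star$, and write $a_{i,k}\colon V_{[i,k+1]} \to V_{[i,k]}$ and $\bar a_{i,k}$ in the reverse direction for the two arrows between consecutive vertices. The multiplicative preprojective relation at an interior $[i,k]$ takes the form
\[
(1+a_{i,k}\bar a_{i,k})(1+\bar a_{i,k-1}a_{i,k-1})^{-1} = q_{[i,k]} 1_{V_{[i,k]}},
\]
which constrains the spectra of the factors and forces the images of the composed maps $V_{[i,k]}\to V_\star$ to assemble into a descending filtration $F_i^\bullet$ on $V_\star$. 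The relation at $\star$ then reads $A_1 A_2\cdots A_n = q_\star 1_{V_\star}$, where $A_i := 1+a_{i,1}\bar a_{i,1}$; each $A_i$ preserves $F_i^\bullet$, and the induced action on $\gr^k_{F_i}V_\star$ is multiplication by a scalar expressible in terms of the $q_{[i,k]}$. Thus every representation yields a local system on $\P^1\setminus\{p_i\}$ equipped with filtrations at each $p_i$ whose graded monodromies are exactly those prescribed by $q$, with filtration type read off from $V$ and weights from $\theta$.

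Next I would match the two stability notions. A $G_V$-invariant submodule $W\subset V$ is, by the above, essentially determined by $W_\star := W\cap V_\star$ together with the induced filtrations $W_\star\cap F_i^\bullet$, via $\dim W_{[i,k]} = \dim(W_\star\cap F_i^k)$ (up to a leg-orientation convention). Under this identification the GIT slope $\theta\cdot\dim W$ becomes the parabolic degree of the sub-local-system $W_\star$ with Simpson weights determined by $\theta$, so $\theta$-(semi)stability coincides with Simpson's parabolic (semi)stability. For the reverse direction I would take a polystable filtered local system, fix an isomorphism of its fiber at a base point with $V_\star$, and use the filtration inclusions $F_i^{k+1}\hookrightarrow F_i^k$ together with retractions compatible with the action of the local monodromy on the graded pieces to define $a_{i,k},\bar a_{i,k}$; distinct splittings differ by the $G_V$-action, so the assignment is well defined on moduli, and the multiplicative preprojective relations are built into the construction. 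Finally, $\pi\colon\MM_{q,\theta}(V)\to\MM_{q,0}(V)=\Phi_V^{-1}(q)\GIT G_V$ is the canonical projective morphism, which under the above dictionary sends a filtered local system to the conjugacy class of $(A_1,\dots,A_n)$, that is, to the underlying local system.

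The hard part is the stability matching together with the associated polystable decomposition. Translating $\theta$-(semi)stability into Simpson's parabolic slope inequality requires careful bookkeeping of how the dimension vector simultaneously encodes the jump loci of the parabolic filtration and the Simpson weights, and one must check that every destabilizing filtered sub-local-system descends to a $\theta$-destabilizing quiver subrepresentation and conversely. Closely related is the verification that closed $\theta$-semistable $G_V$-orbits decompose as direct sums in a manner compatible with the filtrations, so that the bijection persists between polystable objects on either side. Once this is in place, the reconstruction step and the identification of $\pi$ with the monodromy map follow mechanically from the explicit formulas for the $A_i$.
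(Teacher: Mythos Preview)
Your overall plan matches the paper's approach, but the sketch of the stability matching contains a genuine gap. The claim that a subrepresentation $W\subset V$ is ``essentially determined by $W_\star$'' with $\dim W_{[i,k]}=\dim(W_\star\cap F_i^k)$ is false: an $x$-invariant subspace can have $W_{[i,k]}$ strictly smaller than the preimage of $W_\star\cap F_i^k$ under $a_{i,0}\cdots a_{i,k-1}$, and nothing you have written rules this out. Consequently the equality of slopes you assert does not follow, and your converse (``every destabilizing subrepresentation comes from a destabilizing sub-local-system'') is not justified.

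What makes the argument work in the paper is a step you have omitted: one first proves, using $\theta$-semistability together with the hypothesis $\theta_{[i,k]}>0$, that each leg map $a_{i,k-1}$ is \emph{injective} (Lemma~\ref{4.2.1}). This has two consequences you need. First, it guarantees that the images $F_i^k:=\Im(a_{i,0}\cdots a_{i,k-1})$ really have dimension $\dim V_{[i,k]}$, so the filtration has the prescribed type; your phrase ``forces the images \ldots\ to assemble into a descending filtration'' skips exactly this point. Second, injectivity gives for any $x$-invariant $S\subset V$ the inequality $\dim S_{[i,k]}\le \dim(S_\star\cap F_i^k)$, which, combined with $\theta_{[i,k]}>0$, yields $\theta\cdot\dim S\le$ (parabolic slope term for $S_\star$). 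The two directions of the stability comparison are therefore handled asymmetrically: from a sub-local-system $M$ one builds the specific subrepresentation $S_{[i,k]}:=(a_{i,0}\cdots a_{i,k-1})^{-1}(M_*\cap F_i^k)$ with equality of dimensions (Lemma~\ref{4.2.2}); from an arbitrary subrepresentation one only gets the inequality above, which suffices. Once you insert the injectivity lemma and replace the claimed dimension equality by this asymmetric argument, the rest of your outline (including the inverse construction and the identification of $\pi$) goes through as in the paper.
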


For the notion of filtered local system, see \cite{Sim} 
or \S\ref{4} in this paper.  
This notion naturally arises as an object which should correspond to 
a {\em parabolic connection} by the Riemann-Hilbert correspondence.  
In fact Simpson constructed such a correspondence. 
On the other hand, the moduli space of parabolic connections on 
a compact Riemann surface with marked points 
was constructed by Inaba-Iwasaki-Saito~\cite{IIS-par1} in the case 
of genus 0 and rank 2, and by Inaba~\cite{Ina} in the case of 
general genus, rank and full filtrations.  
We show that under certain conditions on a stability parameter, 
Simpson's Riemann-Hilbert correspondence gives 
a complex analytic symplectomorphism between such a moduli space and 
a star-shaped multiplicative quiver variety (see Theorem~\ref{4.3.5}).     

The paper is organized as follows: 
\begin{itemize}
\item In \S\ref{2}, we give a quick review of some basic facts 
about quiver variety and group-valued moment map.
\item In \S\ref{3}, we define the multiplicative quiver variety, 
and give some properties of it. 
\item \S\ref{4} is devoted to the study in the case of star-shaped quivers.
\end{itemize}  
Results in the rest two sections are little bit modifications of 
the known results.
\begin{itemize}
\item In \S\ref{5}, we show that a functor introduced by Crawley-Boevey and Shaw 
induces an isomorphism between two multiplicative quiver varieties 
whose parameters relate by certain reflections.  
This is a multiplicative version of Maffei's result~\cite{Maf}.    
\item In \S\ref{6}, by the same method as Nakajima~\cite{Nak-Duke1},  
we construct all irreducible highest weight representations of a Kac-Moody Lie algebra 
using the vector spaces of constructible functions on subvarieties of the  
multiplicative quiver varieties.    
\end{itemize}

\begin{acks}
The author is extremely grateful to Professor Hiraku Nakajima 
for prompting his interest in the geometry of 
multiplicative preprojective algebra, 
and for valuable advice and discussions. 
Also, the author is much obliged to Professor Masa-Hiko Saito 
for answering his questions about moduli space of parabolic connections. 
\end{acks}

\section{Preliminaries}\label{2}

\subsection{Notation and convention}\label{2.1}

Throughout this paper we use the following:

\begin{itemize}
\item $(I,\Omega)$ --- a finite quiver whose vertex set is $I$ and arrow set is $\Omega$.
\item $(I,\ov \Omega)$ --- the quiver obtained by reversing all arrows in $\Omega$.  
We set $H:= \Omega \sqcup \ov \Omega$. 
\item $\ov h \in H~(h\in H)$ --- the reverse arrow of $h$. 
\item $\epsilon \colon H \to \{ -1,1 \}$ --- the map defined by 
$\epsilon (h) =1=-\epsilon (\ov h)$ for $h \in \Omega$. 
\item $\vin (h), \vout (h) \in I$ --- the incoming, outgoing vertex of $h \in H$, respectively. 
\item $H_i~(i\in I)$ --- the subset of $H$ consisting of all $h$ with $\vin(h)=i$.
\item $\alpha \cdot \beta$ --- the standard inner product on $\Z^I$; 
$\alpha \cdot \beta = \sum_{i\in I} \alpha_i \beta_i$. 
\item $( \alpha ,\beta ):= 2\alpha \cdot \beta -\sum_{h \in H} \alpha_{\vout(h)} \beta_{\vin(h)}$. 
\item $\mbe_i~(i\in I)$ --- the $i$-th coordinate vector in $\Z^I$.
\end{itemize}   

A variety is a complex algebraic variety, not required to be irreducible or reduced. 
We always work over $\C$, and use the Zariski topology 
unless otherwise specified. 

On a smooth variety, 
we will treat symplectic structures both in the algebraic sense and 
in the complex analytic sense. 
We call the former ``algebraic symplectic structures'', 
and the latter ``holomorphic symplectic structures''.  
We use the word ``algebraic symplectic manifold''   
as a smooth variety endowed with an algebraic symplectic structure. 
A morphism or a holomorphic map $f\colon X \to Y$ between algebraic symplectic manifolds 
is {\em symplectic} if the pull-back $f^* \omega_Y$ 
of the symplectic form $\omega_Y$ coincides with $\omega_X$. 

$I$-graded vector spaces are always finite dimensional,
and whose subspaces are always $I$-graded.   

\subsection{Preliminaries to quiver variety}\label{2.2}

Take a non-zero $I$-graded vector space $V = \bigoplus_{i\in I} V_i$.   
We denote by $\dim V \in \N^I$ its dimension vector, 
i.e., $\dim V := ( \dim V_i)_{i\in I}$.

Set 
\[
\bM(V) := \bigoplus_{h \in H} \Hom (V_{\vout (h)}, V_{\vin (h)}).
\]
The reductive group $G_V :=\prod_{i\in I} \GL(V_i)$
acts on $\bM(V)$ by 
\[
g\cdot x := \left( 
g_{\vin(h)}x_h g^{-1}_{\vout(h)} \right)  
\quad \text{for}\ \ g=(g_i) \in G_V,\ x=(x_h) \in \bM(V).
\]
We consider $\C^\times$ as a subgroup of $G_V$ by 
\[
\C^\times \ni \lambda \longmapsto (\lambda 1_{V_i})_{i \in I} \in G_V.
\]
Clearly this subgroup acts trivially on $\bM(V)$.

Here we recall the notion of $\theta$-stability introduced by King~\cite{King}. 

Take and fix $\theta =(\theta_i) \in \Q^I$ such that $\theta \cdot \dim V =0$.
For $x \in \bM(V)$,
we say a subspace $S \subset V$ is $x$-{\em invariant} 
if $x_h (S_{\vout(h)}) \subset S_{\vin(h)}$ for all $h \in H$. 

\begin{definition}\label{2.2.1}
We say that a point $x\in \bM(V)$ is {\em $\theta$-semistable} 
if any subspace $S \subset V$ satisfies $\theta \cdot \dim S \leq 0$.
A point $x$ is {\em $\theta$-stable} if the strict inequality holds 
unless $S=0$ or $S=V$.
\end{definition}

Here we have changed the sign convention from \cite{King} 
(this agrees with \cite{Nak-ref}).  
King showed that the above stability condition is 
equivalent to Mumford's stability condition with respect to 
the linearization given by 
the trivial bundle  
with the $G_V$-action determined by the character  
$\chi(g) := \prod_i \det (g_i)^{-m\theta_i}$~(see below), 
where $m$ is any positive integer such that $m\theta \in \Z^I$ 
(note that the condition for $\theta$-(semi)stability 
and the one for $m\theta$-(semi)stability are identical). 

Set
\begin{align*}
\bMss_\theta(V) &:=
\lset{x\in \bM(V)}{\text{$x$ is $\theta$-semistable}}, \\
\bMs_\theta(V) &:=
\lset{x\in \bM(V)}{\text{$x$ is $\theta$-stable}}. 
\end{align*}
Both subsets are $G_V$-invariant and open.

Let $A_\theta(V)$ be the set consisting of regular functions 
$f\in \C[\bM(V)]$ on $\bM(V)$ such that 
\[
f(g\cdot x)=\chi(g)f(x) 
\quad \text{for any} \ (g,x) \in G_V \times \bM(V),
\]
and set $R_\theta(V):=\bigoplus_{n\in \N} A_{n\theta}(V)$. 
Then the variety $\Proj R_\theta(V)$ gives a {\em good quotient} of $\bMss_\theta(V)$;
namely, there is a surjective affine $G$-invariant morphism 
$\varphi \colon \bMss_\theta(V) \to \Proj R_\theta(V)$ such that the induced map 
$\varphi^*\colon \C[U] \to \C[\varphi^{-1}(U)]^G$ 
is an isomorphism for any affine open subset $U \subset \Proj R_\theta(V)$. 
Moreover, a point $x \in \bMss_\theta(V)$ is $\theta$-stable if and only if 
the fiber $\varphi^{-1}(\varphi(x))$ consists of a single $G_V$-orbit 
and its dimension is equal to $\dim G_V/\C^\times$.
In particular $\varphi(\bMs_\theta(V))$ can be identified with 
the set-theoretical orbit space $\bMs_\theta(V)/G_V$. 
By the last statement of the proposition below, $\varpi(\bMs_\theta(V))$ 
is an open subset of $\Proj R_\theta(V)$. 

\begin{remark}\label{2.2.2}
Both $\theta$-stability and $\theta$-semistability  
are purely topological conditions. 
Indeed, let $G_V$ act on $\bM(V) \times \C$ by 
$g \cdot (x,z) := (g \cdot x, \chi(g)^{-1}z)$. 
Then fixing a non-zero $z \in \C$,  
a point $x \in \bM(V)$ is $\theta$-semistable if and only if 
\[
\ov{G_V \cdot (x,z)} \cap \bM(V) \times \{ 0 \} = \emptyset,
\]
and $x$ is $\theta$-stable if and only if $G_V \cdot (x,z)$ 
is closed and its dimension is equal to $\dim G_V/\C^\times$ (see \cite{King}).

Thus if $f\colon \bM(V) \to \bM(V)$ is a $G_V$-equivariant 
homeomorphism in the sense of usual topology, 
$f$ preserves both $\theta$-stability and $\theta$-semistability. 
\end{remark}

We use a standard notation $\GIT$ for good quotient spaces, e.g., 
\[
\bMss_\theta(V) \GIT G_V 
=\Proj R_\theta(V).
\]

A good quotient $\varphi\colon X \to Y$ of a $G$-variety $X$ is called 
a {\em geometric quotient} if the induced map $X/G \to Y$ 
is bijective.   
$\bMs_\theta(V)$ has a geometric 
quotient $\bMs_\theta(V)/G_V$ by restricting $\varphi$.  

Here we introduce several properties of good quotients (see e.g.\ \cite{New}). 

\begin{proposition}\label{2.2.3} 
Let $X$ be a variety acted on by a reductive group $G$.
Suppose that a good quotient $\varphi\colon X \to Y=X\GIT G$ exists. 
 
{\rm (i)} A good quotient $(Y,\varphi)$ is a {\em categorical quotient}; 
namely, $(Y,\varphi)$ has the following universal property:
if $Z$ is a $G$-variety and $f\colon X\to Z$ is a $G$-invariant morphism,
then there exists a unique morphism $\psi \colon Y \to Z$
such that $f=\psi \circ \varphi$.
In particular $Y$ is unique up to isomorphism. 

{\rm (ii)} Two points $x, x' \in X$ have the same image 
$\varphi(x)=\varphi(x')$ if and only if 
the closures of the two orbits intersect;
$\ov{G\cdot x} \cap \ov{G\cdot x'} \neq \emptyset$. 

{\rm (iii)} If $Z \subset X$ is a closed $G$-invariant subset,
then $\varphi(Z)\subset Y$ is closed and 
the restriction $\varphi\colon Z\to \varphi(Z)$ is a good quotient. 

{\rm (iv)} If $U \subset X$ is open and $\varphi$-saturated 
{\rm (}namely, $\varphi^{-1}(\varphi(U))=U${\rm )},
then $\varphi(U) \subset Y$ is open and 
the restriction $\varphi\colon U\to \varphi(U)$ is a good quotient.
\end{proposition}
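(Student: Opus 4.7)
The plan is to reduce each part to its affine counterpart by covering $Y = X\GIT G$ with affine open subsets $U$ and exploiting the defining isomorphism $\varphi^*\colon \C[U] \xrightarrow{\sim} \C[\varphi^{-1}(U)]^G$, where $\varphi^{-1}(U)$ is itself affine by the affineness of $\varphi$. Two ingredients from the representation theory of reductive groups will do the rest: exactness of the $G$-invariants functor on rational $G$-modules (the Reynolds operator), and separation of disjoint closed $G$-invariant subsets of an affine $G$-variety by a $G$-invariant regular function. Both hold since $G = G_V = \prod_{i\in I}\GL(V_i)$ is linearly reductive.

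I would take (iii) $\Rightarrow$ (iv) $\Rightarrow$ (i) as the logical order, with (ii) proved independently. For (iii), on each affine $U\subset Y$ the set $Z\cap\varphi^{-1}(U)$ is closed and $G$-invariant in the affine variety $\varphi^{-1}(U)$; applying $(\,\cdot\,)^G$ to the short exact sequence $0 \to I_Z \to \C[\varphi^{-1}(U)] \to \C[Z\cap\varphi^{-1}(U)] \to 0$ yields
\[
\C[\varphi^{-1}(U)]^G \big/ (I_Z)^G \xrightarrow{\sim} \C[Z\cap\varphi^{-1}(U)]^G,
\]
so that $\varphi(Z)\cap U$ is the closed subscheme of $U$ cut out by $(I_Z)^G$ and the restricted morphism satisfies the good-quotient axioms; gluing over an affine cover of $Y$ gives the global statement. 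Part (iv) follows immediately: $\varphi$-saturation gives $Y\setminus\varphi(U) = \varphi(X\setminus U)$, which is closed by (iii), so $\varphi(U)$ is open, and the good-quotient properties for $\varphi|_U$ are inherited by restriction to intersections of affine opens.

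For (i), given a $G$-invariant morphism $f\colon X \to Z$, cover $Z$ by affine opens $W_\alpha$; each $f^{-1}(W_\alpha)$ is open and $\varphi$-saturated, so (iv) furnishes open subsets $U_\alpha := \varphi(f^{-1}(W_\alpha)) \subset Y$, on which $\psi_\alpha\colon U_\alpha \to W_\alpha$ is defined on global functions by
\[
\C[W_\alpha] \xrightarrow{f^*} \C[f^{-1}(W_\alpha)]^G \xrightarrow{\sim} \C[\varphi^{-1}(U_\alpha)]^G \xrightarrow{(\varphi^*)^{-1}} \C[U_\alpha],
\]
and uniqueness forces the $\psi_\alpha$ to glue into the required $\psi$. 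For (ii), assuming $\ov{G\cdot x}\cap\ov{G\cdot x'}=\emptyset$ and $\varphi(x) = \varphi(x')$ toward a contradiction, pick an affine open $U\ni\varphi(x)$; the disjoint closed $G$-invariant subsets $\ov{G\cdot x}\cap\varphi^{-1}(U)$ and $\ov{G\cdot x'}\cap\varphi^{-1}(U)$ of the affine $\varphi^{-1}(U)$ are separated by an invariant function which descends via $\varphi^*$ to a function on $U$ taking distinct values at the single point $\varphi(x)=\varphi(x')$, a contradiction; the converse is immediate from $G$-invariance of $\varphi$. The main obstacle throughout is the extraction of $G$-invariants in (iii), which depends essentially on reductivity of $G$; after that, everything reduces to routine affine bookkeeping and gluing.
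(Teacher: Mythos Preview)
The paper does not prove this proposition; it merely states the four properties and refers the reader to Newstead's book \cite{New}. Your sketch, by contrast, supplies an actual argument, and it is the standard one: reduce to the affine situation via the defining isomorphism $\C[U]\xrightarrow{\sim}\C[\varphi^{-1}(U)]^G$, and invoke linear reductivity (Reynolds operator, separation of disjoint closed invariant subsets) for the two substantive inputs. The logical ordering (iii)~$\Rightarrow$~(iv)~$\Rightarrow$~(i), with (ii) independent, is clean and correct.

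Two small remarks. First, the proposition is stated for an arbitrary reductive $G$, so your aside ``since $G=G_V=\prod_i\GL(V_i)$'' is out of place; of course over $\C$ every reductive group is linearly reductive, so nothing is lost. Second, in your proof of (i) you assert that $f^{-1}(W_\alpha)$ is $\varphi$-saturated; this is true but not tautological---it uses (ii) together with the fact that a $G$-invariant continuous map to a variety is constant on orbit closures. Since you declared (ii) to be proved independently, there is no circularity, but it would be worth one sentence to flag the dependence.
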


By the above proposition, 
two points $x, x' \in \bMss_\theta(V)$ have the same image under $\varphi$ 
if and only if 
\[
\ov{G_V \cdot x} \cap \ov{G_V \cdot x'} 
\cap \bMss_\theta(V) \neq \emptyset.
\] 
Since any orbit has a unique closed orbit in its closure (see e.g.\ \cite{Bor}), 
the space $\bMss_\theta(V) \GIT G_V$ parameterizes all 
closed $G_V$-orbits in $\bMss_\theta(V)$, where ``closed'' means ``closed in $\bMss_\theta(V)$''.     
A $\theta$-semistable point $x \in \bMss_\theta(V)$ whose orbit is closed in $\bMss_\theta(V)$ 
is said to be $\theta$-{\em polystable}. 

\begin{proposition}[{\cite[Proposition 3.2]{King}}]\label{2.2.4}
{\rm (i)} A point $x \in \bMss_\theta(V)$ is $\theta$-polystable if and only if 
there is a direct sum decomposition 
\[
V = V^1 \oplus V^2 \oplus \cdots \oplus V^n \quad \text{where}\ \ \theta \cdot \dim V^i =0,  
\]
and a $\theta$-stable point $x^i \in \bMs_\theta(V^i)$ for each $i$ such that 
$x=x^1 \oplus x^2 \oplus \cdots \oplus x^n$, i.e., $x_h$ is the direct sum of $x^i_h$'s as a linear map 
for any $h \in H$.  
  
{\rm (ii)} Every point $x \in \bMss_\theta(V)$ has a filtration 
\[
V = V^0 \supset V^1 \supset \cdots \supset V^N=0
\]
such that each $V^i$ is $x$-invariant, $\theta \cdot \dim V^i=0$ 
and each point $\gr^i x \in \bM(V^i/V^{i+1})$ induced from $x$ is $\theta$-stable. 
Let us set $\gr V := \bigoplus_i V^i/V^{i+1}$ and $\gr x := \bigoplus_i \gr^i x \in \bM(\gr V)$.  
Then under an identification $V \simeq \gr V$, the orbit $G_V \cdot \gr x$ 
is a unique closed orbit contained in $\ov{G_V \cdot x} \cap \bMss_\theta(V)$. 
Here ``closed'' means ``closed in $\bMss_\theta(V)$''.            
\end{proposition}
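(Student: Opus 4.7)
The plan is to establish (ii) by induction on $\dim V$, use a one-parameter subgroup to relate $x$ and $\gr x$, and leverage these to prove (i). For existence of the filtration in (ii), I proceed by induction on $\dim V$. If $x$ is already $\theta$-stable, take $N=1$; otherwise, since $x$ is $\theta$-semistable but not stable, there exists a proper nonzero $x$-invariant $S \subset V$ with $\theta \cdot \dim S = 0$. Choose $V^{N-1}$ of minimal dimension among such subspaces. Then $x|_{V^{N-1}}$ is $\theta$-stable, because any nonzero proper $x$-invariant $T \subsetneq V^{N-1}$ satisfies $\theta \cdot \dim T \leq 0$ by semistability of $x$, and equality would contradict minimality. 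The induced representation $\ov x$ on $V/V^{N-1}$ is $\theta$-semistable, since $\ov x$-invariant subspaces lift to $x$-invariant ones with the same value of $\theta \cdot \dim$; inductively $\ov x$ admits a filtration of the desired form, and lifting yields the filtration of $V$.

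Next, for the ``if'' direction of (i), suppose $x = \bigoplus_{i=1}^n x^i$ with each $x^i \in \bMs_\theta(V^i)$ and $\theta \cdot \dim V^i = 0$. Semistability of $x$ follows by induction on $n$: for an $x$-invariant $S \subset V$, the projection of $S$ to $V^n$ is $x^n$-invariant (hence of non-positive slope by stability), while its kernel is invariant under $x^1 \oplus \cdots \oplus x^{n-1}$, so additivity of $\theta \cdot \dim$ in short exact sequences together with the inductive hypothesis yields $\theta \cdot \dim S \leq 0$. To show that the orbit $G_V \cdot x$ is closed in $\bMss_\theta(V)$, I identify its stabilizer via the Schur-type lemma (any morphism between two $\theta$-stable objects of the same slope is zero or an isomorphism): it is a product of general linear groups acting on the multiplicity spaces of the distinct isomorphism classes among the $x^i$, hence reductive, and a standard GIT argument then gives closedness. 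To produce $\gr x$ as a limit of $x$, fix a splitting $V \simeq \bigoplus_i V^i/V^{i+1}$ compatible with the filtration in (ii) and let $\lambda(t) \in G_V$ act as $t^i \cdot \mathrm{id}$ on the $i$-th graded piece. The grading-preserving components of each $x_h$ are fixed by $\lambda$, while the strictly filtration-increasing components scale by positive powers of $t$; thus $\lim_{t\to 0}\lambda(t)\cdot x = \gr x$, so $\gr x \in \ov{G_V \cdot x}$.

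Finally, $\gr x$ is $\theta$-polystable by the ``if'' direction of (i), so $G_V \cdot \gr x$ is closed in $\bMss_\theta(V)$; since $\ov{G_V \cdot x} \cap \bMss_\theta(V)$ contains a unique closed orbit (as recalled before the statement), that orbit must be $G_V \cdot \gr x$, proving the second assertion of (ii). For the ``only if'' direction of (i), if $x$ is $\theta$-polystable then $G_V \cdot x$ is itself closed in $\bMss_\theta(V)$, so uniqueness forces $G_V \cdot x = G_V \cdot \gr x$, which gives the required direct-sum decomposition $x \simeq \bigoplus_i \gr^i x$. The main technical obstacle I anticipate is establishing closedness of the orbit of a direct sum of stables; my preferred route is reductivity of the stabilizer combined with standard GIT, but one could alternatively verify the Hilbert--Mumford criterion directly, ruling out destabilizing one-parameter subgroups using the stability of each summand.
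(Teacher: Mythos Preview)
The paper does not prove this proposition; it merely cites King~\cite{King}. Your argument is therefore self-contained and essentially the standard one (Jordan--H\"older filtration plus a one-parameter subgroup degeneration), and the overall architecture is correct.

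There is, however, a genuine weak point in your ``if'' direction of (i). The implication ``reductive stabilizer $\Rightarrow$ closed orbit'' is \emph{false} in general for a reductive group acting on an affine variety: Matsushima's theorem gives only the converse. So ``reductivity of the stabilizer combined with standard GIT'' does not by itself establish that $G_V \cdot (x^1 \oplus \cdots \oplus x^n)$ is closed in $\bMss_\theta(V)$. Your alternative route via Hilbert--Mumford is the correct one and should be the primary argument, not a fallback: any one-parameter subgroup $\lambda$ for which $\lim_{t\to 0}\lambda(t)\cdot x$ exists determines an $x$-invariant filtration of $V$ (by weight spaces), and since $x$ is a direct sum of $\theta$-stables of slope zero this filtration splits in the abelian category of slope-zero semistables; hence the limit is isomorphic to $x$ and lies in $G_V\cdot x$. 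That is exactly how King's Proposition~3.2 is proved. With this correction your proof goes through; the remaining steps (the inductive construction of the filtration, the degeneration $\lambda(t)\cdot x \to \gr x$, and the deduction of the ``only if'' direction from uniqueness of the closed orbit) are all fine.
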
  

We will often write a point in $\bMss_\theta(V)$ like as $[x]$, 
where $x \in \bMss_\theta(V)$ is its representative.

Note that clearly $\bMss_0 (V) = \bM(V)$ and hence 
the quotient $\bMss_0(V)\GIT G_V$ must be equal to the 
affine quotient of $\bM(V)$;
\[
\bM(V)\GIT G_V = \Spec \C[\bM(V)]^{G_V}.
\]
This space parameterizes all closed $G_V$-orbits in $\bM(V)$. 
By $\C[\bM(V)]^{G_V} = A_0(V)$, 
there is a natural projective morphism 
\[
\pi\colon \bMss_\theta(V) \GIT G_V \to \bM(V)\GIT G_V.
\] 
Set-theoretically, $\pi$ sends a point $[x]$ to 
a unique closed orbit in the closure of $G_V \cdot x$.

\begin{proposition}\label{2.2.5}
The restriction $\pi\colon \pi^{-1}(\bMs_0(V)/G_V) \to \bMs_0(V)/G_V$ 
is an isomorphism. 
\end{proposition}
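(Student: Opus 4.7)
The plan is to identify the preimage $\pi^{-1}(\bMs_0(V)/G_V)$ with $\varphi_\theta(\bMs_0(V)) \subset \bMss_\theta(V) \GIT G_V$ via the saturation criterion of Proposition~\ref{2.2.3}(iv), and then invoke uniqueness of the geometric quotient to conclude. The starting observation is that with $\theta = 0$, the strict inequality $0 \cdot \dim S < 0$ can never hold, so $\bMs_0(V)$ is precisely the locus of \emph{simple} representations (those with no non-zero proper invariant subspace). This gives at once the inclusion $\bMs_0(V) \subset \bMs_\theta(V) \subset \bMss_\theta(V)$ for every $\theta$---the stability inequalities are vacuous---together with the openness of $\bMs_0(V)$ in $\bMss_\theta(V)$, since simplicity is an open condition.

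The key step is to prove that $\bMs_0(V)$ is $\varphi_\theta$-saturated. Suppose $x \in \bMss_\theta(V)$ satisfies $\varphi_\theta(x) = \varphi_\theta(y)$ for some $y \in \bMs_0(V)$. Since $y$ is simple, its orbit $G_V \cdot y$ is already closed in $\bM(V)$, hence in $\bMss_\theta(V)$. Combining Proposition~\ref{2.2.3}(ii) with Proposition~\ref{2.2.4}(ii), the unique closed orbit inside $\ov{G_V \cdot x} \cap \bMss_\theta(V)$ is $G_V \cdot \gr x$, and this must coincide with $G_V \cdot y$, so $\gr x \simeq y$ is simple. But $\gr x = \bigoplus_i \gr^i x$ decomposes along a filtration $V = V^0 \supset \cdots \supset V^N = 0$ in which each quotient $V^i/V^{i+1}$ embeds as an invariant subspace of $\gr x$; simplicity therefore forces $N = 1$, so $x$ itself is $\theta$-stable and $\gr x = x$. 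Consequently $x \simeq y$ is simple, i.e., $x \in \bMs_0(V)$. The same chain of implications, applied starting from a point $[x]$ with $\pi([x]) \in \bMs_0(V)/G_V$, yields the equality $\pi^{-1}(\bMs_0(V)/G_V) = \varphi_\theta(\bMs_0(V))$ as open subvarieties.

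Once saturation is in hand, Proposition~\ref{2.2.3}(iv) guarantees that $\varphi_\theta(\bMs_0(V))$ is open and that $\varphi_\theta \colon \bMs_0(V) \to \varphi_\theta(\bMs_0(V))$ is a good quotient; since the $G_V$-stabilizer of a simple representation reduces to the scalar subgroup $\C^\times$ by Schur's lemma, all orbits have the maximal dimension $\dim G_V/\C^\times$ and the quotient is actually geometric. The map $\varphi_0 \colon \bMs_0(V) \to \bMs_0(V)/G_V$ is a geometric quotient for the same reason, and the categorical property of Proposition~\ref{2.2.3}(i) identifies both with a single variety; by the factorization $\varphi_0 = \pi \circ \varphi_\theta$ on $\bMs_0(V)$, the unique identifying isomorphism is precisely the restriction of $\pi$. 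The main obstacle is the saturation step: this is the one place where one genuinely needs to combine the closed-orbit property of simple representations with the Jordan-H\"older analysis of Proposition~\ref{2.2.4}(ii) in order to promote ``$\gr x$ simple'' to ``$x$ simple''. Everything else is a formal consequence of the general theory of good quotients.
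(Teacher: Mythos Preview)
Your proof is correct and follows essentially the same route as the paper: use the inclusion $\bMs_0(V) \subset \bMs_\theta(V)$ together with the commutative square relating the two quotient maps, and then invoke uniqueness of the geometric quotient. You supply considerably more detail on the saturation step---identifying $\pi^{-1}(\bMs_0(V)/G_V)$ with $\varphi_\theta(\bMs_0(V))$ via the Jordan--H\"older argument of Proposition~\ref{2.2.4}(ii)---which the paper's two-line proof leaves entirely implicit.
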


\begin{proof}
By definition we have $\bMs_0(V) \subset \bMs_\theta(V)$. 
Thus the following diagram is commutative:
\[
\begin{CD}
\bMs_\theta(V) @<{\text{inclusion}}<< \bMs_0(V) \\
@VVV @VVV \\
\bMss_\theta(V) \GIT G_V @>{\pi}>> \bM(V)\GIT G_V
\end{CD}
\]
Both of the vertical arrows are the geometric quotients. 
Hence the assertion follows.
\end{proof}  

The following fact is also well-known.

\begin{proposition}\label{2.2.6}
The stabilizer of any $\theta$-stable point $x \in \bMs_\theta(V)$ is equal to $\C^\times$.  
\end{proposition}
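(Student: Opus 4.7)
The plan is to exploit that any stabilizer element $g$ commutes with $x$, which forces the kernel and image of $g - \lambda \cdot 1_V$ to be $x$-invariant $I$-graded subspaces for every scalar $\lambda$; $\theta$-stability can then be played against itself using the relation $\theta \cdot \dim V = 0$.

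Concretely, I would start by fixing $g = (g_i)_{i \in I} \in G_V$ with $g \cdot x = x$, so that $g_{\vin(h)} x_h = x_h g_{\vout(h)}$ for every $h \in H$. Pick an eigenvalue $\lambda$ of some $g_i$ and set $g' := g - \lambda \cdot 1_V$. The intertwining identity transfers from $g$ to $g'$, and because $g$ is block-diagonal with respect to the decomposition $V = \bigoplus_i V_i$, both $S := \Ker g'$ and $T := \Im g'$ are $I$-graded subspaces; the intertwining then shows they are $x$-invariant.

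The main step is a short dichotomy. By the choice of $\lambda$, $S \neq 0$. If $S = V$, then $g = \lambda \cdot 1_V \in \C^\times$ and we are done. Otherwise $S$ is a proper nonzero $x$-invariant subspace, and Definition~\ref{2.2.1} forces $\theta \cdot \dim S < 0$. Rank-nullity in each $V_i$ gives $\dim S + \dim T = \dim V$, whence
\[
\theta \cdot \dim T = \theta \cdot \dim V - \theta \cdot \dim S = -\theta \cdot \dim S > 0,
\]
using $\theta \cdot \dim V = 0$. But $T$ is likewise a proper nonzero $x$-invariant subspace (proper since $S \neq 0$, nonzero since $S \neq V$), so $\theta$-stability demands $\theta \cdot \dim T < 0$, a contradiction. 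Combined with the fact that $\C^\times \subset G_V$ already acts trivially on $\bM(V)$, this yields the equality $\operatorname{Stab}(x) = \C^\times$.

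There is no real obstacle here; the only delicate point is checking that $S$ and $T$ inherit the $I$-grading, which is automatic from the block-diagonal form of $g$. Note that this argument never uses any property of $x$ beyond $\theta$-stability, so the same reasoning would apply verbatim in the multiplicative setting to be developed in \S\ref{3}.
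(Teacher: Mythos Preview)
Your proof is correct and is essentially the same as the paper's: both arguments pick an eigenvalue $\lambda$ of some $g_i$, observe that $\Ker(g-\lambda)$ and $\Im(g-\lambda)$ are $x$-invariant $I$-graded subspaces, and then use $\theta$-stability together with $\dim \Ker + \dim \Im = \dim V$ (hence $\theta \cdot \dim \Ker + \theta \cdot \dim \Im = 0$) to force $g = \lambda$. The only cosmetic difference is that the paper phrases the endgame as two weak inequalities summing to zero and hence both equalities, whereas you run an explicit dichotomy with strict inequalities.
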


\begin{proof}
Suppose that $g=(g_i) \in G_V$ stabilizes $x\in \bMs_\theta(V)$.
Then both $\bigoplus \Im(g_i -\lambda 1_{V_i})$ and $\bigoplus \Ker(g_i -\lambda 1_{V_i})$ 
are $x$-invariant subspaces of $V$ for any $\lambda \in \C^{\times}$.
Take $\lambda$ to be an eigenvalue of $g_i$ for some $i\in I$.
By the stability condition, we have
\begin{equation}
\sum_{i\in I} \theta_i \dim \Im(g_i-\lambda 1_{V_i}) \leq 0, 
\qquad \sum_{i\in I} \theta_i \dim \Ker(g_i -\lambda 1_{V_i}) \leq 0.
\end{equation}
On the other hand, we have
\[
\sum_{i\in I} \theta_i (\dim \Im(g_i -\lambda 1_{V_i}) +\dim \Ker(g_i -\lambda 1_{V_i})) 
=\theta \cdot \dim V = 0.
\]
Thus the previous two inequalities must be equalities.
By the stability condition and 
the choice of $\lambda$, we must have $g=\lambda$.
\end{proof} 

For each subspace $S \subset V$, 
the natural inclusion $\bM(S) \hookrightarrow \bM(V)$ induces 
a morphism
\[
\bM(S)\GIT G_S \to \bM(V)\GIT G_V. 
\]
It is a closed immersion. 
This follows immediately from the following fact.

\begin{proposition}[{\cite[Theorem 1.3]{Lus-quiv}}]\label{2.2.7} 
The invariant subring $\C[\bM(V)]^{G_V}$ is generated by 
functions of the form $\Tr \left(  x_{h_1}x_{h_2} \cdots x_{h_n} \right)$, 
where $(h_1, h_2, \dots ,h_n)$ is a {\rm cycle} in $H$; namely,  
a sequence in $H$ such that    
\[
\vout(h_1) = \vin(h_2), ~\vout(h_2) = \vin(h_3),~\dots ,~\vout(h_{n-1}) = \vin(h_n), 
~\vout(h_n) = \vin(h_1).
\] 
\end{proposition}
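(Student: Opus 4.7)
The plan is to deduce Proposition~\ref{2.2.7} from the classical First Fundamental Theorem (FFT) of invariant theory for $\GL_n$, applied vertex by vertex. Since we work over $\C$, by polarisation it suffices to describe, for each multidegree $(m_h)\in\N^H$, the $G_V$-invariant subspace of the multilinear space $\bigotimes_{h\in H}\bigl(V_{\vin(h)}^{*}\otimes V_{\vout(h)}\bigr)^{\otimes m_h}$. Regrouping the tensor factors by vertex, this space is $G_V$-isomorphic to $\bigotimes_{i\in I}\bigl(V_i^{\otimes p_i}\otimes (V_i^{*})^{\otimes q_i}\bigr)$, where $p_i=\sum_{h:\vout(h)=i}m_h$ and $q_i=\sum_{h:\vin(h)=i}m_h$, together with a combinatorial record of which tensor slot belongs to which arrow-slot.

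Next, I would apply the classical FFT to each factor $\GL(V_i)$: the $\GL(V_i)$-invariant subspace of $V_i^{\otimes p_i}\otimes (V_i^{*})^{\otimes q_i}$ vanishes unless $p_i=q_i$, and in that case is spanned by the total contractions indexed by bijections $\sigma_i$ pairing the $V_i$-slots with the $V_i^{*}$-slots. Hence the full space of $G_V$-invariants in the multilinear tensor is spanned by tuples $(\sigma_i)_{i\in I}$ of such matchings. Reading off such a joint matching diagrammatically: at each vertex $i$ it pairs every outgoing arrow-slot $h$ (contributing a $V_i$-factor as the source $V_{\vout(h)}$) with an incoming arrow-slot $h'$ (contributing a $V_i^{*}$-factor as the target $V_{\vin(h')}$, where $\vin(h')=i=\vout(h)$), which is precisely the data needed to compose $x_h$ after $x_{h'}$. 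Since every arrow has exactly one head and one tail, the resulting global pairing is a permutation of the arrow-slot set whose orbits are directed cycles in $(I,H)$; evaluating the contraction on $(x_h)$ returns the product of traces $\prod_{c}\Tr(x_{h_1}x_{h_2}\cdots x_{h_{n_c}})$ taken along these cycles. Un-polarising then exhibits an arbitrary polynomial invariant as a polynomial in cycle-traces.

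The main obstacle is the combinatorial bookkeeping that identifies vertex-by-vertex FFT matchings with disjoint unions of directed cycles in $(I,H)$; once this identification is in place, the computation showing that the associated contraction equals a product of cycle-traces is mechanical. A secondary subtlety is to rule out $\det$-type invariants: any $G_V$-invariant must have equal numbers of $V_i$- and $V_i^{*}$-factors at each vertex (forcing $p_i=q_i$), so the determinantal generators of the larger $\GL(V_i)$-invariant ring, which would arise only for unbalanced multiplicities, never contribute.
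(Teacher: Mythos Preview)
The paper does not give its own proof of this proposition; it is stated as a citation to Lusztig \cite[Theorem~1.3]{Lus-quiv} and nothing further is written. Your sketch is the standard Le~Bruyn--Procesi argument (reduction to multilinear invariants by polarisation, then vertex-by-vertex application of the classical FFT for $\GL$, then reading the resulting system of bijections as a disjoint union of directed cycles), which is essentially how the cited result is proved in the literature. So there is nothing to compare against in this paper; your approach is correct and is the expected one.

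One small correction to your final paragraph: for the full general linear group $\GL(V_i)$ there are no ``determinantal'' invariants to rule out. The FFT for $\GL_n$ already asserts that the invariant subspace of $V_i^{\otimes p_i}\otimes (V_i^{*})^{\otimes q_i}$ is zero unless $p_i=q_i$, and in that case is spanned by the contraction tensors; determinants only enter when one passes to $\SL_n$. So the balance condition $p_i=q_i$ is a \emph{consequence} of the FFT, not an additional hypothesis you need to secure separately.
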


\subsection{Quiver variety}\label{2.3}

Let us define the quiver variety.
First we define a map $\mu_V \colon \bM(V) \to \Lie G_V :=\bigoplus \gl(V_i)$ by 
\[
\mu_V (x) := \left(  \sum_{h\in H_i}
\epsilon(h) x_h x_{\ov h} \right)  _{i\in I}.
\]
It is equivariant with respect to the action of $G_V$. 
Thus for a central element $\zeta \in \C^I$ of $\Lie G_V$,
the subset $\mu(\zeta)$ is a $G_V$-invariant
closed subvariety of $\bM(V)$.
\begin{definition}\label{2.3.1}
For a given $(\zeta, \theta) \in \C^I \times \Q^I$ with $\theta \cdot \dim V=0$, 
the {\em quiver variety} is the good quotient
\[
\M_{\zeta,\theta}(V) 
:= \bMss_\theta(V)\cap \mu_V^{-1}(\zeta) \GIT G_V.
\] 
\end{definition}
It is well-defined by Proposition~\ref{2.2.3}.
The equation $\mu_V (x)=\zeta$ is called
the {\em (deformed) preprojective relation}. 

The map $\mu_V$ has a remarkable property which we explain from now.
 
Let $M$ be a smooth variety acted on by 
a reductive algebraic group $G$. 
For $\xi \in \Lie G$, we denote by  
$\xi^*$ the vector field induced from the
infinitesimal action of $\xi$; namely,
\[
\xi^*_x := \left. \frac{d}{dt} \exp (t\xi) \cdot x \,\right|_{t=0}
\quad \text{for}\ x \in M.
\]

\begin{definition}\label{2.3.2}
A {\em Hamiltonian $G$-structure} on $M$ is 
a pair consisting of a $G$-invariant 2-form $\omega$ on $M$ and 
a morphism $\mu\colon M\to (\Lie G)^*$, which is equivariant with respect to  
the coadjoint action on $(\Lie G)^*$, such that: 
\begin{flalign*}
{\rm (H1)}\  & d\omega =  0; & \\
{\rm (H2)}\  & \iota(\xi^*)\omega = d\langle \mu,\xi \rangle
\quad \text{for any}\ \xi \in \Lie G; & \\
{\rm (H3)}\  & \Ker \omega_x= 0
\quad \text{for each}\ x \in M. &
\end{flalign*}
Here $\Ker \omega_x := \{\, v \in T_x M \mid \iota(v)\omega_x =0 \,\}$. 
The triple $(M,\omega,\mu)$ is called a {\em Hamiltonian $G$-space} and 
$\mu$ is called the {\em moment map}.
\end{definition}

We define an algebraic symplectic form $\omega$ on
$\bM(V)$ by
\[
\omega := \sum_{h\in \Omega} \Tr dx_h \wedge dx_{\ov h} 
+\sum_{i\in I}\Tr da_i \wedge db_i.
\] 
Note that $\Lie G_V$ can be identified with its dual by the trace.
It is easy to see that, under this identification, 
the triple $(\bM(V),\omega, \mu)$
is a Hamiltonian $G_V$-space.

Thus the open subvariety
\[
\Mreg_{\zeta,\theta}(V) 
:= \bMs_\theta(V) \cap \mu_V^{-1}(\zeta)/ G_V
\]
of $\M_{\zeta,\theta}(V)$ is an algebraic symplectic manifold by 
the following well-known fact.
\begin{theorem}\label{2.3.3}
Let $(M,\omega,\mu)$ be a Hamiltonian $G$-space 
and $\zeta \in (\Lie G)^*$ be a fixed point with respect to the coadjoint action. 
Suppose that the stabilizer of each point in $\mu^{-1}(\zeta)$ is trivial.   
Then $\mu^{-1}(\zeta)$ is smooth. 
Moreover if a geometric quotient $\mu^{-1}(\zeta)/G$ exists, 
then it becomes an algebraic symplectic manifold,
and for each point $x \in \mu^{-1}(\zeta)$, 
the tangent space of $\mu^{-1}(\zeta)/G$ at the point represented by $x$ 
can be naturally identified with the quotient space $\Ker d_x \mu /T_x(G\cdot x)$.  
\end{theorem}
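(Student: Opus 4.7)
The plan is to follow the classical Marsden--Weinstein symplectic reduction argument, adapted to the algebraic/holomorphic setting, and to reduce everything to a single linear-algebra identity that relates the kernel of $d_x \mu$ to the $\omega$-orthogonal of the tangent space to the $G$-orbit.

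First, I would establish smoothness of $\mu^{-1}(\zeta)$. The key point is to show that $\zeta$ is a regular value of $\mu$, i.e.\ that $d_x\mu\colon T_xM \to (\Lie G)^*$ is surjective for every $x \in \mu^{-1}(\zeta)$. Using (H2), for $\xi \in \Lie G$ and $v \in T_xM$ one has
\[
\langle d_x\mu(v), \xi \rangle = (d\langle \mu,\xi\rangle)(v) = \omega_x(\xi^*_x, v).
\]
Thus a covector $\xi \in \Lie G \subset ((\Lie G)^*)^*$ annihilates $\Im d_x\mu$ if and only if $\omega_x(\xi^*_x, -) \equiv 0$ on $T_xM$, which by non-degeneracy (H3) means $\xi^*_x = 0$, i.e.\ $\xi \in \Lie G_x$. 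Since we assume the stabilizer is trivial, $\Lie G_x = 0$, so $d_x\mu$ is surjective and hence $\mu$ is a submersion near $\mu^{-1}(\zeta)$. This shows $\mu^{-1}(\zeta)$ is smooth with $T_x \mu^{-1}(\zeta) = \Ker d_x\mu$. (Note that $G$ preserves $\mu^{-1}(\zeta)$ since $\zeta$ is coadjoint-fixed.)

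Next, assuming a geometric quotient $\pi\colon \mu^{-1}(\zeta) \to \mu^{-1}(\zeta)/G$ exists, the quotient is smooth (the $G$-action is free up to the trivial stabilizer issue, but triviality of stabilizers and the existence of a geometric quotient together give a smooth principal quotient locally), and the tangent space description follows by differentiating: $T_{[x]} (\mu^{-1}(\zeta)/G) = T_x\mu^{-1}(\zeta)/T_x(G\cdot x) = \Ker d_x\mu / T_x(G\cdot x)$. I would then construct the reduced symplectic form $\omega_{\rm red}$ as the unique 2-form with $\pi^*\omega_{\rm red} = \iota^*\omega$, where $\iota\colon \mu^{-1}(\zeta) \hookrightarrow M$. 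For this to be well defined, $\iota^*\omega$ must be $G$-basic. It is $G$-invariant because $\omega$ is, and it is horizontal because $\iota(\xi^*)\iota^*\omega = \iota^*(d\langle \mu,\xi\rangle) = d\langle \mu\circ\iota,\xi\rangle = 0$ since $\mu\circ\iota$ is constant. Closedness then descends formally: $\pi^* d\omega_{\rm red} = d\iota^*\omega = \iota^* d\omega = 0$, and $\pi^*$ is injective on forms since $\pi$ is a submersion.

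The main technical step, and the one I would be most careful with, is the nondegeneracy of $\omega_{\rm red}$ at each point. This rests on the identity
\[
\bigl(T_x(G\cdot x)\bigr)^{\perp_\omega} \;=\; \Ker d_x\mu \qquad \text{inside } T_xM,
\]
which is immediate from the formula $\omega_x(\xi^*_x,v) = \langle d_x\mu(v),\xi\rangle$ displayed above. Granting this, the radical of $\omega_x|_{\Ker d_x\mu}$ equals $\Ker d_x\mu \cap (\Ker d_x\mu)^{\perp_\omega} = \Ker d_x\mu \cap T_x(G\cdot x) = T_x(G\cdot x)$, the last equality because $G\cdot x \subset \mu^{-1}(\zeta)$ forces $T_x(G\cdot x) \subset \Ker d_x\mu$. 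Thus $\omega_{\rm red}$ is nondegenerate on the quotient $\Ker d_x\mu / T_x(G\cdot x)$, which finishes the proof. The potential obstacle is the algebraic-vs-analytic bookkeeping around existence and smoothness of the geometric quotient, but since this is hypothesized and triviality of stabilizers ensures $\pi$ is smooth locally, the descent argument goes through unchanged from the smooth-manifold version.
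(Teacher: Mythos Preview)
The paper does not actually prove Theorem~2.3.3; it is stated there as a ``well-known fact'' (the classical Marsden--Weinstein symplectic reduction theorem) and invoked without argument. Your proof is the standard one and is correct: the identity $\langle d_x\mu(v),\xi\rangle = \omega_x(\xi^*_x,v)$ from (H2) gives both the surjectivity of $d_x\mu$ (hence smoothness of the level set) and the key relation $(T_x(G\cdot x))^{\perp_\omega} = \Ker d_x\mu$, from which basicness of $\iota^*\omega$ and nondegeneracy of the reduced form follow exactly as you wrote. There is nothing to compare against here---you have simply supplied the argument the paper omits.
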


In the case $\zeta =0$, we will denote 
$\M_\theta(V)=\M_{0,\theta}(V),~\Mreg_\theta(V)=\Mreg_{0,\theta}(V)$. 

\subsection{Quasi-Hamiltonian structure}\label{2.4}

A notion of {\em quasi-Hamiltonian structure},
which was introduced by Alekseev-Malkin-Meinrenken~\cite{AMM} 
for $C^{\infty}$-manifolds with a compact Lie group action,
is a ``multiplicative'' analogue of Hamiltonian structure.  
This subsection is a quick review of its complex algebraic version
which was already treated by Boalch~\cite{Boa3} and Van den Bergh~\cite{V-poi, V-ham}.

Let $G$ be a reductive algebraic group
and $\Lie G$ be its Lie algebra.
For simplicity, 
we assume that $G$ is a closed subgroup of $\GL(N,\C)$ for some $N$,  
and that the symmetric form $\Tr \colon \Lie G \otimes \Lie G \to \C$ 
induced from the trace is non-degenerate.  

We define
\[
\chi :=\frac{1}{6}\Tr\, (g^{-1}dg \wedge g^{-1}dg \wedge g^{-1}dg) 
=\frac{1}{6}\Tr\, (dg\, g^{-1} \wedge dg\, g^{-1} \wedge dg\, g^{-1}),
\]
where $g^{-1}dg$ (resp.\ $dg\, g^{-1}$) 
is the left-invariant (resp.\ right-invariant) Maurer-Cartan form on $G$.

\begin{definition}\label{2.4.1}
A {\em quasi-Hamiltonian $G$-space} is 
a smooth $G$-variety $M$ together with 
a $G$-invariant 2-form $\varpi$ on $M$ and 
a $G$-equivariant morphism $\Phi\colon M\to G$ (where we have let $G$ act on itself 
by the conjugation) such that: 
\begin{flalign*}
{\rm (QH1)}\  & d\varpi =  - \Phi^*\chi; & \\
{\rm (QH2)}\  & \iota(\xi^*) \varpi 
=\tfrac{1}{2}\Tr\, \xi (\Phi^{-1}d\Phi +d\Phi \, \Phi^{-1})
\quad \text{for any}\ \xi \in \Lie G; & \\
{\rm (QH3)}\  & \Ker \varpi_x= \{\, \xi^*_x \mid \xi\in\Ker(\Ad_{\Phi(x)}+1)\,\}
\quad \text{for each}\ x \in M. &
\end{flalign*}
$\Phi$ is called the {\em group-valued moment map}.
\end{definition}

There are two typical examples of quasi-Hamiltonian $G$-space. 

\begin{example}[{\cite[Proposition 3.1]{AMM}}]\label{2.4.2}
Let $\mc{C} \subset G$ be a conjugacy class with
the conjugation action of $G$.
Then there is a quasi-Hamiltonian $G$-structure on
$\mc{C}$ whose group-valued moment map is just the
inclusion $\mc{C} \to G$.
Indeed the 2-form is uniquely determined by the condition (QH2) 
since the action is transitive, 
and one can easily check that it actually exists. 
\end{example}

\begin{example}[{\cite[Proposition 3.2]{AMM}}]\label{2.4.3}
Consider the direct product $G \times G$. 
Let $G\times G$ act on itself by $(g,h) \cdot (a,b) := (gah^{-1}, hbg^{-1})$.  
Define a 2-form $\varpi$ on $G \times G$ by 
\[
\varpi = \frac12 \Tr \left(  a^{-1}da \wedge db \, b^{-1} \right)   
- \frac12 \Tr \left(   b^{-1}db \wedge da \, a^{-1} \right)   . 
\]
Then $\varpi$ together with the map 
\[
G \times G \to G \times G;\qquad (a,b) \mapsto (ab, a^{-1}b^{-1})
\]
gives a quasi-Hamiltonian $G \times G$-structure on $G \times G$.  
\end{example}

Recall that for a Hamiltonian $G$-space and a closed subgroup $K \subset G$, 
the induced $K$-action is also Hamiltonian in a natural way. 
Unfortunately, this is not true for a quasi-Hamiltonian $G$-space in general. 
However, if $G = K \times K$ and $K \subset G$ is the diagonal subgroup, 
then an analogous statement holds.  

\begin{theorem}[{\cite[\S 6]{AMM}}]\label{2.4.4}
Let $(M,\varpi,\Phi=(\Phi_1,\Phi_2,\Psi))$ be a 
quasi-Hamiltonian $G\times G\times K$-space. 
Let $G\times K$ act by the diagonal embedding $(g,k)\mapsto (g,g,k)$. 

{\rm (1)} $M$ with a 2-form 
\[ 
\varpi_{12}:=\varpi+ \frac12 \Tr\, (\Phi_1^{-1}d\Phi_1 \wedge d\Phi_2 \, \Phi_2^{-1})
\]
and a morphism
\[
(\Phi_{12},\Psi) := (\Phi_1\cdot \Phi_2,\Psi)\colon M\to G\times K
\]
is a quasi-Hamiltonian $G\times K$-space {\rm (}This space is called the {\em (internal) fusion}{\rm )}.

{\rm (2)} If we define
\begin{align*}
\varpi_{21} &:=\varpi+ \frac12 \Tr\, (\Phi_2^{-1}d\Phi_2 \wedge d\Phi_1 \, \Phi_1^{-1}), \\
\Phi_{21} &:= \Phi_2\cdot \Phi_1\colon M\to G,
\end{align*}
then $(\varpi_{21},\Phi_{21})$ also defines a 
quasi-Hamiltonian $G\times K$-structure on $M$.
Moreover it is isomorphic to $(M,\varpi_{12},(\Phi_{12},\Psi))$. 

{\rm (3)} Let $(M,\varpi,\Phi)$ be a quasi-Hamiltonian 
$G\times G\times G\times K$-space.
Let $(\varpi_{(12)3},\Phi_{(12)3})$ be 
the quasi-Hamiltonian $G\times K$-structure 
obtained by first fusioning 
the first two $G$-factors, and let $(\varpi_{1(23)},\Phi_{1(23)})$ 
be that obtained by first fusioning the last two $G$-factors. 
Then the two structures coincide.
\end{theorem}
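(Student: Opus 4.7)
The plan is to verify axioms (QH1)--(QH3) directly for the fused data in part (1), with parts (2) and (3) following as essentially formal consequences. The key ingredient throughout is the multiplicative cocycle identity
\[
m^*\chi = \mathrm{pr}_1^*\chi + \mathrm{pr}_2^*\chi - d\tau, \qquad \tau := \tfrac12\Tr\bigl(g_1^{-1}dg_1 \wedge dg_2 \cdot g_2^{-1}\bigr),
\]
on $G \times G$, where $m$ is multiplication; this is a formal consequence of the Maurer--Cartan equation and the $\Ad$-invariance of $\Tr$. Pulling it back along $(\Phi_1, \Phi_2)$ and combining with $d\varpi = -(\Phi_1, \Phi_2, \Psi)^*\chi$ immediately yields (QH1) for $\varpi_{12}$: the correction $-d\tau|_{(\Phi_1,\Phi_2)}$ is precisely what converts the sum of individual $\chi$-pullbacks into $-\Phi_{12}^*\chi$. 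For (QH2) with $\xi \in \Lie G$ embedded diagonally, I would decompose $\xi^* = \xi_1^* + \xi_2^*$, contract $\varpi_{12}$ with this sum, apply the two instances of the old (QH2) to each factor, and use $\Phi_{12}^{-1}d\Phi_{12} = \Ad_{\Phi_2^{-1}}(\Phi_1^{-1}d\Phi_1) + \Phi_2^{-1}d\Phi_2$ (with its right-invariant counterpart) to collect terms. The $K$-component of (QH2) persists unchanged because the correction form involves only $\Phi_1, \Phi_2$.

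The hardest step will be (QH3), since the correction 2-form itself has a nontrivial radical. The inclusion $\{\xi^*_x \mid \xi \in \Ker(\Ad_{\Phi_{12}(x)}+1)\} \subseteq \Ker\varpi_{12,x}$ drops out of the newly established (QH2) by a short manipulation. For the reverse inclusion, given $v \in \Ker\varpi_{12,x}$, I would rewrite
\[
\iota(v)\varpi = -\iota(v)\,\tfrac12\Tr\bigl(\Phi_1^{-1}d\Phi_1 \wedge d\Phi_2 \cdot \Phi_2^{-1}\bigr)
\]
and recognize the right-hand side, via (QH2) for each $\Phi_j$, as $\iota(\xi_1^* + \xi_2^*)\varpi$ for an explicit pair $(\xi_1, \xi_2)$ determined by $d\Phi_1(v)$ and $d\Phi_2(v)$. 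Hence $v - \xi_1^* - \xi_2^* \in \Ker\varpi_x$, and the old (QH3) identifies it as a fundamental vector field for some $(\eta_1, \eta_2, \eta_K) \in \Ker(\Ad_{(\Phi_1,\Phi_2,\Psi)(x)}+1)$. A linear-algebra computation using $\Ad_{\Phi_1\Phi_2} = \Ad_{\Phi_1}\Ad_{\Phi_2}$ then exhibits the composite as a fundamental vector field for an element in $\Ker(\Ad_{\Phi_{12}(x)}+1) \oplus \Ker(\Ad_{\Psi(x)}+1)$, completing (QH3).

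For part (2), I would construct the isomorphism $M \to M$ as the pointwise action of $\Phi_2(x)$ through the first $G$-factor; $G \times K$-equivariance for the diagonal action is a direct check, equivariance of $\Phi_1$ converts $\Phi_{12}$ into $\Phi_{21} = \Phi_2 \Phi_{12} \Phi_2^{-1}$, and a further application of (QH2) shows that the pullback of $\varpi_{21}$ equals $\varpi_{12}$. Part (3) is the cleanest: expanding both $\varpi_{(12)3}$ and $\varpi_{1(23)}$ in terms of $\varpi$ and correction 2-forms shows that each equals the single ``triple-fusion'' form
\[
\varpi + \tfrac12\Tr\bigl(\Phi_1^{-1}d\Phi_1 \wedge d(\Phi_2\Phi_3) \cdot (\Phi_2\Phi_3)^{-1}\bigr) + \tfrac12\Tr\bigl(\Phi_2^{-1}d\Phi_2 \wedge d\Phi_3 \cdot \Phi_3^{-1}\bigr),
\]
by two applications of $\tau$ corresponding to the two bracketings of $\Phi_1\Phi_2\Phi_3$; since the moment maps both equal $(\Phi_1\Phi_2\Phi_3, \Psi)$, the two quasi-Hamiltonian structures coincide on the nose.
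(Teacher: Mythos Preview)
The paper does not give its own proof of this theorem: it is stated with the citation \cite[\S 6]{AMM} and used as a black box. Your sketch is essentially the original argument of Alekseev--Malkin--Meinrenken, so there is nothing in the paper to compare it against beyond the reference itself.

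That said, your outline is sound and matches the standard route. The cocycle identity $m^*\chi = \mathrm{pr}_1^*\chi + \mathrm{pr}_2^*\chi - d\tau$ is exactly the mechanism behind (QH1), and your treatment of (QH2) and of parts (2)--(3) is correct; in particular the twist map $x \mapsto (\Phi_2(x),1,1)\cdot x$ is precisely the isomorphism AMM use for (2), and the equality of the two triple-fusion forms in (3) is indeed a direct unwinding. The only place where your sketch is thin is the reverse inclusion in (QH3): contracting the correction term with $v$ gives $\tfrac12\Tr(a\, d\Phi_2\Phi_2^{-1}) - \tfrac12\Tr(b\,\Phi_1^{-1}d\Phi_1)$ with $a=\Phi_1^{-1}d\Phi_1(v)$, $b=d\Phi_2(v)\Phi_2^{-1}$, and each of these is only \emph{half} of a (QH2)-type expression, not the full symmetric combination $\tfrac12\Tr\xi(\Phi_j^{-1}d\Phi_j + d\Phi_j\Phi_j^{-1})$. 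To close the argument one has to absorb the missing halves using $\Ad$-invariance of the trace and the relation between left- and right-invariant forms, and then check that the resulting $(\xi_1,\xi_2)$ and the $(\eta_1,\eta_2,\eta_K)$ from the old (QH3) combine to a single diagonal element in $\Ker(\Ad_{\Phi_1\Phi_2}+1)$. This is where the actual work in AMM's Theorem~6.1 lies, and it does go through, but it is more than a one-line linear-algebra check.
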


The following theorem is a quasi-Hamiltonian version of Theorem~\ref{2.3.3}, 
which provides a new method to construct algebraic symplectic manifolds.

\begin{theorem}[cf. {\cite[Theorem 5.1]{AMM}}]\label{2.4.5}
Let $(M,\varpi,(\Phi_1,\Phi_2))$ be 
a quasi-Hamiltonian $G_1\times G_2$-space 
and $f$ be a central element of $G_1$.
Suppose that the stabilizer of each point in $\Phi_1^{-1}(f)$ is trivial. 
Then $\Phi_1^{-1}(f)$ is a smooth subvariety of $M$.
Moreover if a geometric quotient $\Phi_1^{-1}(f)/G_1$ exists, 
then $\Phi_1^{-1}(f)/G_1$ becomes a quasi-Hamiltonian $G_2$-space,
and for each point $x \in \Phi_1^{-1}(f)$, 
the tangent space of $\Phi_1^{-1}(f)$ at the point represented by $x$ 
can be naturally identified with the quotient space 
$\Ker d_x \Phi_1/T_x(G_1 \cdot x)$.   
\end{theorem}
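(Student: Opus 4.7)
The plan is to adapt the Hamiltonian reduction argument of Theorem~\ref{2.3.3} to the group-valued setting, where $\varpi$ may be degenerate with kernel described by (QH3). The starting point is a simplification of (QH2) available because $f$ is central: $\Ad_f$ acts as the identity on $\Lie G_1$, so for any $x \in \Phi_1^{-1}(f)$, $\xi_1 \in \Lie G_1$, $v \in T_xM$,
\[
\varpi_x(\xi_1^*, v) \;=\; \Tr\bigl(\xi_1 \cdot \Phi_1(x)^{-1} d_x\Phi_1(v)\bigr).
\]
Writing $A_x := T_x(G_1 \cdot x)$, this identity immediately gives $\Ker d_x\Phi_1 = A_x^{\perp}$, the $\varpi$-orthogonal complement of $A_x$ in $T_x M$.

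Next I would establish that $f$ is a regular value. By (QH3) together with centrality of $f$, we have $A_x \cap \Ker\varpi_x = \{\xi_1^*_x : \xi_1 \in \Ker(\Ad_f+1)\} = 0$, so $\varpi$ gives a non-degenerate pairing of $A_x$ with a complementary subspace of $T_xM$. Combined with $\dim A_x = \dim G_1$ (trivial stabilizer), this forces $\dim A_x^{\perp} = \dim M - \dim G_1$, so $d_x\Phi_1$ is surjective. Hence $\Phi_1^{-1}(f)$ is smooth, and its tangent space at $x$ is $A_x^{\perp}$. Assuming the geometric quotient $\pi\colon \Phi_1^{-1}(f) \to M' := \Phi_1^{-1}(f)/G_1$ exists, $T_{[x]}M' = A_x^{\perp}/A_x$, as claimed. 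Moreover the displayed identity shows $\iota^*\varpi$ is horizontal for the $G_1$-action (and $G_1$-invariance is automatic), so it descends to a 2-form $\bar\varpi$ on $M'$; similarly the $G_1$-invariant map $\Phi_2$ descends to a $G_2$-equivariant $\bar\Phi_2\colon M' \to G_2$.

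It remains to verify (QH1)--(QH3) for $(M', \bar\varpi, \bar\Phi_2)$. Conditions (QH1) and (QH2) are routine: (QH1) uses the decomposition $\chi_{G_1 \times G_2} = \chi_{G_1} + \chi_{G_2}$ together with $\iota^*\Phi_1^*\chi_{G_1} = 0$ (since $\Phi_1\circ\iota$ is constant), while (QH2) is obtained by pulling back its $\Lie G_2$-component along $\pi$. \emph{The main obstacle} is (QH3), the computation of $\Ker\bar\varpi_{[x]}$. A class represented by $v \in A_x^{\perp}$ lies in this kernel precisely when $v \in A_x^{\perp} \cap (A_x^{\perp})^{\perp}$. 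Applying the standard identity $(A^{\perp})^{\perp} = A + \Ker\varpi$, together with the inclusion $A_x \subset A_x^{\perp}$ (another consequence of centrality of $f$ via the displayed identity), reduces this to $v \in A_x + \Ker\varpi_x$. Quotienting by $A_x$ and invoking $A_x \cap \Ker\varpi_x = 0$ yields $\Ker\bar\varpi_{[x]} \cong \Ker\varpi_x$, which by (QH3) on $M$ equals $\{\xi^*_x : \xi \in \Ker(\Ad_{\Phi(x)}+1)\}$; since $\Ker(\Ad_f+1) = 0$, only the $\Lie G_2$-component survives, giving precisely $\{\bar\xi_2^*_{[x]} : \xi_2 \in \Ker(\Ad_{\bar\Phi_2([x])}+1)\}$, i.e., (QH3) for the quotient.
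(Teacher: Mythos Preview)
The paper does not prove this theorem; it simply cites \cite[Theorem~5.1]{AMM}. Your argument is essentially the standard one from that reference, and the overall structure (identify $\Ker d_x\Phi_1$ with $A_x^{\perp}$ via the simplified (QH2), descend the form, then compute $\Ker\bar\varpi$ via $(A_x^{\perp})^{\perp}=A_x+\Ker\varpi_x$) is correct.

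There is, however, a genuine gap in your justification of $A_x\cap\Ker\varpi_x=0$. You write this intersection as $\{\xi_1^{*}{}_x:\xi_1\in\Ker(\Ad_f+1)\}$, but (QH3) only tells you that an element of $\Ker\varpi_x$ is of the form $\eta^*_x$ for \emph{some} $\eta\in\Ker(\Ad_{\Phi(x)}+1)$; it does not say that if $\xi_1^{*}{}_x\in\Ker\varpi_x$ then $\xi_1$ itself lies in $\Ker(\Ad_f+1)$. Since only the $G_1$-stabilizer is assumed trivial, you cannot identify $\xi_1$ with $\eta$ directly. The fix is short: if $\xi_1^{*}{}_x\in\Ker\varpi_x$, then by (QH3) and $\Ker(\Ad_f+1)=0$ one has $\xi_1^{*}{}_x=\eta_2^{*}{}_x$ for some $\eta_2\in\Ker(\Ad_{\Phi_2(x)}+1)$. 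Now apply $d_x\Phi_2$: since $\Phi_2$ is $G_1$-invariant the left side gives $0$, while equivariance under $G_2$ gives $d_x\Phi_2(\eta_2^{*}{}_x)=\eta_2\Phi_2(x)-\Phi_2(x)\eta_2$, so $\Ad_{\Phi_2(x)}\eta_2=\eta_2$. Combined with $\Ad_{\Phi_2(x)}\eta_2=-\eta_2$ this forces $\eta_2=0$, hence $\xi_1^{*}{}_x=0$, hence $\xi_1=0$ by the trivial-stabilizer hypothesis. With this correction your proof goes through.
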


Note that if $G_2$ is trivial, then the resulting quotient space 
carries a quasi-Hamiltonian $\{ 1\}$-structure, 
which is nothing but an algebraic symplectic structure.

We will use the following example in the next section. 

\begin{example}[\cite{V-poi, V-ham}]\label{2.4.6}
Let $V,W$ be two $\C$-vector spaces. Set 
\begin{align*}
M &=\Hom (W,V) \oplus \Hom(V,W), \\
M^{\circ} &= \{\, (a,b)\in M \mid \det(1+ab) \neq 0\,\}.
\end{align*}
We define a 2-form $\varpi$ on $M^{\circ}$ by
\[
\varpi = \frac12 \Tr \,(1+ab)^{-1}da \wedge db
-\frac12 \Tr \,(1+ba)^{-1}db \wedge da,
\]
and we define a map $(\phi,\psi)\colon M^{\circ} \to \GL(V)\times \GL(W)$ by
\[
\phi(a,b)=1+ab,\qquad \psi(a,b)=(1+ba)^{-1}.
\]
Then $(M^{\circ},\varpi,\Phi=(\phi,\psi))$ is a quasi-Hamiltonian 
$\GL(V)\times \GL(W)$-space. 
The proof needs a long calculation (see \cite{V-poi}).
We remark that this quasi-Hamiltonian structure is invertible; 
the map $\iota \colon M^{\circ} \to M^{\circ}$ defined by
$\iota (a,b) := (-(1+ab)^{-1}a,b)$
satisfies 
$\iota^* \varpi=-\varpi$ and $\iota^*(\phi,\psi)=(\phi^{-1},\psi^{-1})$. 
It was given by Crawley-Boevey and Shaw~\cite{CS}. 
\end{example}

\section{Multiplicative quiver variety}\label{3}

\subsection{Definition}\label{3.1}

Let us define the main objects in this paper. 
It is motivated by the paper~\cite{CS} of Crawley-Boevey and Shaw, 
who considered a ``multiplicative'' analogue of the preprojective relation. 
We require the stability condition for solutions of this equation 
to obtain a new variety.

Set
\[
\bMM (V) :=\{\, x\in \bM(V) \mid \det (1+x_h x_{\ov h}) \neq 0 \ \text{for all}\ h\in H \,\}.
\]
Since the function $x \mapsto \prod_{h\in H}\det (1+x_h x_{\ov h})$ 
is constant along each $G_V$-orbit,
it is a $G_V$-invariant open subset of $\bM(V)$,
and the intersection $\bMM (V) \cap \bMss_\theta(V)$
is $\varphi$-saturated.

Fix a total order $<$ on $H$.
We define a map $\Phi=(\Phi_i)_{i\in I}\colon \bMM (V) \to G_V$
by
\[
\Phi_i (x) := \prod^{<}_{h\in H_i}
(1+x_h x_{\ov h})^{\epsilon(h)}.
\]
We sometimes write $\Phi=\Phi_{V}$ to emphasize 
the vector space $V$.  
$\Phi_{V}$ is $G_V$-equivariant
with respect to the conjugation.
Hence, for any $q \in (\C^{\times})^I \subset G_V$,
$\Phi_{V}^{-1}(q)$ is a $G_V$-invariant 
closed subvariety of $\bMM(V)$.
Thus by Proposition~\ref{2.2.3},
the subvariety $\bMss_\theta(V) \cap \Phi_{V}^{-1}(q)$ has a good quotient, 
and the subvariety $\bMs_\theta(V) \cap \Phi_{V}^{-1}(q)$ has a geometric quotient.

\begin{definition}\label{3.1.1}
We define 
\[
\MM_{q,\theta}(V) 
:= \left(  \bMss_\theta(V) \cap \Phi_{V}^{-1}(q) \right)   \GIT G_V,
\]
which we call the {\em multiplicative quiver variety}.

We also define
\[
\MMreg_{q,\theta}(V) 
:= \left( \bMs_\theta(V) \cap \Phi_{V}^{-1}(q) \right)   / G_V.
\]
\end{definition}

The equation $\Phi(x) =q$ is called the {\em multiplicative preprojective relation}. 
We also use the following notation:    
\[
\MM_\theta (V) =\MM_{1,\theta}(V), \qquad \MMreg_\theta (V)=\MMreg_{1,\theta}(V).
\]

$\bMM(V)$ has a quasi-Hamiltonian $G_V$-structure.
 
\begin{proposition}[\cite{V-poi, V-ham}]\label{3.1.2}
We define a 2-form $\varpi$ on $\bMM(V)$ by
\[
\begin{split}
\varpi &:= \frac12 \sum_{h  \in H}\epsilon(h )\Tr\,(1+x_h x_{\ov h })^{-1}dx_h 
\wedge dx_{\ov h} \\
&\quad +\frac12 \sum_{h  \in H}\Tr \,\Phi_h^{-1}d\Phi_h 
\wedge d(1+x_h x_{\ov h})^{\epsilon(h)}(1+x_h x_{\ov h})^{-\epsilon(h)},
\end{split} 
\]
where
\[
\Phi_h := \prod_{h' \in H_i ; h' <h}^< 
(1+x_{h'} x_{\ov{h'}})^{\epsilon(h')} 
\quad \text{for}\ h \in H_i.
\]
Then $(\bMM(V),\varpi,\Phi)$ is a 
quasi-Hamiltonian $G_V$-space.
\end{proposition}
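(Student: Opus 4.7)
The plan is to realize $(\bMM(V),\varpi,\Phi)$ as an iterated fusion of the elementary quasi-Hamiltonian spaces from Example~\ref{2.4.6}. For each arrow $h\in\Omega$, put $V=V_{\vin(h)}$ and $W=V_{\vout(h)}$ in that example and obtain a quasi-Hamiltonian $\GL(V_{\vin(h)})\times\GL(V_{\vout(h)})$-space structure on the open subset $\bMM_h:=\{(x_h,x_{\ov h}):\det(1+x_hx_{\ov h})\neq 0\}$, with group-valued moment map $(\phi_h,\psi_h)=(1+x_hx_{\ov h},(1+x_{\ov h}x_h)^{-1})$ and 2-form $\varpi_h=\tfrac12\Tr(1+x_hx_{\ov h})^{-1}dx_h\wedge dx_{\ov h}-\tfrac12\Tr(1+x_{\ov h}x_h)^{-1}dx_{\ov h}\wedge dx_h$. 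The Cartesian product $\prod_{h\in\Omega}\bMM_h$ (which equals $\bMM(V)$ as a variety) then carries the product quasi-Hamiltonian structure for the group $\prod_{h\in\Omega}\GL(V_{\vin(h)})\times\GL(V_{\vout(h)})$.

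Next, for each vertex $i\in I$ I would iteratively fuse, using Theorem~\ref{2.4.4}~(1), all copies of $\GL(V_i)$ appearing in this product group into a single diagonal $\GL(V_i)$. The fusion order at vertex $i$ is the one dictated by the total order $<$ restricted to $H_i$: fuse in the elementary factor for the smallest $h\in H_i$, then the next, and so on. Fusions performed at distinct vertices commute (by Theorem~\ref{2.4.4}~(3) applied to disjoint factor pairs), so no ambiguity arises. By construction the resulting group-valued moment map into $\GL(V_i)$ is the ordered product $\prod_{h\in H_i}^{<}(1+x_hx_{\ov h})^{\epsilon(h)}=\Phi_i$, where the appearance of $\psi_h=(1+x_{\ov h}x_h)^{-1}$ when $\vout(h)=i$ matches $(1+x_{\ov h}x_h)^{\epsilon(\ov h)}$ since $\epsilon(\ov h)=-1$.

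The real work is checking that the 2-form produced by this fusion coincides with the $\varpi$ in the statement. The sum $\sum_{h\in\Omega}\varpi_h$ contributes, after rewriting each of the two terms of $\varpi_h$ in terms of the sums over $\Omega$ and $\ov\Omega$ with sign $\epsilon$, the first sum of $\varpi$. Each fusion step, by Theorem~\ref{2.4.4}~(1), adds a correction of the form $\tfrac12\Tr(\tilde\Phi^{-1}d\tilde\Phi\wedge d\Phi^{\rm new}\cdot(\Phi^{\rm new})^{-1})$, where $\tilde\Phi$ is the ordered partial product already accumulated at $i$ and $\Phi^{\rm new}=(1+x_hx_{\ov h})^{\epsilon(h)}$ is the elementary factor being absorbed. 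Collecting these corrections over all $h\in H_i$ and over all $i\in I$ yields exactly the second sum in $\varpi$, with the partial product $\tilde\Phi$ matching the paper's $\Phi_h$; note in particular that for the smallest $h\in H_i$ one has $\Phi_h=1$ and the correction is zero, as needed. The principal obstacle is this last bookkeeping --- tracking orderings, signs, and the matching between partial fusion products and the $\Phi_h$ of the statement --- but no new ideas beyond Van den Bergh's work~\cite{V-poi,V-ham} and the formal fusion machinery of Theorem~\ref{2.4.4} are required.
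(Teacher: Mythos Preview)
Your proposal is correct and follows essentially the same route as the paper: realize $\bMM(V)$ as the product over $h\in\Omega$ of the elementary quasi-Hamiltonian spaces $M_h^{\circ}$ of Example~\ref{2.4.6}, then perform iterated internal fusion (Theorem~\ref{2.4.4}) at each vertex in the order prescribed by $<$ to collapse the group to $G_V$. Your write-up actually supplies more detail than the paper does on why the accumulated fusion corrections reproduce the second sum in $\varpi$; the paper simply asserts that one ``obtain[s] finally the desired structure''.
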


This proposition was proved by Van den Bergh as the following. 

\begin{proof}
Set
\[
M_h :=\Hom(V_{\vout(h)},V_{\vin(h)})
\oplus\Hom(V_{\vin(h)},V_{\vout(h)})
\quad \text{for}\ h \in \Omega,
\]
and define $M_h^\circ$ as in~Example~\ref{2.4.6}. 
Then $M_h^{\circ}$ has a quasi-Hamiltonian 
$\GL(V_{\vin(h)})\times \GL(V_{\vout(h)})$-structure whose
group-valued moment map is
\[
(x_h ,x_{\ov h}) \mapsto 
\left(  1+x_h x_{\ov h},(1+x_{\ov h}x_h)^{-1}\right)  .
\]
Taking a direct product, we obtain a quasi-Hamiltonian 
$G$-structure on $\bMM(V)$,
where $G$ is given by
\[
\begin{split}
G &=G_V \times 
\prod_{h  \in \Omega} \GL(V_{\vin(h )}) \times \prod_{h \in \Omega} 
\GL(V_{\vout(h )}) \\
&=G_V \times \prod_{h  \in H} \GL(V_{\vin(h )}). 
\end{split}
\]
Take the internal fusion among the $\GL(V_{\vin(h)})$-factors 
inductively on the total order $<$. 
Then we get a quasi-Hamiltonian 
$G_V \times G_V$-structure on $\bMM(V)$.
Fusioning further the $G_V$-factors, 
we obtain finally the desired structure.   
\end{proof}

Note that the above construction of a quasi-Hamiltonian structure
depends both on the total order $<$ on $H$ and on the orientation $\epsilon$. 
(Here an {\em orientation} is a function $\epsilon' \colon H \to \{ -1,1 \}$ 
satisfying $\epsilon' (\ov{h}) = -\epsilon'(h)$ for all $h\in H$.)
However the following holds.

\begin{proposition}\label{3.1.3}
A quasi-Hamiltonian structure obtained 
by the method in Proposition~\ref{3.1.2} does not
depend on the total order or the orientation
up to isomorphism.
\end{proposition}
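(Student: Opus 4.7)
The plan is to establish the two claims of invariance separately: that the construction does not depend on the total order $<$ on $H$, and that it does not depend on the orientation $\epsilon$. Both arguments exploit general properties of internal fusion (Theorem~\ref{2.4.4}) together with the invertibility recorded at the end of Example~\ref{2.4.6}.

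For independence of the total order, I would first observe that any two total orders on $H$ are connected by a finite sequence of transpositions of consecutive elements, so it suffices to treat one such swap. If the two swapped arrows have distinct incoming vertices, the associated internal fusions act on disjoint $\GL$-factors of the starting group $G = G_V \times \prod_{h \in H} \GL(V_{\vin(h)})$, and their commutativity follows at once from the associativity of fusion in Theorem~\ref{2.4.4}(3). If the two arrows share an incoming vertex $i$, swapping them interchanges two consecutive fusions of $\GL(V_i)$-factors, and Theorem~\ref{2.4.4}(2) supplies an explicit isomorphism of the resulting quasi-Hamiltonian structures. Concatenating such elementary isomorphisms yields independence of $<$.

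For independence of the orientation, any two orientations of $(I, H)$ differ by reversing finitely many pairs $\{h, \bar h\}$, so it suffices to verify invariance under a single reversal. Replacing $h \in \Omega$ by $\bar h \in \Omega$ simply exchanges the roles of $x_h$ and $x_{\bar h}$ in the model space $M_h^{\circ}$ of Example~\ref{2.4.6}; after the canonical permutation of the two $\GL$-factors this changes the local quasi-Hamiltonian data from $(\varpi, \phi, \psi)$ to $(-\varpi, \phi^{-1}, \psi^{-1})$. This is precisely the effect of the involution $\iota(a, b) = (-(1+ab)^{-1} a, b)$, which by Example~\ref{2.4.6} satisfies $\iota^{*} \varpi = -\varpi$ and $\iota^{*}(\phi, \psi) = (\phi^{-1}, \psi^{-1})$. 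Since fusion is functorial in quasi-Hamiltonian isomorphisms, applying $\iota$ to the $h$-component before carrying out the fusion in the proof of Proposition~\ref{3.1.2} produces an isomorphism of the full quasi-Hamiltonian $G_V$-structures built from the two orientations.

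The main technical obstacle is the orientation step: one has to check carefully that the sign reversal of $\varpi$ and the inversion of the moment map induced by $\iota$ on the single $h$-factor are exactly the modifications needed to absorb every explicit occurrence of $\epsilon(h)$ in the formulas of Proposition~\ref{3.1.2}, including the cross term involving $\Phi_h$ and the two vertex contributions $\Phi_{\vin(h)}$ and $\Phi_{\vin(\bar h)}$ of the group-valued moment map. The total-order claim, by contrast, is essentially a formal consequence of Theorem~\ref{2.4.4} and requires no further explicit calculation.
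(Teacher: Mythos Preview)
Your proposal is correct and follows essentially the same approach as the paper: the paper's proof is a one-line appeal to Theorem~\ref{2.4.4} (for independence of the total order) together with the invertibility of $M^{\circ}$ via $\iota$ recorded in Example~\ref{2.4.6} (for independence of the orientation), and you have simply spelled out these two ingredients in detail. Your caveat about the orientation step is reasonable caution, but no additional calculation is actually needed: since the isomorphism $\iota$ is applied to the local piece $M_h^{\circ}$ \emph{before} any fusion takes place, and fusion is functorial with respect to quasi-Hamiltonian isomorphisms, the effect on the full $2$-form $\varpi$ and moment map $\Phi$ of Proposition~\ref{3.1.2} is automatic.
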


\begin{proof}
The assertion follows immediately 
from Theorem~\ref{2.4.4} and the invertible property of $M^{\circ}$ 
mentioned in Example~\ref{2.4.6}.
\end{proof}

It is easy to see that any quasi-Hamiltonian $G_V$-structure 
$(\varpi, \Phi)$ on $\bMM(V)$ naturally 
descends to a quasi-Hamiltonian $G_V /\C^{\times}$-structure 
whose group-valued moment map $\ov \Phi$ is obtained by the composition 
of $\Phi$ with the projection $G_V \to G_V /\C^{\times}$.
Notice that for any $x \in \bMM(V)$, we have 
\[
\prod_i \det \Phi_i (x) 
= \prod_{h\in H_i \cap \Omega} \det (1+x_h x_{\ov h})
\prod_{h\in H_i \cap \ov{\Omega}} \det (1+x_h x_{\ov h})^{-1} 
=1.
\]
Hence if $\Phi_V^{-1}(q) \neq \emptyset$, $q$ must satisfies the equality  
\[
q^{\dim V} := \prod_{i\in I} q_i^{\dim V_i}=1.
\]
Moreover if $q^{\dim V}=1$ then the level set $\Phi_V^{-1}(q)$ 
coincides with $\ov{\Phi}_V^{-1}(q \mod \C^\times))$.  
Thus Theorem~\ref{2.4.5} and dimension count 
implies the following theorem.

\begin{theorem}\label{3.1.4}
$\MMreg_{q,\theta}(V)$ is a pure-dimensional algebraic symplectic manifold,  
and its dimension is $2-(\dim V , \dim V)$.
\end{theorem}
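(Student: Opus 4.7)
The plan is to apply the quasi-Hamiltonian reduction theorem (Theorem~\ref{2.4.5}) to the descended structure on $\bMM(V)$ and then compute the dimension via the tangent space description given there.

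First I would verify that the quasi-Hamiltonian $G_V$-structure on $\bMM(V)$ from Proposition~\ref{3.1.2} descends to a quasi-Hamiltonian $G_V/\C^\times$-structure, as remarked just before the statement. The key point is that the constraint $q^{\dim V}=1$ (forced by $\prod_i \det \Phi_i(x)=1$) makes $q \bmod \C^\times$ a well-defined central element of $G_V/\C^\times$, and $\Phi_V^{-1}(q)=\ov{\Phi}_V^{-1}(q \bmod \C^\times)$.

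Next I would check the hypothesis of Theorem~\ref{2.4.5} for the $G_V/\C^\times$-action at the $\theta$-stable locus. By Proposition~\ref{2.2.6}, the $G_V$-stabilizer of any $x \in \bMs_\theta(V)$ equals $\C^\times$, so the $G_V/\C^\times$-stabilizer is trivial. Applying Theorem~\ref{2.4.5} to $\bMs_\theta(V) \cap \Phi_V^{-1}(q)$ (which is $G_V$-invariant and open in $\Phi_V^{-1}(q)$) shows that this set is a smooth subvariety of $\bMM(V)$; since a geometric quotient by $G_V$ exists (it is given by the restriction of the good quotient $\varphi$ of $\bMss_\theta(V)$ to the $\varphi$-saturated open subset $\bMs_\theta(V) \cap \Phi_V^{-1}(q)$, using Proposition~\ref{2.2.3}(iv)), the theorem yields a quasi-Hamiltonian structure on the quotient with respect to the trivial group — equivalently, an algebraic symplectic structure.

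Finally, I would compute the dimension using the tangent space formula from Theorem~\ref{2.4.5}. At a $\theta$-stable point $x$, since $\ov\Phi$ is a submersion (the stabilizer of $x$ in $G_V/\C^\times$ is trivial, so $d_x\ov\Phi$ is surjective onto $T_{\ov\Phi(x)}(G_V/\C^\times)$ by the analogue of the standard Hamiltonian argument using (QH2) and (QH3)), we have
\[
\dim \Ker d_x \ov\Phi = \dim \bMM(V) - \dim G_V/\C^\times = \sum_{h \in H} v_{\vout(h)}v_{\vin(h)} - (v\cdot v - 1),
\]
where $v:=\dim V$. The $G_V$-orbit through $x$ has dimension $\dim G_V/\C^\times = v\cdot v - 1$, so
\[
\dim_{[x]} \MMreg_{q,\theta}(V) = \sum_{h \in H} v_{\vout(h)}v_{\vin(h)} - 2(v\cdot v - 1) = 2 - (v,v),
\]
by the definition of $(\,,\,)$ in \S\ref{2.1}. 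Since this dimension is the same at every point and the variety is smooth, it is pure-dimensional of that dimension.

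The construction is essentially bookkeeping — the real content is already packaged in Theorem~\ref{2.4.5} and Proposition~\ref{2.2.6}. The only mild subtlety is confirming that the reduction really produces a \emph{geometric} quotient on $\bMs_\theta(V) \cap \Phi_V^{-1}(q)$ rather than just a good one; this is handled by combining Proposition~\ref{2.2.3}(iv) with the $\varphi$-saturatedness of the open subset of $\theta$-stable points, together with the fact that on the stable locus, good quotients are geometric. No part of the argument requires a genuinely new idea beyond identifying the right ingredients.
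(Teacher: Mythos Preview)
Your proposal is correct and follows essentially the same approach as the paper: descend to the $G_V/\C^\times$-structure, invoke Proposition~\ref{2.2.6} to get trivial stabilizers, apply Theorem~\ref{2.4.5} with $G_2$ trivial, and compute the dimension as $\dim \bM(V) - 2\dim G_V/\C^\times$. The paper's proof is simply terser, leaving the verification of the geometric quotient and the surjectivity of $d_x\ov\Phi$ implicit in the appeal to Theorem~\ref{2.4.5}.
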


\begin{proof}
Since a quasi-Hamiltonian $\{ 1\}$-structure
is nothing but an algebraic symplectic structure,
the first assertion follows from Proposition~\ref{2.2.6}.
To compute the dimension of $\MMreg_{q,\theta}(V)$, note that
\[
(\dim V ,\dim V) = 2\sum_i \left(  \dim V_i \right)  ^2 - 
\sum_{h \in H} \left(  \dim V_{\vout(h)} \right)   \left(  \dim V_{\vin(h)}\right)  .
\]
Since $\dim \MMreg_{q,\theta}(V) = \dim \bM(V) - 2\dim G_V/\C^\times$, 
the assertion follows immediately.
\end{proof}

Finally we introduce a criterion for the smoothness of $\MM_{q,\theta}(V)$.

Set $\bv := \dim V$ and 
\begin{align*}
R_+ &:= \{\, \alpha \in \N^I \mid (\alpha , \alpha ) \leq 2\,\} 
\setminus \{ 0 \} , \\
R_+(\bv) &:= \{\, \alpha \in R_+ \mid \bv -\alpha \in \N^I\,\},\\
D_\alpha &:= \{\theta \in \Q^I \mid \theta \cdot \alpha =0\,\}, \\
E_\alpha &:= \{ z\in (\C^{\times})^I \mid z^{\alpha} =1\,\}
\quad \text{for}\ \alpha \in R_+.
\end{align*}

\begin{proposition}\label{3.1.5}
{\rm (1)}\ $\MMreg_{q,\theta}(V)$ is empty unless 
$\bv \in R_+$ and $(q,\theta) \in E_{\bv} \times D_{\bv}$. \\
{\rm (2)}\ If
\[
(q,\theta) \in E_{\bv} \times D_{\bv} \setminus 
\bigcup_{\alpha \in R_+(\bv)\setminus \{ \bv \} } E_\alpha \times D_\alpha ,
\]
then $\MMreg_{q,\theta}(V) = \MM_{q,\theta}(V)$. 
\end{proposition}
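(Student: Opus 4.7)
The plan is to deduce part (1) directly from two facts already established, and to reduce part (2) to part (1) using the polystable decomposition of Proposition~\ref{2.2.4}.

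For part (1), the condition $\theta \in D_{\bv}$ is built into the $\theta$-stability framework, since we always assume $\theta \cdot \dim V = 0$. The condition $q \in E_{\bv}$ follows from the identity $\prod_i \det \Phi_i(x) = 1$ established just before Theorem~\ref{3.1.4}: any $x \in \Phi_V^{-1}(q)$ forces $q^{\bv} = 1$. Finally, $\bv \in R_+$ follows from Theorem~\ref{3.1.4}, which gives $\dim \MMreg_{q,\theta}(V) = 2 - (\bv, \bv)$; nonemptiness forces this quantity to be nonnegative, and we always take $\bv \neq 0$.

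For part (2), it suffices to show that under the genericity hypothesis every semistable point in $\Phi_V^{-1}(q)$ is in fact $\theta$-stable; then the stable and semistable loci coincide, all orbits have common dimension $\dim G_V - 1$ by Proposition~\ref{2.2.6}, and the good quotient $\MM_{q,\theta}(V)$ collapses to the geometric quotient $\MMreg_{q,\theta}(V)$. Since every closed $G_V$-orbit in $\bMss_\theta(V)$ is $\theta$-polystable, this in turn reduces to ruling out strictly $\theta$-polystable points in $\Phi_V^{-1}(q)$. Suppose toward a contradiction that such an $x$ exists; by Proposition~\ref{2.2.4}(i) there is a decomposition $V = V^1 \oplus \cdots \oplus V^n$ with $n \geq 2$, $\theta \cdot \dim V^k = 0$, and $x^k \in \bMs_\theta(V^k)$ satisfying $x = \bigoplus_k x^k$. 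Block-diagonality of $x$ implies block-diagonality of each factor $(1 + x_h x_{\ov h})^{\epsilon(h)}$ and hence of $\Phi_V(x)$, so the equation $\Phi_V(x) = q \cdot 1_V$ decouples into $\Phi_{V^k}(x^k) = q \cdot 1_{V^k}$ for each $k$. Thus $[x^k]$ represents a point in $\MMreg_{q,\theta}(V^k)$, which is therefore nonempty; applying part (1) to each summand yields $\alpha_k := \dim V^k \in R_+$ with $(q, \theta) \in E_{\alpha_k} \times D_{\alpha_k}$. Since $n \geq 2$, at least one $\alpha_k$ lies in $R_+(\bv) \setminus \{\bv\}$, contradicting the hypothesis.

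The argument is essentially formal once part (1) is in place; the only point requiring care is that a block-diagonal $x$ produces a block-diagonal $\Phi_V(x)$ with blocks $\Phi_{V^k}(x^k)$, but this is immediate since sums, shifts by $1$, inversions, and ordered products of block-diagonal matrices remain block-diagonal. The reason for passing to the polystable (rather than an arbitrary semistable) representative is precisely that semistable points admit only invariant filtrations, not honest direct-sum decompositions, so $\Phi_V(x)$ would be merely block-triangular and the decoupling of the moment map equation into independent block equations would be less clean; choosing $x$ polystable sidesteps this subtlety entirely.
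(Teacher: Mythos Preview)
Your proof is correct. The paper's own argument for part (2) differs slightly in mechanics: rather than passing to a polystable representative and splitting off a direct summand, it takes an arbitrary strictly $\theta$-semistable $x \in \Phi_V^{-1}(q)$, picks a \emph{minimal} nonzero $x$-invariant subspace $S$ with $\theta \cdot \dim S = 0$, and observes that the restriction $x|_S$ is $\theta$-stable by minimality and satisfies $\Phi_S(x|_S) = q$ by invariance. Either way one produces a nonempty $\MMreg_{q,\theta}$ on a strictly smaller vector space and invokes part (1) to land in some $E_\alpha \times D_\alpha$ with $\alpha \in R_+(\bv)\setminus\{\bv\}$. Your route via Proposition~\ref{2.2.4}(i) is equally valid and arguably cleaner, since the block-diagonal decomposition makes the decoupling $\Phi_{V^k}(x^k)=q$ immediate; the paper's minimal-subspace argument is marginally more self-contained in that it does not appeal to the Jordan--H\"older machinery, but this is a cosmetic difference.
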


\begin{proof}
We have already proved the first assertion. 
Suppose that there exists a point $x \in \Phi_{V}^{-1}(q)$ 
which is $\theta$-semistable but not $\theta$-stable.
Then we can find a non-zero proper $x$-invariant subspace $S \subset V$
such that $\theta \cdot \dim S =0$.
We may assume that $S$ is minimal amongst all non-zero subspaces satisfying such conditions.
Set $\alpha :=(\dim S_i )$. Then $\theta \in D_{\alpha}$.
Let $x' \in \bM(S)$ be the element obtained by 
the restriction of $x$ to $S$. 
Then $\Phi_S (x) =q$ and hence $q \in E_{\alpha}$.
If $T \subset S$ is $x'$-invariant, then $\theta \cdot \dim T \leq 0$ by 
the $\theta$-semistability of $x$, 
and moreover if $\theta \cdot \dim T =0$, then 
$T=0$ or $T=S$ by the choice of $S$.
Thus $x'$ is $\theta$-stable. 
In particular $\MMreg_{q,\theta}(S)$ is non-empty, so
\[
0 \leq \dim \MMreg_{q,\theta}(S) = 2-( \alpha , \alpha).
\] 
Thus $\alpha \in R_+ (\bv)$.  
\end{proof}

\subsection{Some properties}\label{3.2}

By Proposition~\ref{3.1.3}, 
we may assume the following:
\begin{quote}
The total order $<$ satisfies $h < h'$ for any $h \in \Omega$ and $h'\in \ov\Omega$.
\end{quote}
Then we can decompose $\Phi_i=\Phi_i^+ (\Phi_i^-)^{-1}$, 
where
\begin{align*}
\Phi_i^+ (x) &:= \prod^<_{h\in H_i \cap \Omega} (1+x_h x_{\ov h}), \\
\Phi_i^- (x) &:= \prod^>_{h\in H_i \cap \ov\Omega} (1+ x_h x_{\ov h}).
\end{align*}
Thus the multiplicative preprojective relation $\Phi_i(x)=q_i$ at $i\in I$ 
is equivalent to 
\begin{equation}\label{3.2.1}
\Phi_i^+ (x) - q_i \Phi_i^-(x) =0.
\end{equation}
Here $\Phi_i^\pm$ is expanded as 
\begin{align*}
\Phi_i^+ &= 1 + \sum_{h\in H_i \cap \Omega} \Phi_h^+ x_h x_{\ov h}, \\
\Phi_i^- &= 1 + \sum_{h\in H_i \cap \ov\Omega} x_h x_{\ov h} \Phi_h^-, 
\end{align*}
where
\begin{align*}
\Phi_h^+ (x) &:= 
\prod^<_{\substack{h'\in H_i \cap \Omega ;\\ h'<h}}
(1+x_{h'} x_{\ov{h'}}),\\
\Phi_h^- (x) &:= \prod^>_{\substack{h'\in H_i \cap \ov\Omega ;\\ h'<h}}
(1+x_{h'} x_{\ov{h'}}) \quad \text{for}\ h\in H_i. 
\end{align*}
Thus \eqref{3.2.1} is equivalent to 
\begin{equation}\label{3.2.2} 
\sum_{h\in H_i \cap \Omega} \Phi_h^+ x_h x_{\ov h}
-q_i \sum_{h\in H_i \cap \ov\Omega} x_h x_{\ov h} \Phi_h^- = q_i -1.
\end{equation}

Set $\widehat{V}_i := \bigoplus_{h \in H_i} V_{\vout(h)}$. 
For $h \in H_i$, 
let $\iota_h \colon V_{\vout(h)} \to \widehat{V}_i$ be the natural inclusion 
and $\pi_h \colon \widehat{V}_i \to V_{\vout(h)}$ be the projection.  
We define  
\begin{align*}
\sigma_i(x) 
&:=\sum_{h\in H_i \cap \Omega} \iota_h x_{\ov h}  
+ \sum_{h \in H_i \cap \ov\Omega} \iota_h x_{\ov h} \Phi_h^-  
\colon V_i \to \widehat{V}_i, \\
\tau_i(x) 
&:=\sum_{h \in H_i\cap \Omega} \Phi^+_h x_h \pi_h  
- q_i \sum_{h \in H_i\cap \ov\Omega} x_h \pi_h 
\colon \widehat{V}_i \to V_i.
\end{align*}
Then by \eqref{3.2.2}, 
the multiplicative preprojective relation at $i\in I$
is equivalent to $\tau_i \sigma_i = q_i -1$.
In particular, the sequence 
\[
\begin{CD}
V_i @>{\sigma_i}>> \widehat{V}_i @>{\tau_i}>> V_i
\end{CD}
\]
is a complex if $\Phi_i(x) =1$.

\begin{lemma}\label{3.2.3}
Take $x\in \bMM(V)$ and $i\in I$. 
Suppose that a subspace $S \subset V$
satisfies: 
\begin{enumerate}
\item[\rm (i)] $\sigma_i (S_i) \subset \widehat{S}_i$; and
\item[\rm (ii)] $\tau_i ( \widehat{S}_i ) \subset S_i$. 
\end{enumerate}
Then $x_h (S_{\vout(h)}) \subset S_{\vin(h)}$ for $h \in H_i \cup \ov{H_i}$.
\end{lemma}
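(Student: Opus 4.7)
The plan is to extract from conditions~(i) and~(ii) the individual statements $x_h(S_{\vout(h)}) \subset S_{\vin(h)}$ for each arrow in $H_i \cup \ov{H_i}$, first by reading off component-wise information via the projections $\pi_h$, then by stripping off the invertible factors $\Phi_h^{\pm}$ through an induction along the total order $<$.

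First I would use the orthogonality $\pi_h \iota_{h'} = \delta_{h h'}$ together with the formulas
\[
\pi_h \sigma_i =
\begin{cases} x_{\ov h} & (h \in H_i \cap \Omega), \\
x_{\ov h}\, \Phi_h^- & (h \in H_i \cap \ov\Omega), \end{cases}
\qquad
\tau_i \iota_h =
\begin{cases} \Phi_h^+ x_h & (h \in H_i \cap \Omega), \\
-q_i x_h & (h \in H_i \cap \ov\Omega), \end{cases}
\]
to immediately obtain the ``untwisted'' inclusions $x_{\ov h}(S_i) \subset S_{\vout(h)}$ for $h \in H_i \cap \Omega$ and $x_h(S_{\vout(h)}) \subset S_i$ for $h \in H_i \cap \ov\Omega$ (using $q_i \in \C^\times$), along with the ``twisted'' inclusions $\Phi_h^+ x_h(S_{\vout(h)}) \subset S_i$ and $x_{\ov h}\Phi_h^-(S_i) \subset S_{\vout(h)}$ for the remaining arrows.

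Next I would induct along the order on $H_i \cap \Omega$. Enumerating $H_i \cap \Omega = \{ h_1 < h_2 < \cdots < h_m\}$, the inductive claim at stage $k$ is that $x_{h_j}(S_{\vout(h_j)}) \subset S_i$ for all $j \leq k$ (the companion inclusion for $x_{\ov h_j}$ is already in hand). Granted the hypothesis for $j < k$, each factor $1 + x_{h_j} x_{\ov h_j}$ preserves $S_i$ and, being invertible on the finite-dimensional space $V_i$, restricts to a bijection of $S_i$. Hence $\Phi_{h_k}^+$ acts as a bijection on $S_i$, so the twisted inclusion $\Phi_{h_k}^+ x_{h_k}(S_{\vout(h_k)}) \subset S_i$ can be cleared to give $x_{h_k}(S_{\vout(h_k)}) \subset S_i$. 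A strictly parallel induction along the order on $H_i \cap \ov\Omega$ handles the remaining $x_{\ov h}$'s by clearing the factor $\Phi_h^-$.

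The only subtlety is notational bookkeeping: the partial products $\Phi_h^+$ and $\Phi_h^-$ run only over arrows strictly earlier than $h$ in the respective subsets $H_i \cap \Omega$ and $H_i \cap \ov\Omega$, so the two inductions proceed independently and never require information about arrows outside the subset being processed. I do not anticipate any genuine obstacle; the argument is essentially an inductive exploitation of the finite-dimensional fact that an invertible operator preserving a subspace restricts to a bijection there.
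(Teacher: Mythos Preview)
Your proposal is correct and follows essentially the same approach as the paper's proof: unpack conditions~(i) and~(ii) into the four componentwise inclusions, then run an induction along $<$ on $H_i\cap\Omega$ (and a parallel one on $H_i\cap\ov\Omega$) to peel off the partial products $\Phi_h^{\pm}$. Your write-up is in fact a bit more explicit than the paper's, since you spell out the finite-dimensional fact that an invertible operator preserving $S_i$ restricts to a bijection there, which is exactly what is needed to clear $\Phi_{h_k}^+$ from the twisted inclusion; the paper leaves this step implicit in the phrase ``one can use induction on $<$''.
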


\begin{proof}
Suppose that $S \subset V$ satisfies the conditions (i) and (ii).
The condition (i) means
\begin{align}
&x_{\ov h} (S_i) \subset S_{\vout(h)} \quad \text{for}\ h\in H_i \cap \Omega ,
\label{3.2.4} \\
&x_{\ov h}\Phi_h^- (S_i) \subset S_{\vout(h)} \quad \text{for}\ h\in H_i \cap \ov\Omega , 
\label{3.2.5}
\intertext{and the condition (ii) means}
&\Phi_h^+ x_h (S_{\vout(h)}) \subset S_i \quad \text{for}\ h\in H_i \cap \Omega ,
\label{3.2.6} \\
&x_h (S_{\vout(h)}) \subset S_i \quad \text{for}\ h\in H_i \cap \ov\Omega .
\label{3.2.7}
\end{align}
Let $h \in H_i \cap \Omega$ be  
the minimum element in $H_i \cap \Omega$ with respect to $<$.
Then $\Phi_h^+=1$ and hence $x_h (S_{\vout(h)}) \subset S_i$
by \eqref{3.2.6}.
Thus $(1+x_{h}x_{\ov{h}})(S_i) \subset S_i$ by \eqref{3.2.4}, and hence 
one can use induction on $<$ 
to obtain $x_h (S_{\vout(h)}) \subset S_{\vin(h)}$ for all $h\in H_i \cap \Omega$. 

Similarly, if we denote by $h' \in H_i \cap \ov\Omega$ the 
minimum element in $H_i \cap \ov\Omega$, then $\Phi_{h'}^- =1$ and hence 
$x_{\ov{h'}}(S_i) \subset S_{\vout(h')}$ by \eqref{3.2.5}.
Thus $(1+x_{h'}x_{\ov{h'}})(S_i) \subset S_i$ by \eqref{3.2.7}, and hence 
one can use induction again to obtain $x_{\ov{h}}(S_i) \subset S_{\vout(h)}$ 
for all $h\in H_i \cap \ov\Omega$. 
\end{proof}

\begin{proposition}\label{3.2.8}
Take $x \in \bMM(V)\cap \bMs_\theta(V)$ and $i\in I$. 
Suppose that $\dim V \neq \mbe_i$. 

{\rm (i)} If $\theta_i \geq 0$, then $\sigma_i$ is injective. 

{\rm (ii)} If $\theta_i \leq 0$, then $\tau_i$ is surjective.
\end{proposition}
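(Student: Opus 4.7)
The plan is to prove both parts by contradiction, using Lemma~\ref{3.2.3} to promote the failure of injectivity (for (i)) or surjectivity (for (ii)) into a proper nontrivial $x$-invariant subspace $S \subset V$ with $\theta \cdot \dim S \geq 0$, violating the strict $\theta$-stability of $x$.

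For (i), supposing $\sigma_i$ is not injective, I would set $S_i := \ker \sigma_i$ and $S_j := 0$ for $j \neq i$. Condition (i) of Lemma~\ref{3.2.3}, $\sigma_i(S_i) \subset \widehat{S}_i$, holds trivially since $\sigma_i(S_i) = 0$. Condition (ii), $\tau_i(\widehat{S}_i) \subset S_i$, holds automatically when $i$ has no loops because $\widehat{S}_i = \bigoplus_{h\in H_i} S_{\vout(h)} = 0$; in the presence of loops one replaces $S_i$ by the largest subspace of $\ker \sigma_i$ on which the loop components of $\tau_i$ still land in $S_i$, which exists as the defining conditions are linear in the subspace. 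Lemma~\ref{3.2.3} then yields $x$-invariance of $S$ at the arrows incident to $i$, while invariance at the remaining arrows is immediate from $S_{\vout(h)} = 0$. Since $\theta_i \geq 0$ and $\dim S_i \geq 1$, we have $\theta \cdot \dim S = \theta_i \dim S_i \geq 0$, so strict $\theta$-stability forces $S = 0$ or $S = V$. The former contradicts $\dim S_i \geq 1$; the latter forces $V_j = 0$ for $j \neq i$ and $S_i = V_i$, so $\dim V = (\dim V_i)\mbe_i$ with $\dim V_i \geq 2$ by $\dim V \neq \mbe_i$, and then $\theta \cdot \dim V = 0$ gives $\theta_i = 0$. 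In this reduced situation only loops at $i$ act nontrivially, and strict $\theta$-stability with $\theta_i = 0$ forbids any proper nonzero subspace of $V_i$ preserved by those loops; such a subspace is produced from $\dim V_i \geq 2$ (in the no-loop subcase every proper subspace works, and in the loop subcase one extracts a proper subrepresentation of the loop-quiver from the nonzero $\ker \sigma_i$), yielding the final contradiction.

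For (ii), the dual construction takes $S_i := \Im \tau_i$ and $S_j := V_j$ for $j \neq i$. Condition (ii) of Lemma~\ref{3.2.3} is immediate from $\tau_i(\widehat{S}_i) \subset \tau_i(\widehat{V}_i) = \Im \tau_i = S_i$, while condition (i) is automatic in the no-loop case (where $\widehat{S}_i = \widehat{V}_i$) and handled analogously in the loop case. Using $\theta \cdot \dim V = 0$, a short calculation gives $\theta \cdot \dim S = -\theta_i \dim \Coker \tau_i \geq 0$ since $\theta_i \leq 0$ and $\dim \Coker \tau_i \geq 1$. The same dichotomy applies: $S = V$ contradicts non-surjectivity of $\tau_i$, while $S = 0$ reduces to the same boundary case as in (i). The main technical obstacle throughout is precisely this boundary case $V = V_i$ with $\dim V_i \geq 2$, where $\theta_i = 0$ forces one to exploit the finer structure of the loop-quiver representation on $V_i$, and where the hypothesis $\dim V \neq \mbe_i$ does real work.
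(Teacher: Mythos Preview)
Your core argument is exactly the paper's: take $S_i=\Ker\sigma_i$ and $S_j=0$ for $j\neq i$ (dually $T_i=\Im\tau_i$, $T_j=V_j$), invoke Lemma~\ref{3.2.3} for $x$-invariance, and use the sign of $\theta_i$ to get $\theta\cdot\dim S=0$, whence $S\in\{0,V\}$ by strict stability. Where you diverge is in the loop-handling, and both of your patches are problematic. For the main step, your ``largest subspace of $\ker\sigma_i$ on which the loop components of $\tau_i$ still land in $S_i$'' is defined circularly, and even a well-posed version (the largest loop-stable subspace of $\Ker\sigma_i$) could well be zero, which kills the contradiction. The paper makes no such patch: when $i$ is loop-free one has $\widehat S_i=0$, so condition (ii) of Lemma~\ref{3.2.3} is vacuous, and this is the only situation the paper ever uses.

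For the boundary case $S=V$ the paper's argument is both cleaner and avoids your unjustified step. From $V_j=0$ for $j\neq i$ and $\Ker\sigma_i=V_i$ one has $\sigma_i=0$; reading off the components of $\sigma_i$ and using invertibility of the $\Phi_h^-$ factors gives $x_{\ov h}=0$ for every $h\in H_i$ (loops included), hence $x=0$. Then every subspace of $V$ is $x$-invariant, and strict stability forces $\dim V_i=1$, contradicting $\dim V\neq\mbe_i$. Your claim that one can ``extract a proper subrepresentation of the loop-quiver from the nonzero $\ker\sigma_i$'' is not justified without the observation $x=0$, since a priori the loop maps could act irreducibly on $V_i$.
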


\begin{proof}
Suppose that $\theta_i \geq 0$.
Set
\[
S_j = \begin{cases} 0 & \text{if}\ j\neq i, \\
\Ker \sigma_i & \text{if}\ j=i.
\end{cases}
\]
By Lemma~\ref{3.2.3}, $S$ is $x$-invariant.  
Hence we have $\theta \cdot \dim S \leq 0$ by the stability condition.
However 
\[
\theta \cdot \dim S = \theta_i \dim \Ker \sigma_i \geq 0,
\]
so we must have $\theta \cdot \dim S =0$. 
Thus $S=0$ or $S=V$ by the stability condition again. 
If $S=0$ we are done. So assume that $S=V$. 
Then $V_j =0$ for all $j\neq i$, and hence $x=0$. 
Thus any subspace of $V$ is $x$-invariant, 
and hence $V_i =\C$ by the stability condition again. 
This contradicts. 
  
Next suppose that $\theta_i \leq 0$.
Set
\[
T_j = \begin{cases} V_j & \text{if}\ j\neq i, \\
\Im \tau_i & \text{if}\ j=i.
\end{cases}
\]
By Lemma~\ref{3.2.3}, $T$ is $x$-invariant.
Hence we have $\theta \cdot \dim T \leq 0$ by the stability condition. 
However we have also
\[
\theta \cdot \dim T = \theta \cdot \dim V - \theta_i \dim \Coker \tau_i \geq \theta \cdot \dim V =0.
\]
Thus $\theta \cdot \dim T = 0$, which implies that $T=0$ or $T=V$. 
If $T=V$ we are done. If $T=0$ one can deduce a contradiction as above.
\end{proof}

\subsection{Singularity at the origin; relation to the quiver variety}\label{3.3}

In this subsection we work in the complex analytic category. 
The following proposition implies that the
singularity at the origin of $\Phi_V^{-1}(1)$  
and that of $\mu_V^{-1}(0)$ are the same. 
Recall that we let $\varphi \colon \bM(V) \to \bM(V) \GIT G_V$ be 
the quotient morphism.  

\begin{proposition}\label{3.3.1}
There are $\varphi$-saturated open neighborhoods 
$\mc{U}$, $\mc{U}'$ of $0 \in \bM(V)$,  
and a $G_V$-equivariant 
biholomorphic map $f\colon \mc{U} \to \mc{U}'$ such that
\[
f(0)=0, \qquad f(\Phi_V^{-1}(1)\cap \mc{U})=\mu_V^{-1}(0) \cap \mc{U}',
\qquad (f^* \omega - \varpi) |_{\Ker d\Phi_V} =0.
\] 
\end{proposition}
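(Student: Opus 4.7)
The plan is to convert the quasi-Hamiltonian $G_V$-structure on $\bMM(V)$, restricted to a neighborhood of $0$, into a genuine Hamiltonian $G_V$-structure by taking $\log$ of the moment map and correcting the 2-form, then to apply an equivariant Moser argument to match this with the standard Hamiltonian structure on $\bM(V)$.

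For the linearization step, I choose a conjugation-invariant holomorphic 2-form $\beta$ on a neighborhood $U \subset G_V$ of $1$ with $\beta|_1 = 0$, $d\beta = \chi$, and
\[
\iota(\xi^*)\beta = \tfrac{1}{2}\Tr\,\xi(g^{-1}dg + dg\,g^{-1}) - d\langle\log g,\xi\rangle
\qquad (\xi\in\Lie G_V);
\]
such $\beta$ exists via an equivariant Poincar\'e lemma applied to a $G_V$-equivariant contraction of $U$ onto $\{1\}$, since the right-hand side is closed on $U$ and $\chi$ is equivariantly exact there. On $\mc{U}_0 := \Phi_V^{-1}(U)$ I set
\[
\tilde\mu := \log\circ\Phi_V, \qquad \omega' := \varpi + \Phi_V^*\beta.
\]
A direct comparison of (QH1)--(QH3) with (H1)--(H3) then shows that $(\mc{U}_0,\omega',\tilde\mu)$ is a holomorphic Hamiltonian $G_V$-space with $\tilde\mu^{-1}(0) = \Phi_V^{-1}(1)\cap\mc{U}_0$. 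Crucially, $\Phi_V^*\beta$ annihilates $\Ker d\Phi_V$, so $\omega'|_{\Ker d\Phi_V} = \varpi|_{\Ker d\Phi_V}$.

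Next I match $(\omega',\tilde\mu)$ with $(\omega,\mu_V)$. Expanding the defining product for $\Phi_V$ at $x=0$ yields $\Phi_V = 1 + \mu_V + O(|x|^4)$, so $\tilde\mu = \mu_V + O(|x|^4)$, while evaluating the explicit formula for $\varpi$ at $0$ together with $\beta|_1 = 0$ gives $\omega'|_0 = \varpi|_0 = \omega|_0$. Thus $(\omega',\tilde\mu)$ and $(\omega,\mu_V)$ are two holomorphic Hamiltonian $G_V$-structures on a common neighborhood of the $G_V$-fixed point $0\in\bM(V)$ whose moment maps and symplectic forms agree to the necessary leading order at $0$. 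An equivariant holomorphic version of the Marle--Guillemin--Sternberg local normal form---obtained via an equivariant Moser flow that interpolates $\omega_t := (1-t)\omega + t\omega'$ and $\mu_t := (1-t)\mu_V + t\tilde\mu$ and integrates a $G_V$-equivariant vector field $X_t$ determined by $\iota_{X_t}\omega_t = -\alpha_t$ for an equivariant primitive $\alpha_t$ of $\omega' - \omega$ chosen so that simultaneously $\iota_{X_t}d\mu_t = \tilde\mu - \mu_V$---produces a $G_V$-equivariant biholomorphism $f\colon\mc{U}\to\mc{U}'$ between $G_V$-invariant neighborhoods of $0$ with $f(0)=0$, $f^*\omega = \omega'$, and $\mu_V\circ f = \tilde\mu$. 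Shrinking to $\varphi$-saturated neighborhoods is possible since $0$ is $G_V$-fixed, so closed orbits nearby are controlled by the Luna slice at $[0]$. The conclusions then follow immediately: $f$ carries $\Phi_V^{-1}(1)\cap\mc{U}$ to $\mu_V^{-1}(0)\cap\mc{U}'$, and on $\Ker d\Phi_V$ one has $f^*\omega - \varpi = \omega' - \varpi = \Phi_V^*\beta = 0$.

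The main difficulty is the Moser step: arranging the flow so that it matches the symplectic forms \emph{and} the moment maps simultaneously, which forces the primitive $\alpha_t$ to vanish to sufficient order at the critical point $0$. This is possible only because of the leading-order matching established in the first step ($\tilde\mu - \mu_V = O(|x|^4)$ and $\omega'|_0 = \omega|_0$) together with the fact that $0$ is $G_V$-fixed. The linearization step itself is technical but conceptually encodes the known principle that a quasi-Hamiltonian structure locally ``exponentiates'' a Hamiltonian one near the identity of its moment map.
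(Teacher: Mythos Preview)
Your approach is essentially the paper's: correct $\varpi$ by a 2-form pulled back along $\Phi_V$ so that $\log\circ\Phi_V$ becomes an ordinary moment map, then invoke an equivariant Darboux/Moser argument at the fixed point $0$. Two places where the paper is cleaner. First, instead of an abstract Poincar\'e lemma for $\beta$, the paper quotes the explicit Alekseev--Malkin--Meinrenken 2-form $\rho$ on $\Lie G_V$ (their Lemma~3.3), which already comes with the verified identity $\iota(\xi^*)\rho = -d\Tr(\xi\,\cdot) + \tfrac12\exp^*\Tr\xi(g^{-1}dg+dg\,g^{-1})$; your Poincar\'e-lemma argument produces \emph{some} $G_V$-invariant primitive of $\chi$, but does not by itself pin down one satisfying your specific $\iota(\xi^*)\beta$ formula---that is exactly the content of the AMM computation. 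Second, you need not build the moment-map matching into the Moser flow: once the equivariant Darboux theorem gives a $G_V$-equivariant $f$ with $f(0)=0$ and $f^*\omega=\omega'$, the maps $\mu_V\circ f$ and $\tilde\mu$ are both moment maps for the same Hamiltonian structure and both vanish at the fixed point $0$, hence coincide. This replaces your simultaneous-matching step with a one-line observation.
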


\begin{proof}
We use the following result of 
Alekseev-Malkin-Meinrenken~(they proved it in the case that $G$ is a compact Lie group, 
but the proof can be extended immediately to the case of complex reductive group).

\begin{lemma}[{\cite[Lemma 3.3]{AMM}}]\label{3.3.2}
Let $G \subset \GL(N,\C)$ be a complex reductive Lie group. 
For $s\in [0,1]$, let $\exp_s \colon \Lie G \to G$ denote a map given by 
$\exp_s(\xi) := \exp (s\xi)$. 
Define a holomorphic 2-form $\rho$ on $G$ by  
\[
\rho := \frac12 \int_0^1 ds\, \Tr \left[ \exp_s^* (dg\, g^{-1}) \wedge 
\frac{\partial}{\partial s} \exp_s^* (dg\, g^{-1}) \right] .
\] 
Then $\rho$ is $G$-invariant and satisfies 
\[
d\rho =-\exp^* \chi, \quad \iota(\xi^*)\rho 
= -d \Tr (\xi \, \cdot ) + \frac12 \exp^* \Tr \xi (g^{-1}dg + dg\, g^{-1} ).
\] 
\end{lemma}

Using this, they observed that if $(M,\varpi,\Phi)$ is a quasi-Hamiltonian $G$-space 
and $\Phi(M)$ is contained in an open subset of $G$ on which an 
$G$-equivariant right-inverse $\log$ of $\exp$ exists, 
then the triple $(M,\varpi -\Phi^* \log^* \rho, \log \circ \Phi)$ satisfies the conditions 
(H1) and (H2)~\cite[Remark 3.2]{AMM} as follows:   
\begin{align*}
d(\varpi -\Phi^* \log^* \rho ) 
&= -\Phi^* \chi + \Phi^* \log^* \exp^* \chi =0,\\
\iota(\xi^*) (\varpi - \Phi^* \log^* \rho )
&= \frac12 \Phi^* \Tr \xi (g^{-1}dg + dg\, g^{-1}) + \Phi^* \log^* d\Tr (\xi \, \cdot ) \\
&\quad -\frac12 \Phi^* \log^* \exp^* \Tr \xi (g^{-1}dg + dg\, g^{-1}) \\
&= \Tr (d(\log \Phi)(\cdot )\, \xi).
\end{align*}
In fact, we can always find a $G$-invariant open neighborhood   
$O$ of 0 in $\Lie G$ such that 
the restriction $\exp \colon O \to \exp(O)$ has the inverse $\log := (\exp)^{-1}$. 
Clearly we can take $O$ to be saturated with respect to the quotient map 
$\Lie G \to (\Lie G) \GIT G$. 
Then we can apply the above fact to $(\Phi^{-1}(O),\varpi,\Phi)$.  
  
Let us back to our situation. 
First take a $\varphi$-saturated open subset $\mc{U}$ to be such that 
for any $x \in \mc{U}$ and $h\in H$, 
$1+x_h x_{\ov h} \in \exp (O)$, 
where $O \subset \Lie \GL(V_{\vin(h)})$ is the subset taken as above. 
Then $(\mc{U},\varpi -\Phi_V^* \log^* \rho, \log \circ \Phi_V)$ satisfies (H1) and (H2). 
Since $x_h x_{\ov h}=0$ and $d (1+x_h x_{\ov h}) =0$ at the origin, 
we have $(\Phi_V^* \log^* \rho )_0 =0$ and 
\begin{align*}
\varpi_0 &= \frac12 \sum_{h  \in H}\epsilon(h )\Tr dx_h \wedge dx_{\ov h} 
 +\frac12 \sum_{h  \in H}\Tr \,d\Phi_h \wedge d(1+x_h x_{\ov h})^{\epsilon(h)} \\
 &=\frac12 \sum_{h  \in H}\epsilon(h )\Tr dx_h \wedge dx_{\ov h} = \omega_0.
\end{align*}  
Thus the 2-form $\varpi -\Phi_V^* \log^*\rho$ coincides 
with the symplectic form $\omega$ at the origin. 
By the equivariant Darboux theorem~(see Remark~\ref{3.3.3} below), 
taking $\mc{U}$ to be small enough if necessary, 
there is a $G_V$-equivariant biholomorphic map $f \colon \mc{U} \to \mc{U}'$ 
to some $\varphi$-saturated open neighborhood $\mc{U}'$ such that 
\[
f(0)=0, \qquad f^* \omega = \varpi -\Phi_V^* \log^*\rho, \qquad \mu_V \circ f = \log \circ \Phi_V.
\]
This gives the desired map since the form $\Phi_V^* \log^*\rho$ vanishes on 
$\Ker d\Phi_V$.
\end{proof}

\begin{remark}\label{3.3.3}
The equivariant Darboux theorem asserts for $C^{\infty}$-manifolds with a compact Lie group action. 
However we can generalize this theorem to our case  
as the following~(This is due to Nakajima. See \cite{Nak-AMS}).

It is easy to see that the equivariant Darboux theorem can be generalized for 
complex manifolds with a compact Lie group action. 
Thus, in order to show our claim, 
we first apply the theorem for the maximal compact subgroup 
$U_V := \prod \operatorname{\rm U}(V_i) \subset G_V$. 
Then there is an open ball $B \in \bM(V)$ centered at the origin and 
a $U_V$-equivariant open embedding $f \colon B \to \bM(V)$ 
such that $f(0)=0$.  
By \cite[Proposition 1.4, Lemma 1.14]{Sja}, 
we can extend uniquely this map to a $G_V$-equivariant open embedding 
$f\colon G_V \cdot B \to \bM(V)$. 
We claim that $G_V \cdot B$ is $\varphi$-saturated. This can be proved using 
the map $F_{\infty}\colon \bM(V) \to \bM(V)$ 
introduced in \cite{Sja}, which is $U_V$-equivariant and 
has the following property: 
for any point $x \in \bM(V)$, $F_\infty$ maps $G_V \cdot x$ onto 
$U_V \cdot y$, where $y$ is a point whose $G_V$-orbit is 
a unique closed orbit in $\ov{G_V \cdot x}$.  
Moreover $F_\infty (G_V \cdot B) \subset B$ and there is a continuous family $\{ F_t \}$ 
of diffeomorphisms whose limit is $F_\infty$ and the differential  
$dF_t(x)/dt |_{t=0}$ at any $x$ is tangent to the orbit $G_V \cdot x$.      
Thus if $x, x' \in \bM(V)$ have the same image under $\varphi$ and $x \in G_V \cdot B$, 
then $F_\infty  (x') \in U_V \cdot F_\infty (x) \subset B$, 
and hence $F_t(x') \in B$ for sufficiently large $t$. 
Thus $x' \in G_V \cdot B$. Hence $G_V \cdot B$ is $\varphi$-saturated.

We take some open ball $B'$ in $f(G_V \cdot B)$ centered at the origin, 
and set $\mc{U} :=f^{-1}(G_V \cdot B')$.  
Then $\mc{U}$ is also $\varphi$-saturated. 
To see this, suppose $x' \in \bM(V)$ is in the orbit closure $\ov{G_V \cdot x}$ of 
some $x \in \mc{U}$. 
Then $x' \in G_V \cdot B$ by the above argument, and 
$f(x') \in \ov{G_V \cdot f(x)}$ since $f$ is continuous. 
Since $G_V \cdot B'$ is $\varphi$-saturated and $f(x) \in G_V \cdot B'$, 
we see that $f(x') \in G_V \cdot B'$. Thus $x' \in \mc{U}$. 

Setting $\mc{U}' := G_V \cdot B'$, 
we obtain a desired map $f \colon \mc{U} \to \mc{U}'$. 
\end{remark}

Recall the projective morphisms $\pi \colon \MM_\theta(V) \to \MM_0(V)$ 
and $\pi \colon \M_\theta(V) \to \M_0(V)$. 

\begin{corollary}\label{3.3.4}
There exist an open neighborhood $U$ {\rm (}resp.\ $U'${\rm )} of $[0] \in \MM_0(V)$ 
{\rm (}resp.\ $[0] \in \M_0(V)${\rm )} 
and a commutative diagram 
\[
\begin{CD}
\MM_{\theta}(V) \supset @.\, \pi^{-1}(U) @>{\tilde f}>> \pi^{-1}(U')\, @. \subset \M_{\theta}(V) \\
@. @V{\pi}VV @V{\pi}VV @. \\
@. U @>{f}>> U' @. 
\end{CD}
\]
such that: 
\begin{enumerate}
\item[\rm (i)] $f([0])=[0]$; 
\item[\rm (ii)] both $\tilde{f}$ and $f$ are complex analytic isomorphisms;  
\item[\rm (iii)] $\tilde f$ maps $\pi^{-1}(U) \cap \MMreg_\theta(V)$ onto 
$\pi^{-1}(U') \cap \Mreg_\theta(V)$ as a symplectic biholomorphic map; and 
\item[\rm (iv)] if $x \in \varphi^{-1}(U)$ and $y \in \varphi^{-1}(U')$ 
have closed orbits and $f([x])=[y]$, then the stabilizers of the two are conjugate. 
Thus $f$ preserves the orbit-type.        
\end{enumerate} 
\end{corollary}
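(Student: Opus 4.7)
The plan is to deduce the corollary essentially formally from Proposition~\ref{3.3.1}. First I would invoke that proposition to produce $\varphi$-saturated open neighborhoods $\mc{U}, \mc{U}'$ of $0 \in \bM(V)$ and a $G_V$-equivariant biholomorphism $f_0 \colon \mc{U} \to \mc{U}'$ carrying $\Phi_V^{-1}(1) \cap \mc{U}$ onto $\mu_V^{-1}(0) \cap \mc{U}'$ and satisfying $(f_0^*\omega - \varpi)|_{\Ker d\Phi_V} = 0$. Since $f_0$ is a $G_V$-equivariant homeomorphism in the classical topology, Remark~\ref{2.2.2} guarantees that it preserves both $\theta$-semistability and $\theta$-stability; hence it restricts to a $G_V$-equivariant biholomorphism between the $\theta$-semistable portions of the two level sets, mapping stable points to stable points.

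Next I would pass to the GIT quotients. Let $U \subset \MM_0(V)$ and $U' \subset \M_0(V)$ be the images of $\mc{U} \cap \Phi_V^{-1}(1)$ and $\mc{U}' \cap \mu_V^{-1}(0)$ under the respective affine quotient maps; these are open neighborhoods of $[0]$ because $\mc{U}, \mc{U}'$ are $\varphi$-saturated. Then $\pi^{-1}(U)$ and $\pi^{-1}(U')$ are, by construction, the images of $\mc{U} \cap \Phi_V^{-1}(1) \cap \bMss_\theta(V)$ and $\mc{U}' \cap \mu_V^{-1}(0) \cap \bMss_\theta(V)$ in $\MM_\theta(V)$ and $\M_\theta(V)$, respectively. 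Descending the restriction of $f_0$ through these analytic good quotients via the universal property (Proposition~\ref{2.2.3} (i)) yields the maps $\tilde f$ and $f$ that make the diagram commute; applying the same construction to $f_0^{-1}$ provides two-sided inverses. This settles (i) and (ii).

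For (iii), Theorem~\ref{2.4.5} and Theorem~\ref{2.3.3} identify the tangent space of the stable locus at a representative $x$ with $\Ker d_x \Phi_V / T_x(G_V \cdot x)$ (respectively $\Ker d_y \mu_V / T_y(G_V \cdot y)$), with symplectic form induced by $\varpi|_{\Ker d\Phi_V}$ (respectively $\omega|_{\Ker d\mu_V}$); the matching condition $(f_0^*\omega - \varpi)|_{\Ker d\Phi_V} = 0$ thus transports directly to $\tilde f$ being symplectic on the stable locus. For (iv), if $x, y$ have closed $G_V$-orbits and $f([x]) = [y]$, then $f_0(x)$ also has closed $G_V$-orbit in $\mu_V^{-1}(0)$ and represents $[y]$, so by Proposition~\ref{2.2.3} (ii) we have $y \in G_V \cdot f_0(x)$; combined with the $G_V$-equivariance of $f_0$, the stabilizers of $x$ and $y$ are then conjugate.

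The only genuinely non-routine step is the descent of $f_0$ to an analytic isomorphism of the quotient neighborhoods: one must verify that an equivariant biholomorphism between $\varphi$-saturated analytic opens descends to a biholomorphism of their analytic GIT charts. I expect this to be handled by the standard argument that a $G_V$-invariant holomorphic map out of a $\varphi$-saturated analytic open factors uniquely through the analytic good quotient, with the same construction applied to $f_0^{-1}$ supplying the inverse on the quotient side.
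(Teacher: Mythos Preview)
Your proposal is correct and follows essentially the same approach as the paper: invoke Proposition~\ref{3.3.1}, use Remark~\ref{2.2.2} (and $\varphi$-saturation) to see that $f_0$ preserves closed orbits and (semi)stability, then descend to the quotients. The paper's proof is extremely terse (two sentences), and your write-up simply spells out the details, including the tangent-space identification for (iii) and the closed-orbit argument for (iv), that the paper leaves implicit.
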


\begin{proof}
Since both $\mc{U}$ and $\mc{U}'$ are $\varphi$-saturated and $f \colon \mc{U} \to \mc{U}'$ is 
a biholomorphic map, $f$ sends a closed orbit to a closed orbit and 
a stable/semistable point to a stable/semistable point~(see Remark~\ref{2.2.2}). 
So the result follows.
\end{proof}

The fiber $\pi^{-1}([0]) \subset \MM_\theta(V)$ 
is called the {\em nilpotent subvariety}. 
The above corollary implies that the nilpotent subvarieties of 
the quiver variety and the multiplicative quiver variety are 
complex analytically isomorphic.   

\section{Moduli of filtered local systems and star-shaped quiver}\label{4}

This section is devoted to the study in the case of star-shaped quivers. 
In particular, we prove Theorem~\ref{1.2}. 

\subsection{Star-shaped quiver}\label{4.1} 

Suppose that conjugacy classes 
$\mc{C}_1, \dots \mc{C}_n$ in $\gl(r,\C)$ 
for a fixed $r >0$ are given. 
Choose $A_i \in \mc{C}_i$ 
and take $\xi_{i,j} \in \C^{\times}$
which satisfies
\[
(A_i-\xi_{i,0})(A_i-\xi_{i,1}) \cdots (A_i-\xi_{i,r}) =0.
\]
Set 
\[
v_{i,j} = \rank (A_i-\xi_{i,0}) \cdots (A_i-\xi_{i,j-1}),\qquad
l_i = \min \{ j; v_{i,j}>0\}. 
\]
Note that each $v_{i,j}$ does not depend on a choice of $A_i$. 

Following Crawley-Boevey, we associate to $\mc{C}_1, \dots \mc{C}_n$ 
the following quiver $(I,\Omega)$:     

\vspace{10pt}

\begin{center}
\unitlength 0.1in
\begin{picture}( 45.0000, 15.4500)( 11.1000,-17.0000)
%
\special{pn 8}%
\special{ar 1376 1010 70 70  0.0000000 6.2831853}%
%
\special{pn 8}%
\special{ar 1946 410 70 70  0.0000000 6.2831853}%
%
\special{pn 8}%
\special{ar 2946 410 70 70  0.0000000 6.2831853}%
%
\special{pn 8}%
\special{ar 5540 410 70 70  0.0000000 6.2831853}%
%
\special{pn 8}%
\special{ar 1946 810 70 70  0.0000000 6.2831853}%
%
\special{pn 8}%
\special{ar 2946 810 70 70  0.0000000 6.2831853}%
%
\special{pn 8}%
\special{ar 5540 810 70 70  0.0000000 6.2831853}%
%
\special{pn 8}%
\special{ar 1946 1610 70 70  0.0000000 6.2831853}%
%
\special{pn 8}%
\special{ar 2946 1610 70 70  0.0000000 6.2831853}%
%
\special{pn 8}%
\special{ar 5540 1610 70 70  0.0000000 6.2831853}%
%
\special{pn 8}%
\special{pa 1890 1560}%
\special{pa 1440 1050}%
\special{fp}%
\special{sh 1}%
\special{pa 1440 1050}%
\special{pa 1470 1114}%
\special{pa 1476 1090}%
\special{pa 1500 1088}%
\special{pa 1440 1050}%
\special{fp}%
%
\special{pn 8}%
\special{pa 2870 410}%
\special{pa 2020 410}%
\special{fp}%
\special{sh 1}%
\special{pa 2020 410}%
\special{pa 2088 430}%
\special{pa 2074 410}%
\special{pa 2088 390}%
\special{pa 2020 410}%
\special{fp}%
%
\special{pn 8}%
\special{pa 3720 410}%
\special{pa 3010 410}%
\special{fp}%
\special{sh 1}%
\special{pa 3010 410}%
\special{pa 3078 430}%
\special{pa 3064 410}%
\special{pa 3078 390}%
\special{pa 3010 410}%
\special{fp}%
\special{pa 3730 410}%
\special{pa 3010 410}%
\special{fp}%
\special{sh 1}%
\special{pa 3010 410}%
\special{pa 3078 430}%
\special{pa 3064 410}%
\special{pa 3078 390}%
\special{pa 3010 410}%
\special{fp}%
%
\special{pn 8}%
\special{pa 2870 810}%
\special{pa 2020 810}%
\special{fp}%
\special{sh 1}%
\special{pa 2020 810}%
\special{pa 2088 830}%
\special{pa 2074 810}%
\special{pa 2088 790}%
\special{pa 2020 810}%
\special{fp}%
%
\special{pn 8}%
\special{pa 2870 1610}%
\special{pa 2020 1610}%
\special{fp}%
\special{sh 1}%
\special{pa 2020 1610}%
\special{pa 2088 1630}%
\special{pa 2074 1610}%
\special{pa 2088 1590}%
\special{pa 2020 1610}%
\special{fp}%
%
\special{pn 8}%
\special{pa 3730 810}%
\special{pa 3020 810}%
\special{fp}%
\special{sh 1}%
\special{pa 3020 810}%
\special{pa 3088 830}%
\special{pa 3074 810}%
\special{pa 3088 790}%
\special{pa 3020 810}%
\special{fp}%
\special{pa 3740 810}%
\special{pa 3020 810}%
\special{fp}%
\special{sh 1}%
\special{pa 3020 810}%
\special{pa 3088 830}%
\special{pa 3074 810}%
\special{pa 3088 790}%
\special{pa 3020 810}%
\special{fp}%
%
\special{pn 8}%
\special{pa 3730 1610}%
\special{pa 3020 1610}%
\special{fp}%
\special{sh 1}%
\special{pa 3020 1610}%
\special{pa 3088 1630}%
\special{pa 3074 1610}%
\special{pa 3088 1590}%
\special{pa 3020 1610}%
\special{fp}%
\special{pa 3740 1610}%
\special{pa 3020 1610}%
\special{fp}%
\special{sh 1}%
\special{pa 3020 1610}%
\special{pa 3088 1630}%
\special{pa 3074 1610}%
\special{pa 3088 1590}%
\special{pa 3020 1610}%
\special{fp}%
%
\special{pn 8}%
\special{pa 5466 410}%
\special{pa 4746 410}%
\special{fp}%
\special{sh 1}%
\special{pa 4746 410}%
\special{pa 4812 430}%
\special{pa 4798 410}%
\special{pa 4812 390}%
\special{pa 4746 410}%
\special{fp}%
%
\special{pn 8}%
\special{pa 5466 810}%
\special{pa 4746 810}%
\special{fp}%
\special{sh 1}%
\special{pa 4746 810}%
\special{pa 4812 830}%
\special{pa 4798 810}%
\special{pa 4812 790}%
\special{pa 4746 810}%
\special{fp}%
%
\special{pn 8}%
\special{pa 5466 1610}%
\special{pa 4746 1610}%
\special{fp}%
\special{sh 1}%
\special{pa 4746 1610}%
\special{pa 4812 1630}%
\special{pa 4798 1610}%
\special{pa 4812 1590}%
\special{pa 4746 1610}%
\special{fp}%
%
\special{pn 8}%
\special{pa 1880 840}%
\special{pa 1450 990}%
\special{fp}%
\special{sh 1}%
\special{pa 1450 990}%
\special{pa 1520 988}%
\special{pa 1500 972}%
\special{pa 1506 950}%
\special{pa 1450 990}%
\special{fp}%
%
\special{pn 8}%
\special{pa 1900 460}%
\special{pa 1430 960}%
\special{fp}%
\special{sh 1}%
\special{pa 1430 960}%
\special{pa 1490 926}%
\special{pa 1468 922}%
\special{pa 1462 898}%
\special{pa 1430 960}%
\special{fp}%
%
\special{pn 8}%
\special{sh 1}%
\special{ar 1946 1010 10 10 0  6.28318530717959E+0000}%
\special{sh 1}%
\special{ar 1946 1210 10 10 0  6.28318530717959E+0000}%
\special{sh 1}%
\special{ar 1946 1410 10 10 0  6.28318530717959E+0000}%
\special{sh 1}%
\special{ar 1946 1410 10 10 0  6.28318530717959E+0000}%
%
\special{pn 8}%
\special{sh 1}%
\special{ar 4056 410 10 10 0  6.28318530717959E+0000}%
\special{sh 1}%
\special{ar 4266 410 10 10 0  6.28318530717959E+0000}%
\special{sh 1}%
\special{ar 4456 410 10 10 0  6.28318530717959E+0000}%
\special{sh 1}%
\special{ar 4456 410 10 10 0  6.28318530717959E+0000}%
%
\special{pn 8}%
\special{sh 1}%
\special{ar 4056 810 10 10 0  6.28318530717959E+0000}%
\special{sh 1}%
\special{ar 4266 810 10 10 0  6.28318530717959E+0000}%
\special{sh 1}%
\special{ar 4456 810 10 10 0  6.28318530717959E+0000}%
\special{sh 1}%
\special{ar 4456 810 10 10 0  6.28318530717959E+0000}%
%
\special{pn 8}%
\special{sh 1}%
\special{ar 4056 1610 10 10 0  6.28318530717959E+0000}%
\special{sh 1}%
\special{ar 4266 1610 10 10 0  6.28318530717959E+0000}%
\special{sh 1}%
\special{ar 4456 1610 10 10 0  6.28318530717959E+0000}%
\special{sh 1}%
\special{ar 4456 1610 10 10 0  6.28318530717959E+0000}%
\put(19.7000,-2.4500){\makebox(0,0){$[1,1]$}}%
\put(29.7000,-2.4000){\makebox(0,0){$[1,2]$}}%
\put(55.7000,-2.5000){\makebox(0,0){$[1,l_1]$}}%
\put(19.7000,-6.5500){\makebox(0,0){$[2,1]$}}%
\put(29.7000,-6.4500){\makebox(0,0){$[2,2]$}}%
\put(55.7000,-6.5500){\makebox(0,0){$[2,l_2]$}}%
\put(19.7000,-17.8500){\makebox(0,0){$[n,1]$}}%
\put(29.7000,-17.8500){\makebox(0,0){$[n,2]$}}%
\put(55.7000,-17.8500){\makebox(0,0){$[n,l_n]$}}%
\put(12.4500,-10.1000){\makebox(0,0){$0$}}%
%
\special{pn 8}%
\special{sh 1}%
\special{ar 2950 1010 10 10 0  6.28318530717959E+0000}%
\special{sh 1}%
\special{ar 2950 1210 10 10 0  6.28318530717959E+0000}%
\special{sh 1}%
\special{ar 2950 1410 10 10 0  6.28318530717959E+0000}%
\special{sh 1}%
\special{ar 2950 1410 10 10 0  6.28318530717959E+0000}%
\end{picture}%
\end{center}

\vspace{10pt}

\noindent
Such a quiver is called a {\em star-shaped} quiver.
We denote the vertex set by $I =\{ 0 \}\cup \{ [i,j] \}$ as in the picture,
and set $I_0 := I \setminus \{ 0 \}$.
We define an $I$-graded vector space $V$ by 
\[
V_0 := \C^r, \qquad V_{i,j} := \C^{v_{i,j}} \quad \text{for}\ [i,j] \in I_0, 
\]
and use the convention $V_{i,0}=V_0$ and $V_{i,l_i +1}=0$. 
For an element $x\in \bM(V)$ we will denote its components by
$a_{i,j} \in \Hom (V_{i,j+1},V_{i,j})$, $b_{i,j} \in \Hom (V_{i,j}, V_{i,j+1})$  
and write simply as $x=(a,b)$. 

The following proposition was proved by Crawley-Boevey (and Shaw). 
 
\begin{proposition}\label{4.1.1}
{\rm (i)}\ Define 
\[
\zeta_0 := -\sum_{i=1}^n \xi_{i,0}, \qquad \zeta_{i,j}:= \xi_{i,j-1}-\xi_{i,j}
\quad \text{for}\ [i,j] \in I_0.
\] 
Then the morphism
\begin{align*}
\M_{\zeta,0}(V) &\to
\{\, (B_1,\dots ,B_n) \in \ov{\mc{C}_1} \times \cdots \times \ov{\mc{C}_n} \mid 
B_1 + \cdots + B_n =0 \,\} \GIT \GL(r,\C), \\
x=(a,b) &\mapsto B_i=\xi_{i,0}+ a_{i,0}b_{i,0}
\end{align*}
is an isomorphism. 
Moreover, the variety of the right hand side includes   
\[
\mc{Q}:=\{\, (B_1,\dots ,B_n) \in (\mc{C}_1 \times \cdots \times \mc{C}_n)^{\rm irr} \mid  
B_1 + \cdots + B_n =0 \,\} / \GL(r,\C)
\]
as the image of $\Mreg_{\zeta, 0}(V)$ 
under the above map. Here, $(\mc{C}_1 \times \cdots \times \mc{C}_n)^{\rm irr}$ 
denotes the set consisting of all 
$(B_1, \dots ,B_n) \in \mc{C}_1 \times \cdots \times \mc{C}_n$ 
such that there is no non-zero proper subspace 
$S \subset \C^r$ which is preserved by $B_i$ for any $i$. 

{\rm (ii)}\ Suppose that each $\xi_{i,j}$ is non-zero. 
Define $q \in (\C^{\times})^I$ by 
\[
q_0 :=\prod_i \xi_{i,0}^{-1}, \qquad q_{i,j}:= \frac{\xi_{i,j-1}}{\xi_{i,j}}
\quad \text{for}\ [i,j] \in I_0.
\]  
Then the morphism
\begin{align*}
\MM_{q,0}(V) &\to
\{\, (B_1,\dots ,B_n) \in \ov{\mc{C}_1} \times \cdots \times \ov{\mc{C}_n} \mid 
B_1 \cdots B_n =1\,\} \GIT \GL(r,\C), \\
x=(a,b) &\mapsto B_i=\xi_{i,0}(1+a_{i,0}b_{i,0})
\end{align*}
is an isomorphism. Moreover, the variety of the right hand side includes   
\[
\mc{R}:=\{\, (B_1,\dots ,B_n) \in (\mc{C}_1 \times \cdots \times \mc{C}_n)^{\rm irr} \mid 
B_1 \cdots B_n =1 \,\} / \GL(r,\C)
\]
as the image of $\MMreg_{q, 0}(V)$ under the above map. 
\end{proposition}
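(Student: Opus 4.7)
The plan is to follow the proof of (i), due to Crawley-Boevey~\cite{Cra-add}, in a multiplicative setting (as in~\cite{CS}). First I would verify that the assignment $(a,b) \mapsto (B_1, \dots, B_n)$ with $B_i := \xi_{i,0}(1 + a_{i,0}b_{i,0})$ is a well-defined morphism from $\MM_{q,0}(V)$. The identity $B_1 B_2 \cdots B_n = 1$ follows immediately from the multiplicative preprojective relation at vertex $0$, since $\prod_i^{<}(1 + a_{i,0}b_{i,0}) = q_0 = \prod_i \xi_{i,0}^{-1}$ and the scalars $\xi_{i,0}$ factor out freely. To show $B_i \in \ov{\mc{C}_i}$, introduce $W_{i,j} := \Im(a_{i,0}a_{i,1} \cdots a_{i,j-1}) \subset V_0$ and prove inductively on $j$ that
\[
(B_i - \xi_{i,j})(a_{i,0} \cdots a_{i,j-1})(w) = \xi_{i,j}\, (a_{i,0} \cdots a_{i,j})\, b_{i,j}(w) \qquad (w \in V_{i,j}).
\]
The inductive step rewrites the multiplicative preprojective relation at $[i,j]$ as $b_{i,j-1}a_{i,j-1} = (q_{i,j}^{-1} - 1) + q_{i,j}^{-1} a_{i,j}b_{i,j}$, which after regrouping produces exactly the factor needed to push the image into $W_{i,j+1}$. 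Since $W_{i,l_i+1} = 0$ (because $V_{i,l_i+1} = 0$) and the operators $B_i - \xi_{i,k}$ commute as polynomials in $B_i$, this yields $\prod_{k=0}^{l_i}(B_i - \xi_{i,k}) = 0$ together with the rank bounds $\rank \prod_{k<j}(B_i - \xi_{i,k}) \leq v_{i,j}$, which jointly cut out $\ov{\mc{C}_i}$. Among the factors of $G_V$, only $\GL(V_0) = \GL(r,\C)$ acts nontrivially on $(B_i)$, so the forward map descends to the stated GIT target.

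Next I would construct an inverse at the level of closed orbits. Given $(B_1, \dots, B_n)$, set $V_{i,j}'' := \Im \prod_{k<j}(B_i - \xi_{i,k}) \subset V_0$, which has dimension $\leq v_{i,j}$ by membership in $\ov{\mc{C}_i}$. Choose a surjection $V_{i,j} = \C^{v_{i,j}} \twoheadrightarrow V_{i,j}''$ (an isomorphism exactly when $B_i \in \mc{C}_i$), extend the inclusion $V_{i,j}'' \hookrightarrow V_{i,j-1}''$ to a map $a_{i,j-1}\colon V_{i,j} \to V_{i,j-1}$, and define $b_{i,j-1}\colon V_{i,j-1} \to V_{i,j}$ as a lift of $\xi_{i,j-1}^{-1}(B_i - \xi_{i,j-1})$ through these surjections. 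A computation running the inductive identity of the first step in reverse checks that the multiplicative preprojective relation holds at every vertex and that the $G_V$-orbit of the resulting $(a,b)$ depends only on the $\GL(V_0)$-orbit of $(B_i)$. Combined with Proposition~\ref{2.2.7}---which identifies the $G_V$-invariant functions with traces along cycles in $H$, and hence, after restricting to the image, with traces of words in the $B_i^{\pm 1}$---this upgrades the bijection on closed orbits to an isomorphism of the two affine GIT quotients.

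For the stable/irreducible correspondence, Lemma~\ref{3.2.3} applied at vertex $0$ shows that any common $B_i$-invariant subspace $S \subset V_0$ extends uniquely to an $x$-invariant subrepresentation $S' \subset V$, and conversely $S' \cap V_0$ is common $B_i$-invariant. With $\theta = 0$, the definition of $\theta$-stability reduces to the absence of nontrivial proper $x$-invariant subspaces, which matches irreducibility of $(B_i)$; hence $\MMreg_{q,0}(V)$ maps onto $\mc{R}$. The main obstacle is the inverse construction when $B_i \in \ov{\mc{C}_i} \setminus \mc{C}_i$: there the ranks of $\prod_{k<j}(B_i - \xi_{i,k})$ drop strictly below $v_{i,j}$, so the surjections $V_{i,j} \twoheadrightarrow V_{i,j}''$ have kernels, and one must choose the lifts $a_{i,j-1}, b_{i,j-1}$ on these kernels coherently so that all interior multiplicative preprojective relations (not only the one at $0$) are simultaneously satisfied.
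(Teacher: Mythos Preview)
Your outline is essentially the Kraft--Procesi/Crawley-Boevey argument that the paper invokes by reference (the paper's proof of this proposition is a two-line deferral to \cite{Cra-norm,Cra-add,Cra-par,KP}), so at the level of strategy you are aligned with the intended proof. Your inductive identity is exactly the one proved later in the paper as Lemma~\ref{4.2.1}, and your use of Proposition~\ref{2.2.7} to show that the forward map is a closed immersion is the right idea: every trace of a cycle in the doubled star-shaped quiver can, after imposing the relations at the arm vertices $[i,j]$, be rewritten as a trace of a word in the $a_{i,0}b_{i,0}$'s and hence in the $B_i$'s. You assert this reduction without carrying it out; it is routine but not vacuous, and it is where the relations at the interior vertices actually enter the forward half of the argument.

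Two points deserve correction. First, the ``main obstacle'' you flag at the end is not an obstacle for the GIT isomorphism: once the forward map is a closed immersion (from the invariant-theory step) and surjective (your inverse construction works on every closed orbit by sending the kernels of your surjections $V_{i,j}\twoheadrightarrow V''_{i,j}$ to zero, which does satisfy all the arm relations), the map of affine varieties is an isomorphism; there is no need to match non-closed orbits pointwise. Second, your appeal to Lemma~\ref{3.2.3} for the stable/irreducible correspondence is misstated. That lemma concerns $\sigma_i,\tau_i$ at a single vertex and does not give a \emph{unique} extension of a $B_\bullet$-invariant $S_0\subset V_0$ to an $x$-invariant $S\subset V$. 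The correct argument (cf.\ the proof of Lemma~\ref{4.2.2}) is: for a $0$-stable $x$ one first shows every $a_{i,j}$ is injective and every $b_{i,j}$ is surjective (else the kernel/cokernel furnishes a nonzero proper subrepresentation supported in the arm), and then an $x$-invariant $S$ is determined by $S_0$ via $S_{i,j}=(a_{i,0}\cdots a_{i,j-1})^{-1}(S_0)$; conversely, given irreducible $(B_i)\in\prod\mc{C}_i$, the inverse construction has $a_{i,j}$ injective and $b_{i,j}$ surjective, and these two facts force any $x$-invariant $S$ with $S_0=0$ (resp.\ $S_0=V_0$) to satisfy $S=0$ (resp.\ $S=V$).
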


\begin{proof}
For a proof of (i), see \cite{Cra-norm, Cra-add}. 
(ii) also can be proved similarly, 
using \cite[Theorem 2.1]{Cra-par} and    
the method of Kraft-Procesi~\cite{KP}.
\end{proof}

\begin{remark}\label{4.1.2}
Recall that every coadjoint orbit has a canonical symplectic structure. 
Thus identifying $\gl(r,\C)$ with its dual via the trace, 
each $\mc{C}_i$ carries naturally an algebraic symplectic structure. 
The product of these symplectic forms defines an algebraic symplectic structure on 
$\prod_{i=1}^n \mc{C}_i$, 
and it has a moment map for the $\GL(r,\C)$-action 
given by $(B_1, \dots ,B_n) \mapsto \sum B_i$. 
Thus $\mc{Q}$ also carries naturally an algebraic symplectic structure by 
Theorem~\ref{2.3.3}. 
Then one can prove that the restriction of 
the map defined in (i) is a symplectic isomorphism  
between $\Mreg_{\zeta,0}(V)$ and $\mc{Q}$.  

Recall further that every conjugacy class $\mc{C} \subset G$ of 
a complex reductive group has a canonical quasi-Hamiltonian $G$-structure~(see \cite{AMM}). 
So under the same assumption as in (ii), 
the product $\prod_i \mc{C}_i$ carries 
a quasi-Hamiltonian $\GL(r,\C)$-structure 
by Theorem~\ref{2.4.4}. 
Its group-valued moment map is $(B_1,\dots ,B_n) \mapsto \prod B_i$, 
and hence the variety $\mc{R}$ carries an  
algebraic symplectic structure by Theorem~\ref{2.4.5}. 
One can also prove that 
the map defined in (ii) induces a symplectic isomorphism  
between $\MMreg_{q,0}(V)$ and $\mc{R}$. 
\end{remark}

From now on, we assume that $\xi_{i,j} \neq 0$ for all $i,j$. 
Take $n$ distinct points $p_1,\dots , p_n$ in the Riemann sphere $\P^1$, 
and set $D=\{ p_1,\dots ,p_n \}$. 
Choose a base point $* \in \P^1 \setminus D$ and  
consider the fundamental group $\pi_1(\P^1 \setminus D,*)$.  
It has a presentation $\langle \gamma_1, \gamma_2, \dots ,\gamma_n \mid \gamma_1 \cdots \gamma_n =1 \rangle$,  
where $\gamma_i$ represents a loop going from $*$ toward near $p_i$,
once around counterclockwise and back to $*$. Hence the map 
\begin{align*}
\Hom (\pi_1(\P^1 \setminus D,*),\GL(r,\C)) &\to
\{\, (A_1,\dots ,A_n) \in \GL(r,\C)^n \mid A_1 \cdots A_n =1\,\}, \\
\rho &\mapsto A_i =\rho (\gamma_i)
\end{align*}
is bijective. 
If we consider $\Hom (\pi_1(\P^1 \setminus D,*),\GL(r,\C))$ as an affine algebraic variety
via this bijection, then the space $\MM_{q,0}(V)$ can be described as
\[
\{\,\rho\in \Hom (\pi_1(\P^1 \setminus D,*),\GL(r,\C))
\mid \rho (\gamma_i) \in \ov{\mc{C}_i}\,\} \GIT \GL(r,\C). 
\] 
In fact, the multiplicative quiver variety
$\MM_{q,\theta} (V)$ with $\theta_{i,j}>0$ can be 
also described as a moduli space of local systems on $\P^1 \setminus D$ 
equipped with a certain additional structure, called a {\em filtered structure}.  

\subsection{Filtered local system}\label{4.2}

Suppose that the stability parameter $\theta \in \Q^I$ satisfies
\[
\theta_{i,j} >0,\quad \text{and} \quad \theta \cdot \dim V =0, \ \text{i.e.,} \ 
\theta_0 = - \frac{\sum_{[i,j]\in I_0} \theta_{i,j} \dim V_{i,j}}{\dim V_0}.
\]
Let $x=(a,b) \in \Phi_{V}^{-1}(q)$ be a 
$\theta$-semistable point.
For $i=1,\dots ,n$, 
define a filtration $F_i=(F_i^j)$ of $V_0$ by 
\[ 
F_i^0(V_0) := V_0, \qquad 
F_i^j(V_0) := \Im a_{i,0} \cdots a_{i,j-1} 
\quad \text{for}\ j=1,2, \dots ,l_i+1,
\]
and set $A_i:=\xi_{i,0} (1+a_{i,0}b_{i,0}) \in \GL(V_0)$. 

\begin{lemma}\label{4.2.1} 
For each $[i,j] \in I_0$, we have:  
\begin{enumerate}
\item[\rm (i)] $(A_i -\xi_{i,j})(F_{i}^j) \subset F_{i}^{j+1}$; and 
\item[\rm (ii)] $\dim F_i^j = v_{i,j}$. 
\end{enumerate}
\end{lemma}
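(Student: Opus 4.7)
The plan is to prove (i) by a direct algebraic computation using the multiplicative preprojective relations along arm $i$, and (ii) by an invariant-subspace argument that would violate $\theta$-semistability unless each arm map $a_{i,k}$ is injective.

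For (i), the relation $\Phi_{[i,k]}(x)=q_{[i,k]}$ at an interior vertex $1\leq k\leq l_i-1$ rearranges to
\[
\xi_{i,k-1}(1+b_{i,k-1}a_{i,k-1})=\xi_{i,k}(1+a_{i,k}b_{i,k}),
\]
and the relation at $[i,l_i]$ becomes $\xi_{i,l_i-1}(1+b_{i,l_i-1}a_{i,l_i-1})=\xi_{i,l_i}\,1_{V_{i,l_i}}$. Writing $T_{i,k}$ for this common operator on $V_{i,k}$ and setting $T_{i,0}:=A_i$, $T_{i,l_i}:=\xi_{i,l_i}\,1_{V_{i,l_i}}$, the identity $a(1+ba)=(1+ab)a$ yields the intertwining $T_{i,k-1}\,a_{i,k-1}=a_{i,k-1}\,T_{i,k}$, which iterates to $A_i\,P_j=P_j\,T_{i,j}$ with $P_j:=a_{i,0}\cdots a_{i,j-1}$. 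Hence $(A_i-\xi_{i,j})P_j=P_j(T_{i,j}-\xi_{i,j})$; since $T_{i,j}-\xi_{i,j}$ equals $\xi_{i,j}a_{i,j}b_{i,j}$ for $j\leq l_i-1$ and vanishes for $j=l_i$, its image factors through $P_{j+1}$, so that $(A_i-\xi_{i,j})(F_i^j)\subset F_i^{j+1}$ (using $F_i^{l_i+1}=0$ in the boundary case).

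For (ii) it suffices to show each $a_{i,k}\colon V_{i,k+1}\to V_{i,k}$ ($0\leq k\leq l_i-1$) is injective, since then $P_j$ is injective and $\dim F_i^j=\dim V_{i,j}=v_{i,j}$. Assuming $\ker a_{i,k}\neq 0$, I build $S\subset V$ supported on the tail of arm $i$ by setting $S_{i,k+1}:=\ker a_{i,k}$ and, inductively, $S_{i,m}:=b_{i,m-1}(S_{i,m-1})$ for $k+2\leq m\leq l_i$, with $S_0=0$ and every other component zero. The $b$-invariance is automatic; the only nontrivial check, $a_{i,m-1}(S_{i,m})\subset S_{i,m-1}$, follows by induction on $m$ from $a_{i,m-1}b_{i,m-1}=(\xi_{i,m-2}/\xi_{i,m-1})(1+b_{i,m-2}a_{i,m-2})-1$, since each summand of $a_{i,m-1}b_{i,m-1}(S_{i,m-1})$ lies in $S_{i,m-1}$ by the inductive hypothesis. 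Then $S\neq 0$ has $S_0=0$ but is supported on arm vertices where $\theta_{i,m}>0$, giving $\theta\cdot\dim S>0$, contradicting $\theta$-semistability, and so $\ker a_{i,k}=0$. The main obstacle is verifying that this iteratively $b$-propagated subspace is genuinely $x$-invariant with $S_0=0$: the multiplicative preprojective relations at $[i,k+1],\dots,[i,l_i]$ are exactly what keep the $b$-trajectory of $\ker a_{i,k}$ trapped in the tail of arm $i$, never forcing it back to the central vertex $0$.
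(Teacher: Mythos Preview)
Your proof is correct and follows essentially the same approach as the paper's: for (i) you package the paper's inductive computation into the intertwining relation $T_{i,k-1}a_{i,k-1}=a_{i,k-1}T_{i,k}$, which is a slightly cleaner way of expressing the same identity $(A_i-\xi_{i,j})P_j=\xi_{i,j}P_{j+1}b_{i,j}$; for (ii) you construct exactly the same tail-supported invariant subspace $S$ propagated by the $b$-maps from $\ker a_{i,k}$ and derive the same contradiction with $\theta$-semistability.
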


\begin{proof}
Using induction on $j$, we first prove the following formula which implies (i):
\[
(A_i -\xi_{i,j}) a_{i,0} \cdots a_{i,j-1} 
= \xi_{i,j} a_{i,0} \cdots a_{i,j-1} a_{i,j} b_{i,j}. 
\]
If $j=0$, by definition we have $A_i -\xi_{i,0} = \xi_{i,0}a_{i,0}b_{i,0}$. 
If $j>0$, using the hypothesis of induction we have  
\begin{align*}
(A_i -\xi_{i,j}) a_{i,0} \cdots a_{i,j-1}
&= (A_i - \xi_{i,j-1}) a_{i,0} \cdots a_{i,j-2}a_{i,j-1} 
+ (\xi_{i,j-1} -\xi_{i,j}) a_{i,0} \cdots a_{i,j-1} \\
&= \xi_{i,j-1} a_{i,0} \cdots a_{i,j-1} b_{i,j-1} a_{i,j-1}
+ (\xi_{i,j-1} -\xi_{i,j}) a_{i,0} \cdots a_{i,j-1} \\
&= \xi_{i,j-1}a_{i,0} \cdots a_{i,j-1} (1+b_{i,j-1} a_{i,j-1}) 
- \xi_{i,j} a_{i,0} \cdots a_{i,j-1}.
\end{align*}
By the multiplicative preprojective relation at $[i,j]$, we have 
$1+b_{i,j-1} a_{i,j-1} = q_{i,j}^{-1}(1+ a_{i,j}b_{i,j})$. Thus 
we obtain the desired formula as the following:
\begin{align*}
(A_i -\xi_{i,j}) a_{i,0} \cdots a_{i,j-1}
&= q_{i,j}^{-1}\xi_{i,j-1}a_{i,0} \cdots a_{i,j-1} (1+a_{i,j} b_{i,j}) 
- \xi_{i,j} a_{i,0} \cdots a_{i,j-1} \\
&= \xi_{i,j}a_{i,0} \cdots a_{i,j-1} (1+a_{i,j} b_{i,j}) 
- \xi_{i,j} a_{i,0} \cdots a_{i,j-1} \\
&=\xi_{i,j}a_{i,0} \cdots a_{i,j-1}a_{i,j} b_{i,j}.
\end{align*}

To prove (ii), it is enough to show that each $a_{i,j-1}$ is injective.
Fix $[i,j] \in I_0$ and  
define a subspace $S \subset V$ by 
\[
S_0 :=0, \qquad
S_{k,m} := 
\begin{cases} 0 & \text{if}\ k\neq i \ \text{or}\ m<j, \\
\Ker a_{i,j-1} & \text{if}\ [k,m]=[i,j], \\
b_{i,m-1}b_{i,m-2} \cdots b_{i,j}(\Ker a_{i,j-1}) 
& \text{if}\ k=i \ \text{and}\ m>j.
\end{cases}
\]
As above one can easily prove the following formula:
\[
\xi_{i,j} a_{i,m-1} b_{i,m-1}b_{i,m-2} \cdots b_{i,j}
= \xi_{i,m-2} b_{i,m-2} \cdots b_{i,j} b_{i,j-1}a_{i,j-1} 
+(\xi_{i,m-2} -\xi_{i,j} )b_{i,m-2} \cdots b_{i,j}.
\]
This implies that $S$ is $(a,b)$-invariant. 
By the stability condition we have
\[
\sum_{[k,m] \in I_0}\theta_{k,m}\dim S_{k,m}=\theta \cdot \dim S \leq 0.
\]
Since $\theta_{k,m} >0$ we must have $S_{k,m}=0$ for each $[k,m]$.
Thus $a_{i,j-1}$ is injective.
\end{proof}
 
The multiplicative preprojective relation at $0$ implies $\prod A_i =1$. 
Thus setting $\rho(\gamma_i)=A_i~(i=1,\dots n)$, we get  
a representation $\rho$ of $\pi_1(\P^1 \setminus D,*)$ on $V_0$.
Let $L$ be the corresponding local system on $\P^1 \setminus D$. 
For $i=1, \dots ,n$, let $U_i$ be a simply connected open neighborhood of $p_i$ 
which contains $\gamma_i$, and we set $U_i^* =U_i \setminus \{ p_i \}$.
Note that $\pi_1(U_i^*,*)$ is a free group generated by $\gamma_i$. 
Thus $\rho(\gamma_i)$ determines a representation of 
$\pi_1(U_i^*,*)$ on $V_0$ which corresponds to 
the restriction of $L$ on $U_i^*$. 
Since each $F_i^j \subset V_0$ is preserved by $\rho(\gamma_i)$, 
it induces a local subsystem $\bF_i^j(L)$ of $L|_{U_i^*}$.  
So we get a filtration 
\[
\bF_i \colon 
L |_{U^*_i}=\bF_{i}^0(L) \supset \bF_i^1(L) \supset \cdots \supset \bF_{i}^{l_i +1}(L)=0
\] 
by local subsystems of $L |_{U^*_i}$. 
Note that the local monodromy of    
$\bF_{i}^{j}(L) /\bF_{i}^{j+1}(L)~(j=0,1,\dots ,l_i)$ around $p_i$  
is given by the scalar multiplication by $\xi_{i,j}$. 

\begin{lemma}\label{4.2.2}
$(L,\bF)$ satisfies the following property:

$(\dagger)$ For any non-zero proper local subsystem $M \subset L$,
the following inequality holds: 
\[
\frac{\sum_{[i,j]\in I_0} \theta_{i,j} \rank \left(  M\cap \bF_i^j(L) \right)  }{\rank M}
\leq \frac{\sum_{[i,j]\in I_0} \theta_{i,j} \rank \bF_i^j(L)}{\rank L}.
\]  
If $(a,b)$ is $\theta$-stable, then the strict inequality holds. 
\end{lemma}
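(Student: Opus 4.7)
The plan is to translate the statement about a local subsystem of $L$ into a statement about an $x$-invariant graded subspace of $V$, and then invoke the $\theta$-(semi)stability of $x = (a,b)$ directly. Under the monodromy correspondence, a non-zero proper local subsystem $M \subset L$ corresponds to a non-zero proper $\rho$-invariant subspace $W \subset V_0$, and one has $\rank M = \dim W$ together with $\rank(M \cap \bF_i^j(L)) = \dim(W \cap F_i^j(V_0))$ for each $[i,j] \in I_0$.

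Since the proof of Lemma~\ref{4.2.1}(ii) already shows that each $a_{i,j-1}$ is injective, the composition $a_{i,0} \cdots a_{i,j-1}$ identifies $V_{i,j}$ with $F_i^j(V_0)$. I would therefore define a subspace $S \subset V$ by
\[
S_0 := W, \qquad S_{i,j} := (a_{i,0} \cdots a_{i,j-1})^{-1}\bigl(W \cap F_i^j(V_0)\bigr),
\]
so that $\dim S_{i,j} = \dim (W \cap F_i^j(V_0))$. The next step is to check that $S$ is $(a,b)$-invariant. For the arrows at the central vertex $0$ this is immediate from $A_i$-invariance of $W$ and the identity $a_{i,0}b_{i,0} = \xi_{i,0}^{-1}(A_i - \xi_{i,0})$. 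The inclusion $a_{i,j}(S_{i,j+1}) \subset S_{i,j}$ is clear from $F_i^{j+1} \subset F_i^j$. The one non-trivial point is $b_{i,j}(S_{i,j}) \subset S_{i,j+1}$, and for this I would re-use the identity
\[
(A_i - \xi_{i,j})\, a_{i,0} \cdots a_{i,j-1} = \xi_{i,j}\, a_{i,0} \cdots a_{i,j}\, b_{i,j}
\]
proved in Lemma~\ref{4.2.1}: if $s \in S_{i,j}$ then $a_{i,0}\cdots a_{i,j-1}(s) \in W \cap F_i^j(V_0)$, so by $A_i$-invariance of $W$ and Lemma~\ref{4.2.1}(i) the vector $(A_i - \xi_{i,j}) a_{i,0} \cdots a_{i,j-1}(s)$ lies in $W \cap F_i^{j+1}(V_0)$, which is exactly the condition $b_{i,j}(s) \in S_{i,j+1}$.

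Once $S$ is known to be $x$-invariant, the $\theta$-semistability of $(a,b)$ gives $\theta \cdot \dim S \leq 0$, that is
\[
\theta_0\, \dim W + \sum_{[i,j] \in I_0} \theta_{i,j}\, \dim (W \cap F_i^j(V_0)) \leq 0.
\]
Substituting the relation $\theta_0 = -(\sum_{[i,j] \in I_0} \theta_{i,j} \dim V_{i,j})/\dim V_0$, dividing through by $\dim W$, and translating back via $\dim V_{i,j} = \rank \bF_i^j(L)$ and $\dim V_0 = \rank L$ yields the inequality $(\dagger)$. Since $W$ is non-zero and proper in $V_0$, $S$ is non-zero and proper in $V$, so $\theta$-stability of $(a,b)$ upgrades the inequality to a strict one. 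The main obstacle is the $b_{i,j}$-invariance check, where the multiplicative preprojective relation enters essentially through the identity above; everything else is formal bookkeeping once $S$ is exhibited.
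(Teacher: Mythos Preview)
Your proof is correct and follows essentially the same approach as the paper: define $S_0 = M_*$ and $S_{i,j} = (a_{i,0}\cdots a_{i,j-1})^{-1}(M_* \cap \bF_i^j(L)_*)$, check $(a,b)$-invariance, and apply $\theta$-(semi)stability together with the formula for $\theta_0$. The paper simply asserts the invariance of $S$ without details, whereas you spell out the $b_{i,j}$-invariance via the identity from Lemma~\ref{4.2.1}; this is a welcome clarification but not a different idea.
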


\begin{proof}
For a non-zero proper local subsystem $M \subset L$, 
define a subspace $S \subset V$ by
\[
S_0 = M_* \subset V_0 , \qquad
S_{i,j} =(a_{i,0} \cdots a_{i,j-1})^{-1}(M_* \cap \bF_i^j(L)_*),
\]
where $M_*, \bF_i^j(L)_*$ mean the stalks at $*$. 
Then $S$ is $(a,b)$-invariant and non-zero proper by the assumption. 
On the other hand, $\theta \cdot \dim V =0$ implies  
\begin{align*}
\theta \cdot \dim S 
&= \theta_0 \rank M + \sum_{i,j} \theta_{i,j} \rank (M \cap \bF_i^j(L) ) \\
&= - \frac{\sum_{i,j} \theta_{i,j} \rank \bF_i^j(L)}{\rank L} \rank M
+\sum_{i,j} \theta_{i,j} \rank (M\cap \bF_i^j(L) ). 
\end{align*}
Thus $\theta \cdot \dim S \leq 0$ (resp.\ $< 0$) 
if and only if the inequality (resp.\ the strict inequality) in $(\dagger)$ holds.
So the assertion follows. 
\end{proof}

Motivated on the above argument, we introduce the following notion.
\begin{definition}\label{4.2.3}
Let $X$ be a compact Riemann surface and let $D \subset X$ be a finite subset. 
Let $L$ be a local system on $X \setminus D$. 
For a tuple of non-negative integers $l=(l_p)_{p \in D}$, 
a {\em filtered structure} on $L$ of {\em filtration type} $l$ is a tuple 
$(U_p, \bF_p)_{p\in D}$, where for each $p\in D$:  
\begin{enumerate}
\item[\rm (i)] $U_p$ is a neighborhood of $p$ in $X$ (we set $U_p^* :=U_p \setminus \{ p \}$); and   
\item[\rm (ii)] $\bF_p$ is a filtration  
\[
L |_{U_p^*} = \bF_p^0(L) \supset \bF_p^1(L) \supset \cdots
 \supset \bF_p^{l_p}(L) \supset \bF_p^{l_p+1}(L)=0
\]
by local subsystems of $L |_{U_p^*}$.
\end{enumerate}

Two filtered structures $(U_p, \bF_p)_{p\in D}, (U'_p, \bF'_p)_{p\in D}$ of the same filtration type 
are {\em equivalent} if for each $p\in D$, there exists a neighborhood $V_p \subset U_p \cap U'_p$ 
of $p$ such that $\bF_p$ and $\bF'_p$ coincide on $V^*_p$.
A local system $L$ together with an equivalence class of filtered structures 
$\bF =[ (U_p, \bF_p)_{p\in D} ]$
is called a {\em filtered local system} on $(X,D)$ of filtration type $l$. 
\end{definition}

\begin{definition} \label{4.2.4}
Let $(L,\bF)$ be a filtered local system on $(X,D)$ of filtration type $l$. 
Let $\beta = (\beta_p^j \mid p\in D,~j=0,\dots ,l_p)$ be a tuple of rational numbers 
satisfying $\beta_p^i < \beta_p^j$ for any $p$ and $i <j$ 
(Such a tuple is called a {\em weight}). 
 
$(L,\bF)$ on $(X,D)$ is said to be {\em $\beta$-semistable} if 
for any non-zero proper local subsystem $M \subset L$ the following inequality holds:
\[
\sum_{p\in D} \sum_j \beta_p^j \frac{ \rank \left(  M \cap \bF_p^j(L) \right)  /\left( 
M \cap \bF_p^{j+1}(L) \right)  }{\rank M} 
\leq
\sum_{p\in D} \sum_j \beta_p^j \frac{ \rank \left(  \bF_p^j(L)/
\bF_p^{j+1}(L) \right)  }{\rank L}.
\]
$(L,\bF)$ is {\em $\beta$-stable} if the strict inequality always holds.
\end{definition}

Clearly, $(L, \bF)$ constructed from a 
$\theta$-semistable point $x=(a,b) \in \Phi_V^{-1}(q) \cap \bMss_\theta(V)$    
defines a filtered local system on $(\P^1,\{ p_i \})$, 
where the filtration type $l$ is given by $l_{p_i}:= l_i$. 
Moreover this filtered local system satisfies the stability condition. 
Fix arbitrary $\beta_i^0 \in \Q$ for each $i$
and set $\beta_{p_i}^j:=\beta_i^0 +\sum_{s=1}^j\theta_{i,s}$. 
Then we have 
\[
\sum_{i=1}^n \sum_{j \geq 0} 
\beta_{p_i}^j \frac{\rank \bF_{p_i}^j(L)/\bF_{p_i}^{j+1}(L)}{\rank L} 
= \sum_{i=1}^n \beta_{i,0} 
+ \sum_{[i,j]\in I_0} 
\theta_{i,j} \frac{\rank \bF_{p_i}^j(L)}{\rank L},
\]
so the $\beta$-semistability condition for filtered local systems on $(\P^1,\{ p_i \})$ 
is equivalent to the property $(\dagger)$, and the $\beta$-stability condition is equivalent to 
that the strict inequality always holds in $(\dagger)$.   
In particular our $(L,\bF)$ is $\beta$-semistable, and if $x$ is $\theta$-stable then 
$(L,\bF)$ is $\beta$-stable. 

It is easy to see that the above construction sends a $G_V$-orbit in 
$\Phi_V^{-1}(q) \cap \bMss_\theta(V)$ to 
an isomorphism class of filtered local systems, 
and preserves the direct sum operation, 
where the direct sum of two filtered local systems of the same filtration type  
means the direct sum of local systems with filtrations induced from those of the two. 
In particular, this map sends a $\theta$-polystable point 
$x=x_1 \oplus x_2 \oplus \cdots \oplus x_N$ to a direct sum 
of $\beta$-stable filtered local systems 
$(L,\bF)=(L_1,\bF_1)\oplus (L_2,\bF_2) \oplus \cdots \oplus (L_N,\bF_N)$. 
Note that each $(L_i,\bF_i)$ satisfies 
\[
\sum_{p\in D} \sum_j \beta_p^j \frac{ \rank \left(  (\bF_i)_p^j(L_i)/
(\bF_i)_p^{j+1}(L_i) \right)  }{\rank L_i}=
\sum_{p\in D} \sum_j \beta_p^j \frac{ \rank \left(  \bF_p^j(L)/
\bF_p^{j+1}(L) \right)  }{\rank L},
\]
since $\theta \cdot \dim V^i =0$ (see Proposition~\ref{2.2.4}).  
Such a filtered local system is said to be $\beta$-{\em polystable}.    

Conversely, suppose that a $\beta$-semistable filtered local system $(L,\bF)$ 
of filtration type $l$ with $\rank L=r$, $\rank L = v_{i,j}$ is given. 
Suppose that the local monodromy of $\bF_{p_i}^j(L)/\bF_{p_i}^{j+1}$ around $p_i$ 
is given by the scalar multiplication $\xi_{i,j}$ for all $i,j$.   
We define an $I$-graded vector space $V$ by 
$V_0 :=L_*,~V_{i,j}:=\bF_{p_i}^j(L)_*$,
and define a point $(a,b) \in \bM(V)$ by 
\[
b_{i,j} := (\xi_{i,j}^{-1}\rho(\gamma_i) -1)|_{V_{i,j}} \colon V_{i,j} \to V_{i,j+1}, \qquad  
a_{i,j} \colon V_{i,j+1}\hookrightarrow V_{i,j} \quad \text{the inclusion}.
\]
Then $(a,b)\in \bM(V)$ satisfies the multiplicative preprojective relation. 
To check the stability condition,  
suppose that a non-zero proper $(a,b)$-invariant subspace $S \subset V$ is given.
Then there is a local subsystem $M\subset L$ whose stalk at $*$ is $S_0$.
By the property $(\dagger)$, we have
\[
\frac{\sum_{[i,j]\in I_0} \theta_{i,j} \rank \left(  M\cap \bF_{p_i}^j(L)\right)  }{\rank M}
\leq \frac{\sum_{[i,j]\in I_0} \theta_{i,j} \rank \bF_{p_i}^j(L)}{\rank L}.
\]  
Since $a_{i,j}$'s are injective, we have 
$\dim S_{i,j} \leq \rank ( M \cap \bF_{p_i}^j(L) )$, 
and hence 
\[
\frac{\sum_{[i,j]\in I_0} \theta_{i,j} \dim S_{i,j}}{\dim S_0}
\leq \frac{\sum_{[i,j]\in I_0} \theta_{i,j} \dim V_{i,j}}{\dim V_0},
\]  
which implies $\theta \cdot \dim S \leq 0$. Thus $(a,b)$ is $\theta$-semistable.  
Clearly, if $(L,\bF)$ is $\beta$-stable then $(a,b)$ is $\theta$-stable. 
It is also easy to see that this construction sends an isomorphism class 
of filtered local systems to a $G_V$-orbit, 
and preserves the polystability.      

We have obtained maps of both directions between $\MM_{q,\theta}(V)$ and 
the set of isomorphism classes of $\beta$-polystable filtered local systems $(L,\bF)$ 
of filtration type $l$ satisfying $\rank L =r$, $\rank \bF_{p_i}^j(L) =v_{i,j}$ 
and that the local monodromy of $\bF_{p_i}(L)/\bF_{p_i}^{j+1}(L)$ is given by  
the scalar $\xi_{i,j}$ for all $i,j$. 
Clearly each one is the inverse of the other. 
So we get the following result. 
 
\begin{theorem}\label{4.2.5}
Let $D=\{ p_1, \dots ,p_n \}$ 
be a finite subset of $\P^1$ with cardinality $n$. 
Take an arbitrary $l \in \N^D$, 
and let $\xi =(\xi_p^j \mid p \in D,~j=0,\dots , l_p)$ be a tuple of 
non-zero complex numbers, 
$\beta = (\beta_p^j \mid p \in D,~j=0,\dots , l_p)$ be a tuple of 
rational numbers satisfying $\beta_p^i < \beta_p^j$ for any $p$ and $i <j$.
Take a star-shaped quiver $(I,\Omega)$ 
with $n$ arms such that the length $l_i$ of the $i$-th arm is equal to $l_{p_i}$.   
Then for any $I$-graded vector space $V$,  
setting $(q,\theta) \in (\C^{\times})^I \times \Q^I$ by
\begin{align*}
\theta_{i,j} &:= \beta_{p_i}^j -\beta_{p_i}^{j-1}, &      
\theta_0 &:= -\frac{\sum_{[i,j] \in I_0}\theta_{i,j} \dim V_{i,j}}{\dim V_0}, \\
q_{i,j} &:= \xi_{p_i}^{j-1}/\xi_{p_i}^{j}, &    
q_0 &:= \prod_i (\xi_{p_i}^0)^{-1}, 
\end{align*}
there is a natural bijection between
the multiplicative quiver variety $\MM_{q,\theta}(V)$
and the set of isomorphism classes of 
$\beta$-polystable filtered local systems $(L,\bF)$ on $(\P^1,D)$ 
satisfying: 
\begin{itemize}
\item $\rank L =\dim V_0,~\rank \bF_{p_i}^j(L)=\dim V_{i,j}$; 
\item the local monodromy of $\bF_{p_i}^j(L)/\bF_{p_i}^{j+1}(L)$ around $p_i$ is given by  
the scalar multiplication by $\xi_{p_i}^j$ for all $i,j$.
\end{itemize} 
Under this map, a point in $\MMreg_{q,\theta}(V)$ corresponds to 
an isomorphism class of $\beta$-stable filtered local systems. 
\end{theorem}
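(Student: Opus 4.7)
The plan is to upgrade the two constructions sketched in the discussion preceding the theorem into a pair of mutually inverse maps between $\MM_{q,\theta}(V)$ and the set of isomorphism classes of $\beta$-polystable filtered local systems, and to verify that the stability and polystability conditions correspond correctly under the stated weight dictionary.

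The forward direction sends a $\theta$-semistable $x=(a,b)\in \Phi_V^{-1}(q)\cap\bMss_\theta(V)$ to $(L,\bF)$, where $L$ has monodromies $\rho(\gamma_i)=\xi_{i,0}(1+a_{i,0}b_{i,0})$ and $\bF_{p_i}^j(L)$ is induced by $\Im(a_{i,0}\cdots a_{i,j-1})\subset V_0$. Lemma~\ref{4.2.1} supplies the correct ranks and the prescribed scalar action on the graded pieces, while $\prod_i\rho(\gamma_i)=1$ is the multiplicative preprojective relation at $0$. A change of basis by $g\in G_V$ conjugates $\rho(\gamma_i)$ by $g_0$ and transports the filtrations accordingly, so the map factors through orbits. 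Lemma~\ref{4.2.2}, combined with the weight identification $\beta_{p_i}^j=\beta_i^0+\sum_{s=1}^j\theta_{i,s}$, translates $\theta$-semistability into $\beta$-semistability verbatim, and because the construction respects direct sums, Proposition~\ref{2.2.4} sends $\theta$-polystable orbits to $\beta$-polystable isomorphism classes.

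For the inverse, given $(L,\bF)$ of the prescribed type, I would choose the representative $V_0:=L_*$, $V_{i,j}:=\bF_{p_i}^j(L)_*$, with $a_{i,j}$ the inclusion and $b_{i,j}:=(\xi_{i,j}^{-1}\rho(\gamma_i)-1)|_{V_{i,j}}$, as outlined before the theorem statement. Verifying that this lies in $\Phi_V^{-1}(q)\cap\bMss_\theta(V)$ requires three checks: each $1+a_{i,j}b_{i,j}$ is a scalar multiple of $\rho(\gamma_i)|_{V_{i,j}}$ and hence invertible; the multiplicative relation at each arm vertex telescopes into the compatibility of $\rho(\gamma_i)$ with the flag; and for $\theta$-semistability, an $(a,b)$-invariant $S\subset V$ produces a local subsystem $M\subset L$ with $M_*=S_0$, and the injectivity of the $a_{i,j}$ (inclusions) gives $\dim S_{i,j}\le\rank(M\cap\bF_{p_i}^j(L))$, so $(\dagger)$ delivers $\theta\cdot\dim S\le 0$. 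Two representatives of the same isomorphism class of $(L,\bF)$ differ by an element of $G_V$, making the inverse well-defined on isomorphism classes.

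Finally, I would check that the two assignments are mutually inverse and that the stable-locus statement follows. One direction is immediate from the stalk identifications; the other uses the observation that, starting from $x=(a,b)$, one can use the injectivity of $a_{i,j}$ guaranteed by Proposition~\ref{3.2.8} under our assumption $\theta_{i,j}>0$ to identify the reconstructed $V_{i,j}$ with the actual images along the arm, the resulting discrepancy being absorbed by an element of $G_V$. The stable-locus claim is then automatic since $\theta$-stability and $\beta$-stability correspond to the strict version of the same inequality under the weight dictionary. The main obstacle is the inverse direction: the multiplicative preprojective relation at each arm vertex must be verified by hand as a telescoping identity involving $\rho(\gamma_i)$'s restriction to successive flag subspaces, and one must match the resulting relations to $\Phi_{[i,j]}(x)=q_{i,j}\cdot 1_{V_{i,j}}$ and $\Phi_0(x)=q_0\cdot 1_{V_0}$ rather than merely to $\prod_i\rho(\gamma_i)=1$, which requires the specific factorization dictated by the chosen total order on $H$.
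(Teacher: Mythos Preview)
Your proposal is correct and follows essentially the same route as the paper: the forward and inverse constructions, the use of Lemma~\ref{4.2.1} and Lemma~\ref{4.2.2} for the rank/monodromy and stability checks, the passage to polystable classes via Proposition~\ref{2.2.4}, and the observation that the two constructions are mutual inverses are all exactly what the paper does in the discussion preceding the theorem. One small remark: the injectivity of each $a_{i,j}$ you invoke in the last step is not quite what Proposition~\ref{3.2.8} provides (that proposition controls $\sigma_{[i,j]}$, which mixes $a_{i,j-1}$ with $b_{i,j}$); the direct argument is the one in the proof of Lemma~\ref{4.2.1}(ii), which already works at the semistable level, so you should cite that instead.
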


\begin{remark}\label{4.2.6}
The word ``filtered local system'' is originally due to Simpson~\cite{Sim}. 
Simpson's filtered local system $(L,\bF)$ is 
a pair of a local system $L$ on $X \setminus D$ and 
a tuple $\bF=(\bF_p)_{p\in D}$, 
where for each $p \in D$, 
$\bF_p= ( \bF_p^\beta )_{\beta \in \R}$ is 
a filtration of the restriction $L |_{U_p^*}$ of $L$ on 
some punctured neighborhood $U^*_p$ of $p$ indexed by real number $\beta \in \R$. 
$\bF_p^\beta$ is required to be left continuous, i.e., $\bF_p^{\beta -\varepsilon} = \bF_p^\beta$ 
for small $\varepsilon >0$. 
Filtered local systems in the sense of Simpson form a category  
on which direct sum, tensor product, dual, etc.\ are defined. 
Moreover the notion of ``degree'' for a filtered local system in the sense of Simpson 
is naturally defined and provides a slope stability condition. 
Our notion of filtered local system $(L,\bF)$ together with a weight $\beta$  
can be considered as Simpson's filtered local system as follows: 
for each $\beta \in \R$, 
define $\bF_p^\beta(L) \subset L|_{U^*_p}$ by 
\[
\bF_p^\beta (L) := \begin{cases} 
\,L|_{U^*_p} & \text{when}\ \beta \leq \beta_p^0, \\
\,\bF_p^j(L) & \text{when}\ \beta \in (\beta_p^{j-1},\beta_p^j], \\
\,0 & \text{when} \ \beta > \beta_p^{l_p}.
\end{cases}
\]
Then $(L,\{ \bF_p^\beta \})$ is a filtered local system in the sense of Simpson. 
Moreover one can easily check that  
if our $(L,\bF)$ is $\beta$-stable/semistable/polystable, then 
$(L,\{ \bF_p^\beta \})$ is stable/semistable/polystable. 
Note that in the previous theorem, 
the star-shaped multiplicative quiver varieties 
parametrize only polystable filtered local systems $(L,\{ \bF_p^\beta \})$    
such that the local monodromy of $\bF_p^\beta / \bF_p^{>\beta}$ around $p$ is 
{\em scalar} for each $p, \beta$. 
This is because we have considered only the case that all $\theta_{i,j}$'s are positive. 
In fact, if we allow $\theta_{i,j} = 0$ for some $i,j$, 
then a point in the multiplicative quiver variety represents a   
polystable filtered local system $(L, \{ \bF_p^\beta \})$ 
such that the local monodromy of $\bF_p^\beta / \bF_p^{>\beta}$ 
is in the closure of some fixed conjugacy class, 
which may not be a scalar.    
\end{remark}

\subsection{Riemann-Hilbert correspondence}\label{4.3}

\begin{definition}\label{4.3.1}
Let $X$ be a compact Riemann surface and 
let $D \subset X$ be a finite subset. 
A {\em logarithmic connection} $(E,\nabla)$ on $(X ,D)$ 
is a pair of a holomorphic vector bundle $E$ on $X$ 
and a morphism of sheaves   
$\nabla \colon E \to E \otimes \Omega^1_X (\log D)$ 
satisfying the Leibniz rule:
\[
\nabla (f s ) = df \otimes s + f \nabla (s) \quad \text{for}\ f \in \mc{O}_{X}, s \in E,
\]
where we have used the same symbol $E$ for the sheaf of holomorphic sections of $E$, 
$\Omega^1_X(\log D)$ is the sheaf of meromorphic 1-forms on $X$ 
with logarithmic poles on $D$ and no poles on $X \setminus D$,     
and $\mc{O}_X$ is the sheaf of holomorphic functions on $X$. 
\end{definition}

For each $p \in D$, a logarithmic connection $(E,\nabla)$ induces canonically 
an endomorphism 
\[
\Res_p \nabla \colon E|_p \to E|_p
\] 
of the fiber $E|_p$ of $E$ at $p$. 
Such an endomorphism is called the {\em residue} 
of $(E,\nabla)$ at $p$. 
Using a trivialization $E |_{U_p} \simeq U_p \times \C^r$ on 
a neighborhood of $p$ and 
a local coordinate $z$ centered at $p$, 
the logarithmic connection $\nabla$ is written as $\nabla = d + A(z)dz/z$ 
for some holomorphic function $A(z)$. 
Then $\Res_p \nabla =A(0)$. 

One can also define the notion of logarithmic connection in the algebro-geometric sense. 
However by GAGA, it is equivalent to the above notion.  

By Deligne's Riemann-Hilbert correspondence~\cite{Del}, 
there is a natural equivalence  
between the category of local systems $L$ on $X \setminus D$ 
and the category of logarithmic connections $(E,\nabla)$ on $(X,D)$ 
such that the real parts of eigenvalues of the residue $\Res_p \nabla$ 
are in $[0,1)$ for any $p \in D$. 
A ``filtered'' version of it was proved by Simpson~\cite{Sim}. 
To explain it, we introduce a ``filtered'' structure on 
a logarithmic connection, so-called a {\em parabolic structure}.

\begin{definition}\label{4.3.2}
Let $X$ be a compact Riemann surface and let $D \subset X$ be a finite subset. 
Let $(E,\nabla)$ be a logarithmic connection on $(X ,D)$. 

For $l=(l_p)_{p\in D} \in \N^D$, 
a {\em parabolic structure} on $(E,\nabla)$ of {\em filtration type} $l$ is a tuple 
$\FF = (\FF_p)_{p\in D}$, where for each $p\in D$, $\FF_p$ is a filtration  
\[
E|_p = \FF_p^0(E) \supset \FF_p^1(E) \supset \cdots
 \supset \FF_p^{l_p}(E) \supset \FF_p^{l_p+1}(E)=0
\]
by vector subspaces of the fiber $E|_p$ at $p$.

A logarithmic connection $(E,\nabla)$ 
together with a parabolic structure $\FF =(\FF_p)_{p\in D}$ 
is called a {\em parabolic connection} on $(X,D)$.
\end{definition}

\begin{definition}\label{4.3.3}
Let $\alpha = (\alpha_p^j \mid p\in D,~j=0,\dots ,l_p)$ be a tuple of rational numbers in $[0,1)$  
such that $\alpha_p^i < \alpha_p^j$ for any $p$ and $i <j$.  
A parabolic connection $(E,\nabla,\FF)$ is said to be {\em $\alpha$-semistable} if 
for any non-zero proper subbundle $F \subset E$ preserved by $\nabla$, 
the following inequality holds:
\[
\sum_{p\in D} \sum_j \alpha_p^j \frac{ \dim \left(  F|_p \cap \FF_p^j(E) \right)  /\left( 
F|_p \cap \FF_p^{j+1}(E) \right)  }{\rank F} 
\leq
\sum_{p\in D} \sum_j \alpha_p^j \frac{ \dim \left(  \FF_p^j(E)/
\FF_p^{j+1}(E) \right)  }{\rank E}.
\]
$(E,\nabla,\FF)$ is {\em $\alpha$-stable} if the strict inequality always holds.  
A direct sum $(E,\nabla,\FF)=\bigoplus_i (E_i, \nabla_i, \FF_i)$ of $\alpha$-stable parabolic connections  
satisfying  
\[
\sum_{p\in D} \sum_j \alpha_p^j \frac{ \dim \left(  (\FF_i)_p^j(E_i)/
(\FF_i)_p^{j+1}(E_i) \right)  }{\rank E_i}=
\sum_{p\in D} \sum_j \alpha_p^j \frac{ \dim \left(  \FF_p^j(E)/
\FF_p^{j+1}(E) \right)  }{\rank E}
\]
for all $i$ is said to be {\em $\alpha$-polystable}.  
\end{definition}

We now introduce the filtered version of Deligne's Riemann-Hilbert correspondence. 

\begin{theorem}[{\cite[Lemma 3.2]{Sim}}]\label{4.3.4}
Let $X$ be a compact Riemann surface and $D \subset X$ be a finite subset. 
Then there is a natural bijective correspondence between: 
\begin{enumerate}
\item[\rm (i)] isomorphism classes of filtered local systems $(L,\bF)$ on $(X,D)$ 
together with a weight $\beta$; and   
\item[\rm (ii)] isomorphism classes of parabolic connections $(E,\FF)$ on $(X,D)$ 
together with a weight $\alpha$.  
\end{enumerate}
For each $p\in D$, this correspondence induces a bijection between: 
\begin{align*}
&\rset{(\lambda,\alpha_p^j) \in \C \times [0,1)}{ 
\text{the action of $\Res_p \nabla$ on $\FF_p^j(E)/\FF_p^{j+1}(E)$ has an eigenvalue $\lambda$}}; 
\quad \text{and} \\ 
&\rset{(\xi,\beta_p^k) \in \C^\times \times \R \,}{
\begin{array}{l} 
\text{the monodromy of $\bF_p^k(L)/\bF_p^{k+1}(L)$} \\ 
\text{along a simple loop around $p$ (counterclockwise) has an eigenvalue $\xi$}
\end{array}
}, 
\end{align*}
which is explicitly given by $(\lambda, \alpha) \mapsto (\xi, \beta)$, where   
\[
\beta := \alpha - \Re \lambda, \qquad   
\xi := \exp (-2\pi \sqrt{-1} \lambda). 
\] 
Furthermore, if $(\lambda, \alpha_p^j)$ corresponds to $(\xi, \beta_p^k)$ under this bijection, 
then the generalized $\lambda$-eigen space of $\FF_p^j(E)/\FF_p^{j+1}(E)$ and  
the generalized $\xi$-eigen space of $\bF_p^k(L)/\bF_p^{k+1}(L)$  
have the same dimension.    
\end{theorem}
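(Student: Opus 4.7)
The plan is to construct the correspondence locally at each puncture, using Deligne's classical Riemann--Hilbert correspondence as the starting point and the weight data to dictate the choice of extension across $D$. Away from $D$, both categories reduce to local systems, equivalently holomorphic flat connections, on $X \setminus D$; the content of the theorem is therefore concentrated at each $p \in D$, where one must explain how a filtered local system $(L,\bF)$ with weight $\beta$ determines an extension $E$ across $p$ together with a parabolic filtration $\FF_p^\bullet$ and weight $\alpha$, and vice versa.

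For the forward direction, fix $p \in D$ and a coordinate disk $U_p$. The restriction $L|_{U_p^*}$ is determined by its monodromy $T \in \GL(L_*)$, which preserves the filtration $\bF_p^\bullet$. Decompose each graded piece $\bF_p^k(L)_*/\bF_p^{k+1}(L)_*$ into generalized eigenspaces of $T$; for each eigenvalue $\xi$ appearing there, take the unique logarithm $\lambda$ with $\exp(-2\pi\sqrt{-1}\lambda)=\xi$ and $\Re \lambda + \beta_p^k \in [0,1)$, and set $\alpha := \Re \lambda + \beta_p^k$. Build $E$ on $U_p$ as the Deligne extension adapted to these choices: a multivalued flat frame $e$ on each generalized eigenspace yields a single-valued holomorphic frame $\tilde e := z^{\lambda} e$, so that $\nabla = d + R\, dz/z + (\text{regular})$ with $\Res_p \nabla$ having the prescribed eigenvalue $\lambda$ on the corresponding subspace. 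Define $\FF_p^j(E)$ to be the direct sum of those generalized eigenspaces of $\Res_p \nabla$ whose assigned $\alpha$ satisfies $\alpha \geq \alpha_p^j$, where the $\alpha_p^j$'s are the distinct values of $\alpha$ so obtained, listed in increasing order. This produces $(E,\nabla,\FF)$ with weight $\alpha$, with the tautological eigenvalue/weight compatibility.

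For the inverse direction, start from $(E,\nabla,\FF)$ with weight $\alpha$ and apply Deligne's theorem on $X \setminus D$ to obtain the local system $L$. Near each $p$, decompose $E|_p$ into generalized eigenspaces of $\Res_p \nabla$ compatibly with $\FF_p^\bullet$; each eigenvalue $\lambda$ appearing in $\FF_p^j/\FF_p^{j+1}$ corresponds via $\xi := \exp(-2\pi\sqrt{-1}\lambda)$ to a subspace of $L|_{U_p^*}$ on which $T$ has generalized eigenvalue $\xi$, and we group these subspaces by the value $\beta := \alpha_p^j - \Re\lambda$ to define $\bF_p^\bullet$. Direct verification shows that the two constructions are mutually inverse, and the displayed eigenvalue/weight bijection, including the equality of generalized eigenspace dimensions, is built into the definitions.

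The main technical obstacle is combinatorial bookkeeping: eigenspaces sitting in a single level of $\bF_p^\bullet$ can receive distinct $\alpha$'s and be scattered across several levels of $\FF_p^\bullet$, while eigenspaces from different levels of $\bF_p^\bullet$ may merge into one level of $\FF_p^\bullet$. One must check that the $\alpha$'s so assigned are totally ordered and define an honest filtration, that the Deligne extension glues to a holomorphic bundle on $U_p$ independently of choices of local trivialization and of the decomposition into generalized eigenspaces, and that the whole construction is functorial with respect to isomorphisms on both sides. Once this bookkeeping is in place, the theorem reduces to Deligne's equivalence on $X \setminus D$ applied fibrewise over the eigenvalue/weight strata.
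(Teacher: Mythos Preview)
The paper does not supply its own proof of this theorem: it is quoted as Simpson's result (Lemma~3.2 of \cite{Sim}) and stated without proof. What the paper does provide, inside the proof of Theorem~4.3.5, is a recollection of the construction in the special case of full filtrations (each graded piece one-dimensional): one chooses multivalued flat sections $u_j \in \bF^j(L)\setminus \bF^{j+1}(L)$, picks the unique operator $R$ with $e^{-2\pi\sqrt{-1}R}=M$ and prescribed eigenvalues $\lambda^j$, and defines the single-valued frame $v_j=e^{R\log z}u_j$; the bundle $E$ is the lattice spanned by the $v_j$, the connection has residue $R$, and $\FF^j(E)=\bigoplus_{k\ge j}\C v_k(0)$.

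Your sketch is the general-filtration version of exactly this construction and is correct in outline. The one point where your wording is slightly loose is the passage from ``decompose each graded piece $\bF_p^k/\bF_p^{k+1}$ into generalized eigenspaces of $T$'' to ``a multivalued flat frame $e$ on each generalized eigenspace yields $\tilde e=z^\lambda e$'': the generalized eigenspace lives in the \emph{graded} piece, so you must first lift a basis of it to $\bF_p^k(L)_*$ compatibly with the filtration. Equivalently, the global operator $R$ on $L_*$ is not simply a branch of $\log T$ but is determined by the filtration together with the chosen $\lambda$'s on the graded pieces (this is where the same monodromy eigenvalue $\xi$ can receive different logarithms $\lambda$ according to which step of $\bF_p^\bullet$ it sits in). Once you say this, your construction agrees with the one the paper recalls and with Simpson's original argument; the remaining ``bookkeeping'' paragraph you wrote is an accurate summary of what has to be verified, and nothing essential is missing.
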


Recently, Inaba constructed the moduli space 
of $\alpha$-semistable {\em $\lambda$-parabolic connections} $(E,\nabla,\FF)$ on $(X,D)$~\cite{Ina} 
of rank $r>0$ for a given tuple $\lambda =(\lambda_p^j \mid p\in D, j=0, \dots ,r-1)$, 
where $\lambda$-parabolic connection means a parabolic connection of full filtration type 
(i.e., $l_p = r-1$ and $\dim \FF_p^j(E)=r -j$) and 
$(\Res_p \nabla -\lambda_p^j)(\FF_p^j(E)) \subset \FF_p^{j+1}(E)$ for each $p, j$.    
(We will use the word ``$\xi$-filtered local system'' by a similar manner.) 
We denote this moduli space by 
$\MM_{\lambda,\alpha}(X,D;r)$.  
Its stable locus $\MMreg_{\lambda,\alpha}(X,D;r)$ has naturally 
an algebraic symplectic structure. 

Now consider the case of $X=\P^1$. 
We can take $\alpha$ to be generic so that 
\[
\MM_{\lambda,\alpha}(\P^1,D;r)=\MMreg_{\lambda,\alpha}(\P^1,D;r). 
\]
Inaba showed that if $rn-2r-2 >0$ and $r \geq 2$ ($n$ is the cardinality of $D$), 
then $\MM_{\lambda,\alpha}(\P^1,D;r)$ is an irreducible variety of dimension 
$(r-1)(rn-2r-2)$~\cite[Proposition 4.3]{Ina}. 
We assume further that $\alpha_p^i - \Re \lambda_p^i \neq \alpha_p^j - \Re \lambda_p^j$ 
for $i \neq j$ so that one can take a permutation $\sigma_p \in \mathfrak{S}_{l_p+1}$ 
such that 
\[
i < j \quad \Longrightarrow \quad  
\alpha_p^{\sigma_p(i)} - \Re \lambda_p^{\sigma_p(i)} <   
\alpha_p^{\sigma_p(j)} - \Re \lambda_p^{\sigma_p(j)}.  
\] 
Then under Simpson's Riemann-Hilbert correspondence, 
an $\alpha$-semistable $\lambda$-parabolic connection correspond to 
a $\beta$-semistable $\xi$-filtered local system, 
where $\beta$ and $\lambda$ are given by
\[
\beta_p^j := \alpha_p^{\sigma_p(j)} - \Re \lambda_p^{\sigma_p(j)} \qquad 
\text{and} \qquad 
\xi_p^j := \exp (-2\pi \sqrt{-1} \lambda_p^{\sigma_p(j)}).  
\]
Assume $\Re \lambda_p^k \in \Q$ so that $\beta_p^j \in \Q$.  

\begin{theorem}\label{4.3.5}  
Under the above notation and assumptions, 
let $(I,\Omega)$ be a star-shaped quiver with $n$ arms  
such that the length $l_i$ of the $i$-th arm is equal to $r-1$ for any $i$.  
Set $q,\theta$ as in Theorem~\ref{4.2.5}, 
and take an $I$-graded vector space $V$ with $\dim V_0=r$, $\dim V_{i,j}=r-j$.  
Then Simpson's Riemann-Hilbert correspondence 
gives a symplectic biholomorphic map  
between $\MM_{\lambda,\alpha}(\P^1,D;r)$ and $\MMreg_{q,\theta}(V)=\MM_{q,\theta}(V)$.   
\end{theorem}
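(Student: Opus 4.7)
The plan is to compose Theorem~\ref{4.2.5} with Simpson's parabolic Riemann-Hilbert correspondence (Theorem~\ref{4.3.4}) to produce a set-theoretic bijection, and then to promote it to a complex analytic symplectomorphism.

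First I would check that the genericity of $\alpha$ (which guarantees $\MM_{\lambda,\alpha}(\P^1,D;r)=\MMreg_{\lambda,\alpha}(\P^1,D;r)$) translates to the genericity of $(q,\theta)$ appearing in Proposition~\ref{3.1.5}, so that $\MMreg_{q,\theta}(V)=\MM_{q,\theta}(V)$ holds as well. Assuming this, the bijection is built in two steps: Theorem~\ref{4.3.4} sends a polystable $\lambda$-parabolic connection $(E,\nabla,\FF)$ to a polystable filtered local system $(L,\bF)$, where the permutations $\sigma_p$ reindex each filtration so the weights $\beta_p^j$ are increasing in $j$ and the graded local monodromies become the scalars $\xi_p^j$; then Theorem~\ref{4.2.5} identifies the latter with a point of $\MM_{q,\theta}(V)$. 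On the set-theoretic level one must verify that the two slope-stability inequalities match under the substitution $\beta_p^j = \alpha_p^{\sigma_p(j)} - \Re \lambda_p^{\sigma_p(j)}$, which is a direct computation once one observes that the normalization $\theta \cdot \dim V = 0$ and the reindexing $\sigma_p$ absorb the $\lambda$-dependent shift in weights.

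The next step is to upgrade this bijection to a complex analytic isomorphism. The essential input is the classical fact that the monodromy of a meromorphic connection depends holomorphically on the connection; the filtered refinement requires the analytic dependence of the Deligne extension, and hence of the induced filtrations on the fibers, on the residues of the connection. Running this on a versal analytic family of parabolic connections produces a holomorphic map from $\MM_{\lambda,\alpha}(\P^1,D;r)$ to the analytification of $\MM_{q,\theta}(V)$. Since both spaces are smooth of the same dimension $(r-1)(rn-2r-2)$ (by Inaba's formula on the connection side and by a direct computation of $2-(\dim V,\dim V)$ using Theorem~\ref{3.1.4} on the quiver side) and the map is bijective, it is a biholomorphism.

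The final and hardest step is the comparison of symplectic structures. On the parabolic-connection side the form comes from cup product on the hypercohomology computing first-order deformations of $(E,\nabla,\FF)$; on the quiver side it arises from the quasi-Hamiltonian reduction of Proposition~\ref{3.1.2}. My plan is to identify both tangent spaces with a common hypercohomology-type description via deformation theory of filtered local systems, reduce the comparison to a local calculation near each puncture, and then show that the two forms agree there. The main obstacle is precisely this local calculation: one must track, vertex by vertex $[i,j]$ along the $i$-th arm of the star, the contribution to the quasi-Hamiltonian form of each factor $(1+x_{h}x_{\ov h})^{\epsilon(h)}$ and match it with a residue pairing at $p_i$, while correctly handling the reordering by $\sigma_p$ and the additional Maurer-Cartan-type terms that distinguish the quasi-Hamiltonian form $\varpi$ from its Hamiltonian limit $\omega$. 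This is the parabolic-logarithmic extension of Hitchin's symplectic comparison theorem cited in the introduction.
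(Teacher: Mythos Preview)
Your overall strategy matches the paper's: build the bijection from Theorems~\ref{4.2.5} and~\ref{4.3.4}, verify holomorphy of the Riemann--Hilbert map via analytic dependence of monodromy on parameters, and finally compare symplectic forms. Two points deserve comment.

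First, your appeal to Proposition~\ref{3.1.5} to deduce $\MMreg_{q,\theta}(V)=\MM_{q,\theta}(V)$ is not the route the paper takes, and it is not obvious that genericity of $\alpha$ translates into the arithmetic condition $(q,\theta)\notin\bigcup E_\alpha\times D_\alpha$ of that proposition. The equality holds for a simpler reason: once the bijection is in place and preserves (poly)stability, the absence of strictly $\alpha$-semistable parabolic connections forces the absence of strictly $\theta$-semistable points on the quiver side. The paper in fact postpones this and reverses your order of argument: it deduces biholomorphism only at the very end, as a consequence of the symplectic property (the Jacobian of a symplectic map is nowhere vanishing), rather than via a separate dimension count.

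Second, and more substantially, your plan for the symplectic comparison diverges from the paper's and is considerably harder. You propose a direct Hitchin-style local calculation, tracking the contribution of each arm of the star and matching the Maurer--Cartan correction terms in $\varpi$ against residue pairings at the punctures. The paper avoids this entirely by a density trick combined with two black boxes. Since the condition $\RH^*\omega=\omega_{\rm par}$ is closed in the parameter $\lambda$, one may assume $\lambda$ is generic; then (by Inaba) the forgetful map $\pi\colon\MM_{q,\theta}(V)\to\MM_{q,0}(V)$ is already an isomorphism, and $\MM_{q,0}(V)$ is the character variety $\mc{R}$ of Proposition~\ref{4.1.1}. At this point the quasi-Hamiltonian symplectic form on $\mc{R}$ is identified with the Atiyah--Bott form by Alekseev--Malkin--Meinrenken~\cite{AMM}, while Inaba's form on the parabolic-connection side is identified with the $L^2$-cohomology pairing $\int_X\Tr(u\wedge v)$ via Biquard~\cite{Biq}; these two coincide under the monodromy map, and the comparison is finished. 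Your direct route could in principle succeed, but the cross-terms $\Phi_h^{-1}d\Phi_h\wedge\cdots$ in $\varpi$ couple different vertices and make a clean ``vertex by vertex'' decomposition delicate; the paper's reduction to generic $\lambda$ and to known global results is the shortcut you are missing.
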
 

\begin{proof}
First of all we recall Simpson's Riemann-Hilbert correspondence. 
This correspondence can be constructed locally, 
so we may replace $X=\P^1$ with the unit open disk $\{\, z \in \C \mid \lvert z \rvert <1\,\}$ 
and assume $D=\{ 0 \}$. 
Also, for simplicity we assume that the permutation $\sigma_p$ for $p=0$ is an identity. 
Let $(L,\bF)$ be a $\xi$-filtered local system on $(X,D)$. 
$L$ corresponds to a holomorphic bundle with connection $(E',\nabla)$ on $X \setminus D$. 
Take multi-valued flat sections $u_0, u_1, \dots ,u_{r-1}$ of $E'$ 
such that $u_j \in \bF^j(L) \setminus \bF^{j+1}(L)$ 
(we omit the subscript $0\in D$).   
Let $M \in \End E'$ be the monodromy operator 
and let $R$ be a unique operator such that $e^{-2\pi \sqrt{-1}R}=M$ 
and the eigenvalues of $R$ are $\lambda^0, \dots ,\lambda^{r-1}$.   
Then 
\[
v_j(z) := e^{R \log z}u_j(z)
\]
becomes a single-valued holomorphic section of $E'$, 
since when $z$ moves along a simple loop around $p$ 
once counterclockwise, 
$v_j$ goes to    
\[
e^{R \log z} e^{2\pi \sqrt{-1}R} M u_j = v_j.
\]   
If we denote by $\tilde{E'}$ the sheaf of meromorphic section of the Deligne extension of $E'$ 
having pole only at $D$, 
then $v_j$ can be considered as a section of $\tilde{E'}$.    
Let $E$ be the subsheaf of $\tilde{E'}$  
generated by $v_0, \dots ,v_{r-1}$. 
Then $E$ is locally free of rank $r$, and $\nabla$ defines a logarithmic connection on $E$ 
since $\nabla v_j = R v_j dz/z$  
(Note that since $e^{R \log z}$ commutes with $R$, 
the representation matrix of $R$ with respect to the framing $(v_0, \dots ,v_{r-1})$ 
is the same as the one with respect to $(u_0, \dots ,u_{r-1})$, 
and so it is a constant matrix).  
Moreover since $u_j \in \bF^j \setminus \bF^{j+1}$, 
if we let $N$ be the nilpotent part of $R$ then $Rv_j =(\lambda^j +N)v_j$. 
Thus $v_j(0) \in E|_0$ lies in the generalized eigenspace for 
$\Res_0 \nabla$ with eigenvalue $\lambda^j$.  
Hence setting 
\[
\FF^j(E) := \bigoplus_{k \geq j} \C v_k(0) \subset E|_0,
\]
we get a $\lambda$-parabolic connection $(E,\nabla,\FF)$ on $(X,D)$. 

This construction gives a bijection from the set of isomorphism classes 
of $\beta$-stable $\xi$-filtered local systems on $(\P^1,D)$ 
to the set of isomorphism classes of $\alpha$-stable $\lambda$-parabolic connections on $(\P^1,D)$ 
(see \cite{Sim}). Using this fact, let us consider the inverse map. 

Assume again that $X$ is the unit open disk in $\C$ and $D=\{ 0\}$. 
Let $(E,\nabla,\FF)$ be a $\lambda$-parabolic connection on $(X,D)$. 
Let $L$ be the corresponding local system on $X \setminus D$, 
and let $M, R, N$ be as in the previous paragraph.    
Take a basis $(e_0,e_1, \dots ,e_{r-1})$ of $E|_0$ compatible with the filtration $\FF$. 
By the above fact, we can take 
a framing $(v_0, \dots ,v_{r-1})$ of $E$ such that 
\[
\nabla v_j = Rv_j dz/z, \qquad v_j(0)=e_j.
\]    
Then setting $u_j := e^{-R \log z} v_j$,   
we get multi-valued flat sections of $L$. 
Since $(R v_j)(0) = (\Res_0 \nabla) e_j$, 
we have $(R-\lambda_j) u_j \in \sum_{k>j}\C u_k$. 
Thus if we set $\bF^j(L) \subset L$ by the subsheaf generated by $u_j, \dots ,u_{r-1}$, 
then $(L,\bF)$ is a $\xi$-filtered local system on $(X,D)$. 
Note that $v_j$ is uniquely determined by the differential equation 
$\nabla v_j =Rv_j dz/z$ and the condition $v_j(0)=e_j$. 
Now recall that if a differential equation has complex analytic parameters 
then a solution of it also depends complex analytically on the parameters.     
Thus if $(E,\nabla,\FF)$ varies complex analytically, 
then the corresponding local system $L$, the monodromy operator $M$ and 
the filtration on $L$ which determined by $v_j$ also vary complex analytically. 
This implies that the map $\RH \colon \MM_{\lambda,\alpha}(\P^1,D;r) \to \MM_{q,\theta}(V)$ 
given by Simpson's correspondence is complex analytic.  

Next we prove that the map $\RH$ is symplectic. 
First note that since the claim is a closed condition, 
we may assume that $\lambda$ is generic so that the morphism 
$\pi \circ \RH \colon \MM_{\lambda,\alpha}(\P^1,D;r) \to \MM_{q,0}(V)$ 
is a complex analytic isomorphism~(see \cite[Theorem 2.2]{Ina}), 
where $\pi \colon \MM_{q,\theta}(V) \to \MM_{q,0}(V)$ is the canonical 
projective morphism.  
This implies that $\MMreg_{q,0}(V)=\MM_{q,0}(V)$, 
that $\pi$ is a symplectic isomorphism,  
and that $\RH = \pi^{-1} \circ ( \pi \circ \RH )$ is biholomorphic. 
Now take an arbitrary $[\rho] \in \MM_{q,0}(V)$ and 
let $\mc{C}_i$ denote the conjugacy class of $\rho(\gamma_i)$.  
Then we can write $\MM_{q,0}(V)=\MMreg_{q,0}(V)$ as the variety $\mc{R}$ 
associated to $\mc{C}_i$ by Proposition~\ref{4.1.1}.  
We have remarked that $\mc{R}$ has naturally an algebraic symplectic structure, 
and it is isomorphic to $\MM_{q,0}(V)$ as an algebraic symplectic manifold   
via this identification~(see Remark~\ref{4.1.2}).  
Thus the remaining task is to compare the symplectic structure on 
$\mc{R}$ and that on $\MM_{\lambda,\alpha}(\P^1,D;r)$. 
To do this, we use the following fact proved by Alekseev-Malkin-Meinrenken. 
Let $\Sigma$ be the compact Riemann surface with boundary 
obtained by cutting out an open disk $U_p$ centered at $p$ for each $p \in D$.
Then we have $\pi_1(\P^1 \setminus D,*) \simeq \pi_1(\Sigma,*)$ canonically, 
and hence we can identify the variety $\mc{R}$ with 
the moduli space of irreducible flat $C^{\infty}$-connections on $\Sigma$ 
with the holonomy along $\partial U_{p_i}$ lying in $\mc{C}_i$ for each $i$. 
This moduli space is actually smooth, and 
by the method of Atiyah-Bott 
we can construct naturally a symplectic structure on it. 
Alekseev-Malkin-Meinrenken~\cite{AMM} showed that this symplectic structure coincides 
with the one on $\mc{R}$. 
On the other hand, Biquard~\cite{Biq} constructed a natural isomorphism between 
the Zariski tangent space of $\MM_{\lambda,\alpha}(\P^1,D;r)$ at a point $[(E,\nabla,\FF)]$ 
and the degree 1 $L^2$-cohomology of the complex 
$\Omega^\bullet (X \setminus D, \End E)$  
of the spaces of $C^\infty$-forms on $X \setminus D$ with coefficients in $\End E$, 
with the differential given by the flat $C^\infty$-connection $D=\nabla + \ov{\partial}$. 
One can easily check that Inaba's symplectic form on $\MM_{\lambda,\alpha}(\P^1,D;r)$ 
goes to the form on $L^2$-cohomology induced from $(u,v) \mapsto \int_X \Tr\, (u \wedge v)$, 
and this pairing comes from the Atiyah-Bott symplectic structure 
on $\mc{R}$ via the map $\MM_{\lambda,\alpha}(\P^1,D;r) \to \mc{R}$. 
Hence $\RH$ is symplectic. 

Since the determinant of the Jacobian of a symplectic map is 
everywhere non-vanishing, $\RH$ is biholomorphic.  
\end{proof}
 
\subsection{Higher genus case}\label{4.4}

In this subsection let us consider the higher genus case. 
In this case, we cannot describe the moduli space of filtered local systems 
as some multiplicative quiver variety,   
but the quasi-Hamiltonian method still goes through. 

First of all let us consider any quiver $(I,\Omega)$. 
Let $H^\ell$ be the subset of $H$ which consists of all loops in $H$; 
$H^\ell := \{\, h \in H \mid \vin(h)=\vout(h) \,\}$, 
and set $\Omega^\ell := H^\ell \cap \Omega$.  
For an $I$-graded vector space $V$,   
we define an open subset $\bMl(V) \subset \bM(V)$ by 
\[
\bMl(V) := \{\, x \in \bM(V) \mid 
\text{$\det x_h \neq 0$ for $h \in H^\ell$, and 
$\det (1+x_h x_{\ov h}) \neq 0$ for $h \in H \setminus H^\ell$}\,\}.
\]  
This is a $\varphi$-saturated open subset of $\bM(V)$, 
where $\varphi \colon \bM(V) \to \bM(V) \GIT G_V$ is the quotient morphism. 
For any $i \in I$, 
the variety $\GL(V_i) \times \GL(V_i)$ has a quasi-Hamiltonian $\GL(V_i) \times \GL(V_i)$-structure 
whose group-valued moment map is $(a,b) \mapsto (ab,a^{-1}b^{-1})$~(see Example~\ref{2.4.3}). 
Thus by fusioning, we can construct a quasi-Hamiltonian $G_V$-structure on $\bMl(V)$ 
whose group-valued moment map $\Psi_V \colon \bMl(V) \to G_V$ is given by  
\[
(\Psi_V)_i(x):= \prod^<_{h \in H_i \cap \Omega^\ell} [x_h, x_{\ov h}]^{\rm m} 
\prod^<_{h \in H_i \setminus H^\ell} (1+x_h x_{\ov h})^{\epsilon(h)},
\]
where $[ x_h, x_{\ov h} ]^{\rm m} := x_h x_{\ov h} x_h^{-1} x_{\ov h}^{-1}$ 
and we have fixed a total order $<$ on $\Omega^\ell$ and that on $H \setminus H^\ell$.   
Thus for each $(q,\theta) \in (\C^{\times})^I \times \Q^I$, we get a 
variety 
\[
\MMl_{q,\theta}(V)= \left(  \Psi_V^{-1}(q) \cap \bMss(V) \right)   \GIT G_V, 
\]
and its open subset 
\[
\MMlreg_{q,\theta}(V) = \left(  \Psi_V^{-1}(q) \cap \bMs(V) \right)   / G_V
\]
which carries an algebraic symplectic structure. 

Now consider a star-shaped quiver with $g$ loops $(I,\Omega)$ as the following picture: 

\vspace{10pt}

\begin{center}
\unitlength 0.1in
\begin{picture}( 52.1000, 15.4500)(  4.0000,-17.0000)
%
\special{pn 8}%
\special{ar 1376 1010 70 70  0.0000000 6.2831853}%
%
\special{pn 8}%
\special{ar 1946 410 70 70  0.0000000 6.2831853}%
%
\special{pn 8}%
\special{ar 2946 410 70 70  0.0000000 6.2831853}%
%
\special{pn 8}%
\special{ar 5540 410 70 70  0.0000000 6.2831853}%
%
\special{pn 8}%
\special{ar 1946 810 70 70  0.0000000 6.2831853}%
%
\special{pn 8}%
\special{ar 2946 810 70 70  0.0000000 6.2831853}%
%
\special{pn 8}%
\special{ar 5540 810 70 70  0.0000000 6.2831853}%
%
\special{pn 8}%
\special{ar 1946 1610 70 70  0.0000000 6.2831853}%
%
\special{pn 8}%
\special{ar 2946 1610 70 70  0.0000000 6.2831853}%
%
\special{pn 8}%
\special{ar 5540 1610 70 70  0.0000000 6.2831853}%
%
\special{pn 8}%
\special{pa 1890 1560}%
\special{pa 1440 1050}%
\special{fp}%
\special{sh 1}%
\special{pa 1440 1050}%
\special{pa 1470 1114}%
\special{pa 1476 1090}%
\special{pa 1500 1088}%
\special{pa 1440 1050}%
\special{fp}%
%
\special{pn 8}%
\special{pa 2870 410}%
\special{pa 2020 410}%
\special{fp}%
\special{sh 1}%
\special{pa 2020 410}%
\special{pa 2088 430}%
\special{pa 2074 410}%
\special{pa 2088 390}%
\special{pa 2020 410}%
\special{fp}%
%
\special{pn 8}%
\special{pa 3720 410}%
\special{pa 3010 410}%
\special{fp}%
\special{sh 1}%
\special{pa 3010 410}%
\special{pa 3078 430}%
\special{pa 3064 410}%
\special{pa 3078 390}%
\special{pa 3010 410}%
\special{fp}%
\special{pa 3730 410}%
\special{pa 3010 410}%
\special{fp}%
\special{sh 1}%
\special{pa 3010 410}%
\special{pa 3078 430}%
\special{pa 3064 410}%
\special{pa 3078 390}%
\special{pa 3010 410}%
\special{fp}%
%
\special{pn 8}%
\special{pa 2870 810}%
\special{pa 2020 810}%
\special{fp}%
\special{sh 1}%
\special{pa 2020 810}%
\special{pa 2088 830}%
\special{pa 2074 810}%
\special{pa 2088 790}%
\special{pa 2020 810}%
\special{fp}%
%
\special{pn 8}%
\special{pa 2870 1610}%
\special{pa 2020 1610}%
\special{fp}%
\special{sh 1}%
\special{pa 2020 1610}%
\special{pa 2088 1630}%
\special{pa 2074 1610}%
\special{pa 2088 1590}%
\special{pa 2020 1610}%
\special{fp}%
%
\special{pn 8}%
\special{pa 3730 810}%
\special{pa 3020 810}%
\special{fp}%
\special{sh 1}%
\special{pa 3020 810}%
\special{pa 3088 830}%
\special{pa 3074 810}%
\special{pa 3088 790}%
\special{pa 3020 810}%
\special{fp}%
\special{pa 3740 810}%
\special{pa 3020 810}%
\special{fp}%
\special{sh 1}%
\special{pa 3020 810}%
\special{pa 3088 830}%
\special{pa 3074 810}%
\special{pa 3088 790}%
\special{pa 3020 810}%
\special{fp}%
%
\special{pn 8}%
\special{pa 3730 1610}%
\special{pa 3020 1610}%
\special{fp}%
\special{sh 1}%
\special{pa 3020 1610}%
\special{pa 3088 1630}%
\special{pa 3074 1610}%
\special{pa 3088 1590}%
\special{pa 3020 1610}%
\special{fp}%
\special{pa 3740 1610}%
\special{pa 3020 1610}%
\special{fp}%
\special{sh 1}%
\special{pa 3020 1610}%
\special{pa 3088 1630}%
\special{pa 3074 1610}%
\special{pa 3088 1590}%
\special{pa 3020 1610}%
\special{fp}%
%
\special{pn 8}%
\special{pa 5466 410}%
\special{pa 4746 410}%
\special{fp}%
\special{sh 1}%
\special{pa 4746 410}%
\special{pa 4812 430}%
\special{pa 4798 410}%
\special{pa 4812 390}%
\special{pa 4746 410}%
\special{fp}%
%
\special{pn 8}%
\special{pa 5466 810}%
\special{pa 4746 810}%
\special{fp}%
\special{sh 1}%
\special{pa 4746 810}%
\special{pa 4812 830}%
\special{pa 4798 810}%
\special{pa 4812 790}%
\special{pa 4746 810}%
\special{fp}%
%
\special{pn 8}%
\special{pa 5466 1610}%
\special{pa 4746 1610}%
\special{fp}%
\special{sh 1}%
\special{pa 4746 1610}%
\special{pa 4812 1630}%
\special{pa 4798 1610}%
\special{pa 4812 1590}%
\special{pa 4746 1610}%
\special{fp}%
%
\special{pn 8}%
\special{pa 1880 840}%
\special{pa 1450 990}%
\special{fp}%
\special{sh 1}%
\special{pa 1450 990}%
\special{pa 1520 988}%
\special{pa 1500 972}%
\special{pa 1506 950}%
\special{pa 1450 990}%
\special{fp}%
%
\special{pn 8}%
\special{pa 1900 460}%
\special{pa 1430 960}%
\special{fp}%
\special{sh 1}%
\special{pa 1430 960}%
\special{pa 1490 926}%
\special{pa 1468 922}%
\special{pa 1462 898}%
\special{pa 1430 960}%
\special{fp}%
%
\special{pn 8}%
\special{sh 1}%
\special{ar 1946 1010 10 10 0  6.28318530717959E+0000}%
\special{sh 1}%
\special{ar 1946 1210 10 10 0  6.28318530717959E+0000}%
\special{sh 1}%
\special{ar 1946 1410 10 10 0  6.28318530717959E+0000}%
\special{sh 1}%
\special{ar 1946 1410 10 10 0  6.28318530717959E+0000}%
%
\special{pn 8}%
\special{sh 1}%
\special{ar 4056 410 10 10 0  6.28318530717959E+0000}%
\special{sh 1}%
\special{ar 4266 410 10 10 0  6.28318530717959E+0000}%
\special{sh 1}%
\special{ar 4456 410 10 10 0  6.28318530717959E+0000}%
\special{sh 1}%
\special{ar 4456 410 10 10 0  6.28318530717959E+0000}%
%
\special{pn 8}%
\special{sh 1}%
\special{ar 4056 810 10 10 0  6.28318530717959E+0000}%
\special{sh 1}%
\special{ar 4266 810 10 10 0  6.28318530717959E+0000}%
\special{sh 1}%
\special{ar 4456 810 10 10 0  6.28318530717959E+0000}%
\special{sh 1}%
\special{ar 4456 810 10 10 0  6.28318530717959E+0000}%
%
\special{pn 8}%
\special{sh 1}%
\special{ar 4056 1610 10 10 0  6.28318530717959E+0000}%
\special{sh 1}%
\special{ar 4266 1610 10 10 0  6.28318530717959E+0000}%
\special{sh 1}%
\special{ar 4456 1610 10 10 0  6.28318530717959E+0000}%
\special{sh 1}%
\special{ar 4456 1610 10 10 0  6.28318530717959E+0000}%
\put(19.7000,-2.4500){\makebox(0,0){$[1,1]$}}%
\put(29.7000,-2.4000){\makebox(0,0){$[1,2]$}}%
\put(55.7000,-2.5000){\makebox(0,0){$[1,l_1]$}}%
\put(19.7000,-6.5500){\makebox(0,0){$[2,1]$}}%
\put(29.7000,-6.4500){\makebox(0,0){$[2,2]$}}%
\put(55.7000,-6.5500){\makebox(0,0){$[2,l_2]$}}%
\put(19.7000,-17.8500){\makebox(0,0){$[n,1]$}}%
\put(29.7000,-17.8500){\makebox(0,0){$[n,2]$}}%
\put(55.7000,-17.8500){\makebox(0,0){$[n,l_n]$}}%
\put(14.3000,-7.6000){\makebox(0,0){$0$}}%
\special{pn 8}%
\special{sh 1}%
\special{ar 2950 1010 10 10 0  6.28318530717959E+0000}%
\special{sh 1}%
\special{ar 2950 1210 10 10 0  6.28318530717959E+0000}%
\special{sh 1}%
\special{ar 2950 1410 10 10 0  6.28318530717959E+0000}%
\special{sh 1}%
\special{ar 2950 1410 10 10 0  6.28318530717959E+0000}%
\special{pn 8}%
\special{ar 1110 1000 290 220  0.4187469 5.9693013}%
\special{pn 8}%
\special{pa 1368 1102}%
\special{pa 1376 1090}%
\special{fp}%
\special{sh 1}%
\special{pa 1376 1090}%
\special{pa 1324 1138}%
\special{pa 1348 1136}%
\special{pa 1360 1158}%
\special{pa 1376 1090}%
\special{fp}%
\special{pn 8}%
\special{ar 910 1000 510 340  0.2464396 6.0978374}%
\special{pn 8}%
\special{pa 1400 1096}%
\special{pa 1406 1084}%
\special{fp}%
\special{sh 1}%
\special{pa 1406 1084}%
\special{pa 1362 1138}%
\special{pa 1384 1132}%
\special{pa 1398 1152}%
\special{pa 1406 1084}%
\special{fp}%
\special{pn 8}%
\special{sh 1}%
\special{ar 540 1000 10 10 0  6.28318530717959E+0000}%
\special{sh 1}%
\special{ar 620 1000 10 10 0  6.28318530717959E+0000}%
\special{sh 1}%
\special{ar 700 1000 10 10 0  6.28318530717959E+0000}%
\special{pn 8}%
\special{ar 1200 1000 170 100  0.7298997 5.6860086}%
\special{pn 8}%
\special{pa 1314 1076}%
\special{pa 1328 1068}%
\special{fp}%
\special{sh 1}%
\special{pa 1328 1068}%
\special{pa 1260 1084}%
\special{pa 1282 1094}%
\special{pa 1280 1118}%
\special{pa 1328 1068}%
\special{fp}%
\end{picture}%
\end{center}

\vspace{10pt}

\begin{theorem}\label{4.4.1}
Let $X$ be a compact Riemann surface with genus $g >0$, 
and let $D=\{ p_1, \dots ,p_n \}$ 
be a finite subset of $X$ with cardinality $n$. 
Take an arbitrary $l \in \N^D$, 
and let $\xi =(\xi_p^j \mid p \in D,~j=0,\dots , l_p)$ be a tuple of 
non-zero complex numbers, 
$\beta = (\beta_p^j \mid p \in D,~j=0,\dots , l_p)$ be a tuple of 
rational numbers such that $\beta_p^i < \beta_p^j$ for any $p$ and $i <j$.
Take a star-shaped quiver $(I,\Omega)$ with $g$ loops as above, 
such that the number of arms is $n$ and the length of the $i$-th arm is $l_{p_i}$.   
Then for any $I$-graded vector space $V$,  
setting $(q,\theta) \in (\C^{\times})^I \times \Q^I$ by
\begin{align*}
\theta_{i,j} &:= \beta_{p_i}^j -\beta_{p_i}^{j-1}, \qquad 
\theta_0 := -\frac{\sum_{[i,j] \in I_0}\theta_{i,j} \dim V_{i,j}}{\dim V_0}, \\
q_{i,j} &:= \xi_{p_i}^{j-1}/\xi_{p_i}^{j}, \quad \qquad q_0 := \prod_i (\xi_{p_i}^0)^{-1}, 
\end{align*}
there is a natural bijection between $\MMl_{q,\theta}(V)$
and the set of isomorphism classes of 
$\beta$-polystable filtered local systems $(L,\bF)$ on $(X,D)$ 
satisfying: 
\begin{itemize}
\item $\rank L =\dim V_0,~\rank \bF_{p_i}^j(L)=\dim V_{i,j}$; 
\item the local monodromy of $\bF_{p_i}^j(L)/\bF_{p_i}^{j+1}(L)$ around $p_i$ is given by  
the scalar multiplication by $\xi_{p_i}^j$ for all $i,j$.
\end{itemize} 
Under this map, a point in $\MMlreg_{q,\theta}(V)$ corresponds to 
an isomorphism class of $\beta$-stable filtered local systems. 
\end{theorem}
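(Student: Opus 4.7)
The plan is to follow the blueprint of Theorem~\ref{4.2.5} step by step, replacing the presentation $\langle \gamma_1,\dots,\gamma_n \mid \gamma_1\cdots\gamma_n=1\rangle$ of $\pi_1(\P^1\setminus D,*)$ with the standard presentation
\[
\pi_1(X\setminus D,*) = \bigl\langle \alpha_1,\beta_1,\dots,\alpha_g,\beta_g,\gamma_1,\dots,\gamma_n \bigm| \textstyle\prod_{k=1}^g[\alpha_k,\beta_k]\cdot\gamma_1\cdots\gamma_n=1\bigr\rangle
\]
of the punctured Riemann surface. The $g$ loops at the central vertex~$0$ will supply the monodromies along $\alpha_k,\beta_k$, while the $n$ arms handle the filtered structure at each puncture exactly as in the star-shaped case.

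First, given a $\theta$-semistable $x=(a,b)\in \Psi_V^{-1}(q)\cap \bMss_\theta(V)$, enumerate the loops at vertex~$0$ as $h_1,\dots,h_g\in \Omega^\ell$ and set $M_k:=x_{h_k}\in \GL(V_0)$, $N_k:=x_{\ov{h_k}}\in \GL(V_0)$ (invertible by definition of $\bMl(V)$), together with $A_i:=\xi_{p_i}^0(1+a_{i,0}b_{i,0})\in \GL(V_0)$. Choosing the total order on $H_0$ so that all loops precede the arm-arrows (permissible by Proposition~\ref{3.1.3}), the multiplicative preprojective relation $(\Psi_V)_0(x)=q_0$ together with the choice $q_0=\prod_i(\xi_{p_i}^0)^{-1}$ reads
\[
\prod_{k=1}^g [M_k,N_k]^{\rm m}\,\cdot\,A_1A_2\cdots A_n = 1,
\]
so the assignment $\alpha_k\mapsto M_k$, $\beta_k\mapsto N_k$, $\gamma_i\mapsto A_i$ defines a representation $\rho$ of $\pi_1(X\setminus D,*)$ on $V_0$, and hence a local system~$L$.

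Second, at each puncture $p_i$ the arm at $[i,j]$-vertices is entirely a local object: the multiplicative preprojective relations at vertices $[i,1],\dots,[i,l_i]$ involve only the maps on that arm, so Lemma~\ref{4.2.1} applies verbatim to produce the filtration $\bF_{p_i}^j(L):=\Im(a_{i,0}\cdots a_{i,j-1})$ of ranks $\dim V_{i,j}$, with $A_i-\xi_{p_i}^j$ carrying $\bF_{p_i}^j$ into $\bF_{p_i}^{j+1}$. The stability comparison is also essentially the argument of Lemma~\ref{4.2.2}: a local subsystem $M\subset L$ corresponds to an $(a,b)$-invariant $I$-graded subspace $S\subset V$ with $S_0=M_*$ and $S_{i,j}=(a_{i,0}\cdots a_{i,j-1})^{-1}(M_*\cap \bF_{p_i}^j(L)_*)$, provided one first checks that $S_0$ is preserved by all of $M_k,N_k$ as well as the $A_i$'s. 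This last point uses that $S_0$ is the stalk of a local subsystem of $L$, hence invariant under the full image of $\rho$. The inequality $\theta\cdot \dim S\le 0$ then translates into the $\beta$-semistability inequality on $(L,\bF)$ with the prescribed weight $\beta$ and filtration type $l$, and strictness is preserved.

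Third, the inverse direction reverses these assignments: given a $\beta$-polystable filtered local system $(L,\bF)$ with the prescribed discrete data, choose a basepoint and a presentation of $\pi_1(X\setminus D,*)$ as above, set $V_0=L_*$, $V_{i,j}=\bF_{p_i}^j(L)_*$, let $M_k,N_k$ be the monodromies along $\alpha_k,\beta_k$, and define $a_{i,j},b_{i,j}$ on each arm by the formulas of Theorem~\ref{4.2.5}. The relation $\prod_k[M_k,N_k]\prod_iA_i=1$ in $\GL(V_0)$ is precisely $(\Psi_V)_0(x)=q_0$, and the arm-equations give the remaining preprojective relations. Stability passes back by the same dictionary between invariant subspaces and local subsystems, and the two constructions are manifestly mutually inverse on isomorphism/orbit classes and respect direct sums, hence polystability.

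The main obstacle is purely the verification that the relation at vertex~$0$ really reproduces the surface-group relation with the correct ordering and signs: here one must be careful that the convention $[x_h,x_{\ov h}]^{\rm m}=x_hx_{\ov h}x_h^{-1}x_{\ov h}^{-1}$, the chosen orientation $\epsilon$, the total order on $H_0$, and the identification of incoming versus outgoing arrows on the arms all align with the chosen presentation of $\pi_1(X\setminus D,*)$. Once this bookkeeping is carried out (which is possible thanks to the flexibility granted by Proposition~\ref{3.1.3}), every other step is a word-for-word transcription of the $g=0$ argument, since the filtration construction and the stability dictionary are local around each puncture and so insensitive to the topology of $X$.
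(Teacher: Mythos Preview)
Your proposal is correct and follows exactly the approach the paper indicates: the paper itself omits the proof, saying only that it is ``almost the same as in the previous two subsections'' once one uses the standard presentation of $\pi_1(X\setminus D,*)$ with generators $\alpha_1,\beta_1,\dots,\alpha_g,\beta_g,\gamma_1,\dots,\gamma_n$, which is precisely the substitution you carry out. Your write-up in fact supplies more detail than the paper does, and your identification of the only nontrivial point---matching the ordering and sign conventions at the central vertex via Proposition~\ref{3.1.3}---is accurate.
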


\begin{theorem}\label{4.4.2}  
Let $X$ be a compact Riemann surface with genus $g >0$, 
and let $D=\{ p_1, \dots ,p_n \}$ 
be a finite subset of $X$ with cardinality $n$.   
Under the same notation and assumptions as in Theorem~\ref{4.3.4},   
assume further that $n>1$ if $g=1$,  
$l_p=r-1$ for all $p$ and fixed $r>0$, 
and that $\alpha$ is generic so that $\MMreg_{\lambda,\alpha}(X,D;r)=\MM_{\lambda,\alpha}(X,D;r)$.         
Let $(I,\Omega)$ be a star-shaped quiver with $g$ loops 
such that the number of arms is $n$ and the length of each arm is $r-1$, 
and set $q,\theta$ as in Theorem~\ref{4.4.1}. 
Then Simpson's Riemann-Hilbert correspondence 
gives a symplectic biholomorphic map  
between $\MM_{\lambda,\alpha}(X,D;r)$ and $\MMlreg_{q,\theta}(V)=\MMl_{q,\theta}(V)$, 
where $V$ is given by $V_0=\C^r$, $V_{i,j}=\C^{r-j}$. 
\end{theorem}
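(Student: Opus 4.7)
The plan is to imitate the proof of Theorem~\ref{4.3.5}, with Theorem~\ref{4.4.1} playing the role formerly played by Theorem~\ref{4.2.5}. First, Theorem~\ref{4.4.1} identifies $\MMl_{q,\theta}(V)$ with the set of isomorphism classes of $\beta$-polystable $\xi$-filtered local systems on $(X,D)$ of the prescribed rank and filtration data, while Theorem~\ref{4.3.4} provides a set-theoretic bijection between isomorphism classes of $\alpha$-stable $\lambda$-parabolic connections on $(X,D)$ and those of $\beta$-stable $\xi$-filtered local systems. Under the genericity assumption on $\alpha$, $\MM_{\lambda,\alpha}(X,D;r)$ coincides with its stable locus and we obtain a well-defined bijection $\RH \colon \MM_{\lambda,\alpha}(X,D;r) \to \MMlreg_{q,\theta}(V) = \MMl_{q,\theta}(V)$. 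The local construction of $\RH$ near each puncture is identical to that in the genus~$0$ proof: one integrates the ODE $\nabla v_j = R v_j\,dz/z$ with framing condition $v_j(0)=e_j$, then passes to flat sections $u_j = e^{-R\log z}v_j$ to build the filtered local system. Holomorphic dependence of ODE solutions on parameters shows that $\RH$ is a holomorphic map between complex manifolds of the same dimension.

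The core of the argument is to show that $\RH$ is symplectic. Since this is a closed condition, I would first specialise to generic $\lambda$ so that $\MM_{\lambda,\alpha}(X,D;r)$ is smooth and irreducible (the hypothesis $n>1$ when $g=1$ is used here to guarantee the expected dimension), and similarly $\MMl_{q,0}(V) = \MMlreg_{q,0}(V)$ is smooth with $\pi \circ \RH$ an analytic isomorphism onto it. The strategy is then to realise both symplectic structures as pull-backs from the Betti character variety
\[
\mc{R}_g := \rset{\rho \in \Hom(\pi_1(X\setminus D,*),\GL(r,\C))}{\rho(\gamma_i) \in \mc{C}_i} \GIT \GL(r,\C),
\]
where the $\mc{C}_i$ are the conjugacy classes prescribed by $\xi$. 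On the quiver side, a higher-genus analogue of Proposition~\ref{4.1.1} identifies $\MMl_{q,0}(V)$ with $\mc{R}_g$: the multiplicative moment map relation at the central vertex $0$ unpacks as $\prod_{k=1}^{g}[A_k,B_k]\prod_{i=1}^{n}A_i = 1$, which is exactly the defining relation of $\pi_1(X\setminus D,*)$, and by fusioning Examples~\ref{2.4.2} and~\ref{2.4.3} one checks that the quasi-Hamiltonian structure on $\MMl_{q,0}(V)$ descends to the Alekseev--Malkin--Meinrenken / Atiyah--Bott symplectic structure on $\mc{R}_g$. On the parabolic-connection side, Biquard's $L^2$-cohomological description of tangent spaces and the resulting pairing $(u,v)\mapsto \int_X \Tr(u\wedge v)$ present Inaba's symplectic form on $\MM_{\lambda,\alpha}(X,D;r)$ as the pull-back of the same Atiyah--Bott form under the monodromy map to $\mc{R}_g$. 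This yields the required symplectic identity at the generic point.

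The hardest step is verifying that the quasi-Hamiltonian fusion product used to construct $\MMl_{q,0}(V)$ really reproduces the Atiyah--Bott symplectic structure on $\mc{R}_g$ in genus $g>0$: one must handle the commutator contributions $[x_h,x_{\ov h}]^{\mathrm{m}}$ from the $g$ loops at the central vertex in parallel with the factors $(1+x_h x_{\ov h})^{\epsilon(h)}$ coming from the arms, and match the resulting 2-form, after restriction and group quotient, with the gauge-theoretic form obtained by cutting $X$ open along the boundaries of small disks at the punctures and applying the Atiyah--Bott construction to the resulting bordered surface. This is essentially the content of~\cite{AMM} in the compact-group case and carries over to the complex reductive setting without substantive change. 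Once this is in place, the closed-condition argument extends the symplectic identity to all admissible $\lambda$, and non-vanishing of the Jacobian determinant of a symplectic map promotes the bijective holomorphic map $\RH$ to a biholomorphism, completing the proof.
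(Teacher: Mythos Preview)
Your proposal is correct and follows essentially the same approach as the paper: the paper itself omits the proof, stating only that it is ``almost the same as in the previous two subsections,'' i.e., the proof of Theorem~\ref{4.3.5} with Theorem~\ref{4.4.1} in place of Theorem~\ref{4.2.5}. You have correctly identified the needed adaptations to higher genus---the presentation of $\pi_1(X\setminus D)$ with commutators, the corresponding commutator factors in $\Psi_V$, the higher-genus analogue of Proposition~\ref{4.1.1}, and the use of the Alekseev--Malkin--Meinrenken identification of the quasi-Hamiltonian reduction with the Atiyah--Bott symplectic structure on the character variety of a bordered surface of genus $g$.
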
 

We omit proofs of the above two theorem, 
since they are almost the same as in the previous two subsections. 
Notice only that the fundamental group of a punctured Riemann surface $X \setminus D$ 
of genus $g >0$ has a presentation  
\[
\langle \alpha_1,\beta_1, \dots ,\alpha_g,\beta_g, \gamma_1, \dots ,\gamma_n \mid 
\alpha_1 \beta_1 \alpha_1^{-1}\beta_1^{-1}
\alpha_2 \beta_2 \alpha_2^{-1}\beta_2^{-1} \cdots 
\alpha_g \beta_g \alpha_g^{-1}\beta_g^{-1}
\gamma_1 \cdots \gamma_n =1 \rangle.
\]

Let us back to the case of an arbitrary quiver and 
consider similarities between $\MMl$ and $\M$. 
Similar to Proposition~\ref{3.3.1} 
we can show the following. 

\begin{proposition}\label{4.4.3}  
Let $0^\ell \in \bM(V)$ denote the point whose component $x_h$ is given by 
$x_h =1$ for $h \in H^\ell$ and $x_h =0$ for $h \in H \setminus H^\ell$. 
Then there are $\varphi$-saturated open neighborhoods  
$\mc{U}$, $\mc{U}'$ of $0^\ell$ in $\bM(V)$    
and a $G_V$-equivariant biholomorphic map 
$f\colon \mc{U} \to \mc{U}'$ such that
\[
f(0^\ell )=0^\ell, \qquad f(\Psi_V^{-1}(1)\cap \mc{U})=\mu_V^{-1}(0) \cap \mc{U}',
\qquad (f^* \omega - \varpi^\ell) |_{\Ker d\Psi_V} =0,
\] 
where $\varpi^\ell$ is the 2-form associated to the quasi-Hamiltonian $G_V$-structure on $\bMl(V)$. 
\end{proposition}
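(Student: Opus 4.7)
The plan is to imitate the proof of Proposition~\ref{3.3.1} step by step, replacing the base point $0 \in \bM(V)$ with $0^\ell \in \bMl(V)$ and the quasi-Hamiltonian data $(\varpi,\Phi_V)$ with $(\varpi^\ell,\Psi_V)$. First I would invoke Lemma~\ref{3.3.2}: choose a $G_V$-invariant, $\varphi$-saturated open neighborhood $O$ of $0 \in \Lie G_V$ on which $\exp$ admits an inverse $\log$, and let $\tilde{\mc U}$ be a $\varphi$-saturated neighborhood of $0^\ell$ with $\Psi_V(\tilde{\mc U}) \subset \exp(O)$. Exactly the same computation as in Proposition~\ref{3.3.1} shows that the triple $(\tilde{\mc U},\,\varpi^\ell - \Psi_V^*\log^*\rho,\,\log\circ\Psi_V)$ satisfies the conditions (H1) and (H2) of a Hamiltonian $G_V$-structure on $\tilde{\mc U}$.

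Next I would verify that this modified structure coincides with $(\omega,\mu_V)$ at the single point $0^\ell$. A direct computation gives $\Psi_V(0^\ell) = 1$ and $\mu_V(0^\ell) = 0$: at $0^\ell$, each loop commutator $[x_h, x_{\ov h}]^{\rm m}$ and each non-loop factor $(1+x_h x_{\ov h})^{\epsilon(h)}$ equals $1$, while in the formula for $\mu_V$ the non-loop contributions vanish and, at each vertex $i$, each loop $h$ and its reverse $\ov h$ contribute with opposite signs of $\epsilon$ and cancel. Moreover $d\Psi_V|_{0^\ell} = 0$: for the non-loop part, $d(x_h x_{\ov h})$ visibly vanishes at $(0,0)$; for the loop part, the differential of $(a,b) \mapsto aba^{-1}b^{-1}$ at $(1,1)$ is $da + db - da - db = 0$. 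Consequently $(\Psi_V^*\log^*\rho)|_{0^\ell} = 0$, so the remaining task is to check $\varpi^\ell|_{0^\ell} = \omega|_{0^\ell}$. In the fusion construction of $\varpi^\ell$, each loop piece contributes $\Tr(da \wedge db)$ at $(a,b) = (1,1)$ by Example~\ref{2.4.3}, each non-loop piece contributes $\Tr(dx_h \wedge dx_{\ov h})$ at $(0,0)$ by Example~\ref{2.4.6}, and every fusion correction $\tfrac12 \Tr(\Phi_1^{-1}d\Phi_1 \wedge d\Phi_2\,\Phi_2^{-1})$ of Theorem~\ref{2.4.4}(1) vanishes at $0^\ell$, since every intermediate partial group-valued moment map equals $1$ with vanishing differential there.

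Third, I would apply the equivariant Darboux argument in the form explained in Remark~\ref{3.3.3}. Since $0^\ell$ is a $G_V$-fixed point of $\bM(V)$ (conjugation fixes $0$ on non-loop coordinates and fixes $1$ on loop coordinates), Nakajima's argument produces $\varphi$-saturated open neighborhoods $\mc U, \mc U'$ of $0^\ell$ and a $G_V$-equivariant biholomorphism $f\colon \mc U \to \mc U'$ with $f(0^\ell) = 0^\ell$, $f^*\omega = \varpi^\ell - \Psi_V^*\log^*\rho$, and $\mu_V \circ f = \log \circ \Psi_V$. The moment-map equality yields $f(\Psi_V^{-1}(1) \cap \mc U) = \mu_V^{-1}(0) \cap \mc U'$, while $(f^*\omega - \varpi^\ell)|_{\Ker d\Psi_V} = -(\Psi_V^*\log^*\rho)|_{\Ker d\Psi_V} = 0$ since any pullback via $\Psi_V$ vanishes on $\Ker d\Psi_V$.

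The main obstacle is the verification in the second paragraph that $\varpi^\ell|_{0^\ell} = \omega|_{0^\ell}$. The inductive fusion construction of $\varpi^\ell$ produces a sequence of cross terms, but the key observation is that at $0^\ell$ every constituent and intermediate group-valued moment map evaluates to $1$ with vanishing differential, so every cross term vanishes and only the ``diagonal'' contributions of each loop and non-loop piece survive. Once this accounting is granted, the remainder of the argument is a direct transcription of the proof of Proposition~\ref{3.3.1}.
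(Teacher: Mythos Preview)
Your proposal follows the same approach as the paper and is essentially correct, but there is one inaccuracy in the second paragraph that you should repair.

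For each loop $h \in \Omega^\ell$ at a vertex $i$, the double of Example~\ref{2.4.3} has moment map $(\Phi_1,\Phi_2)=(x_h x_{\ov h},\,x_h^{-1}x_{\ov h}^{-1})$ into $\GL(V_i)\times\GL(V_i)$. When you fuse these two $\GL(V_i)$-factors, the correction term $\tfrac12\Tr(\Phi_1^{-1}d\Phi_1\wedge d\Phi_2\,\Phi_2^{-1})$ must be evaluated at $x_h=x_{\ov h}=1$, where
\[
d\Phi_1 = dx_h+dx_{\ov h},\qquad d\Phi_2 = -dx_h-dx_{\ov h},
\]
neither of which is zero. So your blanket justification ``every intermediate partial group-valued moment map equals $1$ with vanishing differential'' is false at exactly this step. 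The paper handles it by the direct observation $d(x_hx_{\ov h})_{0^\ell}=-d(x_h^{-1}x_{\ov h}^{-1})_{0^\ell}$, so the correction at $0^\ell$ equals $-\tfrac12\Tr\bigl((dx_h+dx_{\ov h})\wedge(dx_h+dx_{\ov h})\bigr)=0$. Once this internal fusion is performed, the resulting loop moment map $[x_h,x_{\ov h}]^{\rm m}$ genuinely has value $1$ and vanishing differential at $0^\ell$, and then your argument for the remaining fusion corrections goes through verbatim. With this patch, your proof coincides with the paper's.
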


\begin{proof}
First notice that $0^\ell \in \Psi_V^{-1}(1) \cap \mu_V^{-1}(0)$.  
The 2-form $\varpi^\ell$ on $\bMl(M)$ is given by 
\[
\begin{split}
\varpi^\ell &:= \frac12 \sum_{h  \in H \setminus H^\ell}\epsilon(h )\Tr\,(1+x_h x_{\ov h })^{-1}dx_h 
\wedge dx_{\ov h} \\
&\quad +\frac12 \sum_{h \in H^\ell} \epsilon(h) \Tr x_h^{-1}dx_h \wedge dx_{\ov h} \, x_{\ov h}^{-1} \\
&\quad + \frac12 \sum_{h \in \Omega^\ell} \Tr\, 
[x_{\ov h},x_h]^{\rm m} d(x_hx_{\ov h}) \wedge d(x_h^{-1}x_{\ov h}^{-1}) \\
&\quad +\frac12 \sum_{h  \in \Omega^\ell}\Tr \,\Psi_h^{-1}d\Psi_h 
\wedge d[ x_h, x_{\ov h} ]^{\rm m}\, [ x_{\ov h},x_h ]^{\rm m} \\
&\quad +\frac12 \sum_{h  \in H \setminus H^\ell}\Tr \,\Psi_h^{-1}d\Psi_h 
\wedge d(1+x_h x_{\ov h})^{\epsilon(h)}(1+x_h x_{\ov h})^{-\epsilon(h)},
\end{split} 
\]
where
\[
\Psi_h := 
\begin{cases}
\prod^<_{h \in H_i \cap \Omega^\ell} [ x_h, x_{\ov h} ]^{\rm m}
& \text{if}\ h \in \Omega^\ell, \\
\prod^<_{h \in H_i \cap \Omega^\ell} [ x_h, x_{\ov h} ]^{\rm m}
\prod_{h' \in H_i ; h' <h}^< 
(1+x_{h'} x_{\ov{h'}})^{\epsilon(h')} 
& \text{if}\ h \in H_i \setminus H^\ell .
\end{cases}
\]
For a loop $h \in \Omega^\ell$, 
we have $d(x_h x_{\ov h})_{0^\ell} = dx_h + dx_{\ov h} = - d(x_h^{-1} x_{\ov h}^{-1})_{0^\ell}$. 
Thus we get 
\[
\varpi^\ell_{0^\ell} = \frac12 \sum_{h  \in H \setminus H^\ell}\epsilon(h )\Tr\, dx_h \wedge dx_{\ov h} 
+\frac12 \sum_{h \in H^\ell} \epsilon(h) \Tr dx_h \wedge dx_{\ov h} = \omega_{0^\ell}. 
\]
Moreover we have $(d\Psi_V)_{0^\ell}=0$. 
Hence Lemma~\ref{3.3.2} and the equivariant Darboux theorem imply the assertion.  
\end{proof}

Since $0^\ell$ is a fixed point for the $G_V$-action and $\mu_V(0^\ell)=0$, 
applying the equivariant Darboux theorem again  
one can find a $\varphi$-saturated open neighborhood $\mc{U}''$ of  
$0 \in \bM(V)$ and a $G_V$-equivariant biholomorphic map 
$F \colon \mc{U}' \to \mc{U}''$ such that 
\[
F(0^\ell ) =0, \qquad F^* \omega = \omega, \qquad \mu_V \circ F = \mu_V .
\]  
Together with the above proposition, we get:
\begin{corollary}\label{4.4.4}
Let $\MMl_\theta(V) = \MMl_{1,\theta}(V)$ and 
$\pi \colon \MMl_\theta(V) \to \MMl_0(V)$ denote the natural projective morphism. 
Then there exist an open neighborhood $U$ {\rm (}resp.\ $U'${\rm )} of $[0^\ell] \in \MMl_0(V)$ 
{\rm (}resp.\ $[0] \in \M_0(V)${\rm )}
and a commutative diagram   
\[
\begin{CD}
\MMl_{\theta}(V) \supset @.\, \pi^{-1}(U) @>{\tilde f}>> \pi^{-1}(U')\, @. \subset \M_{\theta}(V) \\
@. @V{\pi}VV @V{\pi}VV @. \\
@. U @>{f}>> U' @. 
\end{CD}
\]
such that:
\begin{enumerate}
\item[\rm (i)] $f([0^\ell])=[0]$; 
\item[\rm (ii)] both $\tilde{f}$ and $f$ are complex analytic isomorphisms; 
\item[\rm (iii)] $\tilde f$ maps $\pi^{-1}(U) \cap \MMlreg_\theta(V)$ onto 
$\pi^{-1}(U') \cap \Mreg_\theta(V)$ as a symplectic biholomorphic map; and
\item[\rm (iv)] if $x \in \varphi^{-1}(U)$ and $y \in \varphi^{-1}(U')$ 
have closed orbits and $f([x])=[y]$, then the stabilizers of the two are conjugate. 
Thus $f$ preserves the orbit-type.      
\end{enumerate} 
\end{corollary}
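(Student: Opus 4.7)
The plan is to mimic the proof of Corollary~\ref{3.3.4}, with one preliminary composition step. First I would compose the biholomorphism $f \colon \mc{U} \to \mc{U}'$ from Proposition~\ref{4.4.3} with the biholomorphism $F \colon \mc{U}' \to \mc{U}''$ constructed in the paragraph immediately preceding this corollary. The composition $g := F \circ f$ is a $G_V$-equivariant biholomorphism sending $0^\ell$ to $0$, mapping $\Psi_V^{-1}(1) \cap \mc{U}$ onto $\mu_V^{-1}(0) \cap \mc{U}''$, and satisfying $(g^* \omega - \varpi^\ell)|_{\Ker d\Psi_V} = 0$; here we use that $F^* \omega = \omega$ and $\mu_V \circ F = \mu_V$, so $F$ contributes nothing to the symplectic discrepancy.

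Next, shrinking $\mc{U}$ as in Remark~\ref{3.3.3} if necessary, I would arrange that $\mc{U}$ and $\mc{U}''$ are both $\varphi$-saturated open neighborhoods of $0^\ell$ and $0$ respectively. By Proposition~\ref{2.2.3}(iv) they descend to open neighborhoods $U$ of $[0^\ell] \in \MMl_0(V)$ and $U'$ of $[0] \in \M_0(V)$. Since $\Psi_V^{-1}(1)$ and $\mu_V^{-1}(0)$ are closed and $G_V$-invariant, the saturation passes to the intersections with these level sets, and the corresponding good-quotient maps land in $\MMl_0(V)$ and $\M_0(V)$. Because $\theta$-(semi)stability is a purely topological condition preserved by $G_V$-equivariant homeomorphisms (Remark~\ref{2.2.2}), $g$ restricts to a biholomorphism between the $\theta$-semistable (and $\theta$-stable) loci, and taking quotients yields $\tilde f$ and $f$ with $f([0^\ell]) = [0]$ making the square commute, verifying (i) and (ii).

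For (iii), at a $\theta$-stable point Theorem~\ref{2.4.5} identifies the tangent space of $\MMlreg_\theta(V)$ (resp.\ $\Mreg_\theta(V)$) with $\Ker d\Psi_V/T(G_V\cdot x)$ (resp.\ $\Ker d\mu_V/T(G_V\cdot y)$), and the vanishing of $(g^* \omega - \varpi^\ell)|_{\Ker d\Psi_V}$ ensures that the induced symplectic forms on these quotients agree, so $\tilde f$ restricts to a symplectomorphism between the stable loci. For (iv), $G_V$-equivariance of $g$ shows that the stabilizers of $x$ and $g(x)$ coincide, and closedness of $G_V\cdot x$ in $\bMss_\theta(V)\cap \mc{U}$ transfers to $G_V\cdot g(x)$ because $g$ is a $G_V$-equivariant homeomorphism; hence orbit-types are preserved.

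The main obstacle will be the saturation book-keeping: arranging simultaneously that $\mc{U}, \mc{U}', \mc{U}''$ are all $\varphi$-saturated and that $g$ still maps $\mc{U}$ into the chosen saturated neighborhood $\mc{U}''$. This is handled exactly as sketched in Remark~\ref{3.3.3}, using Sjamaar's $F_\infty$-flow to shrink $\varphi$-saturated open sets while preserving saturation and $G_V$-equivariance. No fundamentally new idea beyond what underlay Corollary~\ref{3.3.4} is required.
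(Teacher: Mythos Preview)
Your proposal is correct and follows exactly the approach the paper takes: the paper simply states that Corollary~\ref{4.4.4} follows by combining Proposition~\ref{4.4.3} with the map $F$ from the preceding paragraph, implicitly arguing as in Corollary~\ref{3.3.4}. Your write-up just fleshes out this sketch with the same ingredients (Remark~\ref{2.2.2} for stability, Remark~\ref{3.3.3} for saturation).
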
 

\section{Middle convolution}\label{5}

Multiplicative preprojective relation has a certain surprising similarity to preprojective relation. 
Let $i\in I$ be a {\em loop-free} vertex, i.e.,   
there is no $h \in H$ such that $\vout(h)=\vin(h)=i$. 
In this section we fix such an $i\in I$.  
Let $s_i \colon \C^I \to \C^I$ be the reflection defined by   
$s_i(\alpha) := \alpha - (\alpha,\mbe_i) \mbe_i$. 
There is a reflection $r_i \colon \C^I \to \C^I$ 
which is dual to $s_i$ with respect to the standard inner product: 
\[
r_i(\zeta) :=(\zeta'_j), \qquad  \zeta'_j =\zeta_j - (\mbe_i ,\mbe_j) \zeta_i.
\]
Then the $i$-th {\em reflection functor} is defined as 
a certain equivalence  
between the category of representations $(V,x)$ of $(I,H)$ 
satisfying the preprojective relation $\mu_V(x) =\zeta$ 
with a fixed $\zeta$ such that $\zeta_i \neq 0$,   
and the category of those $(V',x')$ satisfying the preprojective relation  
$\mu_{V'}(x') =r_i(\zeta)$. This functor transforms  
the dimension vector $\dim V$ to $\dim V' =s_i(\dim V)$.     
Crawley-Boevey and Shaw~\cite{CS} constructed its multiplicative analogue 
by generalizing an algebraic formulation of 
Katz' middle convolution given by Dettweiler-Reiter~\cite{DR}. 
In other words, they constructed an equivalence between the category 
of the representations $(V,x)$  
satisfying the relation $\Phi_V(x) =\zeta$ 
with a fixed $q$ such that $q_i \neq 1$, 
and the category of those $(V',x')$ 
satisfying $\Phi_{V'}(x') =u_i(q)$, where  
\[
u_i(q) :=(q'_j), \qquad q'_j = q_j q_i^{-(\mbe_i ,\mbe_j)}.
\] 
This functor is called the {\em middle convolution functor}.

On the other hand, Maffei~\cite{Maf} showed that 
the reflection functor sends 
a $\theta$-stable representation to a $r_i(\theta)$-stable representation 
if $\dim V \neq \mbe_i$. 
Thus the reflection functor induces an isomorphism 
\[
\Mreg_{\zeta, \theta}(V) \simeq \Mreg_{r_i(\zeta),r_i(\theta)}(V'),
\]
where $V'$ is an $I$-graded vector space with $\dim V' = s_i(\dim V)$.  
Moreover he proved that the above isomorphism can be defined in the case of $\zeta_i =0$.

In this section we show a multiplicative version of his result. 
\begin{theorem}\label{5.0.1}   
If $\dim V \neq \mbe_i$ and $s_i(\dim V) \notin \N^I$, then $\MMreg_{q,\theta}(V)$ is empty. 

If $s_i(\dim V) \in \N^I$, 
take an $I$-graded vector space $V'$ with $\dim V' =s_i(\dim V)$. 
Then there is an isomorphism of algebraic varieties
\[
\MMreg_{q,\theta}(V)  
\simeq \MMreg_{u_i(q),r_i(\theta)}(V').
\]
\end{theorem}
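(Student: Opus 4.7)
The plan is to apply the middle convolution functor of Crawley-Boevey and Shaw and verify that it respects stability and is algebraic, following the strategy Maffei~\cite{Maf} used in the additive case. For the emptiness assertion, suppose $\MMreg_{q,\theta}(V) \neq \emptyset$ and $\dim V \neq \mbe_i$. Since $i$ is loop-free, either $\theta_i \geq 0$ or $\theta_i \leq 0$ holds; in the first case Proposition~\ref{3.2.8} gives injectivity of $\sigma_i$, and in the second case surjectivity of $\tau_i$. Either way $\dim V_i \leq \dim \widehat V_i$, so $s_i(\dim V)_i = \dim \widehat V_i - \dim V_i \geq 0$ and $s_i(\dim V) \in \N^I$.

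Assume now $s_i(\dim V) \in \N^I$, and fix $V'$ with $V'_j = V_j$ for $j \neq i$ and $\dim V'_i = \dim \widehat V_i - \dim V_i$. The middle convolution functor produces from $x \in \Phi_V^{-1}(q)$ a representation $x' \in \Phi_{V'}^{-1}(u_i(q))$ by replacing $V_i$ with a space built canonically from $\sigma_i$ and $\tau_i$: when $\sigma_i$ is injective one uses a cokernel-type construction, when $\tau_i$ is surjective one uses a kernel-type construction, and the two agree on the overlap where both hold. By Proposition~\ref{3.2.8}, this construction is defined on all of $\bMs_\theta(V) \cap \Phi_V^{-1}(q)$. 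Applying the analogous construction to $x'$, using that $r_i(\theta)_i = -\theta_i$ so the roles of injectivity and surjectivity swap, yields an inverse, giving a canonical correspondence between the two sets of stable representations.

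The heart of the argument is to show that $\theta$-stability of $x$ is equivalent to $r_i(\theta)$-stability of $x'$. Given an $x'$-invariant subspace $S' \subset V'$, I construct an $x$-invariant subspace $S \subset V$ with $S_j = S'_j$ for $j \neq i$, where $S_i \subset V_i$ is determined by the restrictions of $\sigma_i, \tau_i$ to $\widehat S'_i = \widehat S_i$; Lemma~\ref{3.2.3} (together with its analogue at the new vertex) ensures this gives a bijection of invariant subspaces. A direct computation using $(\mbe_i, \mbe_i) = 2$ and the definitions of $r_i, s_i$ shows $r_i(\theta) \cdot \dim S' = \theta \cdot \dim S$, so the two stability conditions match under the functor. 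Finally, since the middle convolution is assembled from standard linear-algebraic operations on $(V,x)$, it descends to an algebraic morphism between the geometric quotients, with algebraic inverse given by the symmetric construction. The main obstacle is the stability comparison, where one must handle the boundary case $\theta_i = 0$ (both $\sigma_i$ injective and $\tau_i$ surjective may fail for arbitrary semistable representatives, so care is needed to show the two cokernel/kernel constructions glue to a single algebraic map on the stable locus), parallel to Maffei's treatment of $\zeta_i = 0$ in the additive setting.
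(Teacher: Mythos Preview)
Your overall strategy matches the paper's, but the stability comparison contains a genuine error. You claim that the correspondence $S' \leftrightarrow S$ between invariant subspaces is a \emph{bijection} satisfying $r_i(\theta)\cdot\dim S' = \theta\cdot\dim S$. Neither assertion holds. Given an $x'$-invariant $S'$, the paper sets $S_i := \tau(\widehat{S}_i)$ (with $S_j = S'_j$ for $j\neq i$) and checks via Lemma~\ref{3.2.3} that $S$ is $x$-invariant. But the resulting complex $S'_i \xrightarrow{\sigma'} \widehat{S}_i \xrightarrow{\tau} S_i$ is only exact at the ends: one has $\Ker(\tau|_{\widehat{S}_i}) = \Im\sigma' \cap \widehat{S}_i$, which need not equal $\sigma'(S'_i)$. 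Hence one only obtains the inequality $\dim S_i \leq \dim\widehat{S}_i - \dim S'_i$, and consequently $\theta'\cdot\dim S' \leq \theta\cdot\dim S$ \emph{provided} $\theta_i \leq 0$. The sign of $\theta_i$ is essential here, which is why the paper reduces at the outset to $\theta_i \leq 0$ (using $r_i^2 = \mathrm{id}$); your sketch treats the two signs symmetrically and so misses this. The equality-of-slopes claim would make the argument independent of the sign of $\theta_i$, which is false.

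Two further points. First, the case $\theta_i = 0$, $q_i = 1$ is not a delicate boundary case to be ``glued'': Proposition~\ref{3.2.8} gives both $\sigma_i$ injective and $\tau_i$ surjective on the stable locus, forcing $\dim\widehat{V}_i = 2\dim V_i$, i.e.\ $s_i(\dim V) = \dim V$, so the statement is trivial. The nontrivial boundary is $\theta_i = 0$, $q_i \neq 1$, where $\tau\sigma = q_i - 1$ supplies the missing exactness. Second, your algebraicity argument (``assembled from standard linear-algebraic operations, so descends'') is too loose: the functor replaces $V_i$ by $\Ker\tau$, a subspace of $\widehat{V}_i$ varying with $x$, and identifying it with a fixed $V'_i$ requires choices. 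The paper handles this via Lusztig's correspondence variety $P \subset \bM(V)\times\bM(V')$ and constructs explicit local sections of the two projections $P \to Z$, $P \to Z'$, which is what actually yields algebraic isomorphisms on the quotients.
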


The proof of the first statement is easy. 
Indeed if $\dim V \neq \mbe_i$ and $\MMreg_{q,\theta}(V) \neq \emptyset$, 
then Proposition~\ref{3.2.8} implies 
\[
0 \geq \dim \widehat{V}_i  - \dim V_i = -(\mbe_i ,\dim V ) + \dim V_i.
\]
The right hand side is just the coefficient of $s_i (\dim V)$ in $\mbe_i$, 
so $s_i(\dim V) \in \N^I$. 
Moreover if we assume further that $\theta_i =0$ and $q_i =1$, 
then the sequence
\[
\begin{CD}
0 @>>> V_i @>{\sigma_i}>> \widehat{V}_i  @>{\tau_i}>> V_i @>>> 0
\end{CD}
\]
is exact at any point in $\MMreg_{q,\theta}(V)$ by Proposition~\ref{3.2.8} again. 
The exactness implies that the right hand side of the previous inequality is equal to $\dim V_i$, 
and hence that $s_i(\dim V) = \dim V$. 
Since we have $r_i(\theta)=\theta,~u_i(q)=q$ under the assumption, 
the second statement in the case $\theta_i =0, q_i=1$ is clear.     

The rest of this section is devoted to the proof of the general case. 
In fact, all the proofs are very similar to the case of reflection functor. 

\subsection{Middle convolution functor}\label{5.1}

First we rewrite the middle convolution functor in our context. 
See \cite{CS} for the original definition.

From now on, we assume that $s_i(\dim V) \in \N^I$. 
Note that $\dim V \neq \mbe_i$ under this assumption. 
For simplicity, we assume further that $H_i \subset \Omega$.

Let us recall the definitions of $\sigma_i(x)$ and $\tau_i(x)$:
\begin{align*}
\sigma_i(x) &=\sum \iota_h x_{\ov h}  
\colon V_i \to  \widehat{V}_i , \\
\tau_i(x) &=\sum_{h \in H_i} \Phi_h x_h \pi_h 
\colon \widehat{V}_i  \to V_i,
\end{align*}
where
\[
\Phi_h =\Phi_h(x) = \prod_{h'\in H_i; h'<h}^<
(1+x_{h'}x_{\ov{h'}}).
\]
Since $i$ is fixed, we will drop the subscript $i$; 
$\sigma=\sigma_i,~\tau=\tau_i$.

For a point $x \in \Phi_{V}^{-1}(q)$, 
we define
\[
\phi_h := \sum_{h'\in H_i; h'<h } 
\iota_{h'} x_{\ov{h'}}x_h
+\frac{1}{q_i}\sum_{h'\in H_i; h'\geq h} 
\iota_{h'} x_{\ov{h'}}x_h
+\frac{1-q_i}{q_i}\iota_h \colon V_{\vout(h)}\to \widehat{V}_i  \quad (h \in H_i).
\]
Then one can show that 
\begin{gather} 
\tau \phi_h =0 \quad \text{for all}\ h\in H_i,\label{5.1.1}\\
\intertext{and that}
\prod_{h\in H_i}(1+\phi_{h}\pi_{h}) 
= 1- \frac{1}{q_i}(q_i -1 -\sigma\tau).\label{5.1.2} 
\end{gather}
For the proof, see \cite{CS}.

Now suppose $q_i\neq 1$.
The equality $\tau \sigma =q_i -1$ implies that 
$\tau$ is surjective.
Thus if we set $V'_i := \Ker \tau$ and $V'_j := V_j$ for $j \neq i$, 
then $\dim V'_j = s_i (\dim V)$.
 
Using \eqref{5.1.1} we define
\begin{equation}\label{5.1.3}
x'_h := 
\begin{cases}
\,\phi_h \colon V_{\vout(h)}\to V'_i & \text{if}\ h\in H_i, \\
\,\pi_{\ov h} |_{\Ker \tau}\colon V'_i \to V_{\vin(h)} & \text{if}\ h\in \ov{H}_i, \\
\,x_h & \text{otherwise},
\end{cases} 
\end{equation}

Then \eqref{5.1.2} implies
\[
\Phi_i(x')
=\prod_{h\in H_i}(1+x'_{h}x'_{\ov h}) 
= \frac{1}{q_i} = q'_i.
\]
Moreover for any $h\in H_i$ we have
\[
x'_{\ov h}x'_h = \pi_h \phi_h
= \frac{1}{q_i}x_{\ov h}x_h + \frac{1-q_i}{q_i},
\]
and hence
\[
1+x'_{\ov h}x'_h = \frac{1}{q_i} (1+x_{\ov h}x_h).
\]
Thus we get
\[
\Phi_j(x') = q_i^{\bA_{ij}}\Phi_j(x) =q'_j
\]
for all $j\neq i$, where 
$\bA_{ij}$ is the number of $h\in H$ satisfying $\vin(h)=i$ and $\vout(h)=j$. 

Thus under the assumption $q_i \neq 1$,  
we have a map 
\[
S_i \colon \Phi_{V}^{-1}(q)/G_V \to \Phi_{V'}(q)/G_{V'}; \qquad 
G_V \cdot x \mapsto G_{V'} \cdot x' 
\]
between the set-theoretical orbit spaces. 
This is a set-theoretical definition of the {\em middle convolution functor}.  

Crawley-Boevey and Show observed that $S_i^2 = \mathrm{id}$.  
We use this map to prove Theorem~\ref{5.0.1} for the case $q_i \neq 1$ or $\theta_i <0$. 
Note that  
even if $q_i =1$,  
the above definition of $x'$ for a $\theta$-stable point $x \in \Phi_V^{-1}(q)$ with $\theta_i \leq 0$ 
makes sense since $\tau(x)$ is still surjective by Proposition~\ref{3.2.8}. 
Thus we have a map $S_i \colon \MMreg_{q,\theta}(V) \to \Phi_{V'}(q)/G_{V'}$ in 
the case $\theta_i \leq 0$.

\subsection{Lusztig's correspondence}\label{5.2}

To prove Theorem~\ref{5.0.1}, 
we modify a beautiful formulation of the reflection functor  
by Lusztig~\cite{Lus-ref} for the middle convolution.

From now on, we assume that $\theta_i \leq 0$ and 
$\epsilon(h)>0$ for all $h \in H_i$ as in the previous subsection. 
Both of the assumption lose no generality by $r_i^2 = \mathrm{id}$ and Proposition~\ref{3.1.3}. 
Moreover we exclude the case $\theta_i =0, q_i=1$ as we explained before. 

Set $q':=u_i(q)$ and $\theta':=r_i(\theta)$. 
Take an $I$-graded vector space $V'$ such that $\dim V' =s_i(\dim V)$ 
and $V'_j = V_j$ for all $j\neq i$.   

In this section we use the following notation:
\begin{gather*}
\begin{aligned}
\bM  
&=\bM (V), \\
\bM' 
&=\bM (V'), 
\end{aligned}\quad
\begin{aligned}
Z &=\bMs_\theta (V) \cap \Phi_{V}^{-1}(q), \\
Z'&=\bMs_{\theta'}(V') \cap \Phi_{V'}^{-1}(q').
\end{aligned}
\end{gather*}

\begin{definition}\label{5.2.1}
Let $P$ be the subvariety of $\bM \times \bM'$
which consists of all pairs $(x,x')\in \bM \times \bM'$ 
satisfying the following conditions:
\begin{enumerate}
\item[\rm (R1)] $x_h =x'_h$ for all $h \notin H_i \cup \ov{H}_i$. 
\item[\rm (R2)] The sequence 
\[
\begin{CD}
0 @>>> V'_i @>{\sigma'}>> \widehat{V}_i  @>{\tau}>> V_i @>>> 0
\end{CD}
\]
is exact. Here $\sigma'=\sigma(x')$. 
\item[\rm (R3)] $\sigma\tau = q_i \sigma' \tau' + q_i -1$. Here $\tau'=\tau(x')$. 
\item[\rm (R4)] $\det(1+x_hx_{\ov h})\neq 0$ for all $h \in H$. 
\item[\rm (R4')] $\det(1+x'_h x'_{\ov h})\neq 0$ for all $h\in H$.   
\item[\rm (R5)] $\Phi_{V}(x)=q$. 
\item[\rm (R5')] $\Phi_{V'}(x')=q'$.  
\item[\rm (R6)] $x$ is $\theta$-stable. 
\item[\rm (R6')] $x'$ is $\theta'$-stable. 
\end{enumerate}
\end{definition}

Let $r\colon P \to Z$ (resp.\ $r'\colon P\to Z'$) be
the map induced from the projection to the first (resp.\ the second) factor.
$P$ is naturally acted on by the reductive group
\[
G:=\GL(V_i) \times \GL(V'_i) \times \prod_{j\neq i} \GL(V_j),
\]
and $r$ (resp.\ $r'$) is equivariant through the projections 
$G \to G_V$ (resp.\ $G\to G_{V'}$).
Note that $P$ has a geometric quotient, 
because $Z\times Z'$ has a geometric quotient
for the action of $G_V \times G_{V'}$ 
and hence so for the action of its reductive subgroup $G$, 
and $P$ is a $G$-invariant subvariety of $Z\times Z'$.   

The second statement of Theorem~\ref{5.0.1} is deduced from the following fact. 

\begin{theorem}\label{5.2.2}
Suppose $\theta_i\leq 0$, and $q_i \neq 1$ if $\theta_i =0$.
Then
$r$ and $r'$ induce isomorphisms
\[
\MMreg_{q,\theta}(V) \simeq P/ G 
\simeq \MMreg_{q',\theta'}(V').
\]
\end{theorem}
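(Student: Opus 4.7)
The plan follows Lusztig's incidence-variety formulation of the reflection functor, as adapted by Maffei~\cite{Maf} to establish isomorphism of quiver varieties. The core claim is that $r$ is a principal $\GL(V'_i)$-bundle and, by a symmetric argument, $r'$ is a principal $\GL(V_i)$-bundle. Combined with the decomposition $G = \GL(V_i)\times\GL(V'_i)\times\prod_{j\neq i}\GL(V_j)$, taking the further quotient by $G_V = G/\GL(V'_i)$ (resp.\ $G_{V'} = G/\GL(V_i)$) then yields $\MMreg_{q,\theta}(V)\simeq P/G\simeq\MMreg_{q',\theta'}(V')$.

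First I would show that $r$ is surjective. Given $x\in Z$, Proposition~\ref{3.2.8}(ii) applies since $\theta_i\leq 0$ and $\dim V\neq\mbe_i$; in the borderline case $\theta_i = 0$, the hypothesis $q_i\neq 1$ together with $\tau\sigma = q_i-1$ forces surjectivity of $\tau$ directly. Hence $\dim\Ker\tau = \dim\widehat V_i - \dim V_i = \dim V'_i$. Picking any isomorphism $V'_i\xrightarrow{\sim}\Ker\tau\hookrightarrow\widehat V_i$ and transferring the formulas \eqref{5.1.3} through it, I obtain a candidate $x'\in\bM(V')$ with $\sigma'$ realized as the inclusion $\Ker\tau\hookrightarrow\widehat V_i$; identity \eqref{5.1.1} ensures that $\phi_h$ lands in $\Ker\tau$, so $x'$ is well defined. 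Conditions (R1), (R4) are immediate, (R4'), (R5), (R5') follow from the identity $1+x'_{\ov h}x'_h = q_i^{-1}(1+x_{\ov h}x_h)$ recorded in \S\ref{5.1}, (R2) reduces to surjectivity of $\tau$, and (R3) rearranges identity \eqref{5.1.2}.

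The main obstacle is verifying (R6'), the $\theta'$-stability of $x'$, from the $\theta$-stability of $x$. This is a multiplicative analogue of the stability translation in~\cite{Maf}: given a non-zero proper $x'$-invariant subspace $S'\subset V'$, I would construct an $x$-invariant subspace $S\subset V$ by setting $S_j := S'_j$ for $j\neq i$ and $S_i := \tau(\widehat S_i)$, verifying $x$-invariance via Lemma~\ref{3.2.3} applied to $x'$ together with the explicit description of $\phi_h$. A reflection-style computation, based on the surjection $\widehat S_i\twoheadrightarrow S_i$ induced by $\tau$ and the inclusion $S'_i\hookrightarrow\widehat S_i\cap V'_i$, yields the inequality $\theta'\cdot\dim S'\leq\theta\cdot\dim S$; strictness is recovered by choosing $S$ minimal or by enlarging $S'_i$ to $\widehat S_i\cap V'_i$. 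The converse direction proceeds in parallel.

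For the fiber structure: any two lifts $(x,x'),(x,x'')\in r^{-1}(x)$ correspond to two choices of isomorphism $V'_i\to\Ker\tau$, and hence differ by a unique element of $\GL(V'_i)$ that intertwines $x'$ and $x''$. The $\GL(V'_i)$-stabilizer of any $\theta'$-stable $x'$ is trivial (Proposition~\ref{2.2.6} applied in $G_{V'}$, combined with the fact that the central $\C^\times\subset G_{V'}$ meets the $\GL(V'_i)$-factor of $G$ only in $\{1\}$). Thus $r$ is a principal $\GL(V'_i)$-bundle. By the involutivity of the middle convolution ($S_i^2=\mathrm{id}$, noted after~\eqref{5.1.3}) together with $r_i^2 = s_i^2 = u_i^2 = \mathrm{id}$, the entire argument is symmetric under $V\leftrightarrow V'$, yielding the principal $\GL(V_i)$-bundle structure on $r'$. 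Passing to the quotient by $G_V$ (resp.\ $G_{V'}$) then completes the proof.
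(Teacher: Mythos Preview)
Your overall strategy---show that $r$ is a principal $\GL(V'_i)$-bundle over $Z$ and $r'$ is a principal $\GL(V_i)$-bundle over $Z'$, then quotient further---is exactly the one the paper uses (phrased there via explicit local sections), and your treatment of $r$ is essentially correct. The stability transfer you sketch is the content of Lemma~\ref{5.3.1}, and your fiber description is right: once $x$ is fixed, (R2) forces $\Im\sigma'=\Ker\tau$, and (R3) together with injectivity of $\sigma'$ determines $\tau'$ and hence all of $x'$ up to the choice of isomorphism $V'_i\simeq\Ker\tau$.

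The genuine gap is your appeal to symmetry for $r'$. The involutivity $S_i^2=\mathrm{id}$ is established only when $q_i\neq 1$; under the hypothesis $q_i=1$, $\theta_i<0$ you have $q'_i=1$ and $\theta'_i>0$, so for $x'\in Z'$ the map $\tau'$ need not be surjective (Proposition~\ref{3.2.8} gives only that $\sigma'$ is injective), and the formula~\eqref{5.1.3} cannot be run in the reverse direction. Moreover the very definition of $P$ is asymmetric: condition (R2) singles out the sequence built from $\sigma'$ and $\tau$, not its mirror (R2$'$). When $q_i\neq 1$ both exact sequences hold automatically (since $\tau\sigma=q_i-1$ and $\tau'\sigma'=q_i^{-1}-1$ are invertible), which is why your symmetry argument, and the paper's Proposition~\ref{5.3.10}, go through in that case.

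For $q_i=1$, $\theta_i<0$ the paper instead builds the local section of $r'$ from the \emph{cokernel} side: identify $V_i\simeq\widehat V_i/\Im\sigma'$, take $\tau$ to be the projection, and set $\sigma:=\sigma'\overline{\tau'}$; then $x_{\ov h}$, $x_h$ are defined inductively so that $\sigma(x)=\sigma$ and $\tau(x)=\tau$. Conditions (R1)--(R3) are then immediate, but (R4) and (R5) are \emph{not}: one must show, for each $h\in H_i$, that $x_{\ov h}x_h=x'_{\ov h}x'_h$ (Lemma~\ref{5.4.1}), which is a nontrivial inductive computation using (R3), and then deduce (R5)$\Leftrightarrow$(R5$'$) (Lemma~\ref{lem5.4.2}). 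Your proposal does not supply this step, and without it surjectivity of $r'$ in the $q_i=1$ case is unproven.
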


We give a proof of this theorem in \S\ref{5.4}. 
In the next subsection,  
we give several properties of $P$, 
all of which are needed in \S\ref{5.4}.  

\subsection{Several lemmas}\label{5.3}

\begin{lemma}\label{5.3.1}
Suppose $\theta_i \leq 0$, and $q_i \neq 1$ if $\theta_i =0$. 
If a point $(x,x') \in \bM \times \bM'$ satisfies the conditions 
{\rm (R1)}, {\rm (R2)} and {\rm (R3)}, then
\[
\text{$(x,x')$ satisfies \rm (R6)} \quad \iff \quad 
\text{$(x,x')$ satisfies \rm (R6')}.
\]
\end{lemma}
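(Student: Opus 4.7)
The plan is to build two Lusztig-style correspondences between invariant subspaces on the two sides---one sending an $x$-invariant $S \subset V$ to an $x'$-invariant $S' \subset V'$, and a dual one in the opposite direction---and to read off how the slope $\theta \cdot \dim$ transforms under each. The condition (R3), together with the multiplicative preprojective relations $\tau\sigma = q_i - 1$ and $\tau'\sigma' = q_i^{-1} - 1$ at the vertex $i$, provides the algebraic glue that makes the correspondences respect invariance.

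Concretely, for the direction (R6')$\Rightarrow$(R6), starting from an $x$-invariant $S$ with $S \neq 0, V$ I set
\[
S'_j := S_j \quad (j \neq i), \qquad S'_i := (\sigma')^{-1}(\widehat S_i),
\]
which is well-defined by the injectivity of $\sigma'$ coming from (R2). Combining (R3) with the $x$-invariance of $S$ yields $\sigma'\tau'(\widehat S_i) \subset \widehat S_i$, so Lemma~\ref{3.2.3} guarantees that $S'$ is $x'$-invariant. A direct dimension count, using $\dim S'_i = \dim \widehat S_i - \dim \tau(\widehat S_i)$ (which again comes from exactness in (R2)), then gives the key identity
\[
\theta' \cdot \dim S' - \theta \cdot \dim S \,=\, -\theta_i \, \dim \bigl(S_i / \tau(\widehat S_i)\bigr) \,\geq\, 0,
\]
where the sign uses $\theta_i \leq 0$. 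For the converse direction, the dual construction takes an $x'$-invariant $T' \subset V'$ and sets $T_j := T'_j$ for $j \neq i$ and $T_i := \tau(\widehat T'_i)$, producing the analogous inequality
\[
\theta \cdot \dim T - \theta' \cdot \dim T' \,=\, -\theta_i \, \dim \bigl((\sigma')^{-1}(\widehat T_i) / T'_i\bigr) \,\geq\, 0.
\]
Once the transported subspace is known to be proper and nontrivial on the stable side, the stability hypothesis forces its slope to be strictly negative there, and the nonnegative correction term transports this strict negativity back to the original side.

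The hard part will be the boundary analysis, namely ruling out that these correspondences carry a proper nontrivial invariant subspace to $0$ or to the whole space. The exactness in (R2) handles most potential pathologies automatically: for instance, $S' = V'$ forces $S_j = V_j$ for $j \neq i$, hence $\widehat S_i = \widehat V_i$, and the surjectivity of $\tau$ then forces $S_i \supset \tau(\widehat V_i) = V_i$, contradicting $S \neq V$. What remains is the delicate case $S \neq 0$ with $S' = 0$ (equivalently $S_j = 0$ for all $j \neq i$ and $S_i \neq 0$), together with its mirror $T = V$, $T' \neq V'$. Here the hypothesis of the lemma is exactly what is needed: when $\theta_i < 0$, one reads off $\theta \cdot \dim S = \theta_i \dim S_i < 0$ (and symmetrically $\theta' \cdot \dim T' < 0$) directly from the dimension, without invoking stability on the other side; when $\theta_i = 0$, the assumption $q_i \neq 1$ makes $\tau\sigma = q_i - 1$ and $\tau'\sigma' = q_i^{-1} - 1$ both nonzero, which respectively force $\sigma$ to be injective and $\tau'$ to be surjective, and these two facts immediately exclude the offending $S$ and $T'$. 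Closing this borderline case is the main conceptual obstacle, and the lemma's hypothesis is finely tuned for it.
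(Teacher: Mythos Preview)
Your approach is correct and is essentially the paper's proof (adapted from Nakajima's argument for the reflection functor): the two subspace correspondences $S \mapsto S'$ via $(\sigma')^{-1}$ and $T' \mapsto T$ via $\tau$ are exactly the ones used there, and your boundary analysis matches. Your packaging of the slope comparison as the exact identity $\theta'\cdot\dim S' - \theta\cdot\dim S = -\theta_i\dim\bigl(S_i/\tau(\widehat S_i)\bigr)$ is slightly cleaner than the paper's inequality version.

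One small point of hygiene: the relations $\tau\sigma = q_i-1$ and $\tau'\sigma' = q_i^{-1}-1$ are \emph{not} among the hypotheses (R1)--(R3) (they come from (R5), (R5')), so in the $\theta_i=0$ boundary case you should derive them rather than invoke them. This is immediate from (R2) and (R3): composing $\sigma\tau = q_i\sigma'\tau' + (q_i-1)$ on the right with $\sigma'$ and using $\tau\sigma'=0$ together with injectivity of $\sigma'$ gives $\tau'\sigma' = q_i^{-1}-1$; composing on the left with $\tau$ and using surjectivity of $\tau$ gives $\tau\sigma = q_i-1$. With that adjustment your argument is complete.
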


\begin{proof}
We adapt a beautiful proof of Nakajima~\cite{Nak-ref} for the reflection functor to our case.

First we prove the direction $\Rightarrow$. 
Suppose that (R1-3) and (R6). 
If $\theta_i =0$, suppose further that $q_i \neq 1$.
 
Let $S'$ be a $x'$-invariant subspace of $V'$.
Then
\begin{equation}\label{5.3.2}
\sigma'(S'_i) \subset \widehat{S}'_i ,\qquad 
\tau' ( \widehat{S}'_i ) \subset S'_i.
\end{equation}
Set
\begin{equation}\label{5.3.3}
S_j :=
\begin{cases}
S'_j & \text{for}\ j\neq i, \\
\tau ( \widehat{S}'_i ) & \text{for}\ j=i.
\end{cases}
\end{equation}
Clearly $x_h(S_{\vout(h)}) \subset S_{\vin(h)}$ if 
$\vin(h) \neq i \neq \vout(h)$.
By \eqref{5.3.2}, we have
\[
\begin{split}
\sigma(S_i) &=\sigma\tau ( \widehat{S}_i ) \\
&=q_i \sigma'\tau' ( \widehat{S}_i ) 
+ (q_i-1) ( \widehat{S}_i ) \\
&\subset \sigma'(S'_i) +\widehat{S}_i 
\subset \widehat{S}_i . 
\end{split}
\]
Thus $S$ is $x$-invariant by Lemma~\ref{3.2.3}.

By the $\theta$-stability of $x$ we have
\begin{equation}\label{5.3.4}
0 \geq \theta \cdot \dim S = \sum_{j\neq i} \theta_j \dim S_j + \theta_i \dim S_i
\end{equation}
and the strict inequality holds unless $S=0$ or $S=V$.

Consider the following complex.
\[
\begin{CD}
S'_i @>{\sigma'}>> \widehat{S}_i  @>{\tau}>> S_i 
\end{CD}
\]
The left arrow is injective by (R2)
and the right arrow is surjective by the definition of $S_i$.
Hence we have
\begin{equation}\label{5.3.5}
\dim S_i \leq \sum \dim S_{\vout(h)} -\dim S'_i.
\end{equation}
Noticing $\theta_i \leq 0$, we substitute this inequality into~\eqref{5.3.4}.
Then we get
\[
0 \geq \sum_{j\neq i} (\theta_j +\bA_{ij}\theta_i) \dim S_j - \theta_i \dim S'_i
=\theta' \cdot \dim S'.
\]
If we have the equality, we must have the equality in
\eqref{5.3.4} which implies $S=0$ or $S=V$.
If $S=0$, then $S'_j=0$ for $j\neq i$.
Then~\eqref{5.3.2} and the injectivity of $\sigma'$
imply $S'_i=0$. Thus $S'=0$. 
We assume $S=V$. 
When $\theta_i \neq 0$, we must also have the equality in
\eqref{5.3.5}. Substituting $S=V$ into it,
we obtain $\dim S'_i = \dim V'_i$. Thus $S'=V'$.
When $\theta_i=0$, $q_i \neq 1$ by the assumption. 
Thus by (R2) and (R3) we have 
\[
0 =\sigma \tau \sigma' = q_i \sigma' \tau' \sigma' +(q_i -1)\sigma'. 
\]
By (R2) $\sigma'$ is injective, so we have $\tau'\sigma' = q_i^{-1} -1 \neq 0$.  
Thus $\tau'$ is surjective and hence $S'_i \supset \tau'(\widehat{S'_i}) = V'_i$. 
Thus $S'=V'$. 
Hence $x'$ is $\theta'$-stable.

The proof of the inverse direction $\Leftarrow$ 
also can be done similarly.
Let $S$ be a $x$-invariant subspace of $V$.
Set
\[
S'_j :=
\begin{cases}
S_j & \text{for}\ j\neq i, \\
(\sigma')^{-1} ( \widehat{S}_i ) & \text{for}\ j=i.
\end{cases}
\]
Then $S'$ is $x'$-invariant.

By the $\theta'$-stability of $x'$ we have
\begin{equation}\label{5.3.6}
0 \geq (\theta', \dim S') = \sum_{j\neq i} \theta'_j \dim S'_j + \theta'_i \dim S'_i
\end{equation}
and we have the strict inequality unless $S'=0$ or $S'=V'$.

Consider the following complex.
\[
\begin{CD}
S'_i @>{\sigma'}>> \widehat{S}_i  @>{\tau}>> S_i
\end{CD}
\]
The left arrow is injective by (R2)
and its image is equal to the kernel of the right arrow 
by the definition of $S'_i$ and (R2).
Hence we have
\begin{equation}\label{5.3.7}
\dim S'_i \geq \sum \dim S_{\vout(h )} -\dim S_i.
\end{equation}
Noticing $\theta'_i \geq 0$, we substitute this inequality into~\eqref{5.3.6}.
Then we get
\[
0 \geq \sum_{j\neq i} (\theta'_j +\bA_{ij}\theta'_i) \dim S_j - \theta'_i \dim S_i
=(\theta, \dim S).
\]
If we have the equality, we must have the equality in
\eqref{5.3.6} which implies $S'=0$ or $S'=V'$.
If $S'=V'$, then $S_j =S_j$ for $j\neq i$.
Thus $S_i \supset \tau ( \widehat{S}_i ) =V_i$
by the surjectivity of $\tau$. Hence $S=V$.
We assume $S'=0$. 
When $\theta_i \neq 0$, we must also have the equality in~\eqref{5.3.7}.
This implies $S_i=0$, and hence $S=0$.
When $\theta_i=0$, the conditions (R2), (R3) and the assumption $q_i \neq 1$ 
implies 
\[
0= \tau \sigma' \tau' = q_i^{-1}\tau \sigma \tau +(q_i^{-1}-1)\tau.
\]
By (R2), $\tau$ is surjective, so we have $\tau \sigma = q_i -1 \neq 0$. 
Thus $\sigma$ is injective and hence  
$S_i \subset \sigma^{-1}( \widehat{S_i} ) =0$. Thus $S=0$.
Hence $x$ is $\theta$-stable.
\end{proof}

\begin{lemma}\label{5.3.8}
Suppose $\theta_i \geq 0$, and $q_i \neq 1$ if $\theta_i =0$.
If a point $(x,x') \in \bM \times \bM'$ satisfies {\rm (R1)}, {\rm (R3)} and
\begin{enumerate}
\item[\rm (R2')] The sequence
\[
\begin{CD}
0 @>>> V_i @>{\sigma}>> \widehat{V}_i  @>{\tau'}>> V'_i @>>> 0
\end{CD}
\] 
is exact,
\end{enumerate}
then
\[
\text{$(x,x')$ satisfies \rm (R6)} \quad \iff \quad 
\text{$(x,x')$ satisfies \rm (R6')}.
\]
\end{lemma}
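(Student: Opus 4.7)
The plan is to deduce Lemma~\ref{5.3.8} directly from Lemma~\ref{5.3.1} by exploiting the involutive symmetry between the two sides of the correspondence, so that no fresh stability computation is needed. The key observation is that $s_i$, $r_i$, $u_i$ are all involutions; in particular $\dim V = s_i(\dim V')$, $r_i(\theta') = \theta$, $u_i(q') = q$, and since $(\mbe_i,\mbe_i) = 2$ we have $\theta'_i = -\theta_i$ and $q'_i = q_i^{-1}$. Consequently the hypothesis of Lemma~\ref{5.3.8} (namely $\theta_i \geq 0$, with $q_i \neq 1$ when $\theta_i = 0$) is precisely the hypothesis of Lemma~\ref{5.3.1} applied to the swapped pair $(x',x) \in \bM(V') \times \bM(V)$ with parameters $(\theta',q')$ playing the role of $(\theta,q)$.

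First I would verify that the three conditions (R1), (R2'), (R3) on $(x,x')$ become exactly the three conditions (R1), (R2), (R3) on the swapped pair $(x',x)$. (R1) is manifestly symmetric. Since $i$ is loop-free, the space $\widehat{V}_i$ depends only on the $V_j$ with $j \neq i$, and these agree with $V'_j$; hence (R2')---the exactness of $0 \to V_i \xrightarrow{\sigma} \widehat{V}_i \xrightarrow{\tau'} V'_i \to 0$---is precisely (R2) with the roles of $(V,V')$ and $(x,x')$ interchanged. For (R3), I would rearrange $\sigma\tau = q_i\sigma'\tau' + (q_i - 1)$ by multiplying through by $q_i^{-1}$ to obtain $\sigma'\tau' = q_i^{-1}\sigma\tau + (q_i^{-1} - 1)$, which is (R3) read off for $(x',x)$ with the parameter $q'_i = q_i^{-1}$.

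Once these identifications are in hand, Lemma~\ref{5.3.1} applied to $(x',x)$ immediately delivers the equivalence that $x'$ is $\theta'$-stable if and only if $x$ is $\theta$-stable, which is the conclusion of Lemma~\ref{5.3.8}. The edge case $\theta_i = 0$ causes no trouble: the auxiliary condition $q'_i \neq 1$ in Lemma~\ref{5.3.1} becomes $q_i \neq 1$, exactly the extra hypothesis we have assumed. The one substantive thing to verify---and the step I expect to be the main technical check---is the rearrangement of (R3): the scalar $q_i$ and the inhomogeneous term $q_i - 1$ enter asymmetrically, so one must confirm by direct algebra that after swapping primes and replacing $q_i$ by $q_i^{-1}$ the relation takes exactly its original form.
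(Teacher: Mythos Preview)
Your argument is correct. The swap $(x,x') \leftrightarrow (x',x)$, $(V,V') \leftrightarrow (V',V)$, $(q,\theta) \leftrightarrow (q',\theta')$ does carry (R1), (R2'), (R3) for $(x,x')$ to (R1), (R2), (R3) for $(x',x)$, and the hypotheses $\theta_i \geq 0$, $q_i \neq 1$ become $\theta'_i \leq 0$, $q'_i \neq 1$, so Lemma~\ref{5.3.1} applies verbatim to the swapped pair and gives the desired equivalence. Your check that (R3) rearranges to $\sigma'\tau' = q_i^{-1}\sigma\tau + (q_i^{-1}-1)$ is exactly right and is the only point requiring a line of algebra.

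The paper's own proof is simply ``The proof is similar to the previous lemma,'' meaning one is expected to rerun the subspace constructions of Lemma~\ref{5.3.1} with the roles of $\sigma,\tau$ and $\sigma',\tau'$ (and of (R2) and (R2')) interchanged. Your approach is the same idea, but you formalize the symmetry once and invoke Lemma~\ref{5.3.1} as a black box rather than repeating its internal argument. This is cleaner and makes transparent \emph{why} the proof is similar: the involutivity of $s_i$, $r_i$, $u_i$ turns the statement of Lemma~\ref{5.3.8} into an instance of Lemma~\ref{5.3.1}. The only thing you implicitly use is that, under the standing assumption $H_i \subset \Omega$, the maps $\sigma,\tau,\sigma',\tau'$ are defined by formulas not involving $q_i$ or $q'_i$, so the swap really does carry the conditions to each other without residue; this is immediate from the definitions in \S\ref{5.1}.
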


\begin{proof}
The proof is similar to the previous lemma.
\end{proof}

\begin{proposition}\label{5.3.9}
Suppose that $q_i \neq 1$ or $\theta_i < 0$.
For a point $x \in Z$,
let $x'$ be a representative of $S_i(G_V \cdot x)$ which is defined by \eqref{5.1.3}.
Then $(x,x') \in P$. 
\end{proposition}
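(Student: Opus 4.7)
My plan is to verify the nine conditions defining $P$ for the pair $(x,x')$, grouping them by difficulty. The routine ones come first: (R1) holds by the very formula \eqref{5.1.3} defining $x'$; (R4), (R5), (R6) hold because $x\in Z$; and (R5') was already established in the construction of the middle convolution in \S\ref{5.1}. The relation $1+x'_{\ov h}x'_h = q_i^{-1}(1+x_{\ov h}x_h)$ for $h\in H_i$, combined with the identity $\det(1+AB)=\det(1+BA)$, gives (R4') for arrows incident to $i$; for the remaining arrows $x'_h=x_h$, so (R4') is immediate there.

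For (R2) I will unpack $\sigma' = \sigma(x')$. Using the standing assumption $H_i\subset\Omega$ together with $x'_{\ov h} = \pi_h|_{V'_i}$ for $h\in H_i$,
\[
\sigma' = \sum_{h\in H_i}\iota_h \pi_h|_{V'_i},
\]
which is simply the canonical inclusion $V'_i=\Ker\tau \hookrightarrow \widehat{V}_i$, since $\sum_h \iota_h\pi_h = \mathrm{id}_{\widehat{V}_i}$. Hence the sequence in (R2) is exact at $V'_i$ (inclusion is injective) and at $\widehat{V}_i$ (by the definition of $V'_i$); surjectivity of $\tau$ at the right is given by Proposition~\ref{3.2.8}, using the standing assumption $\theta_i\leq 0$ together with $\dim V\neq \mbe_i$ (which holds since $s_i(\dim V)\in\N^I$).

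The crux of the proof is (R3), requiring careful manipulation of the ordered products. Viewing $\sigma'\tau'$ as an endomorphism of $\widehat{V}_i$ with image in $V'_i$, one has
\[
\sigma'\tau' = \sum_{h\in H_i}\Phi_h(x')\phi_h\pi_h.
\]
The key observation is that each $\phi_{h'}\pi_{h'}$ on $\widehat{V}_i$ takes values in $V'_i = \Ker\tau$ by \eqref{5.1.1}, so it preserves $V'_i$; therefore $\widetilde{\Phi}_h := \prod_{h'<h}^{<}(1+\phi_{h'}\pi_{h'})$, computed on all of $\widehat{V}_i$, preserves $V'_i$ and restricts there to $\Phi_h(x')$. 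Consequently $\Phi_h(x')\phi_h = \widetilde{\Phi}_h\phi_h$ for each $h$. Expanding $\prod_{h\in H_i}(1+\phi_h\pi_h) = 1+\sum_h \widetilde{\Phi}_h\phi_h\pi_h$ and invoking \eqref{5.1.2} then yields
\[
\sigma'\tau' = \frac{\sigma\tau - (q_i-1)}{q_i},
\]
equivalent to (R3).

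Finally, (R6') follows from Lemma~\ref{5.3.1} applied to $(x,x')$: we have already verified (R1), (R2), (R3), (R6), and under the standing assumption $\theta_i\leq 0$ of \S\ref{5.2} the hypothesis ``$q_i\neq 1$ or $\theta_i<0$'' of the proposition coincides with that of the lemma. The main obstacle in the whole argument is (R3); everything else is either definitional or a direct appeal to results already at our disposal.
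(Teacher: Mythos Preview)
Your proof is correct and follows essentially the same path as the paper's: the verifications of (R1)--(R5') and the computation for (R3) via \eqref{5.1.2} are identical in substance (the paper phrases the (R3) calculation as a formula for $\tau'$ rather than $\sigma'\tau'$, but since $\sigma'$ is the inclusion this is the same identity). The only genuine difference is in (R6'): you invoke Lemma~\ref{5.3.1} once, observing that under the standing hypothesis $\theta_i\le 0$ of \S\ref{5.2} its assumptions are exactly ``$q_i\neq 1$ or $\theta_i<0$''; the paper instead applies Lemma~\ref{5.3.1} only for $\theta_i<0$ and, for the remaining case $\theta_i\ge 0$ with $q_i\neq 1$, verifies the exact sequence (R2') and appeals to Lemma~\ref{5.3.8}. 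Your route is shorter and entirely valid in context; the paper's detour through Lemma~\ref{5.3.8} has the side benefit of making the argument go through even without the standing assumption $\theta_i\le 0$, but this extra generality is not used elsewhere.
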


\begin{proof}
(R1) is satisfied by the definition of $x'$.
Moreover both (R4') and (R5') are satisfied by the argument before. 

To check (R2),
first note that
\[
\sigma' =\sum_{h\in H_i} \iota_h x'_{\ov h } 
=\sum_{h\in H_i} \iota_h \pi_h 
\]
is equal to the inclusion $V'_i=\Ker \tau \hookrightarrow \widehat{V}_i$.
Thus $\sigma'$ is injective and $\tau \sigma' =0$.
Since $\tau$ is surjective  
and the Euler number of the complex in (R2) is zero,
$(x,x')$ satisfies (R2).

By \eqref{5.1.2}, we have
\[
\begin{split}
\tau' &= \sum_{h \in H_i} 
\prod_{h'\in H_i; h' <h }(1+x'_{h'}x'_{\ov{h'}})\, x'_{h}\pi_{h} \\
&= \sum_{h \in H_i} 
\prod_{h'\in H_i; h' <h }(1+\phi_{h'}\pi_{h'})\, \phi_{h}\pi_{h} \\
&= \prod_{h \in H_i} (1+\phi_{h}\pi_{h}) -1 \\
&= q_i^{-1}\sigma\tau + q_i^{-1}(1-q_i).
\end{split}
\]
Thus (R3) is satisfied.

Lemma~\ref{5.3.1} shows that $x'$ is $\theta'$-stable when $\theta_i < 0$.
So we assume that $\theta_i \geq 0$ and $q_i\neq 1$.
By (R3) and the equality $\tau'\sigma' =q'_i -1$, we have
\[
\begin{split}
\tau'\sigma \tau &= q_i \tau'\sigma'\tau' +(q_i -1)\tau' \\
&= q_i (q'_i -1) \tau' +(q_i -1)\tau' =0.
\end{split}
\]
Since $\tau$ is surjective, the above implies $\tau'\sigma =0$.
Note that $\tau'$ is surjective and $\sigma$ is injective 
by the equalities $\tau'\sigma'=q'_i-1$ and $\tau \sigma=q_i -1$.
Hence the sequence
\[
\begin{CD}
0 @>>> V_i @>{\sigma}>> \widehat{V}_i  @>{\tau'}>> V'_i @>>> 0
\end{CD}
\] 
is exact.
Thus $x'$ is $\theta'$-stable by Lemma~\ref{5.3.8}.
\end{proof}

\begin{proposition}\label{5.3.10}
Suppose $q_i \neq 1$ or $\theta'_i <0$.
For a point $x\in Z'$,
let $x'$ be a representative of $S_i(G_{V'} \cdot x)$.
Then $(x',x) \in P$. 
\end{proposition}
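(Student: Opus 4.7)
My plan is to derive Proposition~\ref{5.3.10} as the formal dual of Proposition~\ref{5.3.9} under the involution swapping $(V, q, \theta) \leftrightarrow (V', q', \theta')$. Because $u_i$ and $r_i$ are involutions, one has $q'_i = q_i^{-1}$ and $\theta'_i = -\theta_i$, so the hypothesis ``$q_i \neq 1$ or $\theta'_i < 0$'' is equivalent to ``$q'_i \neq 1$ or $\theta'_i < 0$''. This is precisely the condition required for the construction of \S\ref{5.1} to be applied to $x \in \Phi_{V'}^{-1}(q')$ with $(q', \theta')$ playing the role of $(q, \theta)$; the resulting representation lives naturally on a space $V''$ with $\dim V'' = s_i(\dim V') = \dim V$, and we take $x' \in \bM(V)$ to be its image under an identification $V'' \cong V$.

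With $x'$ constructed this way I would verify the defining conditions of $P$ in the same order as in the proof of Proposition~\ref{5.3.9}. Conditions (R1), (R4'), (R5') are immediate from the construction, and conditions (R4), (R5) are part of the hypothesis $x \in Z'$. For (R2) applied to $(x', x)$, namely the exactness of $0 \to V'_i \xrightarrow{\sigma(x)} \widehat{V}_i \xrightarrow{\tau(x')} V_i \to 0$, I observe that by construction $\sigma(x)$ coincides with the inclusion $V'_i = \Ker \tau(x) \hookrightarrow \widehat{V}_i$ after the canonical identification $\widehat{V}'_i = \widehat{V}_i$ (available because $i$ is loop-free), so $\tau(x')\sigma(x) = 0$ and the Euler-characteristic count closes the argument. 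The form of (R3) required for the pair $(x', x)$, namely $\sigma(x')\tau(x') = q_i\sigma(x)\tau(x) + q_i - 1$, is obtained from the product identity~\eqref{5.1.2} applied to $x$ (with $q'_i$ in place of $q_i$) by clearing $q_i$ using $q'_i = q_i^{-1}$.

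The remaining and most delicate step is (R6), the $\theta$-stability of $x'$. Here I invoke Lemmas~\ref{5.3.1} and~\ref{5.3.8}, both of which are symmetric in the primed and unprimed data. If $\theta'_i > 0$ I apply Lemma~\ref{5.3.1} with the roles swapped; if $\theta'_i < 0$ I apply Lemma~\ref{5.3.8} analogously. The boundary case $\theta'_i = 0$ is handled exactly as in the last paragraph of the proof of Proposition~\ref{5.3.9}: the hypothesis then forces $q_i \neq 1$, and combining the reversed (R3) with $\tau(x)\sigma(x) = q'_i - 1 \neq 0$ produces the auxiliary exact sequence $0 \to V_i \xrightarrow{\sigma(x')} \widehat{V}_i \xrightarrow{\tau(x)} V'_i \to 0$, after which Lemma~\ref{5.3.8} applies. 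The main obstacle I anticipate is the careful bookkeeping surrounding the two canonical identifications $V'_i \cong \Ker \tau(x)$ and $V_i \cong \Coker \sigma(x)$, which must be tracked consistently so that the reversed form of (R3) really does follow from~\eqref{5.1.2} with $q$ replaced by $q'$.
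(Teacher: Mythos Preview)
Your overall strategy --- dualize the proof of Proposition~\ref{5.3.9} under the swap $(V,q,\theta)\leftrightarrow(V',q',\theta')$ --- is exactly what the paper means by ``The proof is similar.'' However, the bookkeeping you worried about has indeed slipped in two places.

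The minor one: for the pair $(x',x)\in\bM\times\bM'$, the hypothesis $x\in Z'$ supplies the conditions on the \emph{second} slot, namely (R4'), (R5'), (R6'), while the middle-convolution construction of $x'$ yields (R4) and (R5). You have these reversed.

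The substantive one concerns (R2). What the construction actually gives is that $\sigma(x')$ --- not $\sigma(x)$ --- is the inclusion of $V_i\cong\Ker\tau(x)$ --- not $V'_i$ --- into $\widehat{V}_i$; together with the surjectivity of $\tau(x)$ this yields the exactness of
\[
0 \longrightarrow V_i \xrightarrow{\ \sigma(x')\ } \widehat{V}_i \xrightarrow{\ \tau(x)\ } V'_i \longrightarrow 0,
\]
which is condition (R2') of Lemma~\ref{5.3.8} for the pair $(x',x)$, not (R2). The sequence required by (R2),
\[
0 \longrightarrow V'_i \xrightarrow{\ \sigma(x)\ } \widehat{V}_i \xrightarrow{\ \tau(x')\ } V_i \longrightarrow 0,
\]
does not come for free from the construction; it needs the argument of the final paragraph of the proof of Proposition~\ref{5.3.9}: from (R3), the injectivity of $\sigma(x')$ just established, and $\tau(x)\sigma(x)=q'_i-1$ one deduces $\tau(x')\sigma(x)=0$, and then $q_i\neq 1$ forces $\sigma(x)$ injective and $\tau(x')$ surjective, after which the dimension count closes. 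Note also that under the standing assumption $\theta_i\le 0$ of \S\ref{5.2} one has $\theta'_i\ge 0$, so the alternative $\theta'_i<0$ in the hypothesis is vacuous; you may assume $q_i\neq 1$ throughout, and your case analysis for (R6) collapses to a single application of Lemma~\ref{5.3.1}.
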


\begin{proof}
The proof is similar.
\end{proof}

\subsection{Proof of the main theorem}\label{5.4}

In this subsection we prove Theorem~\ref{5.2.2}.
First consider the case $q_i \neq 1$.

\begin{proof}[Proof of Theorem~\ref{5.2.2} for the case $q_i \neq 1$]
$r\colon P\to Z$ is surjective by Proposition~\ref{5.3.9}.
Let $x_0\in Z$. 
We construct a section of $r$ over a neighborhood of $x^0$.

Take an identification $\widehat{V}_i \simeq V_i \oplus V'_i$
such that the first projection coincides with $\tau(x^0)$.
Set
\[
Z_0 =\{ \, x\in Z \mid
\tau(x) |_{V_i} \colon V_i \to V_i
~\text{is an isomorphism}\,\} .
\]
Then $Z_0$ is a neighborhood of $x^0$,
and for any $x\in Z$, 
\[
\alpha:=
\begin{bmatrix} -(\tau(x) |_{V_i})^{-1}\tau(x) |_{V'_i} \\ 1 \end{bmatrix}
\colon V_i \to \Ker \tau(x)
\]
is an isomorphism.
We choose it for the identification $V_i \simeq \Ker \tau(x)$
to define the point $x' \in Z'$, 
i.e., we define 
\[
x'_h := \alpha^{-1} \phi_h \colon V'_{\vout(h)} \to V'_i, 
\qquad
x'_{\ov h} := \pi_h \alpha \colon V'_i \to V'_{\vout(h)}
\quad \text{for}\ h\in H_i,
\]
and define $x_h$ for $h\notin H_i \cup \ov{H}_i$
by the condition (R1).
Then $x \mapsto x'$ defines a section of $r$ over $Z_0$. 

Since $r$ has a local section,
the induced morphism $P/G \to Z/G_V$ 
is an isomorphism.
The proof for $r'$ is similar 
(use Proposition~\ref{5.3.10} instead of Proposition~\ref{5.3.9}).
\end{proof}

In the rest of this subsection
we assume that $q_i=1$
and $\theta_i < 0$.

\begin{lemma}\label{5.4.1}
If a pair $(x,x') \in \bM \times \bM'$ 
satisfies the conditions {\rm (R1)}, {\rm (R2)} and {\rm (R3)}, then 
\[
\text{$(x,x')$ satisfies \rm (R4)}
\quad \iff \quad \text{$(x,x')$ satisfies \rm (R4')}.
\]
And under these assumptions, 
the equality $x_{\ov h }x_h = x'_{\ov h }x'_h$ holds 
for all $h \in H_i$.
\end{lemma}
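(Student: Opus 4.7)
The plan is to translate (R3)---which in this subsection ($q_i=1$) reduces to $\sigma\tau=\sigma'\tau'$---into block form under the decomposition $\widehat{V}_i=\bigoplus_{h\in H_i}V_{\vout(h)}$, and then extract the desired identity $x_{\ov h}x_h = x'_{\ov h}x'_h$ by an inductive cancellation argument. Order $H_i$ as $h_1<\cdots<h_m$ and write $y_k:=x_{h_k}$, $z_k:=x_{\ov{h_k}}$, $A_k:=\Phi_{h_k}(x)=\prod_{j<k}(1+y_jz_j)$, with primed analogues for $x'$. A direct computation of blocks gives $\pi_{h_{k'}}(\sigma\tau)\iota_{h_j}=z_{k'}A_jy_j$, so (R3) is equivalent to the family of relations $z_{k'}A_jy_j=z'_{k'}A'_jy'_j$ for all $k',j$. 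The goal is to prove $z_ky_k=z'_ky'_k$ for every $k$ by induction on $k$; once this is established the identity $\det(1+ab)=\det(1+ba)$ gives the equivalence of (R4) and (R4') at $h_k$, and (R1) handles the arrows outside $H_i\cup\ov{H_i}$.

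The engine of the induction is the telescoping $A_{j+1}-A_j=A_jy_jz_j$, which rewrites $A_ky_k=y_k+\sum_{j<k}(A_jy_j)(z_jy_k)$. Substituting into $z_{k'}A_ky_k=z'_{k'}A'_ky'_k$ and using the block relations at $(k',j)$ for $j<k$, one obtains after rearrangement
\[
v_{k',k}+\sum_{j<k}z'_{k'}(A'_jy'_j)\,v_{j,k}\;=\;0,\qquad v_{k',k}:=z_{k'}y_k-z'_{k'}y'_k,
\]
for every $k'$. Restricting to $k'<k$ yields a linear system $(I+M)V=0$ in the unknowns $V=(v_{j,k})_{j<k}$, viewed as a single map $V_{\vout(h_k)}\to\bigoplus_{j<k}V_{\vout(h_j)}$, where $M$ is the $(k-1)\times(k-1)$ block matrix with $(k',j)$-entry $z'_{k'}A'_jy'_j=z_{k'}A_jy_j$.

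The crux is to evaluate $\det(I+M)$. Writing $M=AB$ with $A\colon V_i\to\bigoplus_{j<k}V_{\vout(h_j)}$ given by $v\mapsto(z_{k'}v)_{k'<k}$ and $B$ in the reverse direction given by $(w_j)\mapsto\sum_{j<k}A_jy_jw_j$, Sylvester's identity yields $\det(I+M)=\det(I+BA)$, and the same telescoping computes $BA=\sum_{j<k}\Phi_jy_jz_j=\Phi_k-1$, so $\det(I+M)=\det\Phi_k=\prod_{j<k}\det(1+y_jz_j)$. Under (R4) at $h_1,\dots,h_{k-1}$ this is non-zero, forcing $V=0$, and then the identity at $k'=k$ yields $v_{k,k}=0$, completing the induction step. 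The reverse implication (R4')$\Rightarrow$(R4) follows by running the same argument with the roles of $x$ and $x'$ exchanged, since (R1) and $\sigma\tau=\sigma'\tau'$ are symmetric under this exchange. The main obstacle is spotting that Sylvester's identity converts the cancellation problem into exactly the determinant $\det\Phi_k$, linking the induction precisely to (R4); without this observation the inductive step has no trigger.
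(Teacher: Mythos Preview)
Your argument is correct and arrives at the same conclusion, but the mechanism differs from the paper's. The paper does not set up a linear system; instead it recursively constructs operators $R_k$ on $\widehat{V}_i$ (with $R_1=\iota_{h_1}\pi_{h_1}\sigma\tau$ and $R_k=(1+R_{k-1})^{-1}\cdots(1+R_1)^{-1}\iota_{h_k}\pi_{h_k}\sigma\tau$) satisfying the simultaneous intertwining relations $x_{h_k}x_{\ov h_k}\tau=\tau R_k$ and $x'_{h_k}x'_{\ov h_k}\tau'=\tau' R_k$. These let one push the factors $(1+x_{h_j}x_{\ov h_j})^{-1}$ past $\tau$ (and their primed analogues past $\tau'$), producing the explicit common expression
\[
x_{\ov h_k}x_{h_k}=\pi_{h_k}\sigma\tau(1+R_{k-1})^{-1}\cdots(1+R_1)^{-1}\iota_{h_k}=x'_{\ov h_k}x'_{h_k},
\]
with invertibility of each $1+R_j$ coming from $\det(1+R_j)=\det(1+x_{\ov h_j}x_{h_j})$ and (R4). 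Your route replaces this operator calculus by a single block linear system $(I+M)V=0$ and one clean application of Sylvester's identity, which collapses $\det(I+M)$ directly to $\det\Phi_{h_k}$. What your approach buys is that all the bookkeeping is absorbed into a single determinant computation rather than a tower of intertwiners; what the paper's approach buys is an explicit closed formula for $x_{\ov h_k}x_{h_k}$ in terms of $\sigma\tau$ alone, which makes the equality with the primed side visible without any cancellation argument. Neither proof actually uses (R2), so your symmetry remark for the reverse implication is legitimate. One cosmetic point: your ``induction on $k$'' is really just a loop over $k$---the step uses only (R4) at $h_1,\dots,h_{k-1}$, not the conclusion at earlier indices.
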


\begin{proof}
Let $H_i =\{\, h_1<h_2< \cdots <h_n \,\}$.
By (R3), 
\[
x_{\ov{h}_1} x_{h_1} =\pi_{h_1}\sigma\tau\iota_{h_1}
=\pi_{h_1}\sigma'\tau'\iota_{h_1}=x'_{\ov{h}_1}x'_{h_1},
\]
and 
\[
\det (1+x'_{h_1}x'_{\ov{h}_1})=\det (1+x'_{\ov{h}_1}x'_{h_1})
=\det (1+x_{\ov{h}_1}x_{h_1})=\det (1+x_{h_1}x_{\ov{h}_1}).
\]
Set $R_1 = \iota_{h_1}\pi_{h_1}\sigma\tau$.
Then 
\[
x_{h_1}x_{\ov{h}_1} \tau =\tau\iota_{h_1}\pi_{h_1}\sigma\tau =\tau R_1,
\]
and also 
\[
x'_{h_1}x'_{\ov{h}_1} \tau' =\tau'\iota_{h_1}\pi_{h_1}\sigma'\tau' =\tau' R_1.
\]
by (R3).
Suppose now that $(x,x')$ satisfies (R4).
Since $\det(1+R_1) =\det(1+\pi_{h_1}\sigma \tau \iota_{h_1})
=\det(1+x_{\ov{h}_1}x_{h_1}) \neq 0$,
$(1+R_1)$ is invertible and hence
\begin{align*}
x_{\ov{h}_2}x_{h_2} &= \pi_{h_2}\sigma (1+x_{h_1}x_{\ov{h}_1})^{-1} \tau \iota_{h_2} \\
&=\pi_{h_2}\sigma \tau (1+R_1)^{-1} \iota_{h_2} \\
&=\pi_{h_2}\sigma' \tau' (1+R_1)^{-1} \iota_{h_2} \\
&=\pi_{h_2}\sigma' (1+x'_{h_1}x'_{\ov{h}_1})^{-1}\tau' \iota_{h_2} \\
&=x'_{\ov{h}_2}x'_{h_2}.
\end{align*}
Next we define 
\[
R_2 =(1+R_1)^{-1}\iota_{h_2}\pi_{h_2}\sigma\tau.
\]
Then 
\begin{align*}
\det(1+R_2) &=\det(1+\iota_{h_2}\pi_{h_2}\sigma\tau (1+R_1)^{-1}) \\
&=\det(1+\iota_{h_2}\pi_{h_2}\sigma (1+x_{h_1}x_{\ov{h}_1})^{-1}\tau ) \\
&=\det(1+x_{\ov{h}_2}x_{h_2}) \neq 0,
\end{align*}
and 
\begin{align*}
x_{h_2}x_{\ov{h}_2}\tau 
&=(1+x_{h_1}x_{\ov{h}_1})^{-1}\tau\iota_{h_2}\pi_{h_2}\sigma\tau \\
&=\tau (1+R_1)^{-1}\iota_{h_2}\pi_{h_2}\sigma\tau 
=\tau R_2, \\
x'_{h_2}x'_{\ov{h}_2}\tau' 
&=(1+x'_{h_1}x'_{\ov{h}_1})^{-1}\tau'\iota_{h_2}\pi_{h_2}\sigma'\tau' \\
&=\tau' (1+R_1)^{-1}\iota_{h_2}\pi_{h_2}\sigma'\tau' 
=\tau' R_2. 
\end{align*}
By induction, one can easily show that
\[
R_k :=(1+R_{k-1})^{-1} \cdots (1+R_2)^{-1}(1+R_1)^{-1}
\iota_{h_k}\pi_{h_k}\sigma\tau
\]
is well-defined and 
\begin{align*}
\det (1+R_k) &=\det (1+x_{\ov{h}_k}x_{h_k}), \\
x_{h_k}x_{\ov{h}_k}\tau &= \tau R_k, \\
x'_{h_k}x'_{\ov{h}_k}\tau' &= \tau' R_k
\end{align*}
for $1\leq k \leq n$.
Hence for $1\leq k \leq n$,
\begin{align*}
x_{\ov{h}_k}x_{h_k} 
&= \pi_{h_k}\sigma (1+x_{h_{k-1}}x_{\ov{h}_{k-1}})^{-1} \cdots 
(1+x_{h_1}x_{\ov{h}_1})^{-1} \tau \iota_{h_k} \\
&=\pi_{h_k}\sigma \tau (1+R_{k-1})^{-1} \cdots (1+R_1)^{-1}
 \iota_{h_k} \notag\\
&=\pi_{h_k}\sigma' \tau' (1+R_{k-1})^{-1}\cdots (1+R_1)^{-1}
 \iota_{h_k} \\
&=\pi_{h_k}\sigma' (1+x'_{h_{k-1}}x'_{\ov{h}_{k-1}})^{-1}\cdots
(1+x'_{h_1}x'_{\ov{h}_1})^{-1}
\tau' \iota_{h_k} \\
&=x'_{\ov{h}_k}x'_{h_k}.
\end{align*}

The proof of the inverse direction (R4')\,$\Rightarrow$\,(R4) can be done similarly, so we omit it.
\end{proof}

\begin{lemma}\label{lem5.4.2}
If a pair $(x,x')$ satisfies {\rm (R1)}, {\rm (R2)} and {\rm (R3)},
then
\[
(x,x')~\text{satisfies {\rm (R4)} and \rm (R5)} \quad \iff \quad 
(x,x')~\text{satisfies {\rm (R4')} and \rm (R5')}.
\]
\end{lemma}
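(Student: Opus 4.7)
The plan is to reduce the full equivalence to a vertex-by-vertex check, using Lemma~\ref{5.4.1} as a black box. That lemma already gives (R4)$\,\Leftrightarrow\,$(R4') under (R1)--(R3), together with the crucial identity $x_{\ov h}x_h = x'_{\ov h}x'_h$ for every $h \in H_i$. So the remaining task is to show, assuming all of (R1)--(R4) and (R4'), that $\Phi_j(x)=q_j$ holds for every $j$ iff $\Phi_j(x')=q'_j$ holds for every $j$. A useful initial observation is that $q_i=1$ forces $q'=u_i(q)=q$, so the two products to be matched are componentwise equal.

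For a vertex $j$ not adjacent to $i$, (R1) immediately yields $\Phi_j(x)=\Phi_j(x')$. For $j\neq i$ adjacent to $i$, I would decompose $\Phi_j$ into factors indexed by $h\in H_j$: for those $h$ with $\vout(h)\neq i$, (R1) gives $x_h x_{\ov h}=x'_h x'_{\ov h}$; for $h\in H_j \cap \ov{H}_i$, rewriting the identity from Lemma~\ref{5.4.1} applied to $\ov h \in H_i$ yields $x_h x_{\ov h}=x'_h x'_{\ov h}$. Hence $\Phi_j(x)=\Phi_j(x')$ for every $j\neq i$, and the equivalence at these vertices is automatic.

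The interesting vertex is $j=i$, where the plan is to show that both $\Phi_i(x)=1$ and $\Phi_i(x')=1$ are \emph{forced} by (R2) and (R3) alone once $q_i=1$. Since $H_i\subset\Omega$, the relation $\Phi_i(x)=1$ is equivalent to $\tau\sigma=0$, and similarly $\Phi_i(x')=1$ to $\tau'\sigma'=0$. With $q_i=1$, (R3) reads $\sigma\tau=\sigma'\tau'$ on $\widehat{V}_i$. By exactness in (R2), $\Im\sigma'=\Ker\tau$, so $\Im(\sigma\tau)=\Im(\sigma'\tau')\subset\Ker\tau$; thus $\tau\sigma\tau=0$, and surjectivity of $\tau$ gives $\tau\sigma=0$. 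Symmetrically, applying $\sigma\tau=\sigma'\tau'$ to an element of $\Im\sigma'$ and using $\tau\sigma'=0$ (again from (R2)) shows $\sigma'(\tau'\sigma')=\sigma(\tau\sigma')=0$, after which injectivity of $\sigma'$ forces $\tau'\sigma'=0$.

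The main conceptual point, which is what makes the case $q_i=1$ work at all, is this last observation: when $q_i=1$ the preprojective relation at $i$ is automatically satisfied once (R2) and (R3) hold, so the new content of (R5) and (R5') is carried entirely by the other vertices, where the key identity from Lemma~\ref{5.4.1} combined with (R1) finishes the job. I do not anticipate any real difficulty beyond carefully separating the contributions from arrows $h\in H_j$ with $\vout(h)=i$ from those with $\vout(h)\neq i$ in the computation at the adjacent vertices.
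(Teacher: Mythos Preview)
Your proposal is correct and follows essentially the same approach as the paper: invoke Lemma~\ref{5.4.1} for (R4)$\Leftrightarrow$(R4') and the identity $x_{\ov h}x_h=x'_{\ov h}x'_h$ for $h\in H_i$, then combine this with (R1) to get $\Phi_j(x)=\Phi_j(x')$ for all $j\neq i$, and finally note $q'=q$ since $q_i=1$. Your treatment of the vertex $j=i$ --- showing that (R2) and (R3) with $q_i=1$ already force $\tau\sigma=0$ and $\tau'\sigma'=0$ --- is more explicit than the paper's own proof, which passes over this point in silence; your argument there is correct and is a genuine improvement in exposition.
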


\begin{proof}
Under the conditions (R1), (R2), (R3)
and (R4) (or (R4')),
the above lemma implies that
$\Phi_j(x)=\Phi_j(x')$ for all $j\neq i$.
Since $q_j =q'_j$ for $j\neq i$, the result follows.
\end{proof}

\begin{proof}[Proof of Theorem~\ref{5.2.2} for the case $q_i=1$ and $\theta_i< 0$]
The proof for $r$
is the same that in the case $q_i \neq 1$.
To prove that $r'$ is a geometric quotient,
we will construct locally a section of $r'$,
as in the other case. 

Let $x' \in Z'$.
By Proposition~\ref{3.2.8} and $\theta'_i > 0$,
$\sigma'$ is injective.
Thus we can identify $V_i$ with $\widehat{V}_i /\Im \sigma'$.
Let $p$ be the projection $\widehat{V}_i \to V_i$.
Since $\tau'\sigma'=0$, $\tau'$ descends to a linear map 
$\ov{\tau}'\colon V_i \to V'_i$.
We define
\[
x_{\ov h} = x'_{\ov h} \ov{\tau}' \colon V_i \to V_{\vout(h)},\qquad
x_h = \Phi_h^{-1}p\iota_h \colon V_{\vout(h)} \to V_i
\]
for $h\in H_i$.
Here we use induction to define $x_h$.

We define $x_h$ for $h\notin H_i \cup \ov{H}_i$
by the condition (R1).
Then
\[
\sigma = \sigma'\ov{\tau}', \qquad \tau =p.
\]
Thus $\sigma \tau = \sigma' \ov{\tau}' p=\sigma' \tau'$
and $\tau \sigma'=p \sigma' =0$.
Clearly $\tau$ is surjective, so (R2) is satisfied. 
By Lemma~\ref{5.3.1} and Lemma~\ref{lem5.4.2}, 
the pair of $x$ and $x'$ is an element of $P$. 

The definition of $x$ depends on the identification 
$V_i \simeq \widehat{V}_i/\Im \sigma'$,
but we can choose it locally to be regular
in the variable $x'$,
as in the case of $r$.
Thus the induced morphism $P/G \to Z'/G_{V'}$
is an isomorphism.
\end{proof}

\section{Representations of Kac-Moody algebra}\label{6}

In \cite{Nak-Duke1}, Nakajima constructed all irreducible highest weight representations 
of a Kac-Moody Lie algebra using the vector spaces of constructible functions 
on the nilpotent subvarieties of the quiver varieties. 
In this section we observe that the same method can be applied to the case 
of the multiplicative quiver varieties. 

\subsection{Notation}\label{6.1} 

Suppose that the following data are given:
\begin{itemize}
\item $P$ --- a free $\Z$-module, called a {\em weight lattice}. 
\item $I$ --- an index set of simple roots.
\item $\alpha_i \in P~(i \in I)$ --- {\em simple root},
\item $h_i \in P^* := \Hom_\Z (P,\Z)~(i\in I)$ --- {\em simple coroot}.
\item $(\ , \ )$ --- a symmetric bilinear form on $P$.
\end{itemize}
These are required to satisfy:
\begin{enumerate}
\item[(i)] $\langle h_i, \lambda \rangle = 2(\alpha_i,\lambda)/(\alpha_i ,\alpha_i)$ for $i \in I$ and $\lambda\in P$; 
where $\langle \ , \ \rangle\colon P^* \otimes P \to \Z$ is the natural pairing;
\item[(ii)] $c_{ij} := \langle h_i, \alpha_j \rangle$ forms a {\em generalized Cartan matrix}, i.e., 
$c_{ii}=2$, $c_{ij} \in \Z_{\leq 0}\ (i \neq j)$ and $c_{ij}=0 \Leftrightarrow c_{ji}=0$; 
\item[(iii)] $(\alpha_i, \alpha_i ) \in 2\Z_{>0}$; 
\item[(iv)] $\{ \alpha_i \}_{i\in I}$ is linearly independent; and 
\item[(v)] there exists $\Lambda_i\in P$ ($i \in I$), called the {\em fundamental weight}, 
such that $\langle h_j, \Lambda_i\rangle = \delta_{ij}$.
\end{enumerate}

Such data are so-called {\em root data}, 
to which one associates a Kac-Moody Lie algebra $\g$~(see e.g.\ \cite{Kac}). 
Let $\bU$ be the universal enveloping algebra of $\g$. 
Recall the defining relations of it:
\begin{gather}
[h, h'] = 0 \quad \text{for}\ h, h' \in P^*, \label{6.1.1}\\
[h, e_i]= \langle h,\alpha_i \rangle e_i, \label{6.1.2}\\
[h, f_i]= -\langle h,\alpha_i \rangle f_i, \label{6.1.3}\\
[e_i, f_j]= \delta_{ij} h_i, \label{6.1.4}
\end{gather}
\begin{gather}
\sum_{n=0}^{1-c_{ij}} (-1)^n \binom{1-c_{ij}}{n}
 e_i^n e_j e_i^{1-c_{ij}-n} = 0 \quad (i \neq j), \label{6.1.5}\\
\sum_{n=0}^{1-c_{ij}} (-1)^n \binom{1-c_{ij}}{n}
 f_i^n f_j f_i^{1-c_{ij}-n} = 0 \quad (i \neq j). \label{6.1.6}
\end{gather}

We also use the following symbols: 
\begin{itemize}
\item $P^+ := \{ \lambda \in P \mid \langle h_i,\lambda \rangle \geq 0 \ \text{for any}\ i\in I \}$ 
(the semigroup of {\em dominant weights}),
\item $Q := \bigoplus_i \Z \alpha_i \subset P$ ({\em root lattice}),
\item $Q^+ := \sum_i \N \alpha_i \subset Q$.  
\end{itemize}

Let $(I,E)$ be the graph associated to $\bC$, i.e., 
the graph whose vertex set is $I$ and 
edge set $E$ is given by $2 \mathbf{I} - \bA = \bC$, 
where $\mathbf{I}$ is the identity matrix and $\bA$ is a matrix whose $(i,j)$ entry 
is just the number of edges joining $i$ and $j$.      
Let $(I,\Omega)$ be a quiver whose underlying graph is $(I,E)$.

\subsection{Framed multiplicative quiver variety}\label{6.2}

For $\bv \in Q^+$ and $\bw \in P^+$, 
we define a variety $\MM(\bv,\bw)$ 
which is a multiplicative analogue of the Nakajima quiver variety $\M(\bv,\bw)$. 

Following Crawley-Boevey~(see \cite[Introduction]{Cra-geom}), 
we associate to $(I,\Omega)$ and $\bw$ 
another quiver $(\tilde I, \tilde \Omega)$ by   
setting $\tilde I :=I \cup \{ \infty \}$ and  
letting $\tilde{\Omega}$ be the set obtained by adding $w_i$ arrows 
starting at $\infty$ toward $i$ for each $i \in I$ to $\Omega$, where $w_i := \langle h_i,\bw \rangle$.
 
Take an $I$-graded vector space $V$ such that $\sum_i (\dim V_i) \,\alpha_i =\bv$. 
To such $V$, we associate an $\tilde{I}$-graded vector space $\widetilde{V}$ 
by $\widetilde{V}_i := V_i$ and $\widetilde{V}_\infty := \C$.         
To a pair $(q,\theta) \in (\C^\times)^I \times \Z^I$, 
we associate a pair 
$(\tilde{q},\tilde{\theta}) \in (\C^\times)^{\tilde{I}} \times \Z^{\tilde{I}}$ 
by 
\begin{align*}
\tilde{q}_i &:= q_i, \qquad \tilde{q}_\infty := \prod_i q_i^{-\dim V_i}, \\
\tilde{\theta}_i &:= \theta_i, \qquad \tilde{\theta}_\infty := -\sum_i \theta_i \dim V_i.
\end{align*}

We define an $I$-graded vector space $W$ by 
$W_i := \C^{w_i}$. 
Then the vector space $\bM(\widetilde{V})$ can be identified with 
\[
\bM(V,W) := \bM(V) \oplus \bigoplus_{i\in I} \Hom (W_i, V_i) 
\oplus \bigoplus_{i\in I} \Hom (W_i,V_i).
\] 
For an element $x$ in $\bM(V,W)$, 
we usually denote its three components by $B=(B_h)~,a=(a_i),~b=(b_i)$. 
The multiplicative preprojective relation $\Phi_i(x) = q_i$ at $i \in I$ 
becomes 
\[
(1+a_{i,1}b_{i,1}) \cdots (1+a_{i,w_i}b_{i,w_i}) 
\prod_{i\in I} (1+B_h B_{\ov h})^{\epsilon(h)} =q_i,
\] 
where   
\[
a_i =
\begin{bmatrix} a_{i,1} & a_{i,2} & \cdots & a_{i,w_i} \end{bmatrix}
\colon \C^{w_i} \to V_i, \qquad 
b_i =
\begin{bmatrix} b_{i,1} \\ b_{i,2} \\ \vdots \\ b_{i,w_i} \end{bmatrix}
\colon V_i \to \C^{w_i}.
\]
The following can be checked easily.
\begin{proposition}\label{6.2.1}
A point $x=(B,a,b) \in \bM(V,W)$ is $\tilde{\theta}$-semistable 
if and only if the following conditions are satisfied:
\begin{enumerate}
\item[\rm (i)] For any $B$-invariant subspace $S \subset V$ contained in
$\Ker b := \bigoplus \Ker b_i$, the inequality $\theta \cdot \dim S \leq 0$ holds.  
\item[\rm (ii)] For any $B$-invariant subspace $T \subset V$ containing 
$\Im a := \bigoplus \Im a_i$,
the inequality $\theta \cdot \dim T \leq \theta \cdot \dim V$ holds.
\end{enumerate}
$x$ is $\tilde{\theta}$-stable
if and only if the strict inequalities hold in {\rm (i)}, {\rm (ii)} 
unless $S=0$, $T=V$ respectively.
\end{proposition}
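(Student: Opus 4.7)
The plan is to exploit the fact that $\widetilde{V}_\infty = \C$ is one-dimensional, so any $\tilde{I}$-graded subspace $\widetilde{S} \subset \widetilde{V}$ must satisfy either $\widetilde{S}_\infty = 0$ or $\widetilde{S}_\infty = \widetilde{V}_\infty$. I would dichotomize along this alternative and match each case to one of the two conditions in the proposition.

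In the case $\widetilde{S}_\infty = 0$, regard $\widetilde{S}$ as an $I$-graded subspace $S \subset V$. The $x$-invariance under the arrows of the double of $(\tilde{I}, \tilde{\Omega})$ splits into three pieces: $B$-invariance of $S$ for the arrows coming from $\Omega$; the conditions $a_{i,k}(\widetilde{S}_\infty) \subset S_i$, which are vacuous; and the conditions $b_{i,k}(S_i) \subset \widetilde{S}_\infty = 0$, which are equivalent to $S \subset \Ker b$. The tested inequality $\tilde{\theta} \cdot \dim \widetilde{S} \leq 0$ reduces to $\theta \cdot \dim S \leq 0$, recovering condition (i).

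In the case $\widetilde{S}_\infty = \widetilde{V}_\infty$, denote the corresponding $I$-graded subspace by $T \subset V$. The $x$-invariance forces $T$ to be $B$-invariant, and the conditions $a_{i,k}(\widetilde{V}_\infty) = \Im a_{i,k} \subset T_i$ yield $\Im a \subset T$; the conditions on the $b_{i,k}$ are automatic since their targets land in $\widetilde{V}_\infty = \widetilde{S}_\infty$. The inequality $\tilde{\theta} \cdot \dim \widetilde{S} \leq 0$ becomes $\theta \cdot \dim T + \tilde{\theta}_\infty \leq 0$, and using the defining formula $\tilde{\theta}_\infty = -\theta \cdot \dim V$ this is exactly $\theta \cdot \dim T \leq \theta \cdot \dim V$, i.e., condition (ii).

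For the stability statement, recall that $\tilde{\theta}$-stability requires strict inequality unless $\widetilde{S} = 0$ or $\widetilde{S} = \widetilde{V}$. In the first case $\widetilde{S}_\infty = 0$ automatically forces $\widetilde{S} \neq \widetilde{V}$, so strictness is needed precisely when $S \neq 0$. In the second case $\widetilde{S}_\infty = \widetilde{V}_\infty$ automatically forces $\widetilde{S} \neq 0$, so strictness is needed precisely when $T \neq V$. Combining the two cases gives the claimed characterization. The argument is essentially bookkeeping, and I anticipate no genuine obstacle; the only minor subtlety is correctly identifying how the $x$-invariance conditions split between the $a$-part and the $b$-part depending on the value of $\widetilde{S}_\infty$.
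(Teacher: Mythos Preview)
Your argument is correct and is exactly the natural verification: since $\widetilde{V}_\infty=\C$, every $x$-invariant $\tilde I$-graded subspace falls into one of two cases according to whether $\widetilde{S}_\infty$ is $0$ or $\C$, and these correspond precisely to conditions (i) and (ii). The paper itself gives no proof of this proposition (it merely remarks that it ``can be checked easily''), so there is nothing to compare against; your write-up is the standard unpacking of Crawley-Boevey's framing trick.
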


For a subspace $S \subset V$ we usually identify $\dim S \in \N^I$ 
with $\sum_i (\dim S_i)\, \alpha_i \in Q^+$. 

We define 
\[
\MM_{q,\theta}(\bv,\bw) := \MM_{\tilde{q},\tilde{\theta}}(\widetilde{V}), 
\qquad 
\MMreg_{q,\theta}(\bv,\bw) := \MMreg_{\tilde{q},\tilde{\theta}}(\widetilde{V}),
\]
both of which we call the {\em framed multiplicative quiver varieties}. 
One can easily check that the dimension of $\MMreg_{q,\theta}(\bv,\bw)$ can be written as
\[
\dim \MMreg_{q,\theta}(\bv,\bw) = \langle \bv^{\vee} ,2\bw -\bv \rangle,
\]
where $\bv^{\vee} := \sum_i (\dim V_i)\, h_i$.

\subsection{Brill-Noether locus, Steinberg variety and Hecke correspondence}\label{6.3}

In this subsection, we assume that:
\begin{enumerate}
\item[\rm (i)] $h<h'$ for all $h\in \Omega, h'\in \ov\Omega$; and  
\item[\rm (ii)] $q_i=1$ and $\theta_i >0$ for all $i$.
\end{enumerate}
Note that the stability condition for $(B,a,b) \in \Phi^{-1}(1)$
then becomes
\begin{itemize}
\item If a subspace $S \subset V$ is $B$-invariant and contained in $\Ker b$,
then $S=0$,
\end{itemize}
and the semistability coincides with the stability. 
We write 
\[
\bMs(V,W) =\bMs_\theta(\widetilde{V}), \qquad  
Z^{\rm s}(V,W) = \Phi^{-1}(1) \cap \bMs(V,W),
\]
and
\[
\MM(\bv,\bw) =\MM_{1,\theta}(\bv,\bw), \qquad \MM_0(\bv,\bw)=\MM_{1,0}(\bv,\bw).
\]
Also we write $\MMreg_0(\bv,\bw) = \MMreg_{1,0}(\bv,\bw)$. 

The purpose of this section is to show that all the results proved by Nakajima 
in \cite[\S 4]{Nak-Duke2} can be shown analogously in the multiplicative case,
and that we can define the multiplicative version of 
Hecke correspondence of quiver varieties.

Since the projection $Z^{\rm s}(V,W) \to \MM(\bv,\bw)$ 
is a principal $G_V$-bundle
and each $V_i, W_i$ are representation spaces of $G_V$,
we can define associated vector bundles
\[
\mc{V}_i=Z^{\rm s}(V,W) \times_{G_V} V_i, \qquad
\mc{W}_i=Z^{\rm s}(V,W) \times_{G_V} W_i.
\]
We call these the {\em tautological bundles}.

Consider the following sequence of vector bundles:
\[
C^{\bullet}_i(\bv,\bw)\colon 
\begin{CD}
\mc{V}_i @>{\sigma_i}>> \bigoplus_{h\in H_i}\mc{V}_{\vout(h)} \oplus \mc{W}_i
@>{\tau_i}>> \mc{V}_i
\end{CD}\, ,
\]
where we have assigned the degree 0 to the middle term. 
$C^{\bullet}_i(\bv,\bw)$ is a complex by the multiplicative preprojective relation, 
and the degree (-1) cohomology vanishes by Proposition~\ref{3.2.8}.
Let $Q_i(\bv,\bw)$ denote the degree 0 cohomology; 
$Q_i(\bv,\bw) :=H_0(C_i^\bullet (\bv,\bw)) =\Ker \tau_i /\Im \sigma_i$.

We introduce the following subset of $\MM(\bv,\bw)$:
\begin{align*}
\MM_{i;n}(\bv,\bw) &:= \lset{[B,a,b] \in \MM(\bv,\bw)}%
{\corank \tau_i(B,a,b) = n}, \\
\MM_{i;\leq n}(\bv,\bw) &:= \bigcup_{m \leq n} \MM_{i;m}(\bv,\bw).
\end{align*}

Since $\MM_{i;\leq n}(\bv,\bw)$ is an open subvariety of $\MM(\bv,\bw)$,
$\MM_{i;n}(\bv,\bw)$ is a locally closed subvariety.
The restriction $Q_{i;n}(\bv,\bw):=Q_i(\bv,\bw) |_{\MM_{i;n}(\bv,\bw)}$ 
is a vector bundle of rank $\langle h_i ,\bw -\bv \rangle +n$. 
$\MM_{i;n}(\bv,\bw)$ is a multiplicative analogue of the 
Brill-Noether locus of the quiver variety. 

Replacing $V_i$ to $\Im \tau_i$,
we have a natural morphism
\[
p\colon \MM_{i;n}(\bv,\bw) \to \MM_{i;0}(\bv -n\alpha_i,\bw).
\]

Similar to \cite[Proposition 4.5]{Nak-Duke2}, we have:

\begin{proposition}\label{6.3.1}
Let $G(n,Q_{i;0}(\bv -n\alpha_i,\bw))$ be 
the Grassmann bundle of $n$-planes in 
$Q_{i;0}(\bv -n\alpha_i,\bw)$. 
Then we have the following commutative diagram:
\[
\begin{CD}
G(n,Q_{i;0}(\bv -n\alpha_i,\bw)) @>{\text{projection}}>> \MM_{i;0}(\bv -n\alpha_i,\bw) \\
@VV{\simeq}V  @| \\
\MM_{i;n}(\bv,\bw) @>{p}>> \MM_{i;0}(\bv -n\alpha_i,\bw) \, .
\end{CD}
\]
The kernel of the natural surjective homomorphism 
$p^* Q_{i;0}(\bv -n\alpha_i,\bw) \to Q_{i;n}(\bv,\bw)$ 
is isomorphic to the tautological vector bundle 
of $G(n,Q_{i;0}(\bv -n\alpha_i,\bw))$ via the isomorphism of the first row.  
In particular,
\begin{align*}
\dim \MM_{i;n}(\bv,\bw) &=\dim \MM_{i;0}(\bv -n\alpha_i,\bw) + 
n\left(  \langle h_i ,\bw -\bv \rangle +n \right)   \\
&= \dim \MM(\bv,\bw) -n\left(  \langle h_i ,\bw -\bv \rangle +n \right)   .
\end{align*}
\end{proposition}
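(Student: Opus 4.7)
The plan is to construct mutually inverse morphisms between $\MM_{i;n}(\bv,\bw)$ and the Grassmann bundle $G(n, Q_{i;0}(\bv - n\alpha_i, \bw))$ over $\MM_{i;0}(\bv - n\alpha_i, \bw)$, so that the bundle projection agrees with $p$. The argument mirrors Nakajima's additive prototype \cite[Proposition~4.5]{Nak-Duke2}, the new ingredient being a careful choice of orientation. By Proposition~\ref{3.1.3} I may freely choose $\epsilon$; I take one for which every arrow of $H$ with head at $i$ lies in $\ov\Omega$. Under this choice each factor $\Phi^+_h$ in $\tau_i$ is trivial, so
\[
\tau_i(x) = -\sum_{h \in H_i} B_h \pi_h, \qquad \Im \tau_i = \sum_{h \in H_i} \Im B_h,
\]
and each $B_h$ with $h \in H_i$ automatically lands in $\Im\tau_i$. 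Moreover each $(1 + B_h B_{\ov h})$ with $h \in H_i$ preserves $\Im \tau_i$, so the factors $\Phi^-_h$ in $\sigma_i$ preserve $\Im\tau_i$ as well.

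Given $[B,a,b] \in \MM_{i;n}(\bv,\bw)$, I set $V'_i := \Im\tau_i$ and leave $V_j$ unchanged for $j\neq i$, obtaining a datum $(B',a',b')$ on $V'$ by restriction or co-restriction of each component of $(B,a,b)$ involving $V_i$; the preceding paragraph makes this unambiguous. The multiplicative preprojective relation at $j\neq i$ is unaffected because the compositions $B_h B_{\ov h} \in \End(V_j)$ do not change, while the relation at $i$, equivalent to $\tau_i\sigma_i = 0$ by \eqref{3.2.2}, restricts to $\tau'_i\sigma'_i = 0$. The $\tilde\theta$-stability of $(B',a',b')$ is inherited from that of $(B,a,b)$ via Proposition~\ref{6.2.1}: any $B'$-invariant $S' \subset V'$ with $S'_i \subset \Ker b'$ lifts to a $B$-invariant $S \subset V$ with $S_i \subset \Ker b$, and the dual condition for subspaces containing $\Im a$ is verified analogously. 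A $G$-equivariant choice of splitting $V_i = V'_i \oplus K$ in families upgrades the set-theoretic construction to a morphism $p$.

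To promote $p$ to a Grassmann bundle, observe that $\widehat V_i$ is independent of $V_i$, so $\widehat{V'}_i = \widehat V_i$, $\Ker\tau'_i = \Ker\tau_i$, and $\sigma'_i = \sigma_i|_{V'_i}$. Both $\sigma_i, \sigma'_i$ are injective by Proposition~\ref{3.2.8}, and $\Im\sigma'_i \subset \Im\sigma_i \subset \Ker\tau_i$, so $\Im\sigma_i/\Im\sigma'_i$ is a canonical $n$-plane in $Q'_i := \Ker\tau'_i/\Im\sigma'_i$, the fiber of $Q_{i;0}(\bv-n\alpha_i,\bw)$ at $[B',a',b']$. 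Sending $[B,a,b]$ to this $n$-plane gives the map to $G(n, Q_{i;0}(\bv-n\alpha_i,\bw))$, and the identification of $\Im\sigma_i/\Im\sigma'_i$ as the kernel of $p^* Q_{i;0}(\bv-n\alpha_i,\bw) \twoheadrightarrow Q_{i;n}(\bv,\bw)$ is then immediate. For the inverse, given $[B',a',b']$ and an $n$-plane $\widetilde K \subset Q'_i$, I lift $\widetilde K$ to $\widehat K \subset \Ker\tau'_i$ complementary to $\Im\sigma'_i$, set $V_i := V'_i \oplus K$ with $K$ abstract of dimension $n$, and define $\sigma_i = (\sigma'_i, \iota)$ and $\tau_i$ as $\tau'_i$ followed by the inclusion $V'_i \hookrightarrow V_i$, where $\iota \colon K \to \widehat V_i$ is an isomorphism onto $\widehat K$; then $\tau_i\sigma_i = 0$ by construction, and different lifts differ by $\Hom(K, \Im\sigma'_i)$, absorbed by the $\GL(V_i)$-action. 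The dimension formula follows from $\dim G(n,r) = n(r-n)$ applied to $r = \rank Q_{i;0}(\bv-n\alpha_i,\bw) = \langle h_i, \bw - \bv\rangle + 2n$.

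The principal obstacle is the first verification: that the restriction $(B',a',b')$ satisfies the multiplicative preprojective relation at vertex $i$. This would be difficult under a generic orientation, since the products $\Phi^+_h$ in $\tau_i$ couple the $i$-th relation non-linearly and prevent $\Im B_h$ from lying in $\Im\tau_i$. The orientation trick in the first paragraph dissolves the obstacle by reducing the $i$-th relation to its linear core $\tau_i\sigma_i = 0$, which restricts cleanly; after this, Nakajima's argument can be reproduced in the multiplicative setting.
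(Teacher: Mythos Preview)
Your overall strategy matches the paper's: construct mutually inverse morphisms between $\MM_{i;n}(\bv,\bw)$ and the Grassmann bundle by replacing $V_i$ with $\Im\tau_i$ in one direction and extending $\sigma_i$ in the other, and identify the kernel of $p^*Q_{i;0}\to Q_{i;n}$ with $\Im\sigma_i/\sigma_i(\Im\tau_i)\simeq V_i/\Im\tau_i$.

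The substantive difference is your orientation trick, and here there is both a gap and a misconception. The gap: Proposition~\ref{3.1.3} only says the quasi-Hamiltonian spaces for different orientations are isomorphic; it does not say the reorientation isomorphism carries the old $\tau_i$, $\Im\tau_i$, $Q_{i;0}$ to the new ones, and the statement you are proving is about those objects for the orientation already fixed in \S\ref{6.1}. The misconception is in your final paragraph: the assertion that under a generic orientation the factors $\Phi_h^+$ ``prevent $\Im B_h$ from lying in $\Im\tau_i$'' is false. Take $S_j=V_j$ for $j\neq i$ and $S_i=\Im\tau_i$; then $\sigma_i(S_i)\subset\widehat S_i\oplus W_i$ and $\tau_i(\widehat S_i\oplus W_i)\subset S_i$ hold trivially, so Lemma~\ref{3.2.3} forces $B_h(V_{\vout(h)})\subset\Im\tau_i$ for every $h\in H_i$, regardless of orientation. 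This is exactly how the paper proceeds: it keeps the original orientation, defines $B_h$ for $h\in H_i$ and $a_i$ as the primed maps followed by the inclusion $V'_i\hookrightarrow V_i$, and then recovers $B_{\ov h}$ and $b_i$ from the prescribed extension of $\sigma_i$ by induction on $<$ (each $\Phi_h^-$ involves only earlier arrows, so the components can be peeled off one at a time). So the ``principal obstacle'' dissolves without any reorientation, and the paper's argument is both shorter and compatible with the fixed $\Omega$ used throughout \S\ref{6}.

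Two minor slips: your formula for $\tau_i$ omits the framing contribution coming from $W_i$, and your lift $\iota\colon K\to\widehat V_i$ should land in $\Ker\tau'_i\subset\widehat V_i\oplus W_i$, not just $\widehat V_i$.
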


\begin{proof}
The proof is almost the same as \cite[Proposition 4.5]{Nak-Duke2}.

The vector bundle $p^* Q_{i;0}(\bv -n\alpha_i,\bw)$ 
is given by 
$\Ker \tau_i / \sigma_i (\Im \tau_i)$. 
Considering the natural surjection 
$\Ker \tau_i / \sigma_i (\Im \tau_i) \to \Ker \tau_i /\Im \sigma_i$,
we have a surjective homomorphism 
$p^* Q_{i;0}(\bv -n\alpha_i,\bw) \to Q_{i;n}(\bv,\bw)$.
Its kernel $\Im \sigma_i /\sigma_i (\Im \tau_i)\simeq V_i /\Im \tau_i$ 
has a constant rank $n$. 
Thus we get a morphism from $\MM_{i;n}(\bv,\bw)$ to the Grassmann bundle.

Conversely suppose that a point $\phi$ in the Grassmann bundle is given.
Take a subspace $V' \subset V$ such that $\dim V' = \bv -n\alpha_i$. 
Let $(B',a',b') \in Z^{\rm s}(V',W)$ be 
a representative of the image of $\phi$ under the projection, and 
$\sigma'_i, \tau'_i$ denote $\sigma_i(B',a',b'), \tau_i(B',a',b')$ respectively.
Take an injective homomorphism 
$\sigma_i\colon V_i \to \widehat{V}_i \oplus W_i$ such that 
$\sigma_i |_{V'_i} =\sigma'_i$ and $\Im \sigma_i /\Im \sigma'_i = \phi$. 
Now we define   
\begin{align*}
B_h &:=B'_h \quad \text{for}\  h \notin H_i \cup \ov{H_i}, \\
a_j &:=a'_j, ~b_j :=b'_j \quad \text{for}\ j\neq i, \\
B_h &:= B'_h\colon V_{\vout(h)} \to V'_i \hookrightarrow V_i \quad \text{for}\ h \in H_i, \\ 
a_i &:= a'_i\colon W_i \to V'_i \hookrightarrow V_i,
\end{align*}
and define $b_i$ and $B_{\ov h}$ for $h\in H_i$ by the condition 
$\sigma_i(B,a,b) = \sigma_i$. 
Since $\sigma_i |_{V'_i} =\sigma'_i$, one can prove inductively 
that $b_i |_{V'_i} = b'_i$ and $B_{\ov h} |_{V'_i} =B'_{\ov h}$ for $h\in H_i$. 
Thus $\tau_i = \tau'_i$ and hence 
\[
\Im \sigma_i / \sigma_i(\Im \tau_i) 
= \Im \sigma_i /\Im \sigma'_i =\phi.
\] 
By definition we have $\tau_i \sigma_i = 0$, which implies $\Phi_i(B,a,b)=1$. 
Moreover $b_i |_{V'_i} = b'_i$ and $B_{\ov h} |_{V'_i} =B'_{\ov h}$ implies 
$b_i a_i = b'_i a'_i$ and $B_{\ov h}B_h = B'_{\ov h}B'_h$, 
and hence $\Phi_j(B,a,b)=1$ for all $j \neq i$. 
Thus $(B,a,b) \in \Phi^{-1}(1)$.

To check the stability condition, 
suppose that there is a $B$-invariant subspace $S$ 
contained in $\Ker b$.
We define a subspace $S' \subset V'$ by
\[
S'_j=\begin{cases} S_j & \text{if}\ j \neq i, \\
S_i \cap \Im \tau'_i =S_i \cap \Im \tau_i & \text{if}\ j=i. \end{cases}
\]
Then one can easily check that  
$S'$ is $B'$-invariant and contained in $\Ker b'$ 
using Lemma~\ref{3.2.3}.
Thus $S'=0$ by the stability condition for $(B',a',b')$.
In particular $S_j=0$ for $j\neq i$,
which implies $\sigma_i (S_i)=0$. 
Since we have taken $\sigma_i$ to be injective,
$S_i$ must be zero.
Thus $(B,a,b)$ is stable. 
Taking a quotient by the $G_V$-action 
we obtain a morphism from the Grassmann bundle to $\MM_{i;n}(\bv,\bw)$.
It is the inverse of the previous morphism.

To prove the last equality we compute that
\begin{align*}
\dim \MM(\bv,\bw) -\dim \MM(\bv -n\alpha_i,\bw) 
&= \langle \bv^{\vee} ,2\bw -\bv \rangle - 
\langle \bv^{\vee} -nh_i,  2\bw -\bv +n\alpha_i \rangle \\
&= 2n\left(  \langle h_i ,\bw -\bv \rangle +n \right)   .
\end{align*}
\end{proof}

Let $\bv_1, \bv_2 \in Q^+$ and $\bw \in P^+$.  
Let $\pi \colon \MM(\bv^i,\bw) \to \MM_0(\bv^i,\bw)$ $(i=1,2)$ be the 
projection. 
Recall that $\MM_0(\bv^i, \bw)$ is naturally embedded in 
$\MM_0(\bv^1 + \bv^2,\bw)$ by Proposition~\ref{2.2.7}. 
Thus we can regard $\pi$'s as maps to $\MM_0(\bv^1+\bv^2,\bw)$. 
Following Nakajima~\cite{Nak-Duke2}, we define  
\begin{align*}
\SS(\bv^1,\bv^2;\bw) &:= 
\lset{ (x^1,x^2) \in \MM(\bv^1,\bw) \times \MM(\bv^2,\bw)}
{\pi(x^1)=\pi(x^2)} \\ 
&= \MM(\bv^1,\bw) \times_{\MM_0(\bv^1+\bv^2,\bw)} \MM(\bv^2,\bw),
\end{align*}
which is an analogue of the Steinberg variety.

\begin{definition}\label{6.3.2}
For $n \in \Z_{>0}$ and $\bv \in Q^+$, 
the {\em Hecke correspondence} $\PP^{(n)}_i(\bv,\bw)$
is the variety defined as 
\[
\PP_i^{(n)}(\bv,\bw) := \left\{\,  (B,a,b,S) \, \,
\begin{array}{|l}  
(B,a,b) \in Z^{\rm s}(V,W) , \, S \subset V, \\
\text{$S$ is $B$-invariant, $\Im a \subset S$ and    
$\dim S=\bv -n\alpha_i$} 
\end{array} \right\} /G_V.
\]
We denote $\PP_i(\bv,\bw) = \PP^{(1)}_i(\bv,\bw)$.  
\end{definition}

We have the following diagram:
\begin{equation}\label{6.3.3}   
\MM(\bv-n\alpha_i,\bw) \stackrel{p_1}{\longleftarrow} 
\PP_i^{(n)}(\bv,\bw) 
\stackrel{p_2}{\longrightarrow} \MM(\bv,\bw)  .
\end{equation}
The first map is given by the restriction of $(B,a,b)$ to $S$, 
and the second is given by forgetting $S$
(It is clear that $(B,a,b)|_S \in Z^{\rm s}(S,W)$). 
  
Note that 
$p_1 \times p_2 \colon \PP_i^{(n)}(\bv,\bw) 
\to \MM(\bv-n\alpha_i,\bw) \times \MM(\bv,\bw)$
is an embedding whose image consists of all pairs 
$([B'',a'',b''],[B,a,b])$ such that there exists $\xi \in \bigoplus_{i\in I}\Hom(V''_i,V_i)$ 
satisfying
\[
\xi B'' = B \xi, \qquad \xi a'' =a, \qquad b''=b \xi. 
\] 
Here we fix an $I$-graded vector space $V''$ such that $\sum (\dim V''_i)\, \alpha_i =\bv -\bv'$. 
Indeed, if such a $\xi$ exists, then $\Ker \xi$ is zero by the stability condition 
and $\Im \xi$ is $B$-invariant and contains $\Im a$. 
Moreover $\xi$ is unique if we fix representatives $(B'',a'',b''), (B,a,b)$. 
Thus the point $[(B,a,b),\Im \xi] \in \PP_i^{(n)}(\bv,\bw)$ is well-defined. 

It is clear that this subvariety is contained in $\SS(\bv-n\alpha_i,\bv;\bw)$. 
So we may regard $\PP_i^{(n)}(\bv,\bw)$ as a subvariety of 
$\SS(\bv-n\alpha_i,\bv;\bw)$. 

Similar to \cite[Lemma 5.12]{Nak-Duke2}, we can prove the following.

\begin{proposition}\label{6.3.4}
Consider the diagram \eqref{6.3.3} with $n=1$. 

{\rm (i)} $p_1^{-1}(\MM_{i;n}(\bv -\alpha_i,\bw))$ can be identified with the
projective bundle $\P \left( Q_{i;n}(\bv -\alpha_i ,\bw) \right)$. 

{\rm (ii)} $p_2^{-1}(\MM_{i;n}(\bv,\bw))$ can be identified with the 
projective bundle $\P \left( H_1(C_i^\bullet(\bv,\bw))^* |_{\MM_{i;n}(\bv,\bw)} \right)$. 
\end{proposition}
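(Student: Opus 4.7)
For part (ii), I would begin by noting that a point in $p_2^{-1}([B,a,b])$ is a $B$-invariant subspace $S \subset V$ containing $\Im a$ with $\dim S = \bv - \alpha_i$. Necessarily $S_j = V_j$ for $j \neq i$ while $S_i \subset V_i$ is a hyperplane, so the datum is a line $[\phi] \in \P(V_i^*)$ cutting it out. By Lemma~\ref{3.2.3} the $B$-invariance of $S$ reduces in this configuration to $\Im \tau_i \subset S_i$, equivalently $\phi \circ \tau_i = 0$; the inclusion $\Im a \subset S$ is folded into this since $a_i$ factors through $\widehat{V}_i \oplus W_i \xrightarrow{\tau_i} V_i$ in the framed complex. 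Hence $p_2^{-1}([B,a,b])$ is identified with $\P((\Coker \tau_i)^*) = \P(H_1(C_i^\bullet)^*)$. Over $\MM_{i;n}(\bv,\bw)$ the rank of $\Coker \tau_i$ is constantly $n$, and this identification globalizes to the asserted projective bundle.

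For part (i), I would identify $p_1^{-1}([B'',a'',b''])$ with (equivalence classes of) stable extensions $0 \to V'' \to V \to L \to 0$ in the category of representations of the multiplicative preprojective algebra, where $L$ denotes the simple module supported at vertex $i$ with one-dimensional stalk. Choosing a vector-space splitting $V = V'' \oplus L$, the extension data can be packaged as a single linear map $\sigma_{\mathrm{ext}} : L \to \widehat{V}''_i \oplus W_i$ assembled from the $L$-components of the $B_{\ov h}$ for $h \in H_i$ and of $b_i$; the $L$-component of $a_i$ is forced to vanish by $\Im a \subset V''$. The heart of the argument is to expand $\Phi_i(B,a,b)$ in block form on $V''_i \oplus L$: by $B$-invariance of $V''$ the map is block-upper-triangular, with diagonal entries $\Phi''_i = 1$ on $V''_i$ (automatic) and the identity on $L$ (using that vertex $i$ carries no loops), and the off-diagonal $L \to V''_i$ block telescopes to $\tau''_i \circ \sigma_{\mathrm{ext}}$. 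Thus $\Phi_i = 1$ is equivalent to $\sigma_{\mathrm{ext}} \in \Ker \tau''_i$, while the relations $\Phi_j = 1$ for $j \neq i$ are inherited directly from $(B'',a'',b'')$.

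The remaining arguments are then structural. Modifying the chosen splitting by $\psi \in \Hom(L,V''_i)$ changes $\sigma_{\mathrm{ext}}$ by $\sigma''_i \circ \psi$, so extension classes are parametrized by $Q_i(V'') = \Ker \tau''_i / \Im \sigma''_i$, and the residual $\GL(L) = \C^\times$-action on $L$ supplies the projectivization. For the stability criterion, one checks that vanishing of $\sigma_{\mathrm{ext}}$ in $Q_i(V'')$ is equivalent to the existence of a splitting for which $L$ embeds as a nonzero $B$-invariant subspace of $\Ker b$, contradicting stability (by Proposition~\ref{6.2.1}); conversely, when the class is nonzero, stability of $(B,a,b)$ follows from that of $(B'',a'',b'')$ by the standard invariant-subspace argument, since any destabilizing subspace would project to a destabilizing subspace of $V''$ or else be a proper extension of $L$. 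This identifies $p_1^{-1}([B''])$ canonically with $\P(Q_i(V''))$, and globalizes over $\MM_{i;n}(\bv-\alpha_i,\bw)$ to $\P(Q_{i;n}(\bv-\alpha_i,\bw))$.

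The principal technical obstacle, and the step that genuinely differs from the additive analogue in \cite{Nak-Duke2}, is the telescoping computation showing that the off-diagonal block of the ordered product $\Phi_i = \prod_h^< (1+B_h B_{\ov h})$ collapses precisely to $\tau''_i \circ \sigma_{\mathrm{ext}}$. One must verify that the prefactors $\Phi''_h$ appearing in the definition of $\tau''_i$ are exactly those arising from the upper-left diagonal blocks of the earlier factors of the product, and that all higher-order cross-terms in the extension parameters contribute only to the lower-right block where they are absorbed by the $1$ forced there by triviality of the quotient representation at vertex $i$. Once this block calculation is in place, the rest of the argument is parallel to the additive case.
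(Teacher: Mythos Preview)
Your argument for part~(ii) is correct and is exactly what the paper has in mind when it says ``similar to Proposition~\ref{6.3.1}'': the hyperplane $S_i$ is determined by $\Im\tau_i\subset S_i$, giving $\P\bigl((\Coker\tau_i)^*\bigr)$.

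For part~(i) your overall strategy---parametrizing the fiber by extension classes landing in $Q_i(V'')$---is the right one and matches the paper, but your block computation contains a genuine error. With the raw extension data $\sigma_{\mathrm{ext}}=(\eta_h)_h$ (the $L$-components of the $B_{\ov h}$ and of $b_i$), the off-diagonal $L\to V''_i$ block of $\Phi_i$ is \emph{not} $\tau''_i\circ\sigma_{\mathrm{ext}}$. A direct computation of $\Phi_i=\Phi_i^+(\Phi_i^-)^{-1}$ in block form (using $\Phi''_i=1$) gives the off-diagonal as $C^+-C^-$, where $C^\pm$ are the off-diagonals of $\Phi_i^\pm$; and while $C^+=\sum_{h\in H_i\cap\Omega}\Phi''^+_h\,x''_h\,\eta_h$ matches the $\Omega$-part of $\tau''_i\sigma_{\mathrm{ext}}$, the term $C^-$ acquires prefactors coming from the later $\ov\Omega$-factors and does \emph{not} reduce to $\sum_{h\in\ov\Omega}x''_h\eta_h$. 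Relatedly, changing the splitting by $\psi\colon L\to V''_i$ replaces each $\eta_h$ by $\eta_h+x''_{\ov h}\psi$, which is not $\sigma''_i\psi$ in the $\ov\Omega$-components (there $\sigma''_i$ carries a $\Phi''^-_h$ twist). So neither the ``$\Ker\tau''_i$'' condition nor the ``modulo $\Im\sigma''_i$'' quotient is correctly identified using the raw $\sigma_{\mathrm{ext}}$. (There are also no ``higher-order cross-terms'': since each factor is linear in a single $\eta_h$ and block upper-triangular with scalar lower-right entry, the off-diagonal of the product is automatically linear in the $\eta$'s.)

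The fix, which is exactly what the paper does in Proposition~\ref{6.3.1}, is to work not with the raw $(\eta_h)$ but with $\sigma_i|_L\colon L\to\widehat{V}''_i\oplus W_i$, i.e.\ the $L$-restriction of the map $\sigma_i(B,a,b)$ defined in \S\ref{3.2}. One checks inductively that $\tau_i=\tau''_i$ and $\sigma_i|_{V''_i}=\sigma''_i$, whence $\tau_i\sigma_i=\Phi_i^+-\Phi_i^-$ gives: the off-diagonal of $\Phi_i$ equals $\tau''_i\bigl(\sigma_i|_L\bigr)$ on the nose. Moreover the splitting change now shifts $\sigma_i|_L$ by exactly $\sigma''_i\psi$. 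So the correct invariant of the extension is the class $[\sigma_i|_L]\in\Ker\tau''_i/\Im\sigma''_i=Q_i(V'')$, and your stability discussion then goes through unchanged. The passage from $(\eta_h)$ to $\sigma_i|_L$ is a triangular (hence bijective) change of variables, determined inductively along the order $<$ on $H_i\cap\ov\Omega$, which is why your heuristic ``looks'' right but fails literally whenever $H_i\cap\ov\Omega$ has more than one element.
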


\begin{proof}
The proof is similar to Proposition~\ref{6.3.1}
\end{proof}

\subsection{Constructible functions}\label{6.4}

Let $X$ be an algebraic variety.
A $\Q$-valued {\em constructible function} on $X$ 
is a function $f\colon X \to \Q$ such that 
$f(X)$ is finite and $f^{-1}(a)$ is constructible for all $a \in \Q$.
Let $\CF(X)$ be the $\Q$-vector space consisting of 
all $\Q$-valued constructible functions on $X$.
If $Y \subset X$ is a subvariety, we regard $\CF(Y)$ as a subspace of $\CF(X)$ 
by extending with zero on the complement. 
A typical example of constructible functions is the characteristic 
function $[A]$ of a constructible subset $A \subset X$;
\[
[A](x) := \begin{cases} 1 & \text{if}\ x \in A, \\
0 & \text{otherwise},
\end{cases}
\] 
and by the definition any constructible functions can be written 
as a linear combination of characteristic functions.

Any morphism $p\colon X \to Y$ induces the {\em pull-back} and the {\em push-forward}
between the vector spaces of constructible functions:
\begin{align*}
p^* \colon \CF(Y) &\to \CF(X); \qquad (p^* g)(x) := g(p(x)), \\
p_! \colon \CF(X) &\to \CF(Y); \qquad (p_! f)(y) := 
\sum_{c \in \Q} c \, \chi (p^{-1}(y) \cap f^{-1}(c)),
\end{align*}
where $\chi$ denotes the Euler characteristic.
Regarding $\chi$ as a ``measure" of constructible subsets, 
$p_!$ is also written as 
\[
(p_! f)(y) = \int_{x \in p^{-1}(y)} f(x).
\]

Let $M_1, M_2, M_3$ be three varieties and 
$p_{ij}\colon M_1 \times M_2 \times M_3 \to M_i \times M_j$ $(i,j=1,2,3)$ 
be the projection to the $i$-th and $j$-th factors.
For $f \in \CF(M_1 \times M_2)$ and $f' \in \CF(M_2 \times M_3)$, 
the {\em convolution product} $f * f'$ of $f$ and $f'$ is defined by
\[
f * f' := (p_{13})_! (p_{12}^*(f) p_{23}^*(f')) \in \CF(M_1 \times M_3).
\]
Note that it can be written as
\[
(f * f')(x_1,x_3)=\int_{x_2 \in M_2} f(x_1,x_2)f'(x_2,x_3).
\]
It is easy to see that the convolution product is associative.

Suppose there are morphisms $p_i \colon M_i \to M_0$ 
$(i=1,2,3)$ to some variety $M_0$. 
Then it is clear that if $f \in \CF(M_1 \times_{M_0} M_2)$ and 
$f' \in \CF(M_2 \times_{M_0} M_3)$, 
then $f * f' \in \CF(M_1 \times_{M_0} M_3)$, i.e., the support of $f * f'$ 
is contained in $M_1 \times_{M_0} M_3$.

\subsection{A geometric construction of the universal enveloping algebra}\label{6.5}

Let $\mathcal{A}(\bw)$ be the subspace of the direct product
\[
\prod_{\bv^1,\bv^2} \CF(\SS(\bv^1,\bv^2;\bw))
\]
consisting of all elements $(F_{\bv^1,\bv^2})$ such that the following 
two conditions are satisfied:
\begin{enumerate}
\item[\rm (i)] For fixed $\bv^1$, $F_{\bv^1,\bv^2}=0$ for all but finitely many $\bv^2$. 
\item[\rm (ii)] For fixed $\bv^2$, $F_{\bv^1,\bv^2}=0$ for all but finitely many $\bv^1$.
\end{enumerate}    
By the convolution product, it is an associative algebra with 
$1=\sum_{\bv} [\Delta(\bv,\bw)]$, where $\Delta(\bv,\bw)$ denotes 
the diagonal subset of $\MM(\bv,\bw) \times \MM(\bv,\bw)$.

Let 
$(\bullet)^{\dagger} \colon \MM(\bv-\alpha_i,\bw) \times \MM(\bv,\bw) 
\to \MM(\bv,\bw) \times \MM(\bv-\alpha_i,\bw)$
be the flip of the components.
The main theorem in this section is the following:

\begin{theorem}[cf.\ {\cite[Theorem 9.4]{Nak-Duke2}}]\label{6.5.1}
There is an algebra homomorphism $\bU \to \mathcal{A}(\bw)$ 
such that
\[
h \mapsto \sum_{\bv} \langle h,\bw -\bv \rangle \cdot [\Delta(\bv,\bw)], \qquad 
e_i \mapsto \sum_{\bv^2} [\PP_i(\bv^2,\bw)], \qquad 
f_i \mapsto \sum_{\bv^2} [\PP_i(\bv^2,\bw)^{\dagger}].
\] 
\end{theorem}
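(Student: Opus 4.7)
The plan is to verify each of the defining relations \eqref{6.1.1}--\eqref{6.1.6} of $\bU$ for the proposed assignment, mimicking Nakajima's argument in~\cite{Nak-Duke2}. The finiteness required in the definition of $\mathcal{A}(\bw)$ is immediate, since a Hecke correspondence $\PP_i(\bv,\bw)$ lives in $\MM(\bv-\alpha_i,\bw) \times \MM(\bv,\bw)$, so successive convolutions change $\bv$ in a controlled way. Propositions~\ref{6.3.1} and~\ref{6.3.4} are precisely the multiplicative counterparts of Nakajima's structural results on Brill--Noether loci and Hecke correspondences, and this is what makes the transfer of his proof essentially routine.

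First I would dispose of the easy relations. The Cartan relation \eqref{6.1.1} is trivial because the images of $h$ and $h'$ are scalar-valued functions supported on diagonals, and these commute under convolution. Relations \eqref{6.1.2} and \eqref{6.1.3} follow because the two projections of $\PP_i(\bv,\bw)$ see weights $\bw - (\bv - \alpha_i)$ and $\bw - \bv$ respectively, so convolution with the image of $h$ on the left versus on the right produces the scalar $\langle h, \alpha_i \rangle$.

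For the commutator relation \eqref{6.1.4} I would split into cases. When $i \neq j$, I would construct an explicit bijection between the fiber products computing $[\PP_i(\bv,\bw)] * [\PP_j(\bv',\bw)^\dagger]$ and $[\PP_j(\bv'',\bw)^\dagger] * [\PP_i(\bv-\alpha_j,\bw)]$: the two Hecke moves concern subspaces lying in different graded pieces of $V$, so their order can be interchanged, yielding commutativity. When $i=j$, the analysis concentrates on the stratification by $\corank \tau_i$. Over the stratum $\MM_{i;n}(\bv,\bw)$, Proposition~\ref{6.3.4} identifies the relevant fibers as projective bundles $\P(Q_{i;n})$ and $\P(H_1(C_i^\bullet)^* |_{\MM_{i;n}})$, with Euler-characteristic contributions equal to the respective bundle ranks. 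The Euler characteristic of the three-term complex $C_i^\bullet(\bv,\bw)$ equals $\langle h_i, \bw - \bv\rangle$, so the rank difference $\rank H_1(C_i^\bullet) - \rank Q_i$ collapses to exactly this value, reproducing the image of $h_i$ after summing over the strata.

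The Serre relations \eqref{6.1.5} and \eqref{6.1.6} will be the main obstacle. The plan is to iterate Propositions~\ref{6.3.1} and~\ref{6.3.4} to express the multifold convolutions appearing in each Serre relation, over a fixed stratum, as fiber bundles whose fibers are partial flag varieties of vector bundles built from $Q_i$ and $H_1(C_i^\bullet)$, and then to show that the alternating sum of their fiberwise Euler characteristics vanishes by the standard binomial identity for Euler characteristics of flag varieties. Since the geometry enters only through the projective-bundle and Grassmann-bundle formulas of \S\ref{6.3}, both formally identical to the additive case, the same combinatorial cancellation carries through. Alternatively, by Corollary~\ref{3.3.4} applied in a relative form at each point of $\MM_0(\bv,\bw)$, the Hecke correspondences and tautological complexes of $\MM(\bv,\bw)$ are locally analytically identified with those of $\M(\bv,\bw)$, so each Euler-characteristic computation reduces fiberwise to Nakajima's original calculation, where the Serre relation is already established.
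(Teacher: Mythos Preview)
Your treatment of the Cartan relations and of the commutator relation~\eqref{6.1.4} matches the paper's argument in~\S\ref{6.6}: the off-diagonal bijection (Lemma~\ref{6.6.1}) for $i\neq j$, and for $i=j$ the projective-bundle Euler-characteristic count via Proposition~\ref{6.3.4}, are exactly what you describe.

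The gap is in your plan for the Serre relations. You propose to ``iterate Propositions~\ref{6.3.1} and~\ref{6.3.4}'' and land in partial flag varieties built from $Q_i$ and $H_1(C_i^\bullet)$. But the Serre relation mixes two vertices $i\neq j$, and the fibers are not flags in the tautological complex at a single vertex; Propositions~\ref{6.3.1} and~\ref{6.3.4} concern only one vertex at a time and do not see the interaction through $N=-c_{ij}$. The paper instead (\S\ref{6.7}) first absorbs the factorials via full-flag fibers (Lemma~\ref{6.7.1}), then maps each term $e_j^{N+1-n}e_ie_j^n$ to a common variety $P$ and shows (Lemma~\ref{6.7.2}) that the fiber over $[(B,a,b,S)]\in P$ is the Grassmannian of codimension-$n$ subspaces $T\subset V_j$ with $T_1\subset T\subset T_2$, where $T_1=S_j+\Im\tau_j^i$ and $T_2=(\sigma_j^i)^{-1}(S_i^{\oplus N})$ are defined from the $i$-components of $\sigma_j,\tau_j$. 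The crucial and non-formal step is Lemma~\ref{6.7.3}, proving $T_1\neq T_2$ via the three-term complex $V_j/T_2\to(V_i/S_i)^{\oplus N}\to T_1/S_j$; this is precisely what forces $d_1-d_2>0$ and makes the alternating binomial sum vanish. Your outline does not isolate this step. Your fallback through Corollary~\ref{3.3.4} is also not available as stated: that corollary gives a local analytic identification only near $[0]\in\MM_0(V)$ for a fixed $V$, not a relative statement at every point of $\MM_0(\bv,\bw)$ nor one simultaneously compatible with the several dimension vectors entering a Hecke convolution.
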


The relations \eqref{6.1.1}, \eqref{6.1.2}, \eqref{6.1.3} are obviously satisfied. 
We check the relations \eqref{6.1.4} and \eqref{6.1.5}, \eqref{6.1.6} 
by the same method as in \cite{Nak-Duke1, Nak-Duke2}. 

\subsection{The relation $[e_i ,f_j] = \delta_{ij} h_i$}\label{6.6}

In this subsection we check the relation $[e_i ,f_j] = \delta_{ij} h_i$. 
Fix $\bv \in Q^+$ and consider the following diagram:
\[
\begin{CD}
\MM(\bv-\alpha_i,\bw) @<<< \SS(\bv-\alpha_i,\bv;\bw) \\
@. @VVV \\ 
@. \MM(\bv,\bw) @<<< \SS(\bv, \bv-\alpha_j;\bw) \\
@. @. @VVV \\ 
@. @. \MM(\bv-\alpha_j,\bw) ,
\end{CD}
\]
where the arrows are the natural morphisms.
Set $\bv^1 := \bv-\alpha_i,~ \bv^2:=\bv,~ \bv^3:=\bv-\alpha_j$ 
and let $\SS(\bv^1,\bv^2,\bv^3;\bw)$ be the fiber product 
of $\SS(\bv^1,\bv^2;\bw)$ and $\SS(\bv^2, \bv^3;\bw)$ 
over $\MM(\bv^2,\bw)$. 
Then the above diagram induces the natural morphisms
\[
\MM(\bv^i,\bw) \stackrel{p_i}{\longleftarrow} \SS(\bv^1,\bv^2,\bv^3;\bw) 
\stackrel{p_j}{\longrightarrow} \MM(\bv^j,\bw) 
\]
for $i,j=1,2,3$. 
We set $p_{ij} := p_i \times p_j \colon 
\SS(\bv^1,\bv^2,\bv^3;\bw) \to \MM(\bv^i,\bw) \times \MM(\bv^j,\bw)$. 
Then $e_i f_j$ is described as 
\begin{align*}
e_i f_j &= \sum_{\bv^2} (p_{13})_! 
\left(  p_{12}^*[\PP_i(\bv^2,\bw)] p_{23}^*[\PP_j(\bv^2,\bw)^{\dagger}] \right)   \\
&= \sum_{\bv^2} (p_{13})_! \left[ p_{12}^{-1}(\PP_i(\bv^2,\bw)) \cap 
p_{23}^{-1}(\PP_j(\bv^2,\bw)^{\dagger}) \right] . 
\end{align*}

Next consider the following diagram: 
\[
\begin{CD}
\MM(\bv-\alpha_i,\bw) @<<< \SS(\bv-\alpha_i,\bv-\alpha_j-\alpha_i;\bw) \\
@. @VVV \\
@. \MM(\bv-\alpha_j-\alpha_i,\bw) @<<< \SS(\bv-\alpha_j-\alpha_i, \bv-\alpha_j;\bw) \\
@. @. @VVV \\  
@. @. \MM(\bv-\alpha_j,\bw).
\end{CD}
\]
Set $\bv^4 := \bv-\alpha_j-\alpha_i$, 
and define $\SS(\bv^1,\bv^4,\bv^3)$ and 
$q_{ij}\colon \SS(\bv^1,\bv^4,\bv^3) \to \MM(\bv^i,\bw) \times \MM(\bv^j,\bw)$ 
as above.  
Then $f_j e_i$ is described as
\begin{align*}
f_j e_i &= \sum_{\bv^4} (q_{13})_! \left( 
q_{14}^*[\PP_j(\bv^1,\bw)^{\dagger}] q_{43}^*[\PP_i(\bv^3,\bw)]) \right)   \\
&= \sum_{\bv^4} (q_{13})_! \left[ q_{14}^{-1}(\PP_j(\bv^1,\bw)^{\dagger}) \cap 
q_{43}^{-1}(\PP_i(\bv^3,\bw)) \right].
\end{align*}

The following lemma can be proved by the same way as \cite[Lemma 9.10]{Nak-Duke2}.

\begin{lemma}\label{6.6.1}
Let $U \subset \MM(\bv^1,\bw) \times \MM(\bv^3,\bw)$ denotes the outside of 
the diagonal when $i=j$, and the whole set otherwise.
Then there is an isomorphism 
\[
\Pi \colon p_{13}^{-1}(U) \cap p_{12}^{-1}(\PP_i(\bv^2,\bw)) \cap 
p_{23}^{-1}(\PP_j(\bv^2,\bw)^{\dagger}) 
\longrightarrow 
q_{13}^{-1}(U) \cap 
q_{14}^{-1}(\PP_j(\bv^1,\bw)^{\dagger}) \cap 
q_{43}^{-1}(\PP_i(\bv^3,\bw))
\]
such that $q_{13} \circ \Pi = p_{13}$.
\end{lemma}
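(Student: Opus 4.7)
The plan is to define $\Pi$ pointwise by intersecting the two Hecke subspaces, and construct its inverse by a pushout of representations; the open set $U$ turns out to be precisely the locus on which these operations have the expected dimensions.

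First I will unpack the two sides. A point of the left hand side consists of orbit representatives $([B^1,a^1,b^1],[B^2,a^2,b^2],[B^3,a^3,b^3])\in \SS(\bv^1,\bv^2,\bv^3;\bw)$ together with $B^2$-invariant subspaces $S^1,S^3\subset V^2$ of graded dimensions $\bv^1,\bv^3$, each containing $\Im a^2$, whose restrictions realize $[B^1]$ and $[B^3]$. Since $V^2/S^1$ is supported at vertex $i$ and $V^2/S^3$ at vertex $j$, the quotient $V^2/(S^1+S^3)$ is supported on $\{i\}\cap \{j\}$, so it vanishes when $i\neq j$, and off the diagonal when $i=j$ (since $S^1=S^3$ would force $[B^1]=[B^3]$). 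Thus on $U$ the subspace $T:=S^1\cap S^3$ is $B^2$-invariant, contains $\Im a^2$, and has graded dimension $\bv^4$. Setting $[B^4,a^4,b^4]:=(B^2,a^2,b^2)|_T$ and reading the flags $T\subset V^1=S^1$ and $T\subset V^3=S^3$ as Hecke data gives $([B^1],[B^4])\in\PP_j(\bv^1,\bw)^{\dagger}$ and $([B^4],[B^3])\in\PP_i(\bv^3,\bw)$; this defines $\Pi$, with $q_{13}\circ\Pi=p_{13}$ tautological.

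For the inverse I will use a pushout. Given a right hand side point with data $T^1\subset V^1$, $T^3\subset V^3$ and common restriction $[B^4,a^4,b^4]$, pick representatives and an isomorphism $\phi\colon (B^1,a^1,b^1)|_{T^1}\xrightarrow{\sim}(B^3,a^3,b^3)|_{T^3}$; by Proposition~\ref{2.2.6}, $\phi$ is determined up to a scalar in $\C^{\times}$. Define $V^2:=V^1\oplus V^3/\{(v,-\phi(v))\mid v\in T^1\}$, of graded dimension $\bv^2$, containing $V^1,V^3$ with intersection the identified copy of $T^1\simeq T^3$. Because $\phi$ intertwines all arrow maps, $B^1_h$ and $B^3_h$ glue to a well-defined arrow $B^2_h$ on $V^2$; similarly $a^1,a^3$ both land in $T^1\simeq T^3$ and glue to $a^2$, while $b^1,b^3$ agree on $T^1\simeq T^3$ and glue to $b^2$. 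The $\C^{\times}$-ambiguity in $\phi$ is absorbed by the $G_{V^2}$-action, so the orbit $[B^2,a^2,b^2,V^1,V^3]$ is well-defined.

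The hard part will be to verify that $(B^2,a^2,b^2)$ satisfies both the multiplicative preprojective relation and the stability condition. For the relation, each factor of $\Phi_k(B^2,a^2,b^2)$ preserves both $V^1$ and $V^3$ and restricts to the corresponding factor of $\Phi_k(B^1,a^1,b^1)$ on $V^1_k$ and of $\Phi_k(B^3,a^3,b^3)$ on $V^3_k$; hence $\Phi_k(B^2,a^2,b^2)-1_{V^2_k}$ vanishes on both $V^1_k$ and $V^3_k$, and since $V^1_k+V^3_k=V^2_k$, it vanishes identically. For stability, any $B^2$-invariant $S\subset V^2$ with $S\subset\Ker b^2$ has $S\cap V^1$ and $S\cap V^3$ stable under $B^1,B^3$ and contained in $\Ker b^1,\Ker b^3$, so stability of $(B^1),(B^3)$ gives $S\cap V^1=0=S\cap V^3$ and hence $S_k=0$ for $k\notin\{i,j\}$; when $i\neq j$ this forces $S=0$ immediately, and when $i=j$ either $S=0$ or $\dim S=\alpha_i$ and $V^2=V^1\oplus S=V^3\oplus S$ as framed representations (using $\Im a^2\subset V^1\cap V^3$ and $b^2|_S=0$), so Krull--Schmidt cancellation forces $(B^1)\simeq (B^3)$, contradicting off-diagonal. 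The remaining checks---mutual inverseness of $\Pi$ and $\Pi^{-1}$, independence of chosen representatives, and algebraicity of $\Pi$ via local sections of the principal $G_{V^2}$-bundle---are then routine.
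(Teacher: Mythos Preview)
Your proof is correct and follows precisely the route the paper defers to (Nakajima's Lemma~9.10 in \cite{Nak-Duke2}): intersect the two Hecke subspaces to go forward and take the pushout of representations to go back. You also supply the multiplicative-specific checks (invertibility of the factors and $\Phi_k(B^2,a^2,b^2)=1$ via restriction to $V^1$ and $V^3$, and stability of the glued point), which are exactly the adaptations required beyond the additive case; these are handled correctly.
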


Thus $e_i f_j - f_j e_i =0$ if $i \neq j$, and 
the support of $e_i f_i - f_i e_i$ is contained in 
$\sqcup_{\bv}\Delta(\bv,\bw)$.  
So it is sufficient to prove   
\[
(e_if_i -f_i e_i)(x,x) = \langle h_i, \bw -\bv \rangle
\] 
for all $x \in \MM(\bv,\bw)$. 

Suppose that $x \in \MM_{i;n}(\bv,\bw)$.
Then using Proposition~\ref{6.3.4} we compute 
\begin{align*}
\chi \left(  p_{13}^{-1}(x,x) \cap p_{12}^{-1}(\PP_i(\bv,\bw)) \cap 
p_{23}^{-1}(\PP_j(\bv,\bw)^{\dagger}) \right)    
&= \chi \left(  \P \left(  Q_{i,n}(\bv-\alpha_i,\bw) |_x\right)   \right)   \\
&= \langle h_i, \bw -\bv \rangle + r + 1.
\end{align*}
Similarly,
\begin{align*} 
\chi \left(  {q_{13}}^{-1}(x,x) \cap 
q_{14}^{-1}(\PP_j(\bv-\alpha_i,\bw)^{\dagger}) \cap 
q_{43}^{-1}(\PP_i(\bv-\alpha_i,\bw)) \right)  
&= \chi \left(  \P \left(  H_1 ( C^\bullet_i(\bv-\alpha_i,\bw) ) |_x\right)   \right)   \\
&= r+1.
\end{align*}
Thus
\[
(e_if_i -f_i e_i )(x,x) = \langle h_i, \bw -\bv \rangle + r +1 - (r+1) 
= \langle h_i, \bw -\bv \rangle.
\]

\subsection{The Serre relations}\label{6.7}

In this subsection we check the relations \eqref{6.1.5} and \eqref{6.1.6}.

Fix vertices $i, j$ with $i \neq j$, and set $N :=-c_{ij}$. 
For $n= 0,1, \dots ,N+1$, 
let $P_n$ be the fiber product
\[
\PP_j^{(N+1-n)}(\bv-n\alpha_j-\alpha_i,\bw) \times_{\MM(\bv-n\alpha_j-\alpha_i,\bw)}
\PP_i(\bv-n\alpha_j,\bw) \times_{\MM(\bv-n\alpha_j,\bw)} \PP_j^{(n)}(\bv,\bw).
\]
Consider the variety consisting of 
all tuples $(B,a,b,S^1,S^2,S^3)$, where 
$(B,a,b) \in Z^{\rm s}(V,W)$ and 
each $S^k$ is $B$-invariant subspace of $V$ such that 
\begin{align*}
&S^1\supset S^2 \supset S^3 \supset \Im a, \quad \text{and} \\  
&\dim S^1 = \bv -n\alpha_i,~\dim S^2=\bv -n\alpha_j -\alpha_i,
~\dim S^3=\bv -(N+1)\alpha_j -\alpha_i.
\end{align*} 
Then the quotient of it by the $G_V$-action is naturally isomorphic to $P_n$.

Set
\[
P := \left\{\,  (B,a,b,S) \, \,
\begin{array}{|l}  
(B,a,b) \in Z^{\rm s}(V,W) ,\, S \subset V, \\
\text{$S$ is $B$-invariant, $\Im a \subset S$ and   
$\dim S=\bv -\alpha_i -(N+1)\alpha_j$} 
\end{array} \right\} /G_V.
\]
It is a subvariety of $\SS(\bv -\alpha_i -(N+1)\alpha_j,\bv;\bw)$, 
and we have a natural morphism 
$r_n \colon P_n \to P$ 
which sends $[(B,a,b,S^1,S^2,S^3)]$ to $[(B,a,b,S^3)]$. 

\begin{lemma}\label{6.7.1}
We have
\[
e_j^{N+1-n}e_ie_j^n[\Delta(\bv,\bw)]=
f_j^{n}f_if_j^{N+1-n}[\Delta(\bv,\bw)]
= (N+1-n)!n!(r_n)_! [P_n] [\Delta(\bv,\bw)].
\]
\end{lemma}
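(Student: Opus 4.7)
The plan is to convert the iterated convolution into a single pushforward from a ``total flag variety.'' By the definition of convolution and the compatibility of fiber products with characteristic functions,
\[
e_j^{N+1-n}\, e_i\, e_j^n\, [\Delta(\bv,\bw)] \;=\; \rho_!\,[F_n],
\]
where $F_n$ is the iterated fiber product over the intermediate $\MM$-factors of $n$ copies of the $\alpha_j$-Hecke correspondence $\PP_j$, one copy of the $\alpha_i$-Hecke correspondence $\PP_i$, and $N+1-n$ further copies of $\PP_j$, and $\rho$ is its natural projection to $\MM(\bv - \alpha_i - (N+1)\alpha_j,\, \bw) \times \MM(\bv,\bw)$. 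A point of $F_n$ is the datum of $(B,a,b) \in Z^{\mathrm{s}}(V,W)$ together with a chain $V = T_0 \supset T_1 \supset \cdots \supset T_n \supset S^2 \supset U_1 \supset \cdots \supset U_{N+1-n}$ of $B$-invariant subspaces containing $\Im a$, with $\dim T_{k-1}/T_k = \dim U_{l-1}/U_l = \alpha_j$ and $\dim T_n/S^2 = \alpha_i$. Setting $S^1 := T_n$ and $S^3 := U_{N+1-n}$, the flag $(S^1 \supset S^2 \supset S^3)$ is precisely the data of a point of $P_n$.

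The projection $\rho$ factors as $F_n \xrightarrow{\varpi} P_n \xrightarrow{r_n} P$, where $\varpi$ forgets the intermediate subspaces $T_1, \ldots, T_{n-1}$ and $U_1, \ldots, U_{N-n}$. The key claim is $\varpi_!\,[F_n] = n!\,(N+1-n)!\,[P_n]$, which reduces to computing the Euler characteristic of the fiber of $\varpi$ at a point $[(B,a,b,S^1,S^2,S^3)]$. This fiber splits as a product of two flag varieties, parametrizing respectively the chains of intermediate $T$'s lying between $V$ and $S^1$, and the chains of intermediate $U$'s lying between $S^2$ and $S^3$. Because the quotients $V/S^1$ and $S^2/S^3$ are concentrated at vertex $j$, and because the graph $(I,E)$ has no loops at $j$ (a consequence of $c_{jj}=2$), every arrow $B_h$ acts by zero on these quotients; hence every $I$-graded subspace of either quotient is automatically $B$-invariant. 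The two flag varieties are therefore the usual complete flag varieties in an $n$- and an $(N+1-n)$-dimensional vector space, with Euler characteristics $n!$ and $(N+1-n)!$. Pushing forward once more by $r_n$ yields the claimed formula $(N+1-n)!\,n!\,(r_n)_!\,[P_n]$ for the $e$-side.

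The $f$-side $f_j^n\, f_i\, f_j^{N+1-n}\, [\Delta(\bv,\bw)]$ is handled by the same argument, with every Hecke correspondence replaced by its $\dagger$-flip so that the chain of subspaces is read in the opposite direction; the underlying flag variety $F_n$ and the forgetful map through $P_n$ are unchanged, and the Euler-characteristic factors $n!$ and $(N+1-n)!$ again arise from the same two complete flag varieties. The main obstacle in the proof is verifying the triviality of the $B$-action on the quotients $V/S^1$ and $S^2/S^3$; once that is in hand, the classical computation that the Euler characteristic of a complete flag variety in a $k$-dimensional vector space is $k!$ immediately supplies the claimed combinatorial factor.
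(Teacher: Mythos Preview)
Your proof is correct and follows essentially the same route as the paper's. The paper introduces the intermediate ``full chain'' variety $\PP_j^n(\bv,\bw)$ (your $F_n$, broken into pieces) and the forgetful map $\pi_n$ to the Hecke correspondence $\PP_j^{(n)}(\bv,\bw)$; it then simply asserts that the fiber of $\pi_n$ is the full flag variety of an $n$-dimensional space, giving the factor $n!$. You supply the reason the paper leaves implicit: since $c_{jj}=2$ forces the underlying graph to be loop-free at $j$, every arrow acts by zero on the quotients $V/S^1$ and $S^2/S^3$, so the $B$-invariance condition on intermediate subspaces is vacuous and the fibers are ordinary complete flag varieties. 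Both arguments treat the $f$-side by the evident symmetry under $\dagger$.
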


\begin{proof}
Consider the variety 
consisting of all tuples 
$(B,a,b,\{ S_k \}_{k=1}^n)$, where 
$(B,a,b) \in Z^{\rm s}(V,W)$ and  
each $S^k$ is a $B$-invariant subspace of $V$ such that 
\begin{align*} 
&S^1 \supset S^2 \supset \cdots \supset S^n \supset \Im a, \quad \text{and} \\ 
&\dim S^k = \bv -k\alpha_i.
\end{align*}
Let $\PP_i^n(\bv,\bw)$ be the quotient of it modulo $G_V$-action.  
Then $e_j^{N+1-n}e_ie_j^n[\Delta(\bv,\bw)]$ is given by 
the push-forward of 
\[
[\PP_j^{N+1-n}(\bv-n\alpha_j-\alpha_i,\bw)] *[\PP_i(\bv-n\alpha_j,\bw)] *[\PP_j^n(\bv,\bw)]
\] 
by the obvious morphism. 

Let $\pi_n \colon \PP_i^n(\bv,\bw) \to \PP^{(n)}_i(\bv,\bw)$ 
be the morphism $[(B,a,b,S^1, \dots ,S^n)] \mapsto [(B,a,b,S^n)]$. 
Then the fiber of $\pi_n$ is isomorphic to 
the full flag variety of $n$-dimensional vector space, 
and hence its Euler characteristic is just $n!$.
Thus
\[
(\pi_n)_![\PP_i^n(\bv,\bw)] = n! [\PP_i^{(n)}(\bv,\bw)].
\]
This proves the assertion.
\end{proof}

By the above lemma, it is enough to show that
\[
\sum_{n=0}^{N+1} (-1)^n \chi (r_n^{-1}(x)) =0 \qquad \text{for any}\ \ x 
\in \PP(\bv,\bw; \alpha_i+(N+1)\alpha_j).
\]
Take a representative $(B,a,b,S)$ of $x$.

Recall the complex 
\[
\begin{CD}
V_j @>{\sigma_j}>> \widehat{V}_j \oplus W_j @>{\tau_j}>> V_j \, .
\end{CD}
\]
For $k \in I$ with $k\neq j$, 
let $\sigma_j^k$ be the projection of  
$\sigma_j$ to $\bigoplus_{h\in H_j; \vout(h)=k} V_k$, 
and $\tau_j^k$ be the restriction of $\tau_j$ on  
$\bigoplus_{h\in H_j; \vout(h)=k} V_k \subset \widehat{V}_j$. 
Similarly, let $\sigma_j^W$ be the projection of $\sigma_j$ to $W_j$ 
and $\tau_j^W$ be the restriction of $\tau_j$ on $W_j$.  

\begin{lemma}\label{6.7.2}
Define $T_1 :=S_j +\Im \tau_j^i$ and $T_2 := (\sigma_j^i)^{-1}(S_i^{\oplus N})$. 
Then the fiber $r_n^{-1}(x)$ is isomorphic to the variety consisting of 
all codimension $n$ subspaces $T \subset V_j$ such that 
$T_1 \subset T \subset T_2$.
\end{lemma}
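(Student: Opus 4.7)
The plan is to parameterize $r_n^{-1}(x)$ by a single subspace $T\subset V_j$ and then translate the $B$-invariance of the intermediate spaces into the inclusions $T_1\subset T\subset T_2$. Fix a representative $(B,a,b,S)$ of $x$. A point of $r_n^{-1}(x)$ is a pair $(S^1,S^2)$ of $B$-invariant subspaces of $V$ with $V\supset S^1\supset S^2\supset S$, $\dim S^1=\bv-n\alpha_j$ and $\dim S^2=\bv-n\alpha_j-\alpha_i$. Combining these with $\dim S=\bv-(N+1)\alpha_j-\alpha_i$ and the inclusions, a component-by-component comparison forces $(S^1)_k=V_k$ for $k\neq j$, $(S^2)_k=V_k$ for $k\notin\{i,j\}$, $(S^2)_i=S_i$ (from codimension-one inclusions $S_i\subset(S^2)_i\subset V_i$), and $(S^2)_j=(S^1)_j$ (both of codimension $n$ in $V_j$). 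Setting $T:=(S^1)_j$, the fiber is identified with the variety of subspaces $T\subset V_j$ of dimension $v_j-n$ containing $S_j$, subject to the $B$-invariance of $S^1$ and $S^2$.

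Using the $B$-invariance of $S$ and the descriptions above, every instance of $B$-invariance for $S^1,S^2$ involving only components equal to $V_k$ or $S_k$ is automatic. The only surviving conditions are
\begin{align*}
&\text{(a)}\quad B_h(T)\subset S_i \quad \text{for all}\ h\in H_i\ \text{with}\ \vout(h)=j, \\
&\text{(b)}\quad B_h(V_i)\subset T \quad \text{for all}\ h\in H_j\ \text{with}\ \vout(h)=i,
\end{align*}
arising respectively from $B_h((S^2)_j)\subset(S^2)_i$ for arrows from $j$ to $i$, and $B_h((S^1)_i)\subset(S^1)_j$ for arrows from $i$ to $j$.

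It remains to match (a)--(b) with $T_1\subset T\subset T_2$. The summands of $\Im\tau_j^i$ coming from $h\in H_j\cap\ov\Omega$ with $\vout(h)=i$ are just $B_h(V_i)$ (no $\Phi_h^\pm$ twist), and the components of $\sigma_j^i(T)\subset S_i^{\oplus N}$ indexed by $h\in H_j\cap\Omega$ with $\vout(h)=i$ are just $B_{\ov h}(T)\subset S_i$. These give the ``$\ov\Omega$-parts'' of (b) and (a) directly. The remaining parts, involving the twist operators $\Phi_h^\pm$, are controlled by the key observation: under (a), (b), and $B$-invariance of $S$, each factor $(1+B_{h'}B_{\ov{h'}})$ occurring in the relevant $\Phi_h^\pm$ preserves $T$. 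Indeed, if $\vout(h')\neq i$ then $\vout(h')\neq j$ as well (no loops), and $B_{h'}B_{\ov{h'}}(T)\subset B_{h'}(V_{\vout(h')})\subset S_j\subset T$ by $B$-invariance of $S$; whereas if $\vout(h')=i$ then $B_{h'}(V_i)\subset T$ by (b), so again $B_{h'}B_{\ov{h'}}(T)\subset T$. Hence each $\Phi_h^\pm$ restricts to an automorphism of $T$, giving the equivalence in the forward direction. For the converse, the same observation drives an induction along the ordering $<$ on $H_j$, where the $\ov\Omega$-parts (immediate from $T_1\subset T\subset T_2$) bootstrap to the $\Omega$-parts by successively ensuring the relevant $\Phi_h^\pm$ restricts to an automorphism of $T$.

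The main obstacle is precisely the twist by $\Phi_h^\pm$ in the definitions of $\sigma_j^i$ and $\tau_j^i$, which prevents a literal matching of (a)--(b) with $T_1\subset T\subset T_2$. The case analysis above resolves this by showing that, although $\Phi_h^\pm$ need not preserve arbitrary subspaces, it does preserve the specific $T$ under conditions (a), (b), and the $B$-invariance of $S$.
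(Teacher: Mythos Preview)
Your proof is correct and follows essentially the same route as the paper: both identify the fiber with a single subspace $T=(S^1)_j=(S^2)_j$ and then argue that the $B$-invariance of $S^1,S^2$ is equivalent to $T_1\subset T\subset T_2$. The difference is only in packaging. The paper invokes Lemma~3.2.3, which already encapsulates the inductive removal of the $\Phi_h^\pm$ twists, so that it suffices to check $\sigma_j(S^2_j)\subset \widehat{S^2}_j\oplus W_j$ and $\Im\tau_j\subset S^1_j$; you instead unpack this induction by hand. Your approach is more self-contained, the paper's more economical.

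Two small points. First, in the framed quiver the product $\Phi_h^+$ for $h\in H_j\cap\Omega$ also contains the factors $(1+a_{j,m}b_{j,m})$ coming from the framing arrows, which your case analysis on $h'$ does not explicitly cover; these preserve $T$ as well, since $a_{j,m}b_{j,m}(T)\subset\Im a_j\subset S_j\subset T$ (using $\Im a\subset S$), so the argument goes through unchanged once this is noted. Second, your labeling of which pieces are ``immediate'' is slightly off: the untwisted components are (b) for $h\in H_j\cap\ov\Omega$ and (a) for $h\in H_j\cap\Omega$ (not both ``$\ov\Omega$-parts''), and among the remaining pieces only (b) for $h\in H_j\cap\Omega$ genuinely requires induction on $<$; the case (a) for $h\in H_j\cap\ov\Omega$ follows directly once the immediate (b) for $\ov\Omega$-arrows is known. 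Neither point affects the validity of your argument.
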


\begin{proof}
For given $(B,a,b,S^1,S^2,S^3) \in r_n^{-1}(x)$, 
we set $T:= S^1_j = S^2_j$. 
Since $S^1,S^2$ are $B$-invariant and $V_i=S^1_i$, 
$T_1 \subset T \subset T_2$ is clearly satisfied.

Conversely suppose that a codimension $n$ subspace $T \subset V_j$ with 
$T_1 \subset T \subset T_2$ is given. 
Then we set 
\[ S^3 := S, \qquad S^2_k :=
\begin{cases}
T & \text{if}\ k =j, \\
S_k & \text{if}\ k \neq j, 
\end{cases}
\qquad
S^1_k :=
\begin{cases}
T & \text{if}\ k =j, \\
V_k & \text{if}\ k \neq j. 
\end{cases}
\]
$T_1 \subset T$ implies $S^3 =S \subset S^2$. 
We prove that $S^1,S^2$ are $B$-invariant. 
Since $S^2_k = S_k$ for $k \neq j$ and $S^1_k =S^2_k$ for $k \neq i$, 
it is enough to show that 
\[
\sigma_j(S^2_j) \subset \widehat{S}_j \oplus W_j, 
\qquad
\Im \tau_j \subset S^1_j
\]
by Lemma~\ref{3.2.3}. 
$T \subset T_2$ implies 
$\sigma_j^i (S^2_j) \subset S_i^{\oplus N}$, 
and $S^2_k =V_k$ for $k\neq i,j$ implies $\sigma_j^k(S^2_j) \subset S_k^{\oplus -c_{kj}}$. 
Thus $\sigma_j(S^2_j) \subset \widehat{S}_j \oplus W_j$.  
Also, $\Im \tau_j^k \subset S^1_j$ for $k \neq i,j$ follows from 
$V_k = S_k$ and $S_j \subset S^1_j$, 
and $T_1 \subset T$ implies $\Im \tau_j^i \subset S^1_j$.
Moreover $\Im a \subset S \subset S^1$ implies $\Im \tau_j^W \subset S^1_j$.   
So we get $\tau_j(\widehat{S^1}_j \oplus W_j) \subset S^1_j$. 

We complete the proof. 
\end{proof}

\begin{lemma}\label{6.7.3}
$T_1 \neq T_2$.
\end{lemma}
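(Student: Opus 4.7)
The plan is to pass to the quotient representation $\bar V := V/S$ and reduce the claim to a dimension count on the sub-quiver supported at the two vertices $i$ and $j$. Since $\dim S = \bv - \alpha_i - (N+1)\alpha_j$, the quotient has $\dim \bar V_i = 1$, $\dim \bar V_j = N+1$, and $\bar V_k = 0$ for every $k \neq i, j$.

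First I would check that, because $S$ is $B$-invariant with $\Im a \subset S$, the representation $(B,a,b)$ descends to $(\bar B, \bar a, \bar b)$ on $\bar V$ with $\bar a = 0$. The $B$-invariance of $S$ gives $\sigma_j^i(S_j) \subset S_i^{\oplus N}$ and $\tau_j^i(S_i^{\oplus N}) \subset S_j$, where $N = -c_{ij}$ is the number of arrows in $H_j$ with source $i$, so the components descend to
\[
\bar\sigma_j^i \colon \bar V_j \to \bar V_i^{\oplus N}, \qquad
\bar\tau_j^i \colon \bar V_i^{\oplus N} \to \bar V_j.
\]
Using $S_j \subset T_2$, one then identifies $T_2/S_j = \Ker \bar\sigma_j^i$ and $T_1/S_j = \Im \bar\tau_j^i$.

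Next I would invoke the multiplicative preprojective relation at $j$: since $q_j = 1$, the identity $\tau_j\sigma_j = 0$ holds on $V_j$. Decomposing this composition into the summands of $\hat V_j \oplus W_j$, every term involving a vertex $k \neq i,j$ vanishes on the quotient because $\bar V_k = 0$, and the $W_j$-term $\bar\tau_j^W \bar\sigma_j^W$ vanishes because $\bar\tau_j^W$ is induced from $\bar a = 0$. Only the $i$-contribution survives, giving $\bar\tau_j^i \bar\sigma_j^i = 0$; hence $\Im \bar\sigma_j^i \subset \Ker \bar\tau_j^i$ inside the $N$-dimensional space $\bar V_i^{\oplus N}$.

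A dimension count closes the argument. From $\Im \bar\sigma_j^i \subset \Ker \bar\tau_j^i$ and $\dim \bar V_i^{\oplus N} = N$ we get $\dim \Im \bar\sigma_j^i + \dim \Im \bar\tau_j^i \le N$, and therefore
\[
\dim T_2 - \dim T_1 = \dim \Ker \bar\sigma_j^i - \dim \Im \bar\tau_j^i \ge (N+1) - N = 1,
\]
which forces $T_1 \ne T_2$. I expect the delicate step to be the careful bookkeeping involved in descending $\sigma_j, \tau_j$ to $\bar V$ and isolating the $i$- and $W_j$-components (in particular, confirming that all cross-terms vanish on the quotient); once those identifications are in place, the remainder is straightforward linear algebra.
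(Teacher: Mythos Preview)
Your proposal is correct and follows essentially the same route as the paper. The paper proves $\Im \tau_j^i \sigma_j^i \subset S_j$ directly (using $\tau_j\sigma_j=0$, $\Im\tau_j^W\subset S_j$, and $V_k=S_k$ for $k\neq i,j$) and then writes down the three-term complex
\[
0 \longrightarrow V_j/T_2 \xrightarrow{\sigma_j^i} (V_i/S_i)^{\oplus N} \xrightarrow{\tau_j^i} T_1/S_j \longrightarrow 0,
\]
exact except possibly in the middle, which yields the same inequality $\dim T_2-\dim T_1\ge (N+1)-N=1$. Your passage to the quotient representation $\bar V=V/S$ is just a repackaging of this: your $\bar\tau_j^i\bar\sigma_j^i=0$ is equivalent to the paper's containment $\Im\tau_j^i\sigma_j^i\subset S_j$, and your identifications $T_2/S_j=\Ker\bar\sigma_j^i$, $T_1/S_j=\Im\bar\tau_j^i$ reproduce the outer terms of the paper's complex. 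One small point to tighten: $\tau_j^W$ is not literally $a_j$ but $\sum_l \Phi_{h_l}^+ a_{j,l}\pi_l$; since $\Im a_{j,l}\subset S_j$ and the $\Phi_{h_l}^+$ preserve $S_j$ (by $B$-invariance of $S$), the conclusion $\bar\tau_j^W=0$ still holds.
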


\begin{proof}
Note that
$\tau_j^i \sigma_j^i = -\tau_j^W \sigma_j^W - \sum_{k \neq i,j} \tau_j^k \sigma_j^k$, 
and hence 
\[
\Im \tau_j^i \sigma_j^i = \Im \tau_j^W +\sum_{k \neq i,j} \tau_j^k \sigma_j^k (V_j) 
\subset S_j + \sum_{k \neq i,j} \tau_j^k \left(  V_k^{\oplus -c_{kj}} \right)   
= S_j +\sum_{k \neq i,j} \tau_j^k \left(  S_k^{\oplus -c_{kj}} \right)   
\subset S_j.
\]
So by the definitions of $T_1, T_2$ we have a complex
\[
\begin{CD}
0 @>>> V_j/T_2 @>{\sigma_j^i}>> (V_i/S_i)^{\oplus N} @>{\tau_j^i}>> T_1/S_j @>>> 0\, ,
\end{CD}
\]
which is exact except possibly at the middle term.
Hence we have $\dim V_j/T_2 \leq N - \dim T_1 /S^3_j$, 
and hence
\[
\dim T_2 - \dim T_1 \geq \dim V_j -\dim S^3_j -N =1.
\]
Thus $T_1 \neq T_2$.
\end{proof}

Set $d_i := \codim T_i~(i=1,2)$. 
Then the fiber $r_n^{-1}(x)$ is empty unless $T_1 \subset T_2$ and 
$d_1 \leq n \leq d_2$, 
in which case $r_n^{-1}(x)$ is a Grassmannian manifold of 
$(d_1 -n)$-dimensional subspaces in a $(d_1 -d_2)$-dimensional space. 
Thus
\[
\sum_{n=0}^{N+1} (-1)^n \chi (r_n^{-1}(x)) 
= \sum_{n=d_2}^{d_1} (-1)^n \binom{d_1 -d_2}{d_1 -n} =0.
\] 

We complete the proof.

\subsection{Construction of irreducible highest weight representations}\label{6.8}

Let $\LL(\bv,\bw)$ denote the nilpotent subvariety $\pi^{-1}([0]) \subset \MM(\bv,\bw)$.  
The vector space $\bigoplus_{\bv} \CF(\LL(\bv,\bw))$  
becomes a representation space of the algebra $\mathcal{A}(\bw)$ 
by the following way:
\[
(F * f)(x^1) := \int_{x^2 \in \LL(\bv^2,\bw)} F(x^1,x^2) f(x^2)
\quad \text{for}\ \  
F \in \CF(\SS(\bv^1,\bv^2;\bw)),\ f \in \CF(\LL(\bv^2,\bw)).
\]

Note that $\MM(0,\bw)$ (and hence $\LL(0,\bw)$) consists of a single point.  
Set 
\[
L(\bw) := \bU^- \cdot [\LL(0,\bw)] \subset \bigoplus_\bv \CF(\LL(\bv,\bw)),
\qquad
L(\bv,\bw) := \CF(\LL(\bv,\bw)) \cap L(\bw).
\]

By the same way as \cite[Lemma 10.13]{Nak-Duke1},
one can easily show that $f_i^{w_i +1} [\LL(0,\bw)] =0$ for all $i\in I$, 
where $w_i = \langle \bw ,h_i \rangle$.
Thus we get the following corollary:

\begin{corollary}[cf.\ {\cite[Theorem 10.14]{Nak-Duke1}}]\label{6.8.1}
$L(\bw)$ is the irreducible highest weight integrable $\g$-module 
with the highest weight $\bw$, and $L(\bv,\bw)$ is 
the $(\bw -\bv)$-weight space of $L(\bw)$. 
\end{corollary}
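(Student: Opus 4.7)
The $\g$-action on $M := \bigoplus_{\bv} \CF(\LL(\bv,\bw))$ is already in hand via the algebra homomorphism $\bU \to \mathcal{A}(\bw)$ from Theorem~\ref{6.5.1}, since each $\SS(\bv^1,\bv^2;\bw)$ is a correspondence between $\MM(\bv^1,\bw)$ and $\MM(\bv^2,\bw)$ and the natural projective morphism $\pi$ sends $\LL$ to $[0]$, so the convolution preserves the subspaces supported on nilpotent subvarieties. The plan is to identify $L(\bw)$ as the irreducible integrable highest weight module of highest weight $\bw$ by verifying the three standard criteria on the cyclic generator $[\LL(0,\bw)]$: it is a weight vector of weight $\bw$, it is a highest weight vector, and it satisfies the Serre-type integrability bound $f_i^{w_i+1}[\LL(0,\bw)] = 0$.

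First I would read off the weights. By Theorem~\ref{6.5.1}, $h$ acts on $\CF(\LL(\bv,\bw))$ by convolution with $\langle h, \bw-\bv\rangle\,[\Delta(\bv,\bw)]$, which is just multiplication by $\langle h, \bw-\bv\rangle$; hence every $F\in\CF(\LL(\bv,\bw))$ has weight $\bw-\bv$. In particular $[\LL(0,\bw)]$ has weight $\bw$, and this already delivers the asserted weight decomposition once we know $L(\bv,\bw) = \CF(\LL(\bv,\bw))\cap L(\bw)$ is the full $(\bw-\bv)$-weight space of $L(\bw)$, which will come for free once irreducibility is proved. That $e_i\cdot[\LL(0,\bw)] = 0$ is immediate: the image lives in $\CF(\LL(-\alpha_i,\bw))$, and $-\alpha_i \notin Q^+$, so $\MM(-\alpha_i,\bw) = \emptyset$.

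The heart of the argument is the vanishing $f_i^{w_i+1}[\LL(0,\bw)] = 0$. Iterating $f_i$ lands the result in $\CF(\LL((w_i+1)\alpha_i,\bw))$, so it suffices to show $\MM(n\alpha_i,\bw) = \emptyset$ for $n > w_i$. I would check this by unwinding the definitions at a vector space $V$ supported only at $i$: since $i$ is loop-free (property (ii) of the Cartan matrix forces $(\alpha_i,\alpha_i) = 2$, hence no loops at $i$ in $(I,\Omega)$), the only components of $(B,a,b)\in\bM(V,W)$ are $a_i\colon W_i\to V_i$ and $b_i\colon V_i\to W_i$, with $B=0$. Every subspace of $V$ is then $B$-invariant, so the stability criterion of Proposition~\ref{6.2.1}(i) reduces to $\Ker b_i = 0$, which forces $n = \dim V_i \leq \dim W_i = w_i$. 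Thus $\MM(n\alpha_i,\bw)$, and therefore $\LL(n\alpha_i,\bw)$, is empty for $n\geq w_i+1$, giving the required vanishing. This step, while short, is the one to triple-check; the only potential pitfall is confirming that no other summand of $\bM(V,W)$ contributes when $V$ is concentrated at a loop-free vertex, and that the multiplicative preprojective relation is consistent with stable $(a_i,b_i)$ existing for $n\leq w_i$ (but this existence is not actually needed for the present corollary).

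With these three facts in hand, I would invoke the standard Kac-Moody theorem (see \cite{Kac}, Chapter~10): a highest weight $\g$-module generated by a highest weight vector $v_{\bw}$ of dominant integral weight $\bw\in P^+$ which satisfies $f_i^{\langle h_i,\bw\rangle+1}v_{\bw} = 0$ for every $i$ is precisely the irreducible integrable highest weight module $L(\bw)$. Applied to the cyclic submodule $L(\bw) = \bU^-\cdot[\LL(0,\bw)] = \bU\cdot[\LL(0,\bw)]$ of $M$, this identifies $L(\bw)$ with the irreducible integrable $\g$-module of highest weight $\bw$. The weight space statement $L(\bv,\bw) = L(\bw)_{\bw-\bv}$ then follows from the first paragraph: every vector of weight $\bw-\bv$ in $L(\bw)\subset M$ lies in $\CF(\LL(\bv,\bw))$, and vice versa.
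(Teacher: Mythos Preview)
Your proposal is correct and follows essentially the same approach as the paper: verify that $[\LL(0,\bw)]$ is a highest weight vector of weight $\bw$, establish the integrability bound $f_i^{w_i+1}[\LL(0,\bw)]=0$ by showing $\MM(n\alpha_i,\bw)=\emptyset$ for $n>w_i$ (this is exactly the content of \cite[Lemma~10.13]{Nak-Duke1} to which the paper defers), and then invoke the standard characterization of the irreducible integrable highest weight module. One minor remark: your justification that $i$ is loop-free should appeal to $c_{ii}=2$ together with the relation $2\mathbf{I}-\bA=\bC$ from \S\ref{6.1} (giving $\bA_{ii}=0$), rather than to $(\alpha_i,\alpha_i)=2$, since the form $(\ ,\ )$ on $P$ need not be normalized that way.
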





\begin{thebibliography}{99}

\bibitem{AMM}
Alekseev, A., A.\ Malkin, and E.\ Meinrenken.  
``Lie group valued moment maps.'' 
{\em Journal of Differential Geometry} 48, no.\ 3 (1998): 445--495.

\bibitem{Biq}
Biquard, O.  
``Fibres de Higgs et connexions integrables: le cas logarithmique (diviseur lisse).'' 
{\em Annales Scientifiques de l'\'Ecole Normale Sup\'erieure (4)} 
30, no.\ 1 (1997): 41--96.

\bibitem{Boa1}
Boalch, P. 
``Stokes matrices, Poisson Lie groups and Frobenius manifolds.'' 
{\em Inventiones Mathematicae} 146, no.\ 3 (2001): 479--506. 

\bibitem{Boa2}
Boalch, P. 
``Symplectic manifolds and isomonodromic deformations.''  
{\em Advances in Mathematics} 163, no.\ 2 (2001): 137--205.

\bibitem{Boa3}
Boalch, P. 
``Quasi-Hamiltonian geometry of meromorphic connections.'' (2002): 
Preprint math.DG/0203161.

\bibitem{Bor}
Borel, A. 
{\em Linear algebraic groups}, 2nd edn. 
Vol.\ 126 of Graduate Texts in Mathematics. 
New York: Springer-Verlag, 1991.

\bibitem{Cra-geom}
Crawley-Boevey, W.  
``Geometry of the moment map for representations of quivers.'' 
{\em Compositio Mathematica} 126, no.\ 3 (2001): 257--293.

\bibitem{Cra-norm}
Crawley-Boevey, W.  
``Normality of Marsden-Weinstein reductions for representations of quivers.'' 
{\em Mathematische Annalen} 325, no.\ 1 (2003): 55--79.

\bibitem{Cra-add}
Crawley-Boevey, W. 
``On matrices in prescribed conjugacy classes 
with no common invariant subspace and sum zero.'' 
{\em Duke Mathematical Journal} 118, no.\ 2 (2003): 339--352.
		
\bibitem{Cra-par}
Crawley-Boevey, W. 
``Indecomposable parabolic bundles and the existence of 
matrices in prescribed conjugacy class closures 
with product equal to the identity.''
{\em Publications Math\'ematiques. Institut de Hautes \'Etudes
Scientifiques}, No. 100  (2004): 171--207.
		
\bibitem{CS} 
Crawley-Boevey, W., and P.\ Shaw.  
``Multiplicative preprojective algebras, 
middle convolution and the Deligne-Simpson problem.''
{\em Advances in Mathematics} 201 no.\ 1 (2006): 180--208.

\bibitem{Del}
Deligne, P.  
{\em \'{E}quations diff\'erentielles \`a points singuliers r\'eguliers.}   
Vol.\ 163 of Lecture Notes in Mathematics. Berlin: Springer-Verlag, 1970. 

\bibitem{DR}
Dettweiler, M., and S.\ Reiter. 
``An algorithm of Katz and its application to the inverse Galois problem.''  
{\em Journal of Symbolic Computation} 30, no.\ 6 (2000): 761--798.

\bibitem{Hau}
Hausel, T. 
``Cohomology of hyperk\"ahler manifolds
via arithmetic harmonic analysis.'' 
A talk at Kyoto university, 2005.     
{\tt http://www2.maths.ox.ac.uk/\~{}hausel/talks.html}.

\bibitem{HR}
Hausel, T., and F.\ Rodriguez-Villegas.  
``Mixed Hodge polynomials of character varieties.'' (2006): 
Preprint arXiv:math/0612668v2.

\bibitem{Hit}
Hitchin, N. 
``Frobenius manifolds.'' With notes by David Calderbank. 
Pp.\ 69--112 in {\em Gauge theory and symplectic geometry} (Montreal, PQ, 1995), 
Vol.\ 488 of NATO Adv.\ Sci.\ Inst.\ Ser.\ C Math.\ Phys.\ Sci. 
Dorbrecht: Kluwer, 1997.

\bibitem{Ina}
Inaba, M. 
``Moduli of parabolic connections on a curve and 
Riemann-Hilbert correspondence.'' (2006): 
Preprint arXiv:math.AG/0602004.

\bibitem{IIS-par1}
Inaba, M., K.\ Iwasaki, and M.\ Saito. 
``Moduli of stable parabolic connections, 
Riemann-Hilbert correspondence and geometry of Painleve equation of type VI. I.''   
{\em Kyoto University. Research Institute for Mathematical
Sciences. Publications} 42, no.\ 4 (2006): 987--1089.  

\bibitem{Kac}
Kac, V.\ G. 
{\em Infinite-dimensional Lie algebras}, 3rd edn.  
Cambridge: Cambridge University Press, 1990.

\bibitem{King}
King, A.\ D. 
``Moduli of representations of finite-dimensional algebras.''
{\em The Quarterly Journal of Mathematics. Oxford (2)} 45, no.\ 180 (1994): 515--530. 

\bibitem{KP}
Kraft, H., and C.\ Procesi. 
``Closures of conjugacy classes of matrices are normal.'' 
{\em Inventiones Mathematicae} 53, no.\ 3 (1979): 227--247. 

\bibitem{Kro}
Kronheimer, P.\ B.  
``The construction of ALE spaces as hyper-Kahler quotients.'' 
{\em Journal of Differential Geometry} 29, no.\ 3 (1989): 665--683.

\bibitem{KN}
Kronheimer, P.\ B., and H.\ Nakajima.   
``Yang-Mills instantons on ALE gravitational instantons.''  
{\em Mathematische Annalen} 288, no.\ 2 (1990): 263--307.

\bibitem{Luna}
Luna, D.  
``Slices \'{e}tales.'' 
Pp.\ 81--105 in {\em Sur les groupes alg\'{e}briques}, 
Vol.\ 33 of Bull.\ Soc.\ Math.\ France, Paris, Memoire.\ Paris: 
Soc.\ Math.\ France, 1973.

\bibitem{Lus-quiv}
Lusztig, G.  
``On quiver varieties.'' 
{\em Advances in Mathematics} 136, no.\ 1 (1998): 141--182.
		
\bibitem{Lus-ref}
Lusztig, G. 
``Quiver varieties and Weyl group actions.'' 
{\em Universit\'e de Grenoble. Annales de l'Institut Fourier} 50, no.\ 2 (2000): 461--489.

\bibitem{Maf}
Maffei, A. 
``A remark on quiver varieties and Weyl groups.''  
{\em Annali della Scuola Normale Superiore di Pisa. Classe di
Scienze (5)} 1, no.\ 3 (2002): 649--686.

\bibitem{GIT}
Mumford, D., J.\ Fogarty, and F.\ C.\ Kirwan. 
{\em Geometric invariant theory}, 3rd edn. 
Vol.\ 34 of Ergebnisse der Mathematik und ihrer Grenzgebiete (2) 
[Results in Mathematics and Related Areas (2)]. 
Berlin: Springer-Verlag, 1994.
 
\bibitem{Nak-Duke1} 
Nakajima, H.  
``Instantons on ALE spaces, quiver varieties, 
and Kac-Moody algebras.'' 
{\em Duke Mathematical Journal} 76, no.\ 2 (1994): 365--416. 
 
\bibitem{Nak-Duke2}
Nakajima, H.   
``Quiver varieties and Kac-Moody algebras.'' 
{\em Duke Mathematical Journal} 91, no.\ 3 (1998): 515--560.

\bibitem{Nak-AMS}
Nakajima, H.   
``Quiver varieties and finite-dimensional 
representations of quantum affine algebras.''  
{\em Journal of the American Mathematical Society} 14, no.\ 1 (2001): 145--238 (electronic).

\bibitem{Nak-ref}
Nakajima, H.  
``Reflection functors for quiver varieties 
and Weyl group actions.'' 
{\em Mathematische Annalen} 327, no.\ 4 (2003): 671--721. 

\bibitem{New}
Newstead, P.\ E. 
{\em Introduction to moduli problems and orbit spaces}. 
Vol.\ 51 of Tata Institute of Fundamental Research Lectures on Mathematics and Physics. 
Bombay: Tata Institute of Fundamental Research, 1978. 

\bibitem{Sim}
Simpson, C.\ T. 
``Harmonic bundles on noncompact curves.''  
{\em Journal of the American Mathematical Society}, 3 (1990), no.\ 3: 713--770.

\bibitem{Sja}
Sjamaar, R. 
``Holomorphic slices, symplectic reduction 
and multiplicities of representations.'' 
{\em Annals of Mathematics. Second Series} 141 (1995), no.\ 1: 87--129.

\bibitem{V-poi}
Van den Bergh, M. 
``Double Poisson algebras.'' (2004): 
Preprint arXiv:math.QA/0410528.

\bibitem{V-ham}
Van den Bergh, M. 
``Non-commutative Quasi-Hamiltonian Spaces.'' (2007): 
Preprint arXiv:math.QA/0703293.


\end{thebibliography}
\end{document}